\numberwithin{subsection}{section}
\newtheoremstyle{1}
{6pt} 
{0pt} 
{\itshape} 
{} 
{\bfseries} 
{.} 
{.5em} 
{} 
\newtheoremstyle{2}
{6pt} 
{0pt} 
{} 
{} 
{\bfseries} 
{.} 
{.5em} 
{} 
\theoremstyle{1}
\newtheorem{theorem}{Theorem}[section]
\newtheorem{lemma}[theorem]{Lemma}
\newtheorem{proposition}[theorem]{Proposition}
\newtheorem{corollary}[theorem]{Corollary}
\theoremstyle{2}
\newtheorem{rmrk}[theorem]{Remark}
\DeclareMathOperator{\id}{id}
\DeclareMathOperator{\Id}{Id}
\DeclareMathOperator{\End}{End}
\DeclareMathOperator{\Ortho}{O}
\DeclareMathOperator{\Sp}{Sp}
\DeclareMathOperator{\U}{U}
\DeclareMathOperator{\SU}{SU}
\DeclareMathOperator{\SO}{SO}
\DeclareMathOperator{\PGL}{PGL}
\DeclareMathOperator{\GL}{GL}
\DeclareMathOperator{\cusp}{cusp}
\DeclareMathOperator{\vol}{vol}
\DeclareMathOperator{\Irr}{Irr}
\DeclareMathOperator{\stab}{stab}
\DeclareMathOperator{\gen}{gen}
\DeclareMathOperator{\im}{im}
\DeclareMathOperator{\re}{re}
\DeclareMathOperator{\supp}{supp}
\DeclareMathOperator{\Symm}{Sym}
\DeclareMathOperator{\rank}{rank}
\DeclareMathOperator{\defin}{def}
\DeclareMathOperator{\Ad}{Ad}
\DeclareMathOperator{\ad}{ad}
\DeclareMathOperator{\tr}{tr}
\DeclareMathOperator{\Hom}{Hom}
\begin{document}

\title{Large values of Maass forms on hyperbolic Grassmannians in the volume aspect}
\author{Thibaut Ménès}
\date{ }
\maketitle

\abstract
Let $n > m \geq 1$ be integers such that $n + m \geq 4$ is even. We prove the existence, in the volume aspect, of exceptional Maass forms on compact quotients of the hyperbolic Grassmannian of signature $(n,m)$. The method builds upon the work of Rudnick and Sarnak \cite{RudnickSarnak}, extended by Donnelly \cite{Donnelly} and then generalized by Brumley and Marshall \cite{BM2} to higher rank manifolds. It combines a counting argument with a period relation, showing that a certain period distinguishes theta lifts from an auxiliary group. The congruence structure is defined with respect to this period and the auxiliary group is either $\U(m,m)$ or $\Sp_{2m}(\mathbb{R})$, making $(\U(n,m), \U(m,m))$ or $(\Ortho(n,m), \Sp_{2m}(\mathbb{R}))$ a type $1$ dual reductive pair. The lower bound is naturally expressed, up to a logarithmic factor, as the ratio of the volumes, with the principal congruence structure on the auxiliary group.

\tableofcontents

\section{Introduction}
\subsection{The sup norm problem}
\subsubsection{Spectral aspect}
Let $X$ be a compact Riemannian manifold of dimension $n$. The eigenfunctions of the Laplace operator $\Delta$, acting on $L^2(X)$, are of importance in various fields such as quantum chaos and harmonic analysis. Typical questions ask about asymptotics, distribution and multiplicities of the eigenvalues but also about point-wise bounds of the eigenfunctions in terms of their eigenvalues. Regarding the latter, the basic upper bound proved by Levitan \cite{Lev} and Avacumomi\'{c} \cite{Ava} is
\begin{equation} \label{eq1}
	\Vert f \Vert_{\infty} \ll \lambda^{\frac{n - 1}{2}} \quad (\Delta f = -\lambda^2 f).
\end{equation}
This bound is sharp on the $n$-sphere but is not optimal on negatively curved manifolds as Bérard \cite{Berard} proved that
\begin{equation*}
	\Vert f \Vert_{\infty} \ll \frac{\lambda^{\frac{n - 1}{2}}}{(\log \lambda)^{\frac{1}{2}}}  \quad (\Delta f = -\lambda^2 f).
\end{equation*}
In fact, when $X$ is negatively curved, its geodesic flow is ergodic and quantum ergodicity holds. More precisely, if $\lbrace f_i \rbrace_i$ is an orthonormal basis of $L^2(X)$ of Laplace eigenfunctions satisfying $(\Delta + \lambda_i^2) f_i = f_i$ and if this basis is ordered by eigenvalues so that $0 = \lambda_1 \leq \lambda_2 \leq \ldots$ then a density one subsequence of eigenfunctions equidistributes \textit{i.e.} there is a density one subsequence $(f_{i_j})_j$ such that \cite{Shnirelman, ColindeVerdiere, Zelditch}
\begin{equation*}
\lim_{j \rightarrow \infty} \int_X \phi(x) \vert f_{i_j} (x) \vert^2 dx = \int_X \phi(x) dx \quad (\phi \in C^{\infty}(X)).
\end{equation*}
In particular, the strong delocalization bound
\begin{equation*}
	\Vert f_{i} \Vert_{\infty} \ll_{\epsilon} \lambda_i^{\epsilon}
\end{equation*}
can reasonably be expected to hold with density $1$ \cite{BM1}. A sequence of functions violating the above strong delocalization bound is said to be \textit{exceptional}. In dimension $2$, Iwaniec and Sarnak \cite{IwaniecSarnak} conjecture that exceptional sequences of eigenfunctions do not exist on hyperbolic surfaces. However, Rudnick and Sarnak \cite{RudnickSarnak} (see also \cite{MilicevicLarge}) proved that certain arithmetic hyperbolic $3$-manifolds do support exceptional sequences.

These arithmetic manifolds arise as locally symmetric spaces of non-compact type, that is, the quotient of a globally symmetric space of non-compact type by a lattice. More precisely, locally symmetric spaces of non-compact type are constructed by taking a non-compact semi-simple real Lie group $G$, a maximal compact subgroup $K \subset G$ and by considering the quotient of the globally symmetric space $S = G/K$ by a lattice $\Gamma \subset G$. As we will focus our attention on arithmetic manifolds, we shall assume that $\Gamma$ enjoys some additional arithmetic properties.\\
For example, let $V$ be a non-degenerate $\mathbb{Q}$-anisotropic quadratic space of signature $(3,1)$, let $G = \SO(V)^0$ (the identity component of the orthogonal group of $V$) and let $K \subset G$ be a maximal compact subgroup of $G$. Fixing a $\mathbb{Z}$-lattice $L \subset V$ and considering the subgroup $\Gamma$ of $G$ preserving $L$, one obtain an arithmetic $3$-dimensional compact locally symmetric space of non-compact type and of rank one by setting $X = \Gamma \backslash G / K$.

In the case described above, Rudnick and Sarnak \cite[Theorem 1.3]{RudnickSarnak} proved that there is a subsequence $(f_{i_j})_j$ of $L^2$-normalized eigenfunctions in $L^2(X)$ such that
\begin{equation} \label{RSBOUND}
\Vert f_{i_j} \Vert_{\infty} \gg \lambda_{i_j}^{\frac{1}{2}}.
\end{equation}
However, this does not contradicts quantum ergodicity because this subsequence is of density zero. By considering a totally real quadratic extension $K$ of $\mathbb{Q}$ and a non-degenerate $K$-anisotropic quadratic form of signature $(n,1)$, one may construct a $n$-dimensional compact locally symmetric space of rank one using the same mechanism described above. Donnelly \cite{Donnelly} proved, for $n \geq 5$, that these arithmetic manifolds support exceptional sequences, more precisely that there is a subsequence $(f_{i_j})_j$ of $L^2$-normalized eigenfunctions such that
\begin{equation} \label{DONBOUND}
\Vert f_{i_j} \Vert_{\infty} \gg \lambda_{i_j}^{\frac{(n-4)}{2}}.
\end{equation}
Brumley and Marshall \cite{BM2} (see also \cite{BM1, LapidOffen}) generalized the above results to the case where the rank of $X$ is greater than one, with $X$ defined similarly with respect to an anisotropic quadratic form. The form produced are not only Laplace eigenfunctions but joint eigenfunctions of $\mathcal{D}(S)$, the algebra of $G$-invariant differential operators on ${S = G/K}$, the globally symmetric space associated to $G$.\\
This algebra is (isomorphic to) a finitely generated polynomial ring in $\rank(G)$ variables with $\mathcal{D}(S) = \mathbb{C}[\Delta]$ when the rank of $G$ is $1$. In particular, $\mathcal{D}(S)$ is commutative so that, when the rank is greater than $1$, $L^2(X)$ can be further diagonalized: a \textit{Maass form} is a joint eigenfunction $f \in L^2(X)$ of $\mathcal{D}(S)$. Since $\Delta \in \mathcal{D}(S)$, Maass forms are also Laplace eigenfunctions.

While Laplace eigenfunctions are parametrized by scalars (namely, their eigenvalue) Maass forms are parametrized by vectors in a space of dimension the rank of $X$. Let $\mathfrak{g} = \mathfrak{k} \oplus \mathfrak{p}$ be the Cartan decomposition of the Lie algebra $\mathfrak{g}$ of $G$, where $\mathfrak{k}$ is the Lie algebra of $K$, and let $\mathfrak{a} \subseteq \mathfrak{p}$ be a maximal abelian subspace. Given a Maass form $f \in L^2(X)$, there is a character $\chi : \mathcal{D}(S) \rightarrow \mathbb{C}$ such that
\begin{equation*}
	D \cdot f = \chi (D) f \quad (D \in \mathcal{D}(S)).
\end{equation*}
Via the Harish-Chandra isomorphism (see Section \ref{sec3.2}) this character $\chi$ corresponds to a unique (up to the Weyl group) element $\mu \in \mathfrak{a}^*_{\mathbb{C}}$. We call $\mu$ the \textit{spectral parameter} of $f$.

If $f$ is a Maass form with \textit{sufficiently regular} spectral parameter $\nu \in \mathfrak{a}^*_{\mathbb{C}}$ (see Section \ref{sec3.5}), then Sarnak \cite{SarnakLetter} proved the bound
\begin{equation*}
\Vert f \Vert_{\infty} \ll \beta_S (\nu)^{\frac{1}{2}}
\end{equation*}
where $\beta$ is the \textit{spectral density function} of $S$ (see Section \ref{sec3.5}). This bound may be stated only in term of the Laplace eigenvalue $\lambda$ of $f$ as
\begin{equation*}
\Vert f \Vert_{\infty} \ll \lambda^{\frac{n-r}{2}}
\end{equation*}
where $r$ is the rank of $X$; recovering \eqref{eq1} when $r = 1$.

In \cite{BM1}, Brumley and Marshall introduced a broad class of higher-rank manifolds that support exceptional sequences of Laplace eigenfunctions. More precisely, they proved that there is $\delta > 0$ (logically ineffective) and a subsequence $(f_{i_j})_j$ of Laplace eigenfunctions such that
\begin{equation} \label{eqqqq}
	\Vert f_{i_j} \Vert_{\infty} \gg \lambda_{i_j}^{\delta}.
\end{equation}
In a subsequent work \cite{BM2}, the same authors established a similar result for congruence quotients of the hyperbolic Grassmannian $\mathbb{H}^{n,m}$ of signature $(n,m)$, which parametrizes negative definite $m$-dimensional subspaces of $\mathbb{R}^{n+m}$. More precisely, if $n > m \geq 1$ and $n+m \geq 4$ is even, they proved that, for sufficiently regular $\nu \in i \mathfrak{a}^*$, there is a Maass form $f$ of spectral parameter $\nu + O(1)$ such that (up to a logarithmic factor)
\begin{equation*}
	\Vert f \Vert_{\infty} \gg \dfrac{\beta_{\mathbb{H}^{n,m}}(\nu)^{\frac{1}{2}}}{\beta_{\mathcal{H}^{m}}(\nu)^{\frac{1}{2}}}
\end{equation*}
where $\mathcal{H}^{m}$ is the Siegel upper half-space of rank $m$. Note that, for well-balanced\footnote[1]{$\nu$ satisfies $1 + \vert \langle \alpha, \nu \rangle \vert \asymp 1 + \Vert \nu \Vert$ for all roots $\alpha$.} $\nu$, the above lower bound may be stated in term of the Laplace eigenvalue $\lambda$ of $f$ as
\begin{equation*}
	\Vert f \Vert_{\infty} \gg \lambda^{\frac{d-r-r^2}{2}}
\end{equation*}
where $d = nm$ is the dimension of $\mathbb{H}^{n,m}$ and $r = m$ its rank. Since $\mathbb{H}^{n,m}$ enters the class of manifolds described in \cite{BM1}, the interest in this lower bound lies in the quality of the exponent rather than in the existence of the latter (the exponent $\delta$ in \eqref{eqqqq} being logically ineffective).\\
Note that, if $r = 1$, the lower bound reads
\begin{equation*}
	\Vert f \Vert_{\infty} \gg \lambda^{\frac{d-2}{2}}
\end{equation*}
which improves upon the bound of Donnelly \eqref{DONBOUND} for $d = n \geq 5$ odd. 
\subsubsection{Volume aspect} \label{secVolasp}
In the above discussion, often referred to as the spectral aspect of the sup-norm problem, the estimates depend, in an unspecified manner, on the underlying manifold or, equivalently, on the lattice $\Gamma$. The volume\footnote[2]{Although the term "level aspect" is commonly used in the literature, we prefer "volume aspect" because, even though our lattices are congruence lattices, the volume is the natural quantity for our method.} aspect of the sup-norm problem asks for sup-norm bounds on Laplace eigenfunctions in term of the volume of the underlying manifold. In other words, we fix the Laplace eigenvalue $\lambda$ while allowing $\Gamma$ to vary among lattices in $G$. Before stating our main result, we provide benchmarks on the behavior of Maass forms in the volume aspect and define exceptional forms in contrast to their average behavior.

Let $G$ be a non-compact real reductive Lie group, let $K \subset G$ be a maximal compact subgroup and set $S = G/K$ for the associated globally symmetric space. Given a lattice $\Gamma \subset G$, we denote the corresponding locally symmetric space by $X_{\Gamma} = \Gamma \backslash S$. We fix a Haar measure $dg$ on $G$, a Haar measure $dk$ on $K$ and let $d\mu$ be the quotient measure on $S$ defined by $dg$ and $dk$. For a lattice $\Gamma \subset G$, we denote by $d\mu_{\Gamma}$ the quotient measure on $X_{\Gamma}$ defined by $d\mu$ and the counting measure on $\Gamma$, that is, such that
\begin{equation*}
\int_{X_{\Gamma}} \sum_{\gamma \in \Gamma} \phi(\gamma \cdot x) d\mu_{\Gamma}(x) = \int_{S} \phi (x) d\mu(x) \quad (\phi \in C_c^{\infty}(S)).
\end{equation*}
We set $d\overline{\mu_{\Gamma}}$ for the probability measure on $X_{\Gamma}$, that is, $d\overline{\mu_{\Gamma}} = \frac{d\mu_{\Gamma}}{\vol(X_{\Gamma},d\mu_{\Gamma})}$.

Assume there is a connected simple Lie group $G_s$ with maximal compact subgroup $K_s$ such that $G/K = G_{s}/K_{s}$. In particular, the center of $G$ is compact and $S = G/K$ is irreducible. Moreover, assume $\mathcal{D}(G/K) = \mathcal{D}(G_{s}/K_{s})$: for example, if $n > m \geq 1$ are integers, we show in Section \ref{chap6} (see the paragraph following Proposition \ref{prop6.0.1}) that $G = \Ortho(n,m)$ and $G = \U(n,m)$ satisfy these hypothesis, with $G_s = \SO^0(n,m)$ and $G_s = \SU(n,m)$ respectively.

Given a lattice $\Gamma \subset G$, we let $\lbrace f^{\Gamma}_{\mu} \rbrace_{\mu}$ denote an orthonormal basis of $L^2(X_{\Gamma}, d\overline{\mu_{\Gamma}})$ consisting of Maass forms of spectral parameter $\mu \in \mathfrak{a}_{\mathbb{C}}^*$. In Section \ref{chap6} we prove that, for uniform $\Gamma \subset G$, there is $Q \geq 1$ such that, for any point $x \in X_{\Gamma}$ and any $\nu \in i \mathfrak{a}^*$, we have\footnote[1]{We actually prove a stronger result with explicit dependence on $\nu$ but, for expository purposes, we emphasize only the asymptotic behavior in terms of the volume.}
\begin{equation} \label{eq1.1}
\sum_{\Vert \im(\mu) - \nu \Vert \leq Q} \vert f^{\Gamma}_{\mu} (x) \vert^2 \asymp_{\nu} \vol (X_{\Gamma}, d\mu_{\Gamma}).
\end{equation}
Since the number of terms in this sum is asymptotically the volume of $X_{\Gamma}$, one deduces that the average size of point evaluation of Maass forms (in a spectral ball of radius $Q$ about $\nu$) is $\asymp 1$. Given this average behavior, we say that a Maass form takes \textit{large values} if its sup norm grows by at least a positive power of the volume. Dropping all but one term in \eqref{eq1.1}, one can derive the upper bound
\begin{equation} \label{locbo}
\Vert f^{\Gamma}_{\nu} \Vert_{\infty} \ll \vol(X_{\Gamma}, d\mu_{\Gamma})^{\frac{1}{2}}.
\end{equation}
We call this the \textit{local bound} as its proof only takes into account local informations about a given point.

In this context, Brumley and Marshall \cite{BM1} proved that a wide range of manifolds support exceptional forms. In this text, we achieve the same result for certain congruence manifolds, namely hyperbolic Grassmannians as in \cite{BM2}. Since these manifolds fall within the framework of \cite{BM1}, the existence of exceptional forms is already established. However, the exponent provided in \textit{loc. cit.} is (logically) ineffective: the interest of Theorem \ref{theo1.2.1} given below lies in the quality of the exponent.
\subsection{Main result} \label{sec1.2}
For integers $n \geq m \geq 1$, let $\mathbb{H}^{n,m}_{\mathbb{C}}$ (resp. $\mathbb{H}^{n,m}_{\mathbb{R}}$) denote the complex (resp. real) hyperbolic Grassmannian of signature $(n,m)$, parametrizing negative definite $m$-dimensional subspaces in $\mathbb{C}^{n+m}$ (resp. $\mathbb{R}^{n+m}$). This is a simply connected symmetric space of non-compact type, of rank $m$ and dimension $2nm$ (resp. $nm$). The group $\U(n,m)$ (resp. $\Ortho(n,m)$) acts transitively on $\mathbb{H}^{n,m}_{\mathbb{C}}$ (resp. $\mathbb{H}^{n,m}_{\mathbb{R}}$) by isometries and the stabilizer of $\mathbb{C}^n \times \lbrace 0 \rbrace$ (resp. $\mathbb{R}^n \times \lbrace 0 \rbrace$) is $\U(n) \times \U(m)$ (resp. $\Ortho(n) \times \Ortho(m)$).\\
Note that the connected simple Lie group $\SU(n,m)$ (resp. $\SO^0(n,m)$) also acts transitively on $\mathbb{H}^{n,m}_{\mathbb{C}}$ (resp. $\mathbb{H}^{n,m}_{\mathbb{R}}$) and the stabilizer of $\mathbb{C}^n \times \lbrace 0 \rbrace$ (resp. $\mathbb{R}^n \times \lbrace 0 \rbrace$) is $\text{S}(\U(n) \times \U(m))$ (resp. $\SO(n) \times \SO(m)$). Hence
\begin{equation*}
\mathbb{H}^{n,m}_{\mathbb{C}} = \U(n,m) / \U(n) \times \U(m) = \SU(n,m) / \text{S}(\U(n) \times \U(m))
\end{equation*}
and
\begin{equation*}
\mathbb{H}^{n,m}_{\mathbb{R}} = \Ortho(n,m) / \Ortho(n) \times \Ortho(m) = \SO^0(n,m) / \SO(n) \times \SO(m).
\end{equation*}
The main result discusses the existence of Maass forms with large values on certain congruence quotients of $\mathbb{H}^{n,m}_{\mathbb{C}}$ and $\mathbb{H}^{n,m}_{\mathbb{R}}$. We prove this in the setting of disjoint union of such locally symmetric spaces: these disconnected manifolds arise as adelic double quotients; more precisely, adelic double quotients associated to the isometry group $G$ of a rational Hermitian or quadratic form.

As explained in \cite[§1]{BM2}, the method is based on the article of Rudnick and Sarnak \cite{RudnickSarnak} and the main idea may be expressed very simply as follows. Given a subgroup $H$ of $G$, one can show the existence of Maass forms with large $H$-periods (\textit{i.e.} larger than average) provided
\begin{enumerate}
\item one knows the average size of $H$-periods; \label{step1}
\item one can show that the non-vanishing of $H$-periods characterizes the image of a functorial lift from an auxiliary group $G'$;\label{step2}
\item one can count the number of corresponding forms on $G'$.\label{step3}
\end{enumerate}
In some sense, the non-vanishing $H$-periods must compensate for the vanishing ones by being unusually large. If $H$ is compact at infinity, these periods are discrete (\textit{i.e.} sums of point evaluations) and one would expect to obtain large point evaluations from large $H$-periods.

Step \ref{step2} is an important theme in Langlands program: the non-vanishing of certain periods is known to characterize the image of certain Langlands functorial lifting. In this text, we work with the theta correspondence. Specifically, we choose $G'$ such that $(G,G')$ form a dual reductive pair.

Let $n \geq m \geq 1$ be integers. Let $F$ be a totally real number field and let $E$ be either a totally imaginary quadratic extension of $F$ or $F$ itself. Let $V$ be a non-degenerate anisotropic Hermitian (or quadratic) $E$-space of dimension $n + m$ and let $W$ be a non-degenerate skew-Hermitian (or symplectic) $E$-space of dimension $2m$. We set $G = \U(V)$ and $G' = \U(W)$ for the associated $F$-groups.

We say that $V$ (resp. $W$) is \textit{unramified} at a finite place $v$ if the Hermitian space $V_v = V \otimes_F F_v$ (resp. $W_v = V \otimes_F F_v$), over the algebra $E \otimes_F F_v$, contains a self-dual lattice. Let $\mathcal{R}_0$ be the finite set of finite places of $F$ at which either $V$, $W$ or $E/F$ ramifies and let $\mathcal{R}$ denote the union of $\mathcal{R}_0$ with the places above $2$. 
Let $\mathcal{O}$ be the ring of integers of $F$ and let $L_V \subset V$ be a $\mathcal{O}$-lattice which is self-dual at each finite place $v \notin \mathcal{R}$, that is, $L_{V,v} = L_V \otimes \mathcal{O}_v$ is a self-dual lattice in $V_v$ for all $v \notin \mathcal{R}$ (with $ \mathcal{O}_v$ the ring of integers of $F_v$). Similarly, let $L_W \subset W$ be a $\mathcal{O}$-lattice which is self-dual at each finite place $v \notin \mathcal{R}$.

If $v \notin \mathcal{R}$ is a finite place, we define $K_v$ (resp. $K'_v$) as the stabilizer of $L_{V,v}$ (resp. $L_{W,v}$) while, if $v \in \mathcal{R}$, we fix an open compact subgroup $K_v$ (resp. $K'_v$\footnote[1]{Some extra care is required to define $K_v'$ (see Section \ref{secc8.2})}) of $G(F_v)$ (resp. $G'(F_v)$). Then, form the products
\begin{equation*}
K_f = \prod_{v \nmid \infty} K_v \quad \text{,} \quad K'_f = \prod_{v \nmid \infty} K'_v.
\end{equation*}
Given an ideal $\mathfrak{n} \subset \mathcal{O}$ prime to $\mathcal{R}$, we define the principal congruence subgroups ${K(\mathfrak{n}) \subset K_f}$ and $K'(\mathfrak{n}) \subset K_f'$ of level $\mathfrak{n}$ as the products
\begin{equation*}
K(\mathfrak{n}) = \prod_{v \mid \mathcal{R}} K_v \prod_{v \nmid \infty \mathcal{R}} K_v(\mathfrak{n}) \quad \text{,} \quad K'(\mathfrak{n}) = \prod_{v \mid \mathcal{R}} K'_v \prod_{v \nmid \infty \mathcal{R}} K'_v(\mathfrak{n})
\end{equation*}
where, for $v \nmid \infty \mathcal{R}$, we set
\begin{equation*}
K_v(\mathfrak{n}) = \lbrace g_v \in G(F_v) : (g_v - 1) L_{V,v} \subset \mathfrak{n} L_{V,v} \rbrace
\end{equation*}
and
\begin{equation*}
K'_v(\mathfrak{n}) = \lbrace g'_v \in G'(F_v) : (g'_v - 1) L_{W,v} \subset \mathfrak{n} L_{W,v} \rbrace.
\end{equation*}
Now, let $X \subset V$ be a $m$-dimensional Hermitian (or quadratic) sub-$E$-space and set ${H = \U(X) \times \U(X^{\perp})}$, a $F$-subgroup of $G$. We let $K_f^H = K_f \cap H(\mathbb{A}_f)$ and choose, for $\mathfrak{n} \subset \mathcal{O}$ prime to $\mathcal{R}$, a congruence subgroup $K_H(\mathfrak{n})$ of level $\mathfrak{n}$\footnote[2]{$K_H(\mathfrak{n})$ contains $K(\mathfrak{n})$ but contains no $K_H(\mathfrak{d})$ ($\mathfrak{d} \mid \mathfrak{n}$).} satisfying
\begin{equation*}
K_H(\mathfrak{n}) \cap K_f^H = K_f^H.
\end{equation*}
For example, one can consider the thickening $K_f^H \cdot K(\mathfrak{n})$ of the principal congruence subgroup as in \cite{BM1}.\\
We fix maximal compact subgroups ${K_v \subset G(F_v)}$ (resp. $K_v' \subset G'(F_v)$) at archimedean places and set
\begin{equation*}
K_{\infty} = \prod_{v \mid \infty} K_v \quad \text{,} \quad K'_{\infty} = \prod_{v \mid \infty} K'_v.
\end{equation*}
Finally, for $\mathfrak{n} \subset \mathcal{O}$ prime to $\mathcal{R}$, we define
\begin{equation*}
X_H(\mathfrak{n}) = G(F) \backslash G(\mathbb{A}) / K_{H}(\mathfrak{n}) K_{\infty} \quad \text{,} \quad X'(\mathfrak{n}) = G'(F) \backslash G'(\mathbb{A}) / K'(\mathfrak{n}) K'_{\infty}.
\end{equation*}
\begin{theorem} \label{theo1.2.1}
Fix integers $n > m \geq 1$ with $n + m \geq 4$ even and assume $W$ is split. Fix an archimedean place $v_0$ of $F$ and assume $V$ has signature $(n,m)$ at $v_0$, $V$ is positive definite at every other real places and $X$ is negative definite at $v_0$. Let $\nu \in i \mathfrak{a}^*$ be sufficiently regular with sufficiently large norm. For $\mathfrak{n} \subset \mathcal{O}$ an ideal prime to $\mathcal{R}$, there is a $L^2$-normalized (with respect to the probability measure) Maass form $f \in L^2(X_{H}(\mathfrak{n}))$ of spectral parameter $\nu + O(1)$ such that
\begin{equation*}
	\Vert f \Vert_{\infty} \gg \dfrac{\vol (X_H(\mathfrak{n}))^{\frac{1}{2}}}{\vol (X'(\mathfrak{n}))^{\frac{1}{2}}} \log ( \mathcal{N}(\mathfrak{n}) )^{-\frac{m [F : \mathbb{Q}]}{2}}
\end{equation*}
where $\mathcal{N}(\mathfrak{n}) = \vert \mathcal{O} / \mathfrak{n} \vert$ is the norm of $\mathfrak{n}$.
\end{theorem}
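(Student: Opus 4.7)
The plan is to execute the three-step strategy outlined in the introduction in the specific context of the theta correspondence for the dual reductive pair $(G, G') = (\U(V), \U(W))$. Let $x_0 \in X_H(\mathfrak{n})$ denote a base point stabilized by $H(F_{v_0})$ at the archimedean place $v_0$; the choice of signature on $X$ ensures $x_0$ is a well-defined fixed point. The quantity to be exploited is the $H$-period sum
\begin{equation*}
	\mathcal{P}_H(f) = \int_{[H]} f(h) \, dh
\end{equation*}
where $[H] = H(F) \backslash H(\mathbb{A}) / K_H(\mathfrak{n}) K_\infty^H$. Since $X$ is anisotropic of signature $(0,m)$ at $v_0$, the archimedean factor of $H$ is compact, and $\mathcal{P}_H(f)$ degenerates to a discrete sum of evaluations of $f$ at the translates of $x_0$; hence a large period forces a large sup norm.

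First, I would set up a pre-trace formula with a carefully chosen test function $k_\nu$ concentrated in a spectral ball of radius $O(1)$ about $\nu \in i\mathfrak{a}^*$, and apply it to the $H$-period inner product on the spectral side. The counting side will be handled by the fact that the relative trace formula for $H \times H$ at $(x_0,x_0)$ reduces, for sufficiently regular $\nu$, to the identity term, giving
\begin{equation*}
	\sum_{\mu} |\mathcal{P}_H(f_\mu^\Gamma)|^2 \widehat{k_\nu}(\mu) \asymp_\nu \vol(X_H(\mathfrak{n}))
\end{equation*}
by the same local volume computation that underlies \eqref{eq1.1}. This provides Step \ref{step1}: the average size of the square of the $H$-period is of order $\vol(X_H(\mathfrak{n}))$.

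Second, for Step \ref{step2} I would invoke the Rallis inner product formula / Ichino regularized period identity for the $H$-period against the theta lift. In the split unitary (respectively metaplectic) setting with $W$ split, non-vanishing of $\mathcal{P}_H(f_\mu^\Gamma)$ is equivalent to $f_\mu^\Gamma$ being a theta lift from an automorphic form $f'$ on $G'(\mathbb{A})$, together with non-vanishing of an $L$-value and local integrals. For $\nu$ sufficiently regular, the archimedean theta correspondence is injective and preserves spectral parameters up to $O(1)$; at the finite ramified places, the compatibility $K_H(\mathfrak{n}) \leftrightarrow K'(\mathfrak{n})$ has been arranged so that $K_H(\mathfrak{n})$-invariant lifts correspond to $K'(\mathfrak{n})$-invariant source forms. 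This is the step I expect to cause the most technical work: controlling the local archimedean seesaw and tracking the level at the finite places so that the correspondence preserves the congruence structure exactly by $\mathfrak{n}$.

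Third, having identified the forms with non-zero $H$-period as sitting in the theta image of $L^2(X'(\mathfrak{n}))$, I would bound the number of such forms with spectral parameter in the window of $\nu$ by Weyl's law applied on $G'$, obtaining the bound $\ll \vol(X'(\mathfrak{n})) \log(\mathcal{N}(\mathfrak{n}))^{m[F:\mathbb{Q}]}$; the logarithmic factor absorbs the finitely many sources (of bounded multiplicity) which can lift to the same $f$, together with the spherical-volume accounting at places dividing $\mathfrak{n}$. Finally, pigeonhole against the lower bound from Step 1 yields a single Maass form $f = f_\mu^\Gamma$ with $\mu = \nu + O(1)$ satisfying
\begin{equation*}
	|\mathcal{P}_H(f)|^2 \gg \frac{\vol(X_H(\mathfrak{n}))}{\vol(X'(\mathfrak{n})) \log(\mathcal{N}(\mathfrak{n}))^{m[F:\mathbb{Q}]}}.
\end{equation*}
Since $\mathcal{P}_H(f)$ is a sum of $\ll \vol(X_H(\mathfrak{n}))^{1/2}$ point evaluations of $f$ at translates of $x_0$ (using that the $H$-orbit has size $\asymp \vol(X_H(\mathfrak{n}))^{1/2}$ relative to $\vol(X'(\mathfrak{n}))^{1/2}$ in this setting, by comparing discriminants of $H$ and $G'$), taking the square root and Cauchy-Schwarz to extract a single large evaluation yields the claimed bound $\|f\|_\infty \gg \vol(X_H(\mathfrak{n}))^{1/2}/\vol(X'(\mathfrak{n}))^{1/2}$ up to the stated logarithmic factor.
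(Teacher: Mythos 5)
Your outline follows the general three-step strategy of the paper, but the final step contains a substantive error that defeats the argument, and your choice for Step \ref{step2} diverges from the paper's.

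The critical gap is in your last paragraph, where you claim that $\mathcal{P}_H(f)$ is a sum of $\ll \vol(X_H(\mathfrak{n}))^{1/2}$ point evaluations, justified by ``comparing discriminants of $H$ and $G'$.'' This is not correct and, if it were, the final deduction would not give the stated lower bound: dividing a period of size $\gg (\vol(X_H(\mathfrak{n}))/\vol(X'(\mathfrak{n})))^{1/2}$ by $\vol(X_H(\mathfrak{n}))^{1/2}$ leaves only $\vol(X'(\mathfrak{n}))^{-1/2}$, which is worse than trivial. The whole reason the congruence subgroup $K_H(\mathfrak{n})$ is chosen to satisfy $K_H(\mathfrak{n}) \cap K_f^H = K_f^H$ (condition 2 in Section \ref{secccc1.2.1}) is precisely so that $\mathcal{P}_H(f) = \sum_{h \in \gen_H(K_f^H)} w_h f(h)$ has \emph{both} the number of points $|\gen_H(K_f^H)|$ \emph{and} the weights $w_h$ independent of $\mathfrak{n}$ and of the spectral parameter. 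With $O(1)$ terms and $O(1)$ weights, a lower bound on $|\mathcal{P}_H(f)|$ transfers directly, with no loss, to a lower bound on $\|f\|_\infty$. Without noticing this point the pigeonholing argument simply does not close, and there is no room for a discriminant-comparison heuristic.

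As for Step \ref{step2}, you propose the Rallis inner product formula / Ichino period identity, which relates non-vanishing of $\mathcal{P}_H(f)$ to non-vanishing of an $L$-value; the paper instead proves a direct period relation (Proposition \ref{prop5.1.1}) between the Bessel period of $\theta(f;s)$ and the $H$-period of $f$, then constructs in Lemma \ref{lem5.3.1} a test function $s_{\mathfrak{n},\nu,Q}$ that picks up only the identity double coset at the finite places and is non-vanishing on the full spectral window $C$ at $v_0$, so that $\mathcal{P}_H(f_\lambda) \neq 0$ implies $\theta(f_\lambda; s_{\mathfrak{n},\nu,Q}) \neq 0$ outright. This is more elementary and, crucially, entirely uniform in $\mathfrak{n}$ and $\nu$, which the $L$-value route would not obviously give. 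The RIP approach could in principle substitute, but would require handling $L$-value non-vanishing uniformly in level and parameter, and tracking local zeta integrals at ramified places; the paper avoids this. Finally, your attribution of the $\log(\mathcal{N}(\mathfrak{n}))^{m[F:\mathbb{Q}]}$ factor to ``bounded multiplicity of sources'' is off: it arises from the coarse truncation $T \gg (1+\|\nu\|)^{1+\epsilon}\vol(K)^{-(1+\epsilon)/2}$ of the Siegel domain in the Weyl-type counting on $G'$ (Section \ref{chap7}, see the remark after Proposition \ref{prop7.0.1}).
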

Before explaining the conditions on the congruence structure, we briefly comment on the Theorem.

The condition that $n + m$ is even can be relaxed, with minimal effort, when $E \neq F$. However, for $E = F$, one must replace $G' = \Sp_{2m}$ by its metaplectic (double) cover in order to ensure that $(G,G')$ form a dual reductive pair. In particular, the counting problem (in the sense of step \ref{step3}) requires additional care in this case.

We may compare the setting of Theorem \ref{theo1.2.1} to the general context provided by \cite{BM1}: applying \cite[Theorem 1.2]{BM1} we obtain, for $K_H(\mathfrak{n}) = K(\mathfrak{n}) K_f^H$, the existence of exceptional forms. This follows from the conditions on the signature of $V$ and $X$ at infinity: in particular, $G(F_{v_0})$ is non-split because $n \neq m$ and $H(F_{v_0}) = K_{v_0}$ (see \cite[§2.3]{BM2} for more details).

The logarithmic loss is due to the use of a partial trace formula when counting automorphic forms on $G'$, that is, when dealing step \ref{step3} (see Section \ref{chap7}). A precise analysis of the geometric side of the trace formula should eliminate this loss.

For Theorem \ref{theo1.2.1} to be non-trivial, certain additional conditions on the signature (as well as on the choice of $K_H(\mathfrak{n})$) must be satisfied. We perform explicit computations with ${K_H(\mathfrak{n}) = K(\mathfrak{n}) K_f^H}$, which is the smallest congruence subgroup of level $\mathfrak{n}$ satisfying the condition ${K_H(\mathfrak{n}) \cap K_f^H = K_f^H}$. By definition, we have
\begin{equation*}
	\vol (X_H(\mathfrak{n})) = \vol (X_H(1)) [K_f : K_H(\mathfrak{n}) ] \quad \text{,} \quad \vol (X'(\mathfrak{n})) =  \vol (X'(1)) [K'_f : K'(\mathfrak{n})].
\end{equation*}
We emphasize that, unlike the usual dichotomy in the level aspect, the lower bound in Theorem \ref{theo1.2.1} does not depend on the decomposition of $\mathfrak{n}$. However, to avoid technicalities due to the lack of strong approximation for orthogonal and unitary groups, we shall take $\mathfrak{n} = \mathfrak{p}^k$, where $\mathfrak{p}$ is a prime ideal, prime to $\mathcal{R}$, and $k \geq 1$ is an integer. We put aside the logarithmic loss, we let $\mathfrak{p} \subset \mathcal{O}$ be a prime ideal prime to $\mathcal{R}$, let $k \geq 1$ be an integer and set $E(\mathfrak{p}^k)$ for the exponent such that (up to a constant)
\begin{equation*}
\dfrac{\vol (X_{H}(\mathfrak{p}^k))^{\frac{1}{2}}}{\vol (X'(\mathfrak{p}^k))^{\frac{1}{2}}} = \vol (X_{H}(\mathfrak{p}^k))^{\frac{1}{2} -  E(\mathfrak{p}^k) + o(1)}.
\end{equation*}
Hence, $E(\mathfrak{p}^k)$ quantifies how far our lower bound is from the local bound \eqref{locbo}. In other words, Theorem \ref{theo1.2.1} is non-trivial if $E(\mathfrak{p}^k) < 1/2$. We have (see appendix \ref{appendixA})
\begin{equation*}
E(\mathfrak{p}^k) = \begin{cases}
      \frac{2m + 1}{2n}  \hspace*{5mm} \text{if $E = F$} \\
	\frac{m}{n}  \hspace*{10mm} \text{if $E \neq F$} \\
    \end{cases}\,.
\end{equation*}
We see that, in the case $E \neq F$, Theorem \ref{theo1.2.1} is non-trivial if $2m < n$ while, in the case $E = F$, the lower bound is non-trivial if $2m < n - 1$.\\
In view of Theorem \ref{theo1.2.2}, we refer to \cite[§2.4]{BM2} for the analogous discussion in the spectral aspect.

The forms produced in Theorem \ref{theo1.2.1} are theta lifts. The fact that these forms exhibit singular behaviors is not new. For instance, Howe and Piatetski-Shapiro \cite{HowePS} used them to prove the failure of the (naive) Ramanujan conjecture.
\subsubsection{The choice of $K_H(\mathfrak{n})$}\label{secccc1.2.1}
Recall there are two conditions on $K_H(\mathfrak{n})$
\begin{enumerate}
\item $K_H(\mathfrak{n})$ is a congruence subgroup of level $\mathfrak{n}$; \label{sstteepp1}
\item $K_H(\mathfrak{n}) \cap K_f^H = K_f^H$. \label{sstteepp2}
\end{enumerate}
The first condition determines the level of the lifted forms and, consequently, the level of the congruence manifold associated to $G'$. Roughly speaking, the theta lift of a level $K(\mathfrak{n})$ automorphic forms on $G$ is a level $K'(\mathfrak{n})$ automorphic form on $G'$. Thus, since $K_H(\mathfrak{n})$ contains $K(\mathfrak{n})$, the same applies to level $K_H(\mathfrak{n})$ automorphic forms on $G$, that is, their theta lift is of level $K'(\mathfrak{n})$. 

The second condition ensures that the $H$-period of a level $K_H(\mathfrak{n})$ automorphic form is independent of $\mathfrak{n}$. More precisely, let $[G] = G(F) \backslash G(\mathbb{A})$ and recall $V$ is anisotropic (hence $[G]$ is compact). By definition, the $H$-period of $f \in C^{\infty} ([G])$ is
\begin{equation*}
	\mathcal{P}_H(f) = \int_{[H]} f (h) dh
\end{equation*}
where $[H] = H(F) \backslash H(\mathbb{A})$ is endowed with the $H(\mathbb{A})$-invariant probability measure; since $[H]$ is compact, this integral converges absolutely.

Recalling the maximal compact subgroup $K_{\infty} \subset G(\mathbb{A}_{\infty})$ and the compactness of $H$ at infinity (\textit{cf}. the conditions on $V$ and $X$ in Theorem \ref{theo1.2.1}), we have
\begin{equation*}
K_{\infty} \cap H(\mathbb{A}_{\infty}) = H(\mathbb{A}_{\infty}).
\end{equation*}
Hence, if $K \subset G(\mathbb{A}_f)$ is an open compact subgroup and if $f \in C^{\infty}([G])$ is right-$KK_{\infty}$-invariant, we have
 \begin{equation*}
	\mathcal{P}_H(f) = \sum_{h \in \gen_H(K)} \vol \big( H(F) h (K \cap H(\mathbb{A}_f)) \big) f(h)
\end{equation*}
where $\gen_H(K)$ is the finite set $H(F) \backslash H(\mathbb{A}_f) / K \cap H(\mathbb{A}_f)$ (see \cite[Theorem 5.1]{BorelFiniteness}) and $\vol \big( H(F) h (K \cap H(\mathbb{A}_f)) \big)$ is the volume of $H(F) \cdot h \cdot K \cap H(\mathbb{A}_f)$ considered as a subset of $H(F) \backslash H(\mathbb{A}_f)$. Thus, the $H$-period of $f$ is a discrete period.\\
In particular, if we suppose $K$ satisfies condition \ref{sstteepp2}, then the $H$-period of $f$ is a discrete period whose number of points and weights are independent of $K$.

This way, if $K_{H}(\mathfrak{n})$ satisfies both conditions \ref{sstteepp1} and \ref{sstteepp2}, the $H$-period of a right-$K_H(\mathfrak{n})K_{\infty}$-invariant function $f \in C^{\infty}([G])$ is a finite weighted sum of point evaluations with both the number of points and the weights independent of $\mathfrak{n}$. This independence with respect to the level is crucial: it allows us to derive a lower bound on the sup-norm from a lower bound on the $H$-period, without any loss in the number of terms nor the weights.
\subsubsection{A hybrid result in the spectral-volume aspect} \label{secsecsecty}
We actually prove a stronger result than the one stated in Theorem \ref{theo1.2.1}, establishing a lower bound, with an additional multiplicity term, in both the volume and the spectral aspects simultaneously.

Indeed, following \cite{SarnakLetter}, if $\mu_H(\mathfrak{n}, \nu)$ denote the dimension of the space $V_H(\mathfrak{n}, \nu) \subset L^2(X_H (\mathfrak{n}))$  of ($L^2$-normalized with respect to the probability measure) Maas forms with spectral parameter $\nu$, then one can prove there is $f \in V_H(\mathfrak{n}, \nu)$ such that
\begin{equation} \label{eq232323}
	\Vert f \Vert_{\infty} \geq \mu_H(\mathfrak{n}, \nu)^{\frac{1}{2}}.
\end{equation}
While Theorem \ref{theo1.2.1} ensures the existence of Maass forms with large values compared to the average $\asymp 1$, one can ask for Maass forms with large values compared to the "trivial lower bound" \eqref{eq232323}.\\
We do not prove such statement, but rather a weaker version.

We let $\beta_S$ (resp. $\beta_{S'}$) denote the spectral density function (see Section \ref{sec3.5}) of ${S = G(F_{v_0}) / K_{v_0}}$ (resp. $S' = G'(F_{v_0}) / K'_{v_0}$). Let $\mathfrak{g} = \mathfrak{k} \oplus \mathfrak{p}$ be the Cartan decomposition of the Lie algebra of $G(F_{v_0})$ with respect to the Lie algebra of $K_{v_0}$ and let $\mathfrak{a} \subset \mathfrak{p}$ be a maximal abelian subspace. Note that the real rank of $G(F_{v_0})$ is the same as that of $G'(F_{v_0})$; hence, we can view a spectral parameter $\nu$ simultaneously for $S$ and $S'$ (see also \cite[§2.2]{BM2} for more details in the case $E = F$).
\begin{theorem} \label{theo1.2.2}
Fix integers $n > m \geq 1$ with $n + m \geq 4$ even and assume $W$ is split. Fix an archimedean place $v_0$ of $F$ and assume $V$ has signature $(n,m)$ at $v_0$, $V$ is positive definite at every other real places and $X$ is negative definite at $v_0$. For sufficiently regular $\nu \in i \mathfrak{a}^*$ with sufficiently large norm and for $\mathfrak{n} \subset \mathcal{O}$ an ideal prime to $\mathcal{R}$, there is a $L^2$-normalized (with respect to the probability measure) Maass form $f \in L^2(X_{H}(\mathfrak{n}))$ of spectral parameter $\nu + O(1)$ such that
\begin{equation*}
	\Vert f \Vert_{\infty} \gg \dfrac{\vol (X_H(\mathfrak{n}))^{\frac{1}{2}}}{\vol (X'(\mathfrak{n}))^{\frac{1}{2}}} \dfrac{\beta_S(\nu)^{\frac{1}{2}}}{\beta_{S'}(\nu)^{\frac{1}{2}}} \mu'(\mathfrak{n}, \nu)^{\frac{1}{2}} \mathcal{L} (\mathfrak{n}, \nu)^{-\frac{m [F : \mathbb{Q}]}{2}}
\end{equation*}
where $\mathcal{L} (\mathfrak{n}, \nu) = \log \big( \mathcal{N}(\mathfrak{n}) (1 + \Vert \nu \Vert) \big)$ and $\mu'(\mathfrak{n}, \nu)$ is the dimension of ${V'(\mathfrak{n}, \nu) \subset L^2(X'(\mathfrak{n}))}$, the space of Maass forms having spectral parameter $\nu$.
\end{theorem}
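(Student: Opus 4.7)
The plan is to adapt the relative trace formula strategy of Rudnick--Sarnak \cite{RudnickSarnak}, as developed in the higher-rank setting by Brumley--Marshall \cite{BM2}, to the present hybrid volume/spectral context, following the three steps listed above. The initial reduction is the observation of Section \ref{secccc1.2.1}: because $H$ is compact at $v_0$ and $K_H(\mathfrak{n})$ satisfies condition \ref{sstteepp2}, the $H$-period of a right-$K_H(\mathfrak{n}) K_\infty$-invariant $f$ is a finite weighted sum of point evaluations whose number of points and weights are independent of $\mathfrak{n}$; in particular, $\Vert f \Vert_\infty \gg |\mathcal{P}_H(f)|$. It therefore suffices to produce a Maass form of spectral parameter $\nu + O(1)$ whose $H$-period is large.

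To exhibit it, I would assemble an adelic test function $k_{\nu,\mathfrak{n}}$ whose component at $v_0$ is bi-$K_{v_0}$-invariant with spherical transform a nonnegative approximate indicator of a ball of radius $O(1)$ around $\nu \in i\mathfrak{a}^*$, and whose finite-place component selects $K_H(\mathfrak{n})$-invariance. Integrating the associated automorphic kernel $K(x,y)$ twice against the probability measure on $[H]$ yields the spectral identity
\begin{equation*}
\sum_{\mu} \hat{k}_\nu(\mu)\,|\mathcal{P}_H(f^\Gamma_\mu)|^2 \;=\; \int_{[H]}\int_{[H]} K(h_1,h_2)\,dh_1\,dh_2.
\end{equation*}
The new substance is step \ref{step2}: non-vanishing of $\mathcal{P}_H$ characterizes the image of the theta lift from the dual-pair partner $G'$, and a companion Rallis-type inner product identity converts $|\mathcal{P}_H(\theta(\varphi,f'))|^2$ into an explicit product of local densities times an inner product on $[G']$. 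Combined with a careful choice of the Schwartz function $\varphi$, this identifies the left-hand side of the display above with an $H$-weighted $G'$-pretrace kernel evaluated near the identity.

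For step \ref{step3}, a partial trace formula on $X'(\mathfrak{n})$, combined with the same archimedean localization around $\nu$ (using that $G$ and $G'$ have equal real rank at $v_0$), isolates the contribution of forms in $V'(\mathfrak{n},\nu)$, whose reproducing-kernel trace is $\asymp \mu'(\mathfrak{n},\nu)$, up to a logarithmic deficit $\mathcal{L}(\mathfrak{n},\nu)^{m[F:\mathbb{Q}]}$ caused by windowing rather than point-evaluating the spherical transform. Transporting this through the Rallis identity and accounting for the ratio of local volumes between $G$ and $G'$ at every place yields
\begin{equation*}
\sum_\mu \hat k_\nu(\mu)\,|\mathcal{P}_H(f^\Gamma_\mu)|^2 \;\gg\; \frac{\vol(X_H(\mathfrak{n}))\,\beta_S(\nu)}{\vol(X'(\mathfrak{n}))\,\beta_{S'}(\nu)}\,\mu'(\mathfrak{n},\nu)^2\,\mathcal{L}(\mathfrak{n},\nu)^{-m[F:\mathbb{Q}]};
\end{equation*}
pigeonholing against the at most $\asymp \mu'(\mathfrak{n},\nu)$ spectral parameters $\mu$ in the window for which $f^\Gamma_\mu$ lies in the image of the theta lift produces a single Maass form whose period squared has the size asserted in Theorem \ref{theo1.2.2}, and the initial reduction closes the proof.

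The main obstacle is step \ref{step2}: the Rallis-type period identity in sufficient generality, with effective explicit control of the archimedean local densities and of the nonarchimedean local factors at places in $\mathcal{R}$, uniformly in $(\mathfrak{n},\nu)$. A secondary difficulty is the partial trace formula on $G'$: sharpening it through a geometric-side analysis of small but non-central $G'(F)$-contributions would eliminate the logarithmic deficit $\mathcal{L}(\mathfrak{n},\nu)^{-m[F:\mathbb{Q}]/2}$ appearing in the final bound.
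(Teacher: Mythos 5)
Your outline follows the paper's high-level roadmap (average $H$-period estimate, distinction via theta lift, count on $G'$, pigeonhole), but the mechanism you describe for step 3 and the pigeonhole is wrong, and the key intermediate display does not hold.

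First, the paper does not invoke a Rallis-type inner product formula. The period identity it uses is the one of Proposition \ref{prop5.1.1}: the $(R_q,\mathbf{1}\times\psi_q)$-period (a Bessel period) of $\theta(f;s)$ equals an integral of $\mathcal{P}_{H_j}(R(x)f)$ against a test-function-dependent weight $\mathcal{I}_{j,s}$. Combined with the specific choice of $s = s_{K_{H_j}(\mathfrak{n})}\otimes s_{\nu,C}$, this only shows (Lemma \ref{lem8.2.4}) that $\mathcal{P}_{H_j}(f_\lambda) \neq 0$ forces $f_\lambda$ into the image of the theta lift from $G'$; it does \emph{not} produce a lower bound on the mean square of the periods. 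The mean square is controlled entirely on the $G$-side by the pretrace formula (Proposition \ref{prop6.0.1}, yielding Lemma \ref{lem8.2.1}): $\mathcal{M}_{H_j}(\mathfrak{n},\nu,Q) \asymp \vol(X_{H_j}(\mathfrak{n}))\,\beta_S(\nu)$, a two-sided estimate with no $\mu'$.

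This is where your intermediate display goes wrong: you assert
\begin{equation*}
\sum_\mu \widehat{k}_\nu(\mu)\,|\mathcal{P}_H(f^\Gamma_\mu)|^2 \;\gg\; \frac{\vol(X_H(\mathfrak{n}))\,\beta_S(\nu)}{\vol(X'(\mathfrak{n}))\,\beta_{S'}(\nu)}\,\mu'(\mathfrak{n},\nu)^2\,\mathcal{L}(\mathfrak{n},\nu)^{-m[F:\mathbb{Q}]},
\end{equation*}
but since the left side is $\asymp \vol(X_H(\mathfrak{n}))\beta_S(\nu)$, the right side would then force $\mu'(\mathfrak{n},\nu)^2 \ll \vol(X'(\mathfrak{n}))\beta_{S'}(\nu)\mathcal{L}(\mathfrak{n},\nu)^{m[F:\mathbb{Q}]}$, which is not available (the Weyl law only bounds the \emph{first} power of the total dimension). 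So the display is false in general, and no Rallis-type identity will give it. You also conflate $\mu'(\mathfrak{n},\nu)$ (a dimension, i.e.\ a multiplicity) with the number of spectral parameters in the window carrying distinguished forms; the latter is bounded by $\sum_\lambda \dim V'(\mathfrak{n},\lambda) \ll \vol(X'(\mathfrak{n}))\beta_{S'}(\nu)\mathcal{L}(\mathfrak{n},\nu)^{m[F:\mathbb{Q}]}$, not by $\mu'(\mathfrak{n},\nu)$. Dividing by $\mu'$ therefore does not produce a form with large period.

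The correct mechanism for introducing $\mu'(\mathfrak{n},\nu)^{1/2}$ is a weighted (mediant) pigeonhole: comparing
\begin{equation*}
\sum_{\lambda}\,|\mathcal{P}_{H_j}(f_{\mathfrak{n},\lambda})|^2 \asymp \vol(X_{H_j}(\mathfrak{n}))\,\beta_S(\nu)
\quad\text{against}\quad
\sum_{\lambda}\dim V'(\mathfrak{n},\lambda) \ll \vol(X'(\mathfrak{n}))\,\beta_{S'}(\nu)\,\mathcal{L}(\mathfrak{n},\nu)^{m[F:\mathbb{Q}]},
\end{equation*}
one obtains a $\lambda$ with $|\mathcal{P}_{H_j}(f_{\mathfrak{n},\lambda})|^2/\dim V'(\mathfrak{n},\lambda)$ at least the ratio of the sums, so the dimension $\dim V'(\mathfrak{n},\lambda)$ appears multiplicatively in the lower bound for that particular period, rather than as a divisor of the count. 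To repair your argument you should replace the Rallis-identity step with Lemma \ref{lem8.2.1} for the mean square, use Lemma \ref{lem8.2.4} only for distinction, invoke Proposition \ref{prop7.0.1} for the weighted count, and then perform the ratio pigeonhole rather than dividing by a putative $\asymp\mu'$ number of terms.
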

We do not expect $\mu'(\mathfrak{n}, \nu) \geq \mu_H(\mathfrak{n}, \nu)$. Indeed, the distinction method primarily relies on the observation that the number of Maass forms on $X_H(\mathfrak{n})$ significantly exceeds that on $X'(\mathfrak{n})$. However, we one can hope for improvements in a "Hecke-Maass" setting, see remark \ref{remarquebrumley}. 

The only known hybrid result on higher rank manifolds is due to Brumley and Marshall \cite{BM1}. As mentioned earlier, since our manifolds fall within their framework, the main interest in the following Theorem lies in the quality of the hybrid exponent.

Note that, for fixed level, this extends \cite[Theorem 2.1]{BM2} to the complex hyperbolic Grassmannian $\mathbb{H}^{n,m}_{\mathbb{C}}$.
\subsubsection{Global period relation and test function} \label{sec1.2.4}
In the global theta correspondence, a well known relation holds between Fourier-Whittaker periods of the lifted form and an orthogonal period of the form itself. This relation can be "extended" to connect Bessel periods with $H$-periods, by considering $X$ as a subspace of both $V$ and $W$. More precisely, if $f$ is an automorphic form of $G$, then
\begin{equation*}
\mathcal{P}^{\mathbf{1} \times \psi_q}_{R_q}(\theta (f ; s)) =  \int_{H (\mathbb{A}) \backslash G(\mathbb{A})}\mathcal{I}_s(x) \text{ } \mathcal{P}_{H}(R(x){f}) dx
\end{equation*}
where $s$ is a test function, $\theta (f ; s))$ is the theta lift of $f$, $R(x)f$ is defined as $g \mapsto f(gx)$ and $\mathcal{I}_s$ is a function on $H (\mathbb{A}) \backslash G(\mathbb{A})$ that is uniquely determined by $s$. The term $\mathcal{P}^{\mathbf{1} \times \psi_q}_{R_q}(\theta (f ; s))$ is the $(R_q, \mathbf{1} \times \psi_q)$-period (or Bessel period) of $\theta(f, s)$, which is defined as the integral
\begin{equation*}
\mathcal{P}^{\mathbf{1} \times \psi_q}_{R_q}(\theta (f ; s)) = \int_{[R_q]} \theta(f ; s) (r) (\mathbf{1} \times \psi_q) (r) dr
\end{equation*}
where $R_q \subset G'$ is the semi-direct product of the unipotent radical $N$ of the Siegel parabolic subgroup of $G'$ with $\U(X)$ (considered as a subgroup of $G'$) and $(\mathbf{1} \times \psi_q)$ is a character of $R_q$ defined with respect to $H$.

In \cite[Proposition 9.3]{BM2}, it is shown that for an open compact subgroup ${K \subset G(\mathbb{A}_f)}$ and a spectral parameter $\lambda \in \mathfrak{a}^*_{\mathbb{C}}$, if $f_{\lambda}$ is right-$KK_{\infty}$-invariant with spectral parameter $\lambda$, then
\begin{equation*}
\mathcal{P}^{\mathbf{1} \times \psi_q}_{R_q}(\theta (f ; s_{\lambda})) = C(s_{K_f, \lambda} ; \lambda) \sum_{x \in p_K} w(x) \mathcal{P}_{H}(R(x) f_{\lambda})
\end{equation*}
where $s_{K_f, \lambda}$ is a test function defined with respect to $K_f$ (not $K$) and $\lambda$, $C(s_{K_f, \lambda} ; \lambda) \neq 0$, $w(x)$ are non-zero weights and $p_K \subset G(\mathbb{A}_f)$ is a finite set of points containing $1$.

In contrast to \cite{BM2}, the present approach uses a test function $s_{K, C}$ defined with respect to $K$ and $C \subset \mathfrak{a}_{\mathbb{C}}^*$, a compact set (namely, the spectral window $\nu + O(1)$ appearing in Theorems \ref{theo1.2.1} and \ref{theo1.2.2}). Roughly speaking, this test function picks up only the contribution of $1 \in p_K$ while ensuring $C(s_{K, C} ; \lambda) \neq 0$ for all $\lambda \in C$.

This non-vanishing condition on the spectral window allows us for a simpler treatment of local multiplicities when counting the number of contributing forms (\textit{cf.} step \ref{step3}). In the spectral aspect, one estimates this number using Bernstein’s uniform admissibility Theorem (see \cite[Lemma 10.2]{BM2}) while, in the volume aspect, one should perform a "place by place" analysis to bound the dimension of the local theta lifts.

Bounds relating the dimension of the fixed vectors under the action of $K$ to the dimension of the lifted vector that are fixed under the action of $K'$ do exist when ${K = K(\mathfrak{n})}$ and $K' = K'(\mathfrak{n})$. More precisely, if $v$ is a finite place prime to $\mathcal{R}$ and if $\pi_v = \theta(\pi'_v)$ is an irreducible smooth representation of $G(F_v)$, then (see \textit{e.g.} \cite{Cossutta})
\begin{equation*}
	\dim \pi_v^{K_H(\mathfrak{n})} \leq \dim \pi_v^{K(\mathfrak{n})}  \ll \dim S_{\mathfrak{n}, v} \dim {\pi'_v}^{K'(\mathfrak{n})}
\end{equation*}
for some local space $S_{\mathfrak{n}, v} \subset S_{v}$ depending on $\mathfrak{n}$. Note that $S_{\mathfrak{n}} = \otimes_v S_{\mathfrak{n}, v}$ is contained in the space $S_{\mathcal{O}} = S = \otimes_v S_v$ of test functions $s$ used for the period relation and ${\dim (S_{\mathfrak{n}}) \ll \mathcal{N}(\mathfrak{n})^{\frac{\dim_E V \dim_E W}{2}}}$. However, these local dimensions (or multiplicities) contaminate the final lower bound, yielding the weaker bound
\begin{equation*}
	\Vert f \Vert_{\infty} \gg \dfrac{\vol (X_H(\mathfrak{n}))^{\frac{1}{2}}}{\vol (X'(\mathfrak{n}))^{\frac{1}{2}}} \dfrac{\beta_S(\nu)^{\frac{1}{2}}}{\beta_{S'}(\nu)^{\frac{1}{2}}}  \dim (S_{\mathfrak{n}})^{- \frac{1}{2}} \mathcal{L} (\mathfrak{n}, \nu)^{-\frac{m [F : \mathbb{Q}]}{2}}.
\end{equation*}
We bypass this local problem by defining the space of global lifted forms as the image of the map $\theta( \cdot \text{ } ; s_{K,C})$ in contrast to \cite{BM2} where, roughly speaking, the authors considered the space of global lifted forms as the image of "all" the various maps $\theta( \cdot \text{ } ; s_{\lambda})$ for $\lambda \in C$.
\begin{rmrk} \label{remarquebrumley}
In Section \ref{secc8.2}, we apply this period relation to derive the distinction argument outlined in step \ref{step2}. However, we believe that our method can be improved to give stronger bounds, as there is a loss of information during the current execution process. Indeed, in Section \ref{secc8.2},  we use condition \ref{sstteepp1} on $K_H(\mathfrak{n})$ but not condition \ref{sstteepp2}. (Nevertheless, we emphasize that condition \ref{sstteepp2} is used in the deduction of the main result, as explained in Section \ref{secccc1.2.1}). We believe that further analysis of the transfer of invariants in the local theta correspondence, taking condition \ref{sstteepp2} into account, could yield additional (equivariant) constraints on the lifted forms. In particular, this would reduce the number of corresponding forms on $G'$ (in the sense of step \ref{step3}) and thus strengthen the lower bound stated in Theorem \ref{theo1.2.2}.

Moreover, upon considering the "Hecke-Mass" multiplicity $\mu^{\mathcal{HM}}_H(\mathfrak{n}, \nu)$ (in contrast to the "Maass multiplicity" $\mu_H(\mathfrak{n}, \nu)$, see Section \ref{secsecsecty}), we think one could to beat \eqref{eq232323} by using surjectivity results for non-tempered representations, see \textit{e.g.} \cite[Theorem 4.2]{BergeronMillsonMoeglin}.

We hope to address these two (connected) questions in future work together with F. Brumley, J. Hou, S. Marshall and R. Takloo-Bighash.
\end{rmrk}
\subsubsection{Outline} \label{sec1.2.3}
The similarity between this thesis and \cite{BM2} should be evident to the reader. In particular, the overall structure of this text parallels that of the latter.

We begin in Section \ref{chap3} with purely Archimedean considerations. We discuss $\tau$-spherical harmonic analysis as in the theta correspondence, spherical forms on $G$ correspond to $\tau$-spherical forms on $G'$. In particular, the counting problem outlined in step \ref{step3} involves $\tau$-spherical forms, thus necessitating a $\tau$-spherical test function. This is based on Shimeno's work \cite{Shimeno50} on the Plancherel formula for $\tau$-spherical functions on simple Lie groups, with $\tau$ a one dimensional $K$-type.

In Section \ref{chap2}, we examine both local and global aspects of the theta correspondence and focus on certain properties of orthogonal-symplectic and unitary pairs. In particular, we establish the correspondence mentioned above and explicit $\tau$.

In Section \ref{chap5}, we further investigate the period relation and the test function sketched in Section \ref{sec1.2.4}. The key outcome of this Section is the distinction principle outlined in step \ref{step2}.

In Section \ref{chap6} we tackle step \ref{step1}, that is, we estimate the average size of $H$-periods of level $K_H(\mathfrak{n})$ spherical automorphic forms on $G$. As explained in Section \ref{secccc1.2.1}, the assumptions on $H$ and $K_H(\mathfrak{n})$ reduce this estimation to that of a (finite) weighted sum of point evaluations, where both the weights and the number of terms are independent of $\mathfrak{n}$. Our treatment is classical (\textit{i.e.} non adelic) and follows from the pre-trace formula.

In Section \ref{chap7}, we address the counting problem outlined in step \ref{step3}. More precisely, we count the number of ($\tau$-spherical) cuspidal automorphic forms of level $K'(\mathfrak{n})$ and spectral parameter $\nu + O(1)$ on $G'$. We show that such cusp forms are concentrated below height $(\Vert \nu \Vert \vol (X'(\mathfrak{n})))^{1 +\epsilon}$ in the cusp and then bound the integral of the geometric kernel function over this region. As mentioned in the paragraph following Theorem \ref{theo1.2.1}, this coarse approach is responsible for the logarithmic loss in Theorem \ref{theo1.2.2} (and \ref{theo1.2.1}).

In Section \ref{chap8}, we combine these ingredients to derive Theorem \ref{theo1.2.2} (and \ref{theo1.2.1}). The constancy of both the weights and the number of terms in $\mathcal{P}_H(f)$ with respect to $\mathfrak{n}$ and $\nu$ is used again to deduce the lower bound on $\Vert f \Vert_{\infty}$ from a lower bound on $\vert \mathcal{P}_H(f) \vert$.

\section{$\tau$-spherical harmonic analysis} \label{chap3}
In this Section, we introduce $\tau$-spherical objects such as $\tau$-spherical functions, representations and the $\tau$-spherical inversion formula. Essentially we refine \cite[§3-§5]{BM2} to include the (indefinite special) unitary groups. The main results are Lemma \ref{lem3.3.1}, giving a parametrization of irreducible $\tau$-spherical representations by their spectral parameter, and Proposition \ref{prop3.7.1}, providing us with a $\tau$-spherical test function we shall use in Sections \ref{chap6} and \ref{chap7}.
\subsection{Notations} \label{sec3.1}
If $V$ is a finite dimensional vector space over $\mathbb{R}$, we let $V_{\mathbb{C}} = V \otimes \mathbb{C}$ and denote by $\Symm(V_{\mathbb{C}})$ the symmetric algebra of $V_{\mathbb{C}}$ which may be identified with $\mathcal{P}(V_{\mathbb{C}}^*)$, the polynomial algebra of $V_{\mathbb{C}}^*$.

Let $G$ be real connected reductive Lie group \textit{i.e.} a closed connected group of real matrices that is stable under conjugate transpose. Let $\Theta$ be the Cartan involution of $G$ given by inverse conjugate transpose and let $K = \lbrace g \in G : \Theta(g) = g \rbrace$. The differential $\theta = d \Theta$, given by negative conjugate transpose, is an automorphism of the Lie algebra $\mathfrak{g}$ of $G$. As $\theta$ is also an involution, if $\mathfrak{k}$ (resp. $\mathfrak{p}$) denote the $1$-eigenspace (resp. $-1$-eigenspace) for $\theta$, we obtain the Cartan decomposition $\mathfrak{g} = \mathfrak{k} \oplus \mathfrak{p}$. Let $B_0(X,Y) = \tr(XY)$ be the trace form and note that $\langle X , Y \rangle {=} - B_0(X, \theta(Y))$ is an inner product on $\mathfrak{g}$ with respect to whom there is a direct sum decomposition $\mathfrak{p} = \mathfrak{m} \oplus \mathfrak{a}  \oplus \mathfrak{n}$. In particular, $\mathfrak{a} \subset \mathfrak{p}$ is a maximal abelian subspace and $\mathfrak{m}$ is the centralizer $\lbrace K \in \mathfrak{k} : \ad(K) A = 0 \text{, } \forall A \in \mathfrak{a} \rbrace$ of $\mathfrak{a}$ in $\mathfrak{k}$. Let $\mathfrak{h}_{\mathfrak{m}} \subset \mathfrak{m}$ be a Cartan subalgebra and note that $\mathfrak{h} = \mathfrak{a} \oplus \mathfrak{h}_{\mathfrak{m}} \subset \mathfrak{g}$ is a Cartan subalgebra.

Let $B(X,Y) = \tr(\ad(X) \circ \ad(Y))$ be the Killing form on $\mathfrak{g}$, an invariant symmetric bilinear form on $\mathfrak{g}$ that is non-degenerated when restricted to $\mathfrak{h}$. For $\alpha \in \mathfrak{h}^*$, we define $H_{\alpha} \in H$ as the unique element satisfying $B(H,H_{\alpha}) = \alpha(H)$ for all $H \in \mathfrak{h}$. The restriction of the Killing form on $\mathfrak{h}$ induces a non-degenerate symmetric bilinear form ${(\alpha_1, \alpha_2) = B(H_{\alpha_1}, H_{\alpha_2})}$ on $\mathfrak{h}^*$. If $\alpha \in \Delta(\mathfrak{h})$, we define the Weyl reflection $s_{\alpha}$ (of $\mathfrak{h}$) associated to $\alpha$ by $s_{\alpha}(H) = H - 2\alpha(H) H_{\alpha} / B(\alpha, \alpha)$. The Weyl group of $\mathfrak{h}$ in $\mathfrak{g}$, denoted by $W(\mathfrak{h})$, is the group generated by the Weyl reflections associated to the $\alpha$'s in $\Delta(\mathfrak{h})$. We define the Weyl group of $\mathfrak{a}$ in $\mathfrak{g}$ as $W(\mathfrak{a}) = \lbrace \Ad(k)_{\vert \mathfrak{a}} : k \in K \text{ s.t.} \Ad(k) \mathfrak{a} = \mathfrak{a} \rbrace$ ; this is the quotient of the normalizer of $\mathfrak{a}$ in $K$ by the centralizer of $\mathfrak{a}$ in $K$. We denote by $W(\mathfrak{h}_{\mathbb{C}})$ the Weyl group of $\mathfrak{h}_{\mathbb{C}}$ in $\mathfrak{g}_{\mathbb{C}}$; the group generated by the Weyl reflections of $\mathfrak{h}_{\mathbb{C}}$ associated to the $\alpha$'s in $\Delta(\mathfrak{h}_{\mathbb{C}})$, the root system of $\mathfrak{h}_{\mathbb{C}}$ in $\mathfrak{g}_{\mathbb{C}}$. We naturally extend $s \in W(\mathfrak{a})$ to an automorphism of $\mathfrak{a}_{\mathbb{C}}$ by setting $s_{\vert i \mathfrak{a}} = i s$.

We denote the set of roots of $\mathfrak{a}$ in $\mathfrak{g}$ by $\Sigma(\mathfrak{a})$, the set of roots of $\mathfrak{h}_{\mathfrak{m}}$ in $\mathfrak{m}$ by $\Delta(\mathfrak{h}_{\mathfrak{m}})$ and the set of roots of $\mathfrak{h}$ in $\mathfrak{g}$ by $\Delta(\mathfrak{h})$. Similarly, we define $\Sigma(\mathfrak{a}_{\mathbb{C}})$, $\Delta(\mathfrak{h}_{\mathfrak{m}_{\mathbb{C}}})$ and $\Delta(\mathfrak{h}_{\mathbb{C}})$. We choose positive roots $\Delta^+(\mathfrak{h}) \subset \Delta(\mathfrak{h})$ such that
\begin{equation*}
\Sigma^+(\mathfrak{a}) \overset{\defin}{=} \Delta^+(\mathfrak{h}) \cap \Sigma(\mathfrak{a}) \subset \Sigma(\mathfrak{a})
\end{equation*}
are the positive roots defined by $\mathfrak{n}$. Similarly, we define $\Delta^+(\mathfrak{h}_{\mathfrak{m}_{\mathbb{C}}})$ and $\Delta^+(\mathfrak{h}_{\mathbb{C}})$. Let $\rho$ denote the half-sum of $\Delta^+(\mathfrak{h}_{\mathbb{C}})$, then $\rho = \rho_{\mathfrak{a}} + \rho_{\mathfrak{m}}$ where $\rho_{\mathfrak{a}}$ (resp. $\rho_{\mathfrak{m}}$) is the half-sum of $\Sigma^+(\mathfrak{a})$ (resp. $\Delta^+(\mathfrak{h}_{\mathfrak{m}_{\mathbb{C}}})$).

Let $M_0$, $A$ and $N$ denote the analytic subgroups of $G$ associated to $\mathfrak{m}$, $\mathfrak{a}$ and $\mathfrak{n}$ respectively. Let $M = Z_K(\mathfrak{a})M_0$, where $Z_K(\mathfrak{a}) = \lbrace k \in K : \Ad(k) A = A \text{, } \forall A \in \mathfrak{a} \rbrace$, then $MAN$ is the Langlands decomposition of a minimal parabolic subgroup $P$ of $G$ containing $A$. Let $H : G \rightarrow \mathfrak{a}$, $\kappa : G \rightarrow K$ denote the Iwasawa projections so that $g \in G$ can be written as
\begin{equation*}
	g = m \exp(H(a)) n \kappa(k)
\end{equation*}
for some $m \in M$ and $n \in N$.
The Killing form induces an inner product on $\mathfrak{a}$ and we define the ball $B_{\mathfrak{a}} (0,R) \subset \mathfrak{a}$ centered at the origin and of radius $R > 0$ with respect to the norm induced by this inner product. We then define $A_R = \exp \big( B_{\mathfrak{a}} (0,R) \big)$ and set $G_R = K A_R K \subset KAK$, with $KAK$ the Cartan decomposition of $G$.
\subsection{$\tau$-spherical function} \label{sec3.2}
Let $\tau : K \rightarrow C^{\times}$ be a continuous character. Let $L(\tau)$ denote the homogeneous line bundle on $G/K$ associated to $\chi$ that is, elements in $L(\tau) = G \times_K \mathbb{C}$ are equivalence classes of $(x, \lambda) \in G \times \mathbb{C}$ for the relation $(g,\lambda) \sim (gk, \chi(k^{-1}) \lambda)$; with the class of $(g, \lambda)$ projecting onto $gK \in G/K$. The space $H^0(G/K,L(\tau))$ of smooth sections ${s : G/K \rightarrow L({\tau})}$ identifies with $C^{\infty}(G/K, \tau)$, the space of smooth functions ${f : G \rightarrow \mathbb{C}}$ such that
\begin{equation*}
	f(gk) = \tau(k)^{-1} f(g) \quad (k \in K, g \in G).
\end{equation*}
The action of $x \in G$ by left-translation on the class of $(g, \lambda)$ is well-defined and yields the class $(x \cdot g, \lambda)$. This induces an action of $G$ on smooth functions $f \in C^{\infty}(G/K ; \tau)$; namely $g \cdot f : x \mapsto f(gx)$. Let $\mathcal{D}(G/K ; \tau)$ denote the left-$G$-invariant differential operators on $C^{\infty}(G/K ; \tau)$ \textit{i.e.} linear differential operators ${D : C^{\infty}(G/K ; \tau) \rightarrow C^{\infty}(G/K ; \tau)}$ such that $g \cdot Df = D (g \cdot f)$ for all $g \in G$ and all $f \in C^{\infty}(G/K ; \tau)$. Let $\mathcal{U}(\mathfrak{g}_{\mathbb{C}})$ denote the universal enveloping algebra of $\mathfrak{g}_{\mathbb{C}}$, let $K$ act on $\mathcal{U}(\mathfrak{g}_{\mathbb{C}})$ by $k \cdot X = \Ad(k) X$ for $X \in \mathfrak{g}$ and denote by $\mathcal{U}(\mathfrak{g}_{\mathbb{C}})^K$ the $K$-invariant elements in $\mathcal{U}(\mathfrak{g}_{\mathbb{C}})$. We have natural map $\mathcal{U}(\mathfrak{g}_{\mathbb{C}})^K \rightarrow \mathcal{D}(G/K ; \tau)$; namely $X \in \mathcal{U}(\mathfrak{g}_{\mathbb{C}})^K$ defines $D_X \in \mathcal{D}(G/K ; \tau)$ given on $f \in C^{\infty}(G)$ by
\begin{equation*}
	D_X f (g) = \frac{d}{dt} f(g \exp(tX))  \bigg|_{t = 0} \quad (g \in G).
\end{equation*}
In \cite[§2]{Shimeno51} Shimeno shows that $X \mapsto D_X$ induces an isomorphism
\begin{equation*}
\Symm(\mathfrak{a}_{\mathbb{C}})^{W(\mathfrak{a})} \simeq \mathcal{D}(G/K ; \tau)
\end{equation*}
where $\Symm(\mathfrak{a}_{\mathbb{C}})^{W(\mathfrak{a})}$ are the $W(\mathfrak{a})$-invariants of $\Symm(\mathfrak{a}_{\mathbb{C}})$, with ${w \in W(\mathfrak{a})}$ acting on $P \in \Symm(\mathfrak{a}_{\mathbb{C}})$ by $w \cdot P(A) = P(w^{-1} \cdot A)$. Let $K$ act on $\Symm(\mathfrak{p}_{\mathbb{C}})$ by ${k \cdot P(X) = P(\Ad(k^{-1}) X)}$ and let $\Symm(\mathfrak{p}_{\mathbb{C}})^K$ denote the $K$-invariants elements in $\Symm(\mathfrak{p}_{\mathbb{C}})$. Similarily, in \cite[§4]{Rouviere}, Rouvière shows that $\mathcal{D}(G/K ; \tau)$ is isomorphic to $\Symm(\mathfrak{p}_{\mathbb{C}})^K$ which, in turn, is isomorphic to $\Symm(\mathfrak{a}_{\mathbb{C}})^{W(\mathfrak{a})}$ by Chevalley's restriction Theorem (see also \cite[§2]{Shimura}). This way, we obtain the \textit{$\tau$-spherical Harish-Chandra isomorphism}
\begin{equation*}
	\gamma_{\tau} : \mathcal{D}(G/K ; \tau) \longrightarrow \Symm(\mathfrak{a}_{\mathbb{C}})^{W(\mathfrak{a})}.
\end{equation*}
Let $\lambda \in \mathfrak{a}^*_{\mathbb{C}}$ and extend $\lambda$ to $\mathcal{U}(\mathfrak{a}_{\mathbb{C}}) \rightarrow \mathbb{C}$ using the universal property of $\mathcal{U}$; note that $\mathcal{U}(\mathfrak{a}_{\mathbb{C}}) = \Symm(\mathfrak{a}_{\mathbb{C}})$ since $\mathfrak{a}$ is abelian. Then, composing $\gamma_{\tau}$ with $\lambda$, we obtain a morphism
\begin{equation*}
	\gamma_{\lambda} : \mathcal{D}(G/K ; \tau) \longrightarrow \mathbb{C}
\end{equation*}
satisfying $\gamma_{\lambda} = \gamma_{\mu}$ if and only if $\lambda = w \cdot \mu$ for some $w \in W(\mathfrak{a})$. Let $f \in C^{\infty}(G/K ; \tau)$ be a \textit{joint eigenfunction} of $\mathcal{D}(G/K ; \tau)$ \textit{i.e.} there is a morphism ${\chi_f : \mathcal{D}(G/K ; \tau) \rightarrow \mathbb{C}}$ such that $D \cdot f = \chi_f (D) f$ for all $D \in \mathcal{D}(G/K ; \tau)$. Then, there is $\lambda \in \mathfrak{a}^*_{\mathbb{C}}$ such that $\chi_f = \gamma_{\lambda}$ and we call $\lambda$ the (up to the Weyl's group) \textit{spectral parameter} of $f$. Given $\lambda \in \mathfrak{a}^*_{\mathbb{C}} / W(\mathfrak{a})$, we denote by $\mathcal{E}_{\lambda}(G/K ; \tau)$ the space of joint $\mathcal{D}(G/K ; \tau)$-eigenfunctions with spectral parameter $\lambda$.

Let $C^{\infty}(G/K ; \tau, \tau)$ denote the subspace of functions $f \in C^{\infty}(G/K ; \tau)$ such that
\begin{equation*}
	f(kg) = \tau(k)^{-1} f(g) \quad (k \in K, g \in G).
\end{equation*}
One can show that this is a commutative algebra (with convolution product) and, given $\lambda \in \mathfrak{a}_{\mathbb{C}}^*$, there is a unique $\varphi_{\lambda, \tau} \in C^{\infty}(G/K ; \tau, \tau) \cap \mathcal{E}_{\lambda}(G/K ; \tau)$ such that $\varphi_{\lambda, \tau}(1) = 1$ (see \textit{e.g.} \cite[§1.3]{GangolliVara}). We call $\varphi_{\lambda, \tau}$ the \textit{$\tau$-spherical function} of spectral parameter $\lambda$. Note that the line spanned by $\varphi_{\lambda, \tau}$ is the $\tau$-isotypic subspace of $\mathcal{E}_{\lambda}(G/K ; \tau)$ and $\varphi_{\lambda, \tau} = \varphi_{\mu, \tau}$ if and only if $\lambda = w \cdot \mu$ for some $w \in W(\mathfrak{a})$. More explicitly, we have
\begin{equation*}
	\varphi_{\lambda, \tau}(g) = \int_K \tau^{-1}(k) \exp( (\lambda - \rho_{\mathfrak{a}}) H(kg)) \tau(\kappa(kg)) dk \quad (g \in G).
\end{equation*}
We define the unitary spectrum $\mathfrak{a}^*_{\tau, \text{unit}}$ of $\mathcal{D}(G/K ; \tau)$ as
\begin{equation*}
\mathfrak{a}^*_{\tau, \text{unit}} = \lbrace \lambda \in \mathfrak{a}^*_{\mathbb{C}} : \varphi_{\lambda, \tau} \text{ is positive definite } \rbrace.
\end{equation*}
\subsection{$\tau$-spherical representation} \label{sec3.3}
In this Section, we shall technically work with admissible $(\mathfrak{g}, K)$-modules but refer to them as representations. By an \textit{admissible representation} of $G$ we mean the following data: a complex vector space $V$ with a continuous action ${G \times V \rightarrow V}$ such that ${G \rightarrow \GL(V)}$ is a group morphism, $K$ operates by unitary operators and ${\dim \Hom_K (\tau, \pi) < \infty}$ for all $\tau \in \widehat{K}$, the equivalence classes of irreducible representations of $K$. Let $\tau : K \rightarrow \mathbb{C}^{\times}$ be a continuous character of $K$, we say that $\pi$ is \textit{$\tau$-spherical} if $\Hom_K(\tau, \pi) \neq 0$. Given $\pi$ a $\tau$-spherical representation of $G$ and $\phi$ a non-zero element in $\Hom_K(\tau, \pi)$, $\mathcal{D}(G/K ; \tau)$ acts on $\phi(1)$ by scalars, yielding a morphism $\chi_{\pi} : \mathcal{D}(G/K ; \tau) \rightarrow \mathbb{C}$. The latter can be written as $\chi_{\pi} = \chi_{\lambda}$ for some unique $\lambda \in \mathfrak{a}^*_{\mathbb{C}} / W(\mathfrak{a})$ called the \textit{spectral parameter} of $\pi$.

Let $(\pi, V)$ be an admissible representation of $G$ and assume that the center ${\mathfrak{Z} = \mathfrak{Z}(\mathcal{U}(\mathfrak{g}_{\mathbb{C}}))}$ of the universal enveloping algebra of $\mathfrak{g}_{\mathbb{C}}$ acts on $V$ by scalars (\textit{e.g.} if $\pi$ is irreducible), yielding a character $\chi_{\pi} : \mathfrak{Z} \rightarrow \mathbb{C}$ called the \textit{infinitesimal character} of $\pi$. Let ${\gamma : \mathfrak{Z} \overset{\simeq}{\longrightarrow} \Symm(\mathfrak{h}_{\mathbb{C}})^{W(\mathfrak{h}_{\mathbb{C}})}}$ be the Harish-Chandra isomorphism \cite[Theorem 3.2.3]{Wallach} and define, for $\omega \in \mathfrak{h}_{\mathbb{C}}^*$, the morphism
\begin{equation*}
	\chi_{\omega} : \mathfrak{Z} \overset{\gamma}{\longrightarrow} \Symm(\mathfrak{h}_{\mathbb{C}}^*)^{W(\mathfrak{h}_{\mathbb{C}})} \overset{\omega}{\longrightarrow} \mathbb{C}
\end{equation*}
where $\omega : \Symm(\mathfrak{h}_{\mathbb{C}}^*) \rightarrow \mathbb{C}$ is defined by extending $\omega : \mathfrak{h}_{\mathbb{C}} \rightarrow \mathbb{C}$ using the universal property of $\mathcal{U}$; note that $\mathcal{U}(\mathfrak{h}_{\mathbb{C}}^*) = \Symm(\mathfrak{h}_{\mathbb{C}}^*)$ since $\mathfrak{h}_{\mathbb{C}}$ is abelian. In particular, there is a unique $\omega \in \mathfrak{h}_{\mathbb{C}}^* / W(\mathfrak{h}_{\mathbb{C}})$ such that $\chi_{\pi} = \chi_{\omega}$ and we call $\omega$ the \textit{Harish-Chandra parameter} of $\pi$.

Recall the Langlands decomposition $MAN$ of a minimal parabolic subgroup $P$ of $G$ containing $A$, the analytic subgroup of $G$ whose Lie algebra is $\mathfrak{a}$. Let $\sigma$ be an irreducible unitary representation of $M$, let $\lambda \in \mathfrak{a}^*_{\mathbb{C}}$ and let $\exp(\lambda) : A \rightarrow \mathbb{C}$ be the continuous character given by $a \mapsto \exp (\lambda(H(a))$. We denote by $\Pi_{\lambda, \sigma}$ the induced representation of $\sigma^{-1} \otimes \exp(- \lambda) \otimes \mathbf{1}$ of $G$, where $\mathbf{1}$ denote the trivial character of $N$. This representation is admissible \cite[Proposition 8.4]{Knapp} and a dense subspace of $\Pi_{\lambda, \sigma}$ is the space of smooth functions $f : G \rightarrow \mathbb{C}$ such that
\begin{equation*}
	f(mang) = \exp \big( (\nu - \rho_{\mathfrak{a}})(H(a) \big) \sigma (m) f(g) \quad (m \in M, a \in A, N \in N, g \in G).
\end{equation*}
We now specialize the discussion to the case $\sigma = \tau_{\vert M}$ and set $\Pi_{\lambda, \tau} = \Pi_{\lambda, \tau_{\vert M}}$. This representation is $\tau$-spherical; indeed, if $V_{\lambda, \tau} \subset \Pi_{\lambda, \tau}$ is the line spanned by the function
\begin{equation*}
	f_{\lambda, \tau} (g) =  \exp \big( (\lambda - \rho_{a}) H(g) \big) \tau(\kappa(g)) \quad (g \in G)
\end{equation*}
then, by Frobenius reciprocity, we have
\begin{equation*}
	\lbrace 0 \rbrace \neq \Hom_G (V_{\lambda, \tau}, \Pi_{\lambda, \tau}) = \Hom_K (\tau,  \Pi_{\lambda, \tau}).
\end{equation*}
Note that $f_{\lambda, \tau}$ is the unique function in $\Pi_{\lambda, \tau}$ extending the function $\tau$ on $K$. Recall the space of joint $\mathcal{D}(G/K ; \tau^{-1})$-eigenfunctions with spectral parameter $\lambda$ denoted by $\mathcal{E}_{\lambda}(G/K ; \tau^{-1})$ and defined in Section \ref{sec3.2}. We endow $\mathcal{E}_{\lambda}(G/K ; \tau^{-1})$ with its natural Fréchet space topology and this defines a smooth representation of $G$. Let ${\Pi_{\lambda, \tau}^{\infty} \subset \Pi_{\lambda, \tau}}$ denote the smooth vectors in $\Pi_{\lambda, \tau}$ and note that $f_{\lambda, \tau} \in \Pi_{\lambda, \tau}^{\infty}$. A $\tau^{-1}$-averaging of $f_{\lambda, \tau}$ over $K$ defines a matrix coefficient \textit{i.e.} we have
\begin{align*}
	\langle f_{\lambda, \tau^{-1}}, \Pi_{\lambda, \tau} (g) f_{\lambda, \tau} \rangle
	&=  \int_{K} \tau^{-1}(k) \exp \big( (\lambda - \rho_{\mathfrak{a}}) (H(kg)) +  (\lambda - \rho_{\mathfrak{a}}) (H(k)) \big) \tau( \kappa (kg)) dk \\
	&=  \int_{K} \tau^{-1}(k) \exp \big( (\lambda - \rho_{\mathfrak{a}}) (H(kg)) \big) \tau( \kappa (kg)) dk = \varphi_{\lambda, \tau^{-1}}(g) \\
	&= \int_{K} \tau^{-1}(k) f_{\lambda, \tau}(kg) dk
\end{align*}
and this recipe yields an intertwining map $\Pi_{\lambda, \tau}^{\infty} \rightarrow \mathcal{E}_{\lambda}(G/K ; \tau^{-1})$. Let $V(\lambda, \tau^{-1})$ denote the smallest closed invariant subspace of $\mathcal{E}_{\lambda}(G/K ; \tau^{-1})$ containing $\varphi_{\lambda, \tau^{-1}}$ and endow it with the relative topology; then $V(\lambda, \tau^{-1})$ is irreducible.

We now discuss the Harish-Chandra parameter of $\Pi_{\lambda, \sigma}$. If $\sigma$ is an irreducible unitary representation of $M$ and if $\omega_{\sigma} \in \mathfrak{h}^*_{\mathfrak{m}, \mathbb{C}}$ is the Harish-Chandra parameter of $\sigma^{-1}$, then the Harish-Chandra parameter of $\Pi_{\lambda, \sigma}$ is $\omega_{\sigma} + \lambda \in \mathfrak{h}^*_{\mathfrak{m}, \mathbb{C}} \oplus \mathfrak{a}^*_{\mathbb{C}} = \mathfrak{h}_{\mathbb{C}}^*$ \cite[Proposition 8.22]{Knapp}. On the other hand, the infinitesimal character of an irreducible representation $\mathfrak{r}$ of $K$ is $\lambda_{\mathfrak{r}} + \rho_{\mathfrak{m}}$, where $\lambda_{\mathfrak{r}}$ is the highest weight of $\mathfrak{r}$. Let $\tau : K \rightarrow \mathbb{C}^{\times}$ is a continuous character of $K$ and let $\lambda \in \mathfrak{a}_{\mathbb{C}}^* / W(\mathfrak{a})$. Recall $\rho_{\mathfrak{m}} \in \mathfrak{h}_{\mathfrak{m}_{\mathbb{C}}}^*$ and $\mathfrak{h}_{\mathbb{C}}^* =  \mathfrak{h}_{\mathfrak{m}_{\mathbb{C}}}^* \oplus \mathfrak{a}^*_{\mathbb{C}}$. The following Lemma ensures, in particular, that the Harish-Chandra parameter of $\Pi_{\lambda, \tau}$ is $\rho_{\mathfrak{m}} + \lambda$.
\begin{lemma} \label{lem3.3.1}
There is a unique irreducible $\tau$-spherical representation $\pi_{\lambda, \tau}$ of $G$ with spectral parameter $\lambda$. Moreover, the Harish-Chandra parameter of $\pi_{\lambda, \tau}$ is $\rho_{\mathfrak{m}} + \lambda$.
\end{lemma}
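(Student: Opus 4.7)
The strategy is to realize $\pi_{\lambda, \tau}$ as an irreducible subquotient of the principal series $\Pi_{\lambda, \tau}$ introduced above, and to identify any other candidate with it via matrix coefficients. For existence, Frobenius reciprocity yields $\Hom_K(\tau, \Pi_{\lambda, \tau}) \simeq \Hom_M(\tau|_M, \tau|_M) \neq 0$, so $\Pi_{\lambda, \tau}$ is $\tau$-spherical; the action of $\mathcal{D}(G/K ; \tau)$ on $f_{\lambda, \tau}$ shows its spectral parameter is $\lambda$. Since $\Pi_{\lambda, \tau}$ has finite length, at least one of its irreducible subquotients carries the $K$-type $\tau$ and is therefore $\tau$-spherical with spectral parameter $\lambda$. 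Equivalently, one takes the closed invariant subspace generated by $\varphi_{\lambda, \tau^{-1}}$ in $\mathcal{E}_{\lambda}(G/K ; \tau^{-1})$, which was stated to be irreducible just before the lemma.

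For uniqueness, let $(\pi, V)$ be any irreducible $\tau$-spherical representation with spectral parameter $\lambda$, and fix non-zero $\phi \in \Hom_K(\tau, \pi)$ and $\phi^{\vee} \in \Hom_K(\tau^{-1}, \pi^{\vee})$, each unique up to scalar by admissibility. The matrix coefficient
\begin{equation*}
c(g) = \langle \phi^{\vee}(1), \pi(g) \phi(1) \rangle
\end{equation*}
is then bi-$K$-equivariant of the appropriate type and a joint eigenfunction of $\mathcal{D}(G/K ; \tau)$ with eigenvalue $\gamma_{\lambda}$, with $c(1) \neq 0$; the uniqueness of the normalized $\tau$-spherical function of spectral parameter $\lambda$ recalled in Section \ref{sec3.2} forces $c$ to be a non-zero scalar multiple of $\varphi_{\lambda, \tau}$. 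The intertwiner $v \mapsto \langle \phi^{\vee}(1), \pi(\cdot) v \rangle$ is therefore a non-zero $G$-equivariant map from $V$ into $\mathcal{E}_{\lambda}(G/K ; \tau)$, injective by irreducibility, whose image is a closed invariant subspace containing $\varphi_{\lambda, \tau}$, hence equals $V(\lambda, \tau)$ by the minimality characterization of the latter. This yields $\pi \simeq V(\lambda, \tau)$, proving uniqueness up to isomorphism.

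For the Harish-Chandra parameter, the principal-series formula recalled just before the lemma gives parameter $\omega_{\tau|_M} + \lambda$ for $\Pi_{\lambda, \tau}$, where $\omega_{\tau|_M}$ is the parameter of $(\tau|_M)^{-1}$; combining this with the rule $\lambda_{\mathfrak{r}} + \rho_{\mathfrak{m}}$ for the infinitesimal character of an irreducible representation of highest weight $\lambda_{\mathfrak{r}}$, the claim reduces to $d\tau|_{\mathfrak{h}_{\mathfrak{m}}} = 0$. This is a consequence of the inclusion $\mathfrak{h}_{\mathfrak{m}} \subset [\mathfrak{k}, \mathfrak{k}]$, a routine Lie-theoretic check for $\Ortho(n,m)$ and $\U(n,m)$ from the explicit description of the centralizer of $\mathfrak{a}$ in $\mathfrak{k}$. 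Since the infinitesimal character descends to every subquotient, $\pi_{\lambda, \tau}$ inherits Harish-Chandra parameter $\rho_{\mathfrak{m}} + \lambda$. The main (and mild) obstacle is precisely this last Lie-theoretic verification; everything else is a standard matrix-coefficient construction of the irreducible $\tau$-spherical quotient, made especially clean by the uniqueness of the normalized $\tau$-spherical function.
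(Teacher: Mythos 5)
Your existence step and the matrix-coefficient machinery are essentially the paper's, but your uniqueness argument is genuinely different. The paper takes two irreducible $\tau$-spherical representations $\pi_1,\pi_2$ with the same spectral parameter, picks a vector $v\in\pi_1(\tau)\oplus\pi_2(\tau)$ lying in neither summand, and shows that the $(\mathfrak g,K)$-module generated by $v$ has one-dimensional $\tau$-isotypic part (because $\mathcal D(G/K;\tau)$ acts by scalars), hence is a graph of an isomorphism $\pi_1\simeq\pi_2$. You instead embed an arbitrary irreducible $\tau$-spherical $\pi$ into $\mathcal E_\lambda(G/K;\tau^{-1})$ via the coefficient $g\mapsto\langle\phi^\vee(1),\pi(g)v\rangle$ and invoke the minimality and irreducibility of $V(\lambda,\tau^{-1})$. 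Both are fine, but note that your route rests on the irreducibility of $V(\lambda,\tau^{-1})$, which the paper asserts in Section~\ref{sec3.3} without proof, whereas the paper's own uniqueness argument is self-contained and does not need that assertion. So yours is cleaner \emph{given} that fact, but less elementary.

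The real issue is in the Harish-Chandra parameter step. You correctly isolate the key point: the claim amounts to $d\tau|_{\mathfrak h_{\mathfrak m}}=0$, and this is indeed used tacitly in the paper's identity $f_{\lambda,\tau}(g\exp H)=\exp\big((\lambda-\rho_{\mathfrak a})H_{\mathfrak a}\big)f_{\lambda,\tau}(g)$, which suppresses a factor $\tau(\exp H_{\mathfrak m})$. But your justification — that $\mathfrak h_{\mathfrak m}\subset[\mathfrak k,\mathfrak k]$ for both $\Ortho(n,m)$ and $\U(n,m)$ — is \emph{false} for $\U(n,m)$. Take the standard $\mathfrak a$ given by real diagonal entries in the off-diagonal block; then $\mathfrak m\subset\mathfrak u(n)\oplus\mathfrak u(m)$ contains a diagonal torus of the form $\big(\operatorname{diag}(0,\ldots,0,it_1,\ldots,it_m),\operatorname{diag}(it_1,\ldots,it_m)\big)$, and a Cartan subalgebra $\mathfrak h_{\mathfrak m}$ therefore contains elements with nonzero trace in \emph{both} $\mathfrak u(n)$- and $\mathfrak u(m)$-factors, hence outside $[\mathfrak k,\mathfrak k]=\mathfrak{su}(n)\oplus\mathfrak{su}(m)$. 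In particular, for $\tau=\det^a\otimes\det^b$ with $a+b\neq 0$ the restriction $d\tau|_{\mathfrak h_{\mathfrak m}}$ does \emph{not} vanish, so the lemma, read with arbitrary $\tau$, would produce the Harish-Chandra parameter $\rho_{\mathfrak m}+\lambda-d\tau|_{\mathfrak h_{\mathfrak m}}$ rather than $\rho_{\mathfrak m}+\lambda$.

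What saves the matter — and what you should say instead — is that the paper only ever invokes the lemma in two regimes: (i) $\tau=\mathbf 1$ on $\Ortho(n,m)$ or $\U(n,m)$, where $d\tau=0$ outright; and (ii) $\tau$ nontrivial on the split groups $\Sp_{2m}(\mathbb R)$ or $\U(m,m)$, where $\mathfrak a$ is already a Cartan of $\mathfrak g$, so $\mathfrak h_{\mathfrak m}=0$ and there is nothing to check (this is Corollary~\ref{coro3.3.2}). For $\Ortho(n,m)$ the inclusion $\mathfrak h_{\mathfrak m}\subset[\mathfrak k,\mathfrak k]$ does hold (since $\mathfrak m\simeq\mathfrak{so}(n-m)$ sits inside the semisimple part), but it is not needed there because $\tau$ is trivial. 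So replace the Lie-theoretic ``check'' by the observation that in every case actually used one has either $d\tau=0$ or $\mathfrak h_{\mathfrak m}=0$; without such a restriction the stated Harish-Chandra parameter is simply wrong.
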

When $G$ is split, then $\mathfrak{a}$ is a Cartan subalgebra of $\mathfrak{g}$ so that $\rho_{\mathfrak{m}} = 0$.
\begin{corollary} \label{coro3.3.2}
Assume $G$ is split. There is a unique irreducible $\tau$-spherical representation $\pi_{\lambda, \tau}$ of $G$ with spectral parameter $\lambda$. Moreover, the Harish-Chandra parameter of $\pi_{\lambda}$ is $\lambda$.
\end{corollary}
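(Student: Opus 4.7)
The plan is to use the principal series $\Pi_{\lambda, \tau}$ as a universal source and to isolate its unique irreducible $\tau$-spherical subquotient, then match an arbitrary competitor against it via the subquotient theorem.

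\emph{Existence.} First I would observe that $\Pi_{\lambda, \tau}$ is admissible and $\tau$-spherical with one-dimensional $\tau$-isotypic spanned by $f_{\lambda, \tau}$: Frobenius reciprocity gives
\begin{equation*}
\dim \Hom_K (\tau, \Pi_{\lambda, \tau}) = \dim \Hom_M (\tau|_M, \tau|_M) = 1.
\end{equation*}
The action of $\mathcal{D}(G/K;\tau)$ on this line is by $\gamma_\lambda$, because the $\tau^{-1}$-average of $f_{\lambda, \tau}$ over $K$ is the spherical function $\varphi_{\lambda, \tau^{-1}}$ identified in Section \ref{sec3.2}. Letting $V \subset \Pi_{\lambda, \tau}$ be the $(\mathfrak{g}, K)$-submodule generated by $f_{\lambda, \tau}$ and $V_0 \subset V$ the sum of all submodules meeting $\mathbb{C} f_{\lambda, \tau}$ trivially, the quotient $\pi_{\lambda, \tau} := V/V_0$ is irreducible, $\tau$-spherical by construction, and inherits the spectral parameter $\lambda$ from the surviving $\tau$-line.

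\emph{Uniqueness.} Let $\pi$ be any irreducible $\tau$-spherical admissible representation with spectral parameter $\lambda$. By the Harish-Chandra/Casselman subquotient theorem, $\pi$ is a subquotient of some principal series $\Pi_{\mu, \sigma}$ with $\sigma$ an irreducible unitary representation of $M$. Frobenius reciprocity forces $\sigma$ to contain $\tau|_M$; since $\tau|_M$ is a character (hence irreducible), $\sigma = \tau|_M$. The spectral parameter of $\Pi_{\mu, \tau}$ is $\mu$ modulo $W(\mathfrak{a})$, forcing $\mu = \lambda$. Thus $\pi$ is a subquotient of $\Pi_{\lambda, \tau}$, and the one-dimensionality of its $\tau$-isotypic leaves only one irreducible $\tau$-spherical subquotient, so $\pi \simeq \pi_{\lambda, \tau}$.

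\emph{Harish-Chandra parameter.} I would then apply the formula recalled just before the lemma: the HC-parameter of $\Pi_{\lambda, \sigma}$ is $\omega_\sigma + \lambda$, with $\omega_\sigma$ the HC-parameter of $\sigma^{-1}$. For $\sigma = \tau|_M$ the highest weight $\lambda_{\tau^{-1}|_M}$ on $\mathfrak{h}_\mathfrak{m}$ is the restriction of $-d\tau$, which vanishes because $d\tau$ is $\mathbb{C}$-valued and therefore annihilates $[\mathfrak{k}, \mathfrak{k}] \supset \mathfrak{h}_\mathfrak{m}$ for the groups at hand. Hence $\omega_{\tau|_M} = \rho_\mathfrak{m}$, the HC-parameter of $\Pi_{\lambda, \tau}$ equals $\rho_\mathfrak{m} + \lambda$, and the same holds for $\pi_{\lambda, \tau}$ since the HC-parameter is determined by the central character of $\mathfrak{Z}$, which is preserved under taking subquotients.

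\emph{Main obstacle.} The most delicate step is uniqueness, which hinges on the subquotient theorem to realise every irreducible $\tau$-spherical module inside $\Pi_{\lambda, \tau}$ (rather than a twist), combined with the rigidity of the inducing datum coming from $\tau|_M$ being a character. A subtler ancillary point is the verification that $\mathfrak{h}_\mathfrak{m} \subset [\mathfrak{k}, \mathfrak{k}]$, which guarantees the clean formula $\rho_\mathfrak{m} + \lambda$; this should be checked case by case for the orthogonal and unitary groups of interest.
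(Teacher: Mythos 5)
Your argument never uses the hypothesis that $G$ is split, and as a result it does not reach the stated conclusion. The corollary asserts that the Harish--Chandra parameter of $\pi_{\lambda,\tau}$ is $\lambda$, but your computation terminates at $\rho_{\mathfrak m} + \lambda$: you write ``the HC-parameter of $\Pi_{\lambda,\tau}$ equals $\rho_{\mathfrak m} + \lambda$, and the same holds for $\pi_{\lambda,\tau}$,'' and then stop. What is missing is the observation that the split hypothesis forces $\rho_{\mathfrak m} = 0$: when $G$ is split, $\mathfrak a$ is already a Cartan subalgebra of $\mathfrak g$, so $\mathfrak h_{\mathfrak m} = 0$ and there are no roots of $\mathfrak m$ to sum over. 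This is exactly the paper's proof of the corollary — a one-line deduction from Lemma~\ref{lem3.3.1}, which is the general ($\rho_{\mathfrak m} + \lambda$) statement you re-derived from scratch. Your ``subtler ancillary point'' about verifying $\mathfrak h_{\mathfrak m} \subset [\mathfrak k, \mathfrak k]$ case by case is thus also a detour in the split setting, where the inclusion is vacuous.

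Two further remarks on the machinery you rebuilt. Your uniqueness argument via the Harish--Chandra/Casselman subquotient theorem is a legitimate alternative to the route taken in the proof of Lemma~\ref{lem3.3.1} (which follows \cite[Proposition 7.2]{BM2} and argues with a cyclic $(\mathfrak g, K)$-submodule inside $\pi_1 \oplus \pi_2$); both rest on the one-dimensionality of the $\tau$-isotype in the principal series, but yours additionally needs the $W$-invariance of the inducing character $\tau|_M$ (true since $W(\mathfrak a)$ is represented by elements of $K$ and $\tau$ is a character of $K$) to reduce all Weyl-conjugate parameters to a single $\Pi_{\lambda,\tau}$ — this should be said explicitly. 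Your Harish--Chandra parameter computation via Knapp's formula for induced representations and the restriction of $d\tau$ to $\mathfrak h_{\mathfrak m}$ is also a valid alternative to the paper's approach (which acts by $\mathfrak Z$ directly on $f_{\lambda,\tau}$ using its right-$MN$-invariance), but neither is needed to prove the corollary once Lemma~\ref{lem3.3.1} is in hand.
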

\begin{proof}[Proof of Lemma \ref{lem3.3.1}]
For the existence, we construct $\pi_{\lambda, \tau}$ as the irreducible sub-quotient of $\Pi_{\lambda, \tau_{\vert M}}$ containing $\tau$ as a $K$-type \textit{i.e.} having non-zero $\Hom_K (\tau, \pi)$. For unicity, we argue as in \cite[Proposition 7.2]{BM2} : let $\pi_1, \pi_2$ be two irreducible $\tau$-spherical representations with the same spectral parameter and let $v \in \pi_1(\tau) \oplus \pi_2(\tau)$ be a non-zero vector that does not lie in either $\pi_1(\tau)$ or $\pi_2(\tau)$; where $\pi_i (\tau) = \lbrace v \in \pi_i : \pi_i(k)v = \tau(k) v \text{, } \forall k \in K \rbrace$ is the $\tau$-isotypic subspace of $\pi_i$ for $i = 1, 2$. Let $V$ be the $(\mathfrak{g}, K)$-module generated by $v$ and let $w \in V(\tau)$. Then, there are $k_i \in K$ and $X_i \in \mathcal{U}(\mathfrak{g})$ such that
\begin{equation*}
	w = \sum_i X_i k_i \cdot v = \left( \sum_i \tau(k_i) X_i  \right) v = Y \cdot v
\end{equation*}
for some $Y \in \mathcal{D}(G/K ; \tau)$. As both $\pi_1(\tau)$ and $\pi_2(\tau)$ share same spectral parameter, $\mathcal{D}(G/K ; \tau)$ acts by scalars on $\pi_1(\tau) \oplus \pi_2(\tau)$, ensuring $w \in \mathbb{C} \cdot v$ and thus $V(\tau) = \mathbb{C} \cdot v$. This way, we obtained a non-zero sub-module $V$ of $\pi_1 \oplus \pi_2$ which is equal to neither $\pi_1, \pi_2$ nor $\pi_1 \oplus \pi_2$; $V$ therefore induces an isomorphism $\pi_1 \simeq \pi_2$.\\
We now prove the statement about the Harish-Chandra parameter. We argue as in the proof of \cite[Proposition 2.1]{HelgasonSome}. Recall the Iwasawa projections $H : G \rightarrow \mathfrak{a}$ and $\kappa : G \rightarrow K$, recall the function
\begin{equation*}
	f_{\lambda, \tau}(g) = \exp \big( (\lambda - \rho_{a}) H(g) \big) \tau (\kappa(g)) \quad (g \in G)
\end{equation*}
and the line $V_{\lambda, \tau} = \mathbb{C} \cdot f_{\lambda, \tau}$ lying in $\Pi_{\lambda, \tau}$. By Frobenius reciprocity, we have
\begin{equation*}
	\lbrace 0 \rbrace \neq  \Hom_K (\tau,  \pi_{\lambda, \tau})  = \Hom_G (V_{\lambda, \tau}, \pi_{\lambda, \tau}).
\end{equation*}
In particular, the $\tau^{-1}$-averaging of $f_{\lambda, \tau}$ over $K$
\begin{equation*}
	\varphi_{\lambda, \tau^{-1}}(g) = \int_{K} \tau^{-1}(k) f_{\lambda, \tau}(gk) dk
\end{equation*}
is a matrix coefficient of $\pi_{\lambda, \tau}$. Let ${\mathfrak{z} \in \mathfrak{Z} = \mathfrak{Z} (\mathcal{U}(\mathfrak{g}_{\mathbb{C}}))}$ and recall the Harish-Chandra isomorphism ${\gamma : \mathfrak{Z} \overset{\simeq}{\longrightarrow} \Symm(\mathfrak{h}_{\mathbb{C}})^{W(\mathfrak{h}_{\mathbb{C}})}}$ is given by $\gamma = \mathfrak{r}_{\rho} \circ \gamma'$ where ${\gamma' : \mathfrak{Z} \overset{\simeq}{\longrightarrow} \Symm(\mathfrak{h}_{\mathbb{C}})^{W(\mathfrak{h}_{\mathbb{C}})}}$ is induced by (see \cite[§3.2]{Wallach})
\begin{equation*}
	\mathfrak{Z} \hookrightarrow \mathcal{U}(\mathfrak{g}_{\mathbb{C}}) =  \mathcal{U}(\mathfrak{h}_{\mathbb{C}}) \oplus \big( \mathfrak{n}^- \mathcal{U}(\mathfrak{g}_{\mathbb{C}}) + \mathcal{U}(\mathfrak{g}_{\mathbb{C}})  \mathfrak{n} \big)
\end{equation*}
and $\mathfrak{r}_{\rho}$ is defined on $\mathfrak{h}_{\mathbb{C}}$ by $H \mapsto H - \rho(H) H$. As $\varphi_{\lambda, \tau}$ is a matrix coefficient of $\pi_{\lambda, \tau}$, it suffices to show that
\begin{equation*}
	\mathfrak{z} \cdot \varphi_{\lambda, \tau} = \chi_{\lambda + \rho_{\mathfrak{m}}}(\mathfrak{z}) \varphi_{\lambda, \tau}.
\end{equation*}
where $\chi_{\lambda + \rho_{\mathfrak{m}}} = (\lambda + \rho_{\mathfrak{m}}) \circ \gamma$. First we note that, for any $H = H_{\mathfrak{m}} + H_{\mathfrak{a}} \in \mathfrak{h}_{\mathbb{C}} =  \mathfrak{h}_{\mathfrak{m}_{\mathbb{C}}} \oplus \mathfrak{a}_{\mathbb{C}}$, we have
\begin{align*}
	f_{\lambda, \tau}(g \exp(H)) &= \exp \big( (\lambda - \rho_{\mathfrak{a}})H_{\mathfrak{a}} \big) f_{\lambda, \tau}(g) \\
	&= \exp \big( (\lambda + \rho_{\mathfrak{m}} - \rho) H \big) f_{\lambda, \tau}(g) \\
	&= H \cdot f_{\lambda, \tau}(g)
\end{align*}
for any $g \in G$. Then we can show, as in the proof of \cite[Proposition 2.1]{HelgasonSome}, that
\begin{equation*}
	\mathfrak{z} \cdot f_{\lambda, \tau} = \gamma' (\mathfrak{z}) f_{\lambda, \tau}
\end{equation*}
because $f_{\lambda, \tau}$ is right-$MN$-invariant. In other words, we have
\begin{equation*}
	\mathfrak{z} \cdot f = (\lambda + \rho_{\mathfrak{m}} - \rho) \circ \gamma'(\mathfrak{z}) f_{\lambda, \tau} = (\lambda + \rho_{\mathfrak{m}}) \circ \gamma (\mathfrak{z}) f_{\lambda, \tau} \quad (\mathfrak{z} \in \mathfrak{Z})
\end{equation*}
by definition of $\gamma$. This proves that the center $\mathfrak{Z}$ acts on $f_{\lambda, \tau}$ (and hence on $\varphi_{\lambda, \tau}$) by $\lambda + \rho_{\mathfrak{m}}$, and concludes the proof.
\end{proof}
\begin{rmrk} \label{rmrkOrtho}
In the proof of the main result, we shall use the analog of Lemma \ref{lem3.3.1} for the non-connected real Lie group $\Ortho(n,m)$ with $n > m \geq 1$; see \cite[Proposition 7.2]{BM2}. Note that the assumption $n > m$ ensures the extended Weyl group of $\mathfrak{a}$ (denoted by $\widetilde{W}_{\mathfrak{a}}$ in \cite[§7]{BM2}) equals the Weyl group (denoted by ${W}_{\mathfrak{a}}$ in \cite[§7]{BM2}).
\end{rmrk}
\subsection{Additional notations}
In view of the $\tau$-spherical inversion formula proved by \cite{Shimeno51}, we now restrict to the case $G$ has simple Lie algebra and reduced root system. We still work with a continuous character $\tau$ of $K$ and make the following assumption
\begin{equation*}
\text{whenever $\tau$ is non-trivial, $G$ is of Hermitian type.}
\end{equation*}
Recall the center of $\mathfrak{k}$ is either trivial or one dimensional: in the latter case, $G$ is said to be of \textit{Hermitian type}. We keep this setting until Section \ref{sec3.7}.

Let $\mathfrak{m}$ be the centralized of $\mathfrak{a}$ in $\mathfrak{k}$ and
\begin{equation*}
\mathfrak{g} = \mathfrak{m} \oplus \mathfrak{a} \oplus \sum_{\alpha \in \Sigma(\mathfrak{a})} \mathfrak{g}_{\alpha}
\end{equation*}
be the root space decomposition of $\mathfrak{g}$. We choose a subset $\Sigma^+ (\mathfrak{a}) \subset \Sigma (\mathfrak{a})$ of positive roots and let $\Sigma_s^+ (\mathfrak{a}) \subset \Sigma^+ (\mathfrak{a})$ be the set of simple roots. Let $M$, $A$ and $N$ be the analytic subgroups of $G$ corresponding to $\mathfrak{m}$, $\mathfrak{a}$ and
\begin{equation*}
\mathfrak{n} \overset{\defin}{=} \sum_{\alpha \in \Sigma^+(\alpha)} \mathfrak{g}_{\alpha}
\end{equation*}
respectively. Let $G = NAK$ be the associated Iwasawa decomposition and let $\kappa : G \rightarrow K$, $H : G \rightarrow \mathfrak{a}$ be the associated Iwasawa projections. Note that $P = NAM$ is a minimal parabolic subgroup of $G$. Let $\overline{N}$ be the analytic subgroup of $G$ corresponding to
\begin{equation*}
\overline{\mathfrak{n}} \overset{\defin}{=} \sum_{- \alpha \in \Sigma^+ (\mathfrak{a})} \mathfrak{g}_{\alpha}.
\end{equation*}
Let $\sigma \subset \Sigma_s^+(\mathfrak{a})$ be a subset of the simple roots and define
\begin{equation*}
	\langle \sigma \rangle = \Sigma^+(\mathfrak{a}) \bigcap \left( \sum_{\alpha \in \sigma} \mathbb{N} \alpha \right)
\end{equation*}
the system of positive roots generated by $\sigma$. Let $W_{\sigma}(\mathfrak{a}) \subset W(\mathfrak{a})$ be the subgroup generated by reflections $s_{\alpha}$ with $\alpha \in \sigma$ and define the associated standard (\textit{i.e} contained in $P$) parabolic subgroup
\begin{equation*}
	P_{\sigma} = P W_{\sigma}(\mathfrak{a}) P
\end{equation*}
of $G$. Let $P_{\sigma} = N_{\sigma} A_{\sigma} M_{\sigma}$ be its Langlands decomposition and let $\overline{N_{\sigma}}$ be the analytic subgroup of $G$ corresponding to
\begin{equation*}
	\overline{\mathfrak{n}_{\sigma}} = \sum_{-\alpha \in \Sigma^+ (\mathfrak{a}) - \langle \sigma \rangle} \mathfrak{g}_{\alpha}.
\end{equation*}
The Lie algebra of $A_{\sigma}$ is given by
\begin{equation*}
	\mathfrak{a}_{\sigma} = \langle a \in \mathfrak{a} : \alpha(a) = 0 \text{, } \forall \alpha \in \sigma \rangle.
\end{equation*}
We let $a(\sigma)$ denote the orthogonal complement of $\mathfrak{a}_{\sigma}$ in $\mathfrak{a}$ (with respect to $\langle \cdot , \cdot \rangle$), $N(\sigma)$ denote the analytic subgroup of $G$ associated to
\begin{equation*}
	\mathfrak{n}(\sigma) = \sum_{\alpha \in \langle \sigma \rangle} \mathfrak{g}_{\alpha}
\end{equation*}
and $\overline{N(\sigma)}$ the analytic subgroup of $G$ associated to
\begin{equation*}
	\overline{\mathfrak{n}(\sigma)} = \sum_{- \alpha \in \langle \sigma \rangle} \mathfrak{g}_{\alpha}.
\end{equation*}
Note that, if $\sigma$ is the empty set, then $P_{\sigma} = P$, $A_{\sigma} = A$, $M_{\sigma} = M$, $N_{\sigma} = N$ and $\overline{N_{\sigma}} = \overline{N}$. 
\subsection{$\tau$-spherical inversion formula}
Let $\sigma \subset \Sigma^+_s (\mathfrak{a})$ be a subset of the simple roots and $\langle \sigma \rangle$ be the system of positive roots generated by $\sigma$. The integral
\begin{equation*}
	\mathbf{c}^{\sigma}(\lambda ; \tau) = \int_{\overline{N_{\sigma}}} a(\overline{n})^{\lambda + \rho_{\mathfrak{a}}} \tau^{-1}(\kappa (\overline{n})) d\overline{n}
\end{equation*}
is absolutely convergent on $\lambda \in \mathfrak{a}_{\mathbb{C}}^*$ with $\re (\langle \lambda, \alpha \rangle) > 0$ and $\alpha \in \Sigma^+(\mathfrak{a}) - \langle \sigma \rangle$. Similarily, the integral
\begin{equation*}
	\mathbf{c}_{\sigma}(\lambda ; \tau) = \int_{\overline{N({\sigma})}} a(\overline{n})^{\lambda + \rho_{\mathfrak{a}}} \tau^{-1}(\kappa (\overline{n})) d\overline{n}
\end{equation*}
is absolutely convergent on $\lambda \in \mathfrak{a}_{\mathbb{C}}^*$ with $\re (\langle \lambda, \alpha \rangle) > 0$ and $\alpha \in \langle \sigma \rangle$. We normalize these integral by choosing the unique Haar measure $d\overline{n}$ on $\overline{N}$ such that
\begin{equation*}
\mathbf{c}^{\sigma}(\lambda ; \mathbf{1}) = 1 \text{ and } \mathbf{c}_{\sigma}(\lambda ; \mathbf{1}) = 1.
\end{equation*}
When $\tau = \mathbf{1}$ and $\sigma$ is the empty set, we simply write $\mathbf{c}(\lambda)$ for $\mathbf{c}^{\emptyset}(\lambda ; \mathbf{1})$. In this case, there is a unique $\sigma$-finite measure $\mu_{\text{Pl}}$ on $\mathfrak{a}^*_{\mathbb{C}}$ for which the Harish-Chandra transform
\begin{equation*}
\widehat{k}(\lambda) = \int_{G} k(g) \varphi_{- \lambda}(g) dg \quad (k \in C_c^{\infty}(S, \mathbf{1}, \mathbf{1}))
\end{equation*}
extends to an isometry
\begin{equation*}
	L^2(S, \mathbf{1}, \mathbf{1}) \longrightarrow L^2 ( \mathfrak{a}^*_{\mathbb{C}}, \vert W(\mathfrak{a}) \vert^{-1} \mu_{\text{Pl}} ).
\end{equation*}
In particular, we have the inversion formula
\begin{equation*}
	k(g) = \int_{\mathfrak{a}_{\mathbb{C}}^*} \widehat{k} (\lambda) \varphi_{\lambda} (g) \dfrac{d\mu_{\text{Pl}} (\lambda)}{\vert W(\mathfrak{a}) \vert}
\end{equation*}
for $k \in C_{c}^{\infty}(S, \mathbf{1}, \mathbf{1})$. This measure $\mu_{\text{Pl}}$, called the \textit{spherical Plancherel measure}, was determined by Gangolli \cite{Gangolli} and Helgason \cite{Helgason} and is given explicitely by (see \textit{e.g.} \cite[Theorem 3.5]{Gangolli})
\begin{equation*}
	d\mu_{\text{Pl}}(\lambda) = \dfrac{d\mu(\lambda)}{\vert \mathbf{c} (\lambda) \vert^2}
\end{equation*}
where $d\mu$ is the Lebesgue measure on $i \mathfrak{a}^*$.

For non-trivial $\tau$, the same result holds: there is a unique $\sigma$-finite measure $\mu_{\text{Pl}, \tau}$ on $\mathfrak{a}^*_{\mathbb{C}}$ for which the $\tau$-Harish-Chandra transform
\begin{equation*}
\widehat{k}(\lambda) = \int_{G} k(g) \varphi_{\tau^{-1}, - \lambda}(g) dg \quad (k \in C_c^{\infty}(S, \tau, \tau))
\end{equation*}
extends to an isometry
\begin{equation*}
	L^2(S, \tau, \tau) \longrightarrow L^2 ( \mathfrak{a}^*_{\mathbb{C}}, \vert W(\mathfrak{a}) \vert^{-1} \mu_{\text{Pl}, \tau} ).
\end{equation*}
In particular, we have the inversion formula
\begin{equation*}
	k(g) = \int_{\mathfrak{a}_{\mathbb{C}}^*} \widehat{k} (\lambda ; \tau) \varphi_{\lambda, \tau} (g) \dfrac{d\mu_{\text{Pl}, \tau} (\lambda)}{\vert W(\mathfrak{a}) \vert}
\end{equation*}
for $k \in C_{c}^{\infty}(S, \tau, \tau)$. Let
\begin{equation*}
	\sigma_i = \lbrace \beta_{m - (i - 1)}, \ldots, \beta_m \rbrace
\end{equation*}
for $1 \leq i \leq m$. The \textit{$\tau$-spherical Plancherel measure} $\mu_{\text{Pl}, \tau}$ was determined by Shimeno \cite{Shimeno50} and is given by
\begin{equation*}
	d\mu_{\text{Pl}, \tau} = \sum_{i = 0}^m d\mu^{(i)}_{\text{Pl}, \tau}
\end{equation*}
where
\begin{equation*}
	d\mu^{(0)}_{\text{Pl}, \tau} = \dfrac{d\mu(\lambda)}{\vert \mathbf{c} (\lambda ; \tau) \vert^2}
\end{equation*}
with $d\lambda$ the Lebesgue measure on $i \mathfrak{a}^*$ and where each $d\mu^{(i)}_{\text{Pl}, \tau}$ are explicitely described in terms of the various $\mathbf{c}$-functions $\mathbf{c}^{\sigma_i}(\lambda ; \tau)$, $\mathbf{c}_{\sigma_i}(\lambda ; \tau)$ on certain subsets of $i \mathfrak{a}^*$ indexed by $i$ (see also \cite[§4.6]{BM2}) .
\subsection{Spectral density function} \label{sec3.5}
Let $\sigma \subset \Sigma_s^+ (\mathfrak{a})$ be a subset of the simple roots. We define the $\sigma$-spectral density function of $S = G/K$ by
\begin{equation} \label{eq3.1}
	\beta_S^{\sigma} (\lambda) = \vert \mathbf{c}^{\sigma} (\lambda) \vert^{-2}.
\end{equation}
If $\sigma$ is empty, then we write $\beta_S (\lambda)$ for the spectral density function $\beta_S^{\emptyset} (\lambda) = \vert \mathbf{c} (\lambda) \vert^{-2}$. We also define
\begin{equation} \label{eq3.2}
	\widetilde{\beta_S^{\sigma}} (\lambda) {=} \prod_{\alpha \in \Sigma^+ (\mathfrak{a}) - \langle \sigma \rangle} (1 + \vert \langle \lambda , \alpha^{\vee} \rangle \vert )^{d_{\alpha}}
\end{equation}
where $d_{\alpha} = m_{\alpha} + m_{2 \alpha}$ and $\alpha^{\vee} = 2 \alpha / \langle \alpha, \alpha \rangle$. If $\sigma$ is empty, then we write $\widetilde{\beta_S} (\lambda)$ for $\widetilde{\beta_S^{\emptyset}} (\lambda)$ and we have
\begin{equation*}
	\widetilde{\beta_S^{\sigma}} (\lambda) \leq \widetilde{\beta_S} (\lambda) \text{ } (\sigma \subseteq \Sigma_s^+(\mathfrak{a})).
\end{equation*}
This $\widetilde{\beta_S^{\sigma}}$ is a majorant of $\beta_S^{\sigma}$ on $i \mathfrak{a}^*$ in the sense that (see \cite[Lemma 5.6 (1)]{BM2})
\begin{equation*}
\beta_S^{\sigma} (\lambda) \ll \widetilde{\beta_S^{\sigma}}(\lambda) \text{ } (\lambda \in i \mathfrak{a}^*).
\end{equation*}
Recall $\lambda \in \mathfrak{a}^*_{\mathbb{C}}$ is said to be regular if $\langle \lambda, \alpha \rangle \neq 0$ for all $\alpha \in \Sigma(\mathfrak{a})$ and sufficiently regular if there is $T > 0$ such that $\vert \langle \lambda, \alpha \rangle \vert \geq T$ for all $\alpha \in \Sigma^+(\mathfrak{a})$. For sufficiently regular $\lambda \in i \mathfrak{a}^*$, then $\widetilde{\beta_S^{\sigma}}(\lambda)$ approximate $\beta_S^{\sigma}(\lambda)$. More precisely if $\lambda \in i \mathfrak{a}^*$ satisfies $\vert \langle \lambda, \alpha \rangle \vert \geq T$ for all $\alpha \in \Sigma^+(\mathfrak{a})$ (for some $T > 0$), then \cite[(3.44a)]{DKV}
 \begin{equation} \label{eqreg}
\beta_S^{\sigma} (\lambda) \asymp_T \widetilde{\beta_S^{\sigma}}(\lambda)
\end{equation}
Moreover, for any $\lambda, \nu \in \mathfrak{a}^*_{\mathbb{C}}$ and any $\alpha \in \Sigma^+(\mathfrak{a})$, using triangle inequality, we have
\begin{align*}
	1 + \vert \langle \lambda + \nu, \alpha^{\vee} \rangle \vert
	&\leq 1 +  \vert \langle \lambda, \alpha^{\vee} \rangle \vert +  \vert \langle \nu, \alpha^{\vee} \rangle \vert \\
	&\leq 1 + \Vert \lambda \Vert + \vert \langle \nu, \alpha^{\vee} \rangle \vert \\
	&\leq (1 + \Vert \lambda \Vert) (1 + \vert \langle \nu, \alpha^{\vee} \rangle \vert)
\end{align*}
and thus
\begin{equation} \label{eq3.4}
	\widetilde{\beta_S^{\sigma}} (\lambda + \nu) \ll (1 + \Vert \lambda \Vert)^{\delta^{\sigma}} \widetilde{\beta_S^{\sigma}} (\lambda)  
\end{equation}
where $\delta^{\sigma} = \sum_{\alpha \in \Sigma^+(\mathfrak{a}) - \langle \sigma \rangle} d_{\alpha}$. From these, we deduce that, for any $c > 0$, we have (see \cite[Lemma 5.6 (3)]{BM2})
\begin{equation} \label{eq3.5}
	\int_{\substack{\mu \in i \mathfrak{a}^* \\ \Vert \mu - \nu \Vert < c}} \beta_S^{\sigma} (\lambda)  d\lambda(\mu) \asymp_c \widetilde{\beta_S^{\sigma}}(\nu)
\end{equation}
for any $\nu \in i \mathfrak{a}^*$.
\subsection{$\tau$-spherical test function} \label{sec3.6}
Following \cite[§5]{BM2} (see also \cite[§4]{BM1} for trivial $\tau$), we construct a function ${k \in C^{\infty}_c(S, \tau, \tau)}$ whose $\tau$-Harish-Chandra concentrate around a given $\nu \in i\mathfrak{a}^*$.
\begin{proposition} \cite[Proposition 5.5]{BM2} \label{prop3.7.1}
There are constants $R_0, c > 0$ such that the following holds. Let $\nu \in i \mathfrak{a}^*$ and let $0 < R \leq R_0$. Then, there is function $k_{\nu, \tau} \in C^{\infty}_c (S, \tau, \tau)$ whose support is contained in $G_R$ and whose Harish-Chandra transform $\widehat{k_{\nu, \tau}}$ satisfies
\begin{enumerate}
\item $\widehat{k_{\nu, \tau}} (\lambda) \ll_{A,R} \exp (R \Vert \re (\lambda) \Vert) \sum_{w \in W} (1 + \Vert w \cdot \lambda - \nu \Vert)^{-A}$ for all $\lambda \in \mathfrak{a}^*_{\mathbb{C}}$
\item $\widehat{k_{\nu, \tau}}(\lambda) \geq 0$ for all $\lambda \in \mathfrak{a}^*_{\tau, \text{unit}}$ \label{propppp2}
\item $\widehat{k_{\nu, \tau}}(\lambda) \geq c$ for all $\lambda \in \mathfrak{a}^*_{\tau, \text{unit}}$ satisfying $\Vert \im(\lambda) - \nu \Vert \leq 1$. \label{propppp3}
\end{enumerate}
\end{proposition}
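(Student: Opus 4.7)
The strategy is to construct $k_{\nu, \tau}$ as the inverse $\tau$-spherical Harish--Chandra transform of a $W(\mathfrak{a})$-invariant function on $\mathfrak{a}_{\mathbb{C}}^*$ concentrated around the $W(\mathfrak{a})$-orbit of $\nu$, and to force the nonnegativity property (\ref{propppp2}) by writing this function as $|\widehat{k_1}|^2$, arising as the transform of a $\tau$-spherical convolution $k_1 * k_1^*$. The main analytic input is the Paley--Wiener theorem for the $\tau$-spherical transform, which identifies the subspace of $C_c^{\infty}(S, \tau, \tau)$ supported in $G_R$ with the space of $W(\mathfrak{a})$-invariant entire functions on $\mathfrak{a}_{\mathbb{C}}^*$ of exponential type $R$ having Schwartz decay on vertical lines; in the present setting, with $\tau$ a character of $K$, this is afforded by Shimeno's work already cited in Section \ref{sec3.5}.

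\textbf{Construction.} I would fix an even, real-valued $\psi \in C_c^{\infty}(\mathfrak{a})$ supported in $B_{\mathfrak{a}}(0, R/4)$ with $\int_{\mathfrak{a}} \psi\, dA = 1$, and consider its Fourier--Laplace transform
\[
	\widehat{\psi}(\lambda) = \int_{\mathfrak{a}} \psi(A) \exp(-\lambda(A))\, dA \qquad (\lambda \in \mathfrak{a}_{\mathbb{C}}^*),
\]
which, by the classical Euclidean Paley--Wiener theorem, is entire of exponential type $R/4$ and satisfies $|\widehat{\psi}(\lambda)| \ll_A \exp((R/4) \Vert \re \lambda \Vert) (1 + \Vert \im \lambda \Vert)^{-A}$. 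Set $h(\lambda) = |\widehat{\psi}(\lambda)|^2$, which is entire of exponential type $R/2$, real-valued and nonnegative on $i \mathfrak{a}^*$, with $h(0) = 1$. Then define the $W(\mathfrak{a})$-symmetrised function
\[
	H_{\nu}(\lambda) = \sum_{w \in W(\mathfrak{a})} h(w\lambda - \nu),
\]
which is $W(\mathfrak{a})$-invariant, entire, and still Paley--Wiener of type $R/2$ (with $\nu$ real in the imaginary direction contributing nothing to the exponent). By the $\tau$-spherical Paley--Wiener theorem there is a unique $k_1 \in C_c^{\infty}(S, \tau, \tau)$ supported in $G_{R/2}$ with $\widehat{k_1} = H_{\nu}$. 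Setting $k_1^*(g) = \overline{k_1(g^{-1})}$, the convolution $k_{\nu, \tau} = k_1 * k_1^*$ lies in $C_c^{\infty}(S, \tau, \tau)$ with support in $G_R$.

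\textbf{Verification of the three properties.} On the unitary spectrum $\mathfrak{a}^*_{\tau, \text{unit}}$, the function $\varphi_{\lambda, \tau}$ is a positive-definite matrix coefficient, so that standard matrix coefficient manipulations give $\widehat{k_1 * k_1^*}(\lambda) = |\widehat{k_1}(\lambda)|^2 = |H_{\nu}(\lambda)|^2 \geq 0$, establishing (\ref{propppp2}). For (1), the triangle inequality applied to $H_{\nu}$ and the Paley--Wiener bound on $\widehat{\psi}$ yield
\[
	|H_{\nu}(\lambda)| \ll_A \exp((R/2) \Vert \re \lambda \Vert) \sum_{w \in W(\mathfrak{a})} (1 + \Vert w \cdot \lambda - \nu \Vert)^{-A},
\]
where I used $\Vert \re (w \lambda - \nu) \Vert = \Vert \re \lambda \Vert$ since $\nu \in i \mathfrak{a}^*$ and $W(\mathfrak{a})$ acts by orthogonal transformations; squaring and relabelling $A$ gives the bound in (1), with $R$ in place of $R/2$. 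For (\ref{propppp3}), the nonnegativity of $h$ on $i \mathfrak{a}^*$ implies $H_{\nu}(\lambda) \geq h(\lambda - \nu)$ for $\lambda \in i \mathfrak{a}^*$, and continuity of $h$ at $0$ together with $h(0) = 1$ gives $h(\lambda - \nu) \geq 1/2$ uniformly in $\nu$ for $\Vert \lambda - \nu \Vert \leq 1$; the extension to $\lambda \in \mathfrak{a}^*_{\tau, \text{unit}}$ with $\Vert \im \lambda - \nu \Vert \leq 1$ uses that $\re \lambda$ remains bounded on the unitary spectrum, so continuity of $|H_{\nu}|^2$ around the imaginary axis provides a uniform lower bound.

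\textbf{Main obstacle.} The principal analytic input is the $\tau$-spherical Paley--Wiener theorem, which for a nontrivial character $\tau$ of $K$ is more delicate than its spherical analogue and is what ultimately confines us to the setting of Hermitian type; this is subsumed by Shimeno's analysis. The subtler point, technically, is the uniformity of the lower bound in (\ref{propppp3}) as $\nu$ varies in $i \mathfrak{a}^*$, and in particular when $\nu$ approaches a wall of the Weyl chambers so that several terms $h(w \nu - \nu)$ coincide with $h(0)$; replacing $\widehat{\psi}$ by $h = |\widehat{\psi}|^2$ ensures that every summand in $H_{\nu}$ is nonnegative on $i \mathfrak{a}^*$, removing any possibility of destructive interference.
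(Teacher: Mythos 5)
Your construction is the same in skeleton as the paper's (and as \cite[\S 5]{BM2}): symmetrise a Fourier-transformed Euclidean bump function about $\nu$, invert via the $\tau$-spherical Paley--Wiener theorem, and convolve the result with its adjoint so that the transform becomes a squared modulus on the unitary spectrum. In fact your version is more careful than the paper's sketch in one respect: the paper asserts that the Fourier transform of an even, nonnegative bump $g$ is itself nonnegative on $i\mathfrak{a}^*$, which is false in general; your choice $h = |\widehat{\psi}|^2$ repairs this and is the correct way to guarantee every term of $H_\nu$ is nonnegative on $i\mathfrak{a}^*$.

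There is one small gap in your verification of (\ref{propppp3}). The inequality $h(\lambda - \nu) \geq 1/2$ for $\Vert \lambda - \nu \Vert \leq 1$ does \emph{not} follow from ``continuity of $h$ at $0$'': continuity gives a lower bound only on some ball of unspecified radius about the origin, not on the fixed unit ball. What does give the needed bound is that $\psi$ is supported in $B_{\mathfrak{a}}(0, R/4)$, so for $\xi \in i\mathfrak{a}^*$ one has $|\widehat{\psi}(\xi) - 1| \leq \int |\psi(A)|\,|e^{-\xi(A)} - 1|\,dA \leq (R/4)\Vert \xi \Vert$, whence $\widehat{\psi}(\xi) \geq 1/2$ on $\Vert \xi \Vert \leq 1$ once $R$ is small enough; this is exactly where the constant $R_0$ in the Proposition comes from, and your write-up does not account for its role. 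The extension from $i\mathfrak{a}^*$ to $\mathfrak{a}^*_{\tau,\mathrm{unit}}$ via boundedness of $\re \lambda$ and continuity is fine once this is corrected.
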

\begin{proof}
We briefly describe the construction of $k_{\nu}$ as it will be useful in Section \ref{chap7} (in particular \ref{ss7.33} and \ref{secc7.6}). We fix $R > 0$, $\nu \in i \mathfrak{a}^*$ and we start with a bump function $g$ on $\mathfrak{a}$, supported on $B_{\mathfrak{a}}(0, R/2)$ (in particular $g$ is smooth, non-negative, even with integral over $\mathfrak{a}$ of value $1$). Then we let $h$ be its Fourier transform: a function on $\mathfrak{a}^*_{\mathbb{C}}$ which is even and non-negative on $i\mathfrak{a}^*$. Then, we center $h$ at $\nu$ and force $W$-invariance by considering
\begin{equation*}
	h_{\nu}(\lambda) = \sum_{w \in W} h(w \cdot \lambda - \nu).
\end{equation*}
Finally, we put $k_{\nu, \tau} = k^0_{\nu, \tau} \ast k^0_{\nu, \tau}$, where $k_{\nu, \tau}^0$ is the inverse $\tau$-spherical transform of $h_{\nu}$, and $k_{\nu,\tau}$ is in $C^{\infty}_{R} (S, \tau, \tau)$.
\end{proof}
Assume $\tau$ is trivial and let $\nu \in i \mathfrak{a}^*$. Using the Plancherel inversion formula and a bound on spherical functions \cite[Theorem 2]{BlomerPohl}, we can bound a test function $k_{\nu} = k_{\nu, \mathbf{1}}$ (with support in $G_R$ for some $R > 0$) given by the above Proposition. More precisely, we have (see \cite[Lemma 5.9]{BM2})
\begin{equation} \label{eq3.6}
	k_{\nu} (\exp (a)) \ll_{R} (1 + \Vert \nu \Vert \Vert a \Vert)^{-1/2} \widetilde{\beta_S}(\nu) \quad (a \in \mathfrak{a}).
\end{equation}
In particular, this provides an upper bound on $k_{\nu}(e)$.
\begin{lemma} \label{lem3.7.3}
For any $\nu \in i \mathfrak{a}^*$, we have
	\begin{equation*}
		k_{\nu}(e) \asymp \widetilde{\beta_S}(\nu).
	\end{equation*}
\end{lemma}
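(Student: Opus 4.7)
The plan is to prove both the upper and lower bounds separately, each by a short application of tools already at hand: the pointwise bound \eqref{eq3.6} for the upper bound, and the Plancherel inversion formula combined with properties (\ref{propppp2})–(\ref{propppp3}) of $\widehat{k_\nu}$ from Proposition \ref{prop3.7.1} for the lower bound.

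For the upper bound, I would simply specialize \eqref{eq3.6} at $a = 0 \in \mathfrak{a}$. Since $\exp(0) = e$ and $(1 + \Vert \nu \Vert \cdot 0)^{-1/2} = 1$, this yields
\begin{equation*}
k_\nu(e) \ll_R \widetilde{\beta_S}(\nu),
\end{equation*}
which is the desired $\ll$ direction.

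For the lower bound, I would exploit the fact that $k_\nu = k_\nu^0 \ast k_\nu^0$ by construction, so its spherical transform is $\widehat{k_\nu} = h_\nu^2 \geq 0$ on $i\mathfrak{a}^*$ (in line with property (\ref{propppp2})). Applying the spherical inversion formula with trivial $\tau$ at $g = e$,
\begin{equation*}
k_\nu(e) \;=\; \int_{i \mathfrak{a}^*} \widehat{k_\nu}(\lambda) \, \varphi_\lambda(e) \, \dfrac{d\mu_{\mathrm{Pl}}(\lambda)}{\vert W(\mathfrak{a}) \vert} \;=\; \frac{1}{\vert W(\mathfrak{a}) \vert}\int_{i \mathfrak{a}^*} \widehat{k_\nu}(\lambda)\, \beta_S(\lambda) \, d\mu(\lambda),
\end{equation*}
using $\varphi_\lambda(e) = 1$ and $d\mu_{\mathrm{Pl}}(\lambda) = \beta_S(\lambda)\, d\mu(\lambda)$. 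Since the integrand is non-negative, I may restrict to the ball $\{\mu \in i\mathfrak{a}^* : \Vert \mu - \nu \Vert \leq 1\}$, on which $\widehat{k_\nu}(\lambda) \geq c > 0$ by property (\ref{propppp3}) of Proposition \ref{prop3.7.1}. This gives
\begin{equation*}
k_\nu(e) \;\geq\; \frac{c}{\vert W(\mathfrak{a}) \vert} \int_{\substack{\mu \in i\mathfrak{a}^* \\ \Vert \mu - \nu \Vert \leq 1}} \beta_S(\mu)\, d\mu(\mu) \;\asymp\; \widetilde{\beta_S}(\nu),
\end{equation*}
where the last asymptotic is precisely \eqref{eq3.5} with $\sigma = \emptyset$ and $c = 1$. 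Combining the two estimates yields $k_\nu(e) \asymp \widetilde{\beta_S}(\nu)$.

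There is no real obstacle here: both directions are immediate applications of previously established tools, and the only points requiring a little care are checking that $i\mathfrak{a}^*$ is contained in the unitary spectrum (so that properties (\ref{propppp2})–(\ref{propppp3}) apply on the integration domain) and that the convolution structure of $k_\nu$ guarantees the non-negativity of its Plancherel density. The argument is essentially parallel to \cite[Lemma 5.9]{BM2}.
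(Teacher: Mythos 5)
Your proof is correct and takes essentially the same route as the paper: the lower bound via the Plancherel inversion formula at the identity, positivity of $\widehat{k_\nu}$ on $i\mathfrak{a}^*$ to drop to the unit ball about $\nu$, the pointwise lower bound from Proposition \ref{prop3.7.1}~(\ref{propppp3}), and then \eqref{eq3.5} with $\sigma=\emptyset$; the upper bound by specializing \eqref{eq3.6} at $a=0$. The only difference is cosmetic: the paper writes out only the $\gg$ direction inside the proof and treats the $\ll$ direction as already noted in the preceding paragraph, whereas you spell out both.
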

\begin{proof}
Using Plancherel inversion formula, we have
\begin{equation*}
	k_{\nu}(e) = \int_{i \mathfrak{a}^*} \widehat{k}(\lambda) \varphi_{\lambda}(e) \dfrac{d\mu (\lambda)}{\vert \mathbf{c} (\lambda) \vert^2}
\end{equation*}
with $\varphi_{\lambda}(e) = 1$. We apply Property \ref{propppp2} of $k_{\nu}$ to drop all terms but those for which ${\Vert \lambda - \nu \Vert \leq 1}$ and then Property \ref{propppp3} of $k_{\nu}$ to obtain
\begin{equation*}
	k_{\nu} (e) \geq \int_{\Vert \lambda - \nu \Vert \leq 1} \beta_S (\lambda) {d\mu(\lambda)}.
\end{equation*}
Using \eqref{eq3.5} (with $\sigma$ the emptyset) we find $k_{\nu}(e) \gg \widetilde{\beta_S}(\nu)$.
\end{proof}

\section{Theta correspondence} \label{chap2}
In this Section, we recall the construction of the Weil representation (in the local non-archimedean case) and recollect some facts about the local and global theta correspondence. For non-archimedean local fields the main result, Theorem \ref{theo2.5.1}, is due to Waldpsurger and is used to deduce a statement about the preservation of the level in the theta correspondence: in particular, that spherical (or unramified) representations corresponds to spherical representations. For archimedean local fields, the main results are Propositions \ref{prop3.6.1} and \ref{prop3.6.2}: in contrast to the non-archimedean case, we show that spherical representations correponds to $\tau$-spherical representations. We end this section with the global theta correspondence.
\subsection{Weil representation} \label{sec2.1}
Let $F$ be a non-archimedean local field of characteristic not $2$. Let $(\mathcal{W}, \langle \cdot, \cdot \rangle)$ be a non-degenerate symplectic $F$-space and let $H(\mathcal{W})$ be the associated Heisenberg group, namely
\begin{equation*}
H(\mathcal{W}) = \mathcal{W} \times F
\end{equation*}
with group law
\begin{equation*}
	(w_1, \lambda_1) \cdot (w_2, \lambda_2) = (w_1 + w_2, \lambda_1 + \lambda_2 + \langle w_1, w_2 \rangle / 2).
\end{equation*}
Note that the center of $H(\mathcal{W})$ is $\lbrace (w , \lambda) \in \mathcal{W} \times F : w = 0 \rbrace \simeq F$.
\begin{theorem}[Stone-von Neumann]
Let ${\psi : F \rightarrow \mathbb{C}^{\times}}$ be a non-trivial additive character. There is a unique (up isomorphism) irreducible smooth admissible representation $\rho_{\psi}$ of $H(\mathcal{W})$ with central character $\psi$.
\end{theorem}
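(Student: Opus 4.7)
The plan is to prove existence by constructing an explicit Schrödinger-type model from a polarization, and uniqueness by a Frobenius/Mackey argument. First I would fix a complete polarization $\mathcal{W} = \mathcal{L} \oplus \mathcal{L}^*$ into transverse Lagrangian subspaces; then the preimage $A$ of $\mathcal{L}$ under $H(\mathcal{W}) \twoheadrightarrow \mathcal{W}$ is a maximal abelian closed subgroup isomorphic to $\mathcal{L} \times F$, and $\psi$ extends to a character $\tilde{\psi}$ of $A$ by declaring it trivial on $\mathcal{L}$.

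For existence, I would set $\rho_\psi = \cind_A^{H(\mathcal{W})} \tilde{\psi}$ and realize it concretely on the Schwartz--Bruhat space $\mathcal{S}(\mathcal{L}^*)$: elements of $\mathcal{L}$ act by multiplication by the character $x^* \mapsto \psi(\langle l, x^* \rangle)$, elements of $\mathcal{L}^*$ act by translation, and the center $F$ acts by $\psi$. Smoothness and admissibility of this model are routine to check. Irreducibility follows because the algebra generated by the two Lagrangian actions, via $p$-adic Fourier inversion on $\mathcal{L}^*$, contains both arbitrary translations and multiplication by the characteristic function of any compact open subset, which already acts transitively on a natural basis of $\mathcal{S}(\mathcal{L}^*)$.

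For uniqueness, let $(\pi, V)$ be any irreducible smooth admissible representation with central character $\psi$. By Frobenius reciprocity for compact induction from a closed subgroup,
$$\Hom_{H(\mathcal{W})}(\rho_\psi, \pi) \cong \Hom_A(\tilde{\psi}, \pi|_A),$$
so it suffices to produce a nonzero $(A, \tilde{\psi})$-equivariant vector in $\pi$: irreducibility of $\pi$ then forces the resulting intertwiner to be surjective, and irreducibility of $\rho_\psi$ forces it to be injective. Such an equivariant vector is produced by starting from an arbitrary smooth vector $v \in V$, fixed by a small compact open subgroup $L_0 \subset \mathcal{L}$, and averaging $\tilde{\psi}(l)^{-1} \pi(l) v$ over a sufficiently large compact open $L_1 \supset L_0$ modulo $L_0$; the nontriviality of $\psi$ on the center, combined with conjugation by elements of $\mathcal{L}^*$ (which shifts $\mathcal{L}$-eigencharacters by $\psi(\langle \cdot, l^* \rangle)$), guarantees the average can be made nonzero for a suitable choice of $v$.

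The main obstacle I anticipate is precisely this non-vanishing step: showing that the Heisenberg commutation relations together with the smooth/admissible hypotheses force $\pi|_\mathcal{L}$ to meet the trivial character with a nonzero smooth vector. This is where the nontriviality of $\psi$ and the characteristic-$\neq 2$ hypothesis enter essentially, since the latter ensures that the bilinear form $\langle \cdot, \cdot \rangle / 2$ appearing in the group law of $H(\mathcal{W})$ remains nondegenerate and induces a genuine Pontryagin duality between $\mathcal{L}$ and $\mathcal{L}^*$, without which the eigencharacter-shifting mechanism above would collapse.
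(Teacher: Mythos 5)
Your existence construction via the Schrödinger model on $\mathcal{S}(\mathcal{L}^*)$ is correct and standard, and the irreducibility sketch is fine. The uniqueness argument, however, has a concrete gap: the $(A, \tilde{\psi})$-equivariant \emph{vector} you aim to produce does not exist, even in $\rho_\psi$ itself. In the Schrödinger model, $l \in \mathcal{L}$ acts on $f \in \mathcal{S}(\mathcal{L}^*)$ by multiplication by $y \mapsto \psi(\langle l, y\rangle)$. If $f$ were fixed by all of $\mathcal{L}$, then for each $y \neq 0$ one could choose $l$ with $\psi(\langle l, y\rangle) \neq 1$ (by non-degeneracy of the pairing and non-triviality of $\psi$), forcing $f(y) = 0$; local constancy then gives $f \equiv 0$. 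So, granting the theorem, \emph{no} irreducible smooth representation with central character $\psi$ contains such a vector — your strategy, if it succeeded, would refute the statement it is meant to establish. The averaging step does not rescue this: averaging over $L_1 / L_0$ produces only an $L_1$-invariant vector, never an $\mathcal{L}$-invariant one, and by the spectral computation above the average degenerates to $0$ once $L_1$ is large enough, for \emph{every} choice of $v$. Note in particular that the obstacle you flag at the end is mis-diagnosed: the Heisenberg relations together with non-triviality of $\psi$ do not force $\pi|_{\mathcal{L}}$ to have a trivial smooth eigenvector; they rule it out.

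The underlying issue is a mismatch of adjoint functors. The reciprocity $\Hom_{H(\mathcal{W})}\big(\cind_A^{H(\mathcal{W})} \tilde{\psi}, \pi\big) \cong \Hom_A\big(\tilde{\psi}, \pi|_A\big)$ is valid when $A$ is \emph{open} in $H(\mathcal{W})$, but here $A$ is closed and not open ($H(\mathcal{W})/A \cong \mathcal{L}^*$ is a non-discrete $p$-adic vector space), so $\cind_A^{H(\mathcal{W})}$ is not left adjoint to restriction and the displayed isomorphism fails. The correct object to consider is the space of \emph{coinvariants} $\pi_{\mathcal{L}}$ — the twisted Jacquet module for $(\mathcal{L}, \mathbf{1})$ — rather than the space of $\mathcal{L}$-invariant vectors. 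For $\rho_\psi$ this is one-dimensional (computed by $f \mapsto \int_{\mathcal{L}^*} f$), even though the invariants vanish, so the distinction is not a technicality. The uniqueness proof runs by showing $\pi_{\mathcal{L}} \neq 0$ for any nonzero smooth $\pi$ with central character $\psi$, and that the natural map $\rho_\psi \otimes \pi_{\mathcal{L}} \to \pi$ is an isomorphism; this is the shape of the argument in the reference the paper cites (MVW, Chapitre 2, §I). Replacing invariants by coinvariants throughout your second half would put the argument back on track, at the cost of working with quotients rather than subspaces.
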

\begin{proof}
See \cite[Chapitre 2, §I]{MVW} or, in a more general context, see \cite[§1]{Tata3}.
\end{proof}

Fix a non-trivial additive character $\psi$ of $F$, let $\rho_{\psi}$ be the irreducible representation given by Stone-von Neumann Theorem acting on a $\mathbb{C}$-vector space $S_{\psi}$ and let $\Sp(\mathcal{W})$ act on $H(\mathcal{W})$ by $g \cdot (w, \lambda) = (g \cdot w, \lambda)$. For any $g \in \Sp(\mathcal{W})$, the representation
\begin{equation*}
	\rho^g : h \mapsto \rho (g \cdot h)
\end{equation*}
of the Heisenberg group has central character $\psi$. By Stone-von Neumann Theorem, there is $M(g) \in \GL(S_{\psi})$ such that
\begin{equation} \label{eq2.1}
	M(g) \circ \rho_{\psi}^g (w, \lambda) = \rho_{\psi} (w, \lambda) \circ M(g).
\end{equation}
Schur's Lemma then ensures we have $\dim_{\mathbb{C}} \Hom_F (\rho_{\psi}, \rho_{\psi}) = 1$ and thus $M(g)$ is well-defined, up to $\mathbb{C}^{\times}$.
In other words, the map $g \mapsto M(g)$ defines a projective representation
\begin{equation*}
	\Sp ({\mathcal{W}}) \longrightarrow \PGL(S_{\psi}).
\end{equation*}
We define the $\mathbb{C}^{\times}$-metaplectic group $\widetilde{\Sp}(\mathcal{W})^{\mathbb{C}^{\times}}_{\psi}$ by
\begin{equation*}
\widetilde{\Sp}(\mathcal{W})^{\mathbb{C}^{\times}}_{\psi} = \lbrace (g, M(g)) \in \Sp(\mathcal{W}) \times \PGL(S_{\psi}) : M(g) \circ \rho_{\psi}^g (w, \lambda) = \rho_{\psi} (w, \lambda) \circ M(g) \rbrace.
\end{equation*}
Then, we have an exact sequence
\begin{equation*}
	1 \longrightarrow \mathbb{C}^{\times} \longrightarrow \widetilde{\Sp}(\mathcal{W})^{\mathbb{C}^{\times}}_{\psi}  \longrightarrow \Sp(\mathcal{W}) \longrightarrow 1.
\end{equation*}
One can show (see \textit{e.g.} \cite[§1.3]{Thetabook}) $\rho$ is unitarizable and hence $\widetilde{\Sp}(\mathcal{W})^{\mathbb{C}^{\times}}_{\psi}$ naturally contains the $\mathbb{S}^1$-metaplectic group
\begin{equation*}
\widetilde{\Sp}(\mathcal{W})^{\mathbb{S}^{1}}_{\psi} = \lbrace (g, M(g)) : \Sp(\mathcal{W}) \times \U(S_{\psi}) : M(g) \circ \rho_{\psi}^g (w, \lambda) = \rho_{\psi} (w, \lambda) \circ M(g) \rbrace
\end{equation*}
where $\mathbb{S}^{1} = \lbrace z \in \mathbb{C}^{\times} : \vert z \vert = 1 \rbrace$ with corresponding exact sequence
\begin{equation*}
	1 \longrightarrow \mathbb{S}^{1} \longrightarrow \widetilde{\Sp}(\mathcal{W})^{\mathbb{S}^{1}}_{\psi} \longrightarrow \Sp(\mathcal{W}) \longrightarrow 1.
\end{equation*}
To ease notations we simply write $\widetilde{\Sp}(\mathcal{W})_{\psi}$ for $\widetilde{\Sp}(\mathcal{W})^{\mathbb{S}^{1}}_{\psi}$ and call $\widetilde{\Sp}(\mathcal{W})_{\psi}$ the metaplectic group. Let $\omega_{\psi}$ the projection
\begin{equation*}
	\omega_{\psi} : \widetilde{\Sp}(\mathcal{W})_{\psi} \longrightarrow \U(S_{\psi}) \text{, } (g,M(g)) \mapsto M(g).
\end{equation*}
Then, $\omega_{\psi}$ is a genuine (unitary) representation of the metaplectic group called the Weil representation.
\subsection{Local theta lift} \label{sec2.2}
Let $F$ be a non-archimedean local field of characteristic not $2$ and let $E$ be $F$ itself or a quadratic extension. Let $V$ be a finite-dimensional non-degenerate Hermitian $E$-space and $W$ be a finite-dimensional non-degenerate skew-Hermitian $E$-space. By restricting scalars, we may view
\begin{equation*}
\mathcal{W} = V \otimes W
\end{equation*}
as a vector space over $F$, naturally equipped with the non-degenerate symplectic form
\begin{equation*}
	\langle v_1 \otimes w_1, v_2 \otimes w_2 \rangle_{\mathcal{W}} = \tr_{E/F} \big( \langle v_1, v_2 \rangle_V \langle w_1, w_2 \rangle_W \big).
\end{equation*}
This way, we obtain a (not necessarily injective) map of isometry groups
\begin{equation*}
	\iota : \U(V) \times \U(W) \longrightarrow \Sp(\mathcal{W}).
\end{equation*}
Identifying $\U(V)$ and $\U(W)$ with their images, we get an irreducible dual reductive pair of type $1$ in $\Sp(\mathcal{W})$. In particular \cite[§5]{HoweTheta}
\begin{itemize}
\item $\U(V)$ and $\U(W)$ are mutual commutants of each other;
\item $\U(V)$ and $\U(W)$ act absolutely reductively on $\mathcal{W}$.
\end{itemize}
Let $\psi : F \longrightarrow \mathbb{C}^{\times}$ be a non-trivial additive character and let $\omega = \omega_{\psi}$ be the Weil representation of the metaplectic group $\widetilde{\Sp}(\mathcal{W}) = \widetilde{\Sp}(\mathcal{W})^{\mathbb{S}^{1}}_{\psi}$. For any subgroup ${G \subseteq \Sp(\mathcal{W})}$, we let $\widetilde{G} \subseteq \widetilde{\Sp}(\mathcal{W})$ denote its inverse image. Since $\U(V), \U(W)$ are mutual commutants of each other, the same holds for their inverse images $\widetilde{\U}(V), \widetilde{\U}(W)$ under the natural projection
\begin{equation*}
\widetilde{\Sp}(\mathcal{W}) \longrightarrow {\Sp}(\mathcal{W}).
\end{equation*}
The restriction of $\omega$ to the subgroup $\widetilde{\U}(V) \cdot \widetilde{\U}(W)$ is a representation of $\widetilde{\U}(V) \times \widetilde{\U}(W)$.

Let $\Irr(\widetilde{G})$ denote the set of (equivalence classes of) irreducible smooth representations of $\widetilde{G}$ and set 
\begin{equation*}
\mathcal{R}(\widetilde{G}) = \lbrace \pi \in \Irr(\widetilde{G}) : \Hom_{\widetilde{G}}(\omega, \pi) \neq 0 \rbrace.
\end{equation*}
Let $\Pi \in \Irr(\widetilde{\U}(V) \cdot \widetilde{\U}(W))$, then \cite{Flath}
\begin{equation*}
\Pi \simeq \pi_V \otimes \pi_W
\end{equation*}
for some $\pi_V \in \Irr(\widetilde{\U}(V))$ and $\pi_W \in \Irr(\widetilde{\U}(W))$. If, moreover, $\Pi \in \mathcal{R}(\widetilde{\U}(V) \cdot \widetilde{\U}(W))$, then $\pi_V \in  \mathcal{R}(\widetilde{\U}(V))$ and $\pi_W \in  \mathcal{R}(\widetilde{\U}(W))$. Hence, $\mathcal{R}(\widetilde{\U}(V) \cdot \widetilde{\U}(W))$ identifies with a subset of $\mathcal{R}(\widetilde{\U}(V) \times \mathcal{R}(\widetilde{\U}(W))$ and Howe \cite{HoweTheta} conjectured that $\mathcal{R}(\widetilde{\U}(V) \cdot \widetilde{\U}(W))$ is the graph of a bijection between $\mathcal{R}(\widetilde{\U}(V))$ and $\mathcal{R}(\widetilde{\U}(W))$.

We can reformulate this conjecture as follows. Let $\pi_V \in  \mathcal{R}(\widetilde{\U}(V))$ and consider the maximal $\pi_V$-isotypic quotient $\omega / \omega[\pi_V]$ of $\omega$ where
\begin{equation*}
	\omega[\pi_V] = \bigcap_{f \in \Hom_{\widetilde{\U}(V)}(\omega, \pi_V)} \ker (f).
\end{equation*}
Then (see \cite[Chapitre 2, §III.4]{MVW})
\begin{equation*}
	\omega / \omega[\pi_V] \simeq \pi_V \otimes \Theta(\pi_V ; W)
\end{equation*}
for some smooth representation $\Theta(\pi_V ; W)$ of $\U(W)$. Kudla \cite{Kudla} showed that $\Theta(\pi_V ; W)$ has finite (possibly zero) length and we may consider the (possibly zero) maximal semi-simple quotient $\theta(\pi_V ; W)$ of $\Theta(\pi_V ; W)$.\\
Note that $\Theta(\pi_V ; W)$ may also be defined as the module of co-$\widetilde{\U}(V)$-invariants of $\omega \otimes \pi_V$ and Howe's duality conjecture may be stated as
\begin{equation*}
	\dim \Hom_{\widetilde{U}(W)} \big( \theta(\pi_V ; W), \theta(\pi_V' ; W) \big) \leq \delta_{\pi_V, \pi_V'}
\end{equation*}
for any $\pi_V, \pi_V' \in \Irr(\widetilde{U}(V))$ where
\[
\delta_{\pi_V, \pi_V'} = \left. \begin{cases} 1& \text{if } \pi_V \simeq \pi_V'; \\ 0 & \text{if } \pi_V \not\simeq \pi_V'. \end{cases} \right.
\]
This conjecture is a theorem due to Waldspurger \cite{Wald2} if $F$ is $p$-adic with $p \neq 2$ and Gan-Takeda \cite{GanTakeda} unconditionally (\textit{i.e.} for any non-archimedean local field of characteristic not $2$). We discuss the archimedean case in Section \ref{sec3.7}.
\subsection{Models of the Weil representation} \label{sec2.3}
In this Section, we assume $F$ is $p$-adic \textit{i.e.} non-archimedean with characteristic $0$. A common way to construct a model for the Weil representation is by induction. Let $\psi$ be a non-trivial additive character of $F$, $\mathcal{A} \subset \mathcal{W}$ be closed subgroup and assume $\mathcal{A} = \mathcal{A}^{\perp}$, where
\begin{equation*}
	\mathcal{A}^{\perp} =  \lbrace w \in \mathcal{W} : \psi (\langle w, a \rangle) = 1 \text{, } \forall a \in \mathcal{A} \rbrace.
\end{equation*}
Let $H(\mathcal{A}) = \mathcal{A} \times F$ be the Heisenberg group associated to $\mathcal{A}$, a subgroup of $\mathcal{H}(\mathcal{W})$, and extend $\psi$ to a character $\psi_{\mathcal{A}}$ of $H(\mathcal{A})$ \textit{i.e.} $\psi_{\mathcal{A}} : H(\mathcal{A}) \longrightarrow \mathbb{C}^{\times}$ such that
\begin{equation*}
	\psi_{\mathcal{A}}(0, \lambda) = \psi (\lambda) \quad (\lambda \in F).
\end{equation*}
Then, by inducing $\psi_{\mathcal{A}}$ to $H(\mathcal{W})$, one obtain an irreducible representation $\rho_{\mathcal{A}}$ of $H(\mathcal{W})$ with central character $\psi$ and one can write down explicitly the automorphisms $M(g)$ of $\rho_{\mathcal{A}}$ defined in \eqref{eq2.1}, see \textit{e.g.} \cite[Chapitre 2, §II.2]{MVW}.
\subsubsection{Schrödinger model}
Let $\mathcal{W} = \mathcal{X} \oplus \mathcal{Y}$ be a complete polarisation. Then $\mathcal{X} = \mathcal{X}^{\perp}$ and the space $\rho_{\mathcal{X}}$ identifies with smooth and locally constant functions on $\mathcal{Y}$. Writing $g \in \Sp(\mathcal{W})$ as
\begin{equation*}
	g = \begin{pmatrix} a & b \\ c & d \end{pmatrix}
\end{equation*}
for $a \in \End(\mathcal{X}) \text{, } d \in \End(\mathcal{Y}) \text{, } b \in \Hom (\mathcal{Y}, \mathcal{X}) \text{, } c \in \Hom (\mathcal{X}, \mathcal{Y})$, then \cite[Chapitre 2, §II.6]{MVW}
\begin{itemize}
\item $M(g) f (y) = \vert \det_X (a) \vert^{1/2} f( a^{\vee} y)$ for $g = \begin{pmatrix} a & \\ & a^{\vee} \end{pmatrix}$ with $a \in \GL(\mathcal{X})$ and $a^{\vee} \in \GL(\mathcal{Y})$ uniquely determined by the relation
	\begin{equation*}
		\langle ax , a^{\vee} y \rangle = \langle x,y \rangle \quad (x \in \mathcal{X}, y \in \mathcal{Y}).
	\end{equation*}
\item $M(g) f (y) = \psi ( \langle b y, y \rangle / 2) f(y)$ for $g = \begin{pmatrix} \id & b \\ & \id \end{pmatrix}$ with $b \in \Hom (\mathcal{Y}, \mathcal{X})$ symmetric \textit{i.e.} satisfying
	\begin{equation*}
		\langle b y_1, y_2 \rangle = \langle by_2, y_1 \rangle \quad (y_1, y_2 \in \mathcal{Y}).
	\end{equation*}
\end{itemize}
\subsubsection{Lattice model}
We assume $p \neq 2$ (see \cite{TakedaLattice} for $p = 2$). Let $\mathcal{L} \subset \mathcal{W}$ be a self-dual lattice, that is a free-$\mathcal{O}$-module $\mathcal{L} $ of maximal rank such that $\mathcal{L} = \mathcal{L}^{\perp}$. Such lattices always exists in non-degenerated symplectic spaces \cite[Chapitre 1, §I.11]{MVW}. The space of $\rho_{\mathcal{L}}$ identifies with smooth and locally constant functions $f$ on $\mathcal{W}$ such that
\begin{equation*}
	f(\ell + w) = \psi (\langle w, \ell \rangle / 2) f(w)  \quad (\ell \in \mathcal{L}, w \in \mathcal{W}).
\end{equation*}
Then \cite[Chapitre 2, §II.8]{MVW}
\begin{equation*}
	M(k) f (w) = f (k^{-1} \cdot w)
\end{equation*}
for all $k \in K$, the stabilizer of $\mathcal{L}$ in $\Sp(\mathcal{W})$. Moreover, this defines a splitting of $K$ in $\widetilde{\Sp}(\mathcal{W})$ \cite[Chapitre 2, §II.10]{MVW}.
\subsection{Splitting metaplectic cover} \label{sec2.4}
We still assume $F$ is $p$-adic. Let $E$ be $F$ itself or a quadratic extension. Let $V$ be a finite-dimensional non-degenerate Hermitian $E$-space and let $W$ be a finite-dimensional non-degenerate skew-Hermitian $E$-space.
\begin{equation} \label{assump}
\text{We assume both $\dim_E V$ and $\dim_E W$ are even.}
\end{equation}
Let $\psi : F \rightarrow \mathbb{C}^{\times}$ be a non-trivial additive character and let $\omega = \omega_{\psi}$ denote the Weil representation. Recall the metaplectic cover
\begin{equation*}
	1 \longrightarrow \mathbb{S}^{1} \longrightarrow \widetilde{\Sp}(\mathcal{W}) \longrightarrow \Sp (\mathcal{W}) \longrightarrow 1
\end{equation*}
where $\mathcal{W}$ is the symplectic vector space $V \otimes_E W$. By assumption \eqref{assump}\footnote[1]{This assumption is not necessary when $E \neq F$, see \cite[Chapitre 3, Théorème I.1]{MVW}}, one can split the metaplectic group over the dual reductive pair $(\U(V), \U(W))$. In other words, the morphism
\begin{equation*}
	\iota : \U(V) \times \U(W) \longrightarrow \Sp(\mathcal{W})
\end{equation*}
can be lifted to the metaplectic group \textit{i.e.} there is a morphism
\begin{equation*}
	\widetilde{\iota} : \U(V) \times \U(W) \longrightarrow \widetilde{\Sp}(\mathcal{W})
\end{equation*}
such that $\iota = \text{proj} \circ \widetilde{\iota}$, where
\begin{equation*}
\text{proj} : \widetilde{\Sp}(\mathcal{W}) \longrightarrow {\Sp(\mathcal{W})}
\end{equation*}
is the natural projection. By pulling back, we can define the Weil representation of $\U(V) \times \U(W)$; namely, by setting $\omega \circ \widetilde{\iota}$. Such splittings can be constructed using only $\psi$ \footnote[2]{More precisely, they depend on an additional pair of characters $(\chi_V, \chi_W)$ of $E^{\times}$ which, thanks to assumption \eqref{assump}, can be chosen trivial.} \textit{i.e.} there are maps
\begin{equation*}
	\beta_{V, \psi} : \U(V) \longrightarrow \widetilde{\Sp}(\mathcal{W}) \quad \text{,} \quad \beta_{W, \psi} : \U(W) \longrightarrow \widetilde{\Sp}(\mathcal{W})
\end{equation*}
such that
\begin{equation*}
	\omega_{\U(V) \times \U(W)} = (\omega \circ \beta_{V, \psi}) \otimes  (\omega \circ \beta_{W, \psi})
\end{equation*}	
is a genuine representation of $\U(V) \times \U(W)$. The construction of $\beta_{V, \psi}, \beta_{W, \psi}$ can be found in \cite{KudlaSplitting} (see also \cite[§1]{HarrisTheta}).
\subsection{Level preservation}\label{sec2.5}
Let $F$ be a $p$-adic field with $p \neq 2$. Let $E$ be $F$ itself or a quadratic extension. Let $\mathcal{O}$ be the ring of integers of $F$ and  $\varpi$ be a uniformizing parameter of $F$. We assume $E/F$ is unramified, so that $\varpi$ is also a unformizing parameter for $E$.

Let $\psi : F \rightarrow \mathbb{C}^{\times}$ be a non-trivial character with the conductor $c \in \mathbb{Z}$ \textit{i.e.} $\psi_{\vert \mathfrak{p}^{c}} = 1$ and $\psi_{\vert \mathfrak{p}^{c - 1}} \neq 1$. Let $\psi^{-c} : F \rightarrow \mathbb{C}^{\times}$ be the character $\psi^{-c} (\lambda) = \psi ( \varpi^{-c} \lambda)$. Then $\psi^{-c}$ has conductor $\mathcal{O}$ and ${\omega_{\psi} \simeq \omega_{\psi^{-c}}}$ (see \cite[Proposition 1.81]{Thetabook}). Therefore, we can assume $\psi$ has conductor $\mathcal{O}$.

Let $V$ be a finite-dimensional non-degenerate Hermitian $E$-space and let $W$ be a finite-dimensional non-degenerate skew-Hermitian $E$-space. We assume $V$ and $W$ are unramified \textit{i.e.} there are self-dual lattices $L_V \subset V$ and $L_W \subset W$. For any integer $k \geq 0$, we define the principal congruence subgroup of level $k$ as
\begin{equation*}
	K_V(\varpi^{k}) = \lbrace g \in \U(V) : (g - \id) L_V \subseteq \varpi^k L_V \rbrace
\end{equation*}
and
\begin{equation*}
	K_W(\varpi^{k}) = \lbrace g \in \U(W) : (g - \id) L_W \subseteq \varpi^k L_W \rbrace.
\end{equation*}
Let $\mathcal{W}$ be the real non-degenerated symplectic vector space $V \otimes_E W$, then
\begin{equation*}
\mathcal{L} = L_V \otimes_{\mathcal{O}_E} L_W
\end{equation*}
is a self-dual lattice in $\mathcal{W}$ (where $\mathcal{O}_{E}$ is the ring of integers of $E$). Let $\omega = \omega_{\psi}$ be the Weil representation of $\widetilde{\U}(V) \times \widetilde{\U}(W)$ realized on the lattice model $S$ defined with respect to $\mathcal{L}$. This model defines a splitting of the stabilizer $K \subset \Sp(\mathcal{W})$ of $\mathcal{L}$ and we identify $K$ with its image in $\widetilde{\Sp}(\mathcal{W})$.

Let $K_V$ (resp. $K_W$) be the stabilizer of $L_V$ in $\U(V)$ (resp. of $L_W$ in $\U(W)$), then $K_V \times K_W \subset K$ and, using the splitting of $K$, we can identify $K_V$ and $K_W$ with a maximal open compact subgroup of $\widetilde{\U}(V)$ and $\widetilde{\U}(W)$ respectively.

Let $k \geq 1$ be an integer and define the subspace $S_{k} \subset S$ as the space of functions with support contained in $\varpi^{-k/2} \mathcal{L}$ if $k$ is even and $\varpi^{- (k + 1)/2} \mathcal{L}$ if $k$ is odd. Let $\mathcal{H}_W$ be the Hecke algebra of $\U(W)$. Let $k \geq 0$ be an integer. The following Theorem, generalized by Pan \cite[§6]{Pan}, is due to Waldspurger \cite[Chapitre 5, §I.4]{MVW}.
\begin{theorem}  \label{theo2.5.1}
	We have $\omega(\mathcal{H}_W) S_k = S^{K_V (\varpi^k)}$.
\end{theorem}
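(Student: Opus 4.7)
The approach is to establish the two inclusions in $\omega(\mathcal{H}_W) S_k = S^{K_V(\varpi^k)}$ separately, with the harder direction being the reverse containment.

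For the forward inclusion $\omega(\mathcal{H}_W) S_k \subseteq S^{K_V(\varpi^k)}$, I would first verify that $S_k \subseteq S^{K_V(\varpi^k)}$ via a direct computation in the lattice model. The lattice model provides a canonical splitting of the stabilizer $K \subset \Sp(\mathcal{W})$ of $\mathcal{L}$, and hence of $K_V \subseteq K$, so the action reads $(\omega(g)f)(w) = f(g^{-1}w)$ for $g \in K_V(\varpi^k)$. For $f \in S_k$ with $\supp(f) \subseteq \varpi^{-\lceil k/2 \rceil} \mathcal{L}$, the defining estimate $(g - \id)L_V \subseteq \varpi^k L_V$ shows that $\ell := g^{-1}w - w$ lies in $\mathcal{L}$ whenever $w$ is in the support. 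The quasi-periodicity $f(w + \ell) = \psi(\langle w, \ell \rangle/2) f(w)$ then reduces $K_V(\varpi^k)$-invariance to the scalar condition $\langle w, g^{-1}w - w \rangle/2 \in \mathcal{O}$, which a short calculation using the scaling $\varpi^{-\lceil k/2\rceil} \cdot \varpi^k$ confirms (since $L_V \otimes_{\mathcal{O}_E} L_W$ is self-dual and $\psi$ has conductor $\mathcal{O}$). Having established this, the preservation of $S^{K_V(\varpi^k)}$ by $\omega(\mathcal{H}_W)$ is immediate from the fact that the images of $\widetilde{\U}(V)$ and $\widetilde{\U}(W)$ commute in $\omega$ as a dual reductive pair.

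The reverse inclusion $S^{K_V(\varpi^k)} \subseteq \omega(\mathcal{H}_W) S_k$ is the substantive part. The plan is to use the Iwasawa decomposition $\U(W) = PK_W$ for the Siegel parabolic $P = MN$ attached to a Witt decomposition $W = W^+ \oplus W^-$ compatible with $L_W$. The Levi $M \simeq \GL_E(W^+)$ acts in the lattice model by dilations on the $V \otimes W^+$ factor, $N$ acts by multiplication by characters of the form $\psi\bigl(\langle \cdot , \cdot \rangle\bigr)$, and $K_W$ permutes cosets of $\mathcal{L}$. The strategy is to decompose a given $f \in S^{K_V(\varpi^k)}$ according to the $\mathcal{L}$-cosets in its support modulo $\mathcal{L}$, then exhibit each piece as the image under an element of $\mathcal{H}_W$ of a function supported on a single coset of $\varpi^{-\lceil k/2\rceil} \mathcal{L}$.

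The main obstacle is the final matching step: showing that the cosets of $\mathcal{L}$ in $\mathcal{W}$ that can support a $K_V(\varpi^k)$-fixed function are precisely those reachable as $\mathcal{H}_W$-translates of $S_k$. This reduces to an orbit analysis for the joint action of $K_V(\varpi^k)$ and $\U(W)$ on $\mathcal{W}/\mathcal{L}$, tracking the phase condition imposed by $\psi$ on the pairing with $L_V \otimes L_W$. I expect to proceed by induction on $k$, the base case $k = 0$ being the classical unramified level correspondence $\omega(\mathcal{H}_W)S_0 = S^{K_V}$, and the inductive step using the Hecke operators associated to minuscule coweights of $\GL_E(W^+)$ to enlarge the support by one power of $\varpi$ at a time. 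The assumption that both $\dim_E V$ and $\dim_E W$ are even is essential here, as it is what makes the splitting $\widetilde{\iota}$ of $\U(V) \times \U(W)$ into the metaplectic group canonical, so that the Hecke algebra action and the identifications above are unambiguous.
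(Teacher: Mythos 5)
The paper does not prove Theorem~\ref{theo2.5.1}; it is quoted directly from Waldspurger \cite[Chapitre 5, \S I.4]{MVW} (as extended by Pan). So there is no in-paper argument to compare with, and your proposal stands or falls on its own.

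The forward inclusion already contains a computational error. With $S_k$ defined (as in the paper) to have support in $\varpi^{-\lceil k/2 \rceil}\mathcal{L}$, the scaling you invoke gives, for $w \in \varpi^{-\lceil k/2 \rceil}\mathcal{L}$ and $g \in K_V(\varpi^k)$, that $\ell := g^{-1}w - w$ lies in $\varpi^{k - \lceil k/2 \rceil}\mathcal{L} = \varpi^{\lfloor k/2 \rfloor}\mathcal{L}$, and hence
\begin{equation*}
\langle w, \ell \rangle \in \varpi^{\lfloor k/2 \rfloor - \lceil k/2 \rceil}\mathcal{O},
\end{equation*}
which is $\mathcal{O}$ when $k$ is even but only $\varpi^{-1}\mathcal{O}$ when $k$ is odd. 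This defect is genuine, not an artefact of a loose bound: taking $F = \mathbb{Q}_p$, $V = F^2$ with the standard symmetric form, $W = F^2$ symplectic, $\mathcal{L} = \mathcal{O}^4$, and $g \in K_V(\varpi)$ with $g - \mathrm{id}$ equal to $\left(\begin{smallmatrix} a & -\varpi \\ \varpi & a \end{smallmatrix}\right)$ for $a \in \varpi^2\mathcal{O}$ determined by $g^Tg = \mathrm{id}$, one computes for $w' = e_1 \otimes f_1 + e_2 \otimes f_2$ that $\langle w', (g^{-1}-\mathrm{id})w' \rangle = 2\varpi$ exactly, so $\langle \varpi^{-1}w', \ell \rangle /2 = \varpi^{-1} \notin \mathcal{O}$ and $\psi$ of it is a nontrivial root of unity. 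So the ``short calculation'' does not confirm $S_k \subseteq S^{K_V(\varpi^k)}$ for odd $k$ under the support exponent you (and the paper) use. The inclusion does go through cleanly with support exponent $\lfloor k/2 \rfloor$ rather than $\lceil k/2 \rceil$ --- which strongly suggests the paper's definition of $S_k$ for odd $k$ carries a sign typo, and also that you reproduced it without actually performing the valuation check you claim confirms it.

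For the reverse inclusion the proposal stops at a research plan rather than a proof. You correctly identify that the work lies in matching $\mathcal{L}$-cosets reachable from $S_k$ under $\mathcal{H}_W$ against those that can support a $K_V(\varpi^k)$-fixed vector, but the orbit analysis, the phase bookkeeping, and the inductive step via minuscule Hecke operators are all left as statements of intent. The ``base case'' $\omega(\mathcal{H}_W)S_0 = S^{K_V}$ is moreover not an elementary fact one can wave at: it is essentially the depth-zero instance of the very theorem being proved, and in \cite{MVW} its proof already requires the structure theory of the Weil representation restricted to a dual pair over $\mathcal{O}$, not merely the Iwasawa decomposition of $\U(W)$. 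As written, the hard half of the theorem remains entirely open in the proposal; combined with the arithmetic slip in the easy half, this is not yet a proof.
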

Let $\pi_W \in \Irr(\U(W))$, let $k \geq 0$ be an integer and consider the $\U(V) \times \U(W)$-projection
\begin{equation*}
	\pi_W \otimes \omega \longrightarrow \Theta(\pi_W ; V) = (\pi_W \otimes \omega)_{\U(V)}
\end{equation*}
onto co-$\U(V)$-invariants. Assume $\Theta(\pi_W ; V) \neq 0$, let $\theta(\pi_W ; V)$ be the maximal semi-simple quotient of $\Theta(\pi_W ; V)$. Composing the projection $\pi_W \otimes \omega \rightarrow \Theta(\pi_W ; V)$ with the projection $\Theta(\pi_W ; V) \rightarrow \theta(\pi_W ; V)$, we obtain the $\U(V) \times \U(W)$-projection
\begin{equation*}
	\pi_W \otimes \omega \longrightarrow \theta(\pi_W ; V).
\end{equation*}
By applying the idempotent $e_{K_V(\mathfrak{p}^k)} = \vol (K_V(\mathfrak{p}^k))^{-1} \mathbf{1}_{K_V(\mathfrak{p}^k)}$ of the Hecke algebra of $\U(V)$ to the above projection, we obtain a surjective map
\begin{equation*}
	\pi_W \otimes \omega^{K_V(\mathfrak{p}^k)} \longrightarrow \theta(\pi_W ; V)^{K_V(\mathfrak{p}^k)}.
\end{equation*}
Applying Theorem \ref{theo2.5.1}, we deduce $\pi_W \otimes \omega(\mathcal{H}_W) S_k \twoheadrightarrow \theta(\pi_W ; V)^{K_V(\mathfrak{p}^k)}$ and, following \cite[§3]{Cossutta}, we deduce a surjective map
\begin{equation*}
	\pi_W^{K_W(\mathfrak{p}^k)} \otimes S_k  \longrightarrow \theta(\pi_W ; V)^{K_V(\mathfrak{p}^k)}.
\end{equation*}
Hence, if $\pi_W \in \Irr(\U(W))$ and if $\theta(\pi_W ; V)^{K_V(\mathfrak{p}^k)} \neq 0$, then $\pi_W^{K_W(\mathfrak{p}^k)} \neq 0$.\\
Conversely, we can reverse the role of $V$ and $W$ to obtain a surjective map
\begin{equation*}
	\pi_V^{K_V(\mathfrak{p}^k)} \otimes S_k  \longrightarrow \theta(\pi_V ; W)^{K_W(\mathfrak{p}^k)}.
\end{equation*}
We may draw the important conclusion that, if $\pi_V \in \Irr(\U(V))$ and $\pi_W \in \Irr(\U(W))$ occur in the local theta correspondence, then $\pi_V^{K_V(\mathfrak{p}^k)} \neq 0$ if and only if $\pi_W^{K_W(\mathfrak{p}^k)} \neq 0$.
\subsection{Archimedean theta correspondence} \label{sec3.7}
We now address the local archimedean theta correspondence. The construction of the Weil representation is analogous to the non-archimedean case and may be found in \cite{HoweTrans}.

Let $d \geq 1$ be an integer and let $(G,G')$ be an irreducible dual reductive pair in the symplectic group $\Sp_{2d}(\mathbb{R})$. Let $\widetilde{\Sp}_{2d}(\mathbb{R})$ be the $\mathbb{S}^1$-cover of $\Sp_{2d}(\mathbb{R})$ and let $\widetilde{G}$ and $\widetilde{G}'$ denote the inverse image of $G$ and $G$' respectively by the covering map ${\widetilde{\Sp}_{2d}(\mathbb{R}) \longrightarrow \Sp_{2d}(\mathbb{R})}$.

Let $\psi : \mathbb{R} \rightarrow \mathbb{C}^{\times}$ be a non-trivial additive character and let $\omega = \omega_{\psi}$ be the Weil representation, associated to $\psi$, of $\widetilde{\Sp}_{2d}(\mathbb{R})$. Let $\omega^{\infty}$ be the smooth representation of $\widetilde{\Sp}_{2d}(\mathbb{R})$ associated to $\omega$. Since $\widetilde{G}$ and $\widetilde{G}'$ commute with one another, we may restrict $\omega^{\infty}$ to $\widetilde{G} \cdot \widetilde{G}'$ to obtain a representation of $\widetilde{G} \times \widetilde{G}'$.

Let $\Irr(\widetilde{G})$ denote set of (infinitesimal classes of) continuous irreducible admissible representations $\pi$ of $\widetilde{G}$ (on locally convex spaces) and $\mathcal{R}(\widetilde{G}) \subset \Irr(\widetilde{G})$ denote the subset of $\pi$'s such that
\begin{equation*}
	\Hom_{\widetilde{G}}(\omega^{\infty} , \pi) \neq 0.
\end{equation*}
For $\pi \in \mathcal{R}(\widetilde{G})$, define
\begin{equation*}
	\omega^{\infty} [\pi] = \bigcap_{\phi \in \Hom_{\widetilde{G}} (\omega^{\infty}, \pi)} \ker (\phi)
\end{equation*}
and set $\omega^{\infty} (\pi) = \omega^{\infty} / \omega^{\infty} [\pi]$. Then $\omega^{\infty} (\pi)$ is a smooth representation of $\widetilde{G} \cdot \widetilde{G}'$ of the form
\begin{equation*}
	\pi \otimes \Theta(\pi)
\end{equation*}
for some smooth representation $\Theta(\pi)$ of $\widetilde{G}'$. Howe \cite{HoweTrans} proved $\Theta(\pi)$ is finitely generated, admissible and quasi-simple. Furthermore, it admits a unique irreducible sub-$\widetilde{G}'$-quotient $\theta(\pi')$ and the correspondence $\pi \mapsto \theta(\pi')$ defines a bijection from $\mathcal{R}(\widetilde{G})$ to $\mathcal{R}(\widetilde{G}')$, where
\begin{equation*}
	\mathcal{R}(\widetilde{G}') = \lbrace \pi' \in \Irr(\widetilde{G}') : \Hom_{\widetilde{G}'} (\omega^{\infty}, \pi') \neq 0 \rbrace.
\end{equation*}
The algebraic version of this result, that is for $(\mathfrak{g}, K)$-modules, also holds (see \cite[Theorem 2.1]{HoweTrans}).
\subsubsection{Embeddings and splittings}
\subsubsection*{Case $E = F = \mathbb{R}$}
Let $n \geq m \geq 1$ be integers. Let $V$ be a non-degenerate orthogonal space of signature $(n,m)$ and let $W$ be a symplectic space of dimension $2m$. The tensor product $\mathcal{W} = V \otimes W$ is naturally a symplectic space by the rule
\begin{equation*}
	\langle v_1 \otimes w_1 , v_2 \otimes w_2 \rangle_{\mathcal{W}} = \langle v_1, v_2 \rangle_V \langle v_2, w_2 \rangle_W.
\end{equation*}
Let $W = X \oplus X^*$ be a complete polarization of $W$, then $\mathcal{W} = \mathcal{X} \oplus \mathcal{Y} = (V \otimes X) \oplus (V \otimes X^*)$ is a complete polarization of $\mathcal{W}$ and we write elements of $\Sp(\mathcal{W})$ as $\langle \cdot , \cdot \rangle_{\mathcal{W}}$-preserving matrices
\begin{equation*}
	G = \begin{pmatrix} A & B \\ C & D \end{pmatrix}
\end{equation*}
for $A \in \End ( \mathcal{X})$, $B \in \Hom( \mathcal{Y}, \mathcal{X})$, $C \in \Hom( \mathcal{X},  \mathcal{Y})$ and $D \in \End( \mathcal{Y})$. Then $\Ortho(V)$ acts on $\mathcal{W}$ by sending $v \otimes w$ to $g \cdot \otimes w$ and this defines an embedding $G \rightarrow \Sp(\mathcal{W})$. The group $\Sp(W)$ embeds in $\Sp(\mathcal{W})$ by the rule
\begin{equation*}
	g = \begin{pmatrix} a & b \\ c & d \end{pmatrix} \mapsto G = \begin{pmatrix} \id_V \otimes a & \id_V \otimes b \\ \id_V \otimes c & \id_V \otimes d \end{pmatrix}
\end{equation*}
for $g \in \Sp(W)$. This way, we obtain an embedding
\begin{equation*}
\iota_V \otimes \iota_W : \Ortho(V) \times \Sp(W) \longrightarrow \Sp(\mathcal{W}) \simeq \Sp_{2(n+m)m}(\mathbb{R})
\end{equation*}
and $\iota_V (\Ortho(V))$, $\iota_W(\Sp(W))$ are mutual commutants in $\Sp(\mathcal{W})$. If $n + m$ is even, then there are maps ${\beta_V} : \Ortho(V) \rightarrow \mathbb{S}^1$, ${\beta_W} : \Sp(W) \rightarrow \mathbb{S}^1$ such that \cite[Corollary 3.2, case 1]{KudlaSplitting}
\begin{equation*}
	\widetilde{\iota_V} \otimes \widetilde{\iota_W} = (\iota_V, \beta_V) \otimes (\iota_W, \beta_W) :  \Ortho(V) \times \Sp(W) \longrightarrow \widetilde{\Sp(W)}
\end{equation*}
lift $\iota_V \otimes \iota_W$.
\subsubsection*{Case $E = \mathbb{C}$, $F = \mathbb{R}$}
Let $n \geq m \geq 1$ be integers. Let $V$ be a non-degenerate Hermitian space of signature $(n,m)$ and let $W$ be a skew-Hermitian space signature $(m,m)$. The tensor product $\mathcal{W} = V \otimes_{\mathbb{C}} W$ is naturally a skew-Hermitian space by the rule
\begin{equation*}
	\langle v_1 \otimes w_1 , v_2 \otimes w_2 \rangle_{\mathcal{W}} = \langle v_1, v_2 \rangle_V \langle v_2, w_2 \rangle_W.
\end{equation*}
Choose bases of $V$, $W$, and $\mathcal{W}$ in such way that $\langle \cdot , \cdot \rangle_V$ is given by the matrix
\begin{equation*}
	I_{n,m} = \begin{pmatrix} \Id_{n} & \\ & -\Id_m \end{pmatrix}
\end{equation*}
$\langle \cdot , \cdot \rangle_W$ is given by the matrix $i I_{m,m}$ (where $i = \sqrt{-1}$) and $\langle \cdot , \cdot \rangle_{\mathcal{W}}$ is given by the matrix $\mathcal{J} = i J$ for some real matrix $J$ such that $J^2 = \Id_{2(n+m)m}$. Then, the map
\begin{equation*}
	\iota_V \otimes \iota_W : \U(V) \times \U(W) \longrightarrow \U(\mathcal{W})
\end{equation*}
given by $\iota_V (g) (v \otimes w) = g \cdot v \otimes w$ and $\iota_W (g') (v \otimes w) = v \otimes g' \cdot w$ is an embedding. Write $G \in \U(\mathcal{W})$ as $G = A + i B$ with $A,B \in \GL_{2(n+m)m}(\mathbb{R})$ and let ${\nu_J : \U(\mathcal{W}) \rightarrow \Sp_{4(n+m)m} (\mathbb{R})}$ be defined by
\begin{equation*}
	\nu_J (G) = \begin{pmatrix} \Id_{2(n+m)m} & \\ & J \end{pmatrix} \begin{pmatrix} A & B \\ -B & A \end{pmatrix} \begin{pmatrix} \Id_{2(n+m)m} & \\ & J \end{pmatrix}.
\end{equation*}
Then
\begin{equation*}
	(\nu_J \circ \iota_V) \otimes (\nu_J \circ \iota_W) : \U(V) \times \U(W) \longrightarrow \Sp (V \otimes_{\mathbb{R}} W) \simeq \Sp_{4(n+m)m}(\mathbb{R})
\end{equation*}
is an embedding and $(\nu_J \circ \iota_V)(\U(V))$, $(\nu_J \circ \iota_W) (\U(W))$ are mutual commutant in ${\Sp (V \otimes_{\mathbb{R}} W)}$. Moreover, there are maps ${\beta_V} : \U(V) \rightarrow \mathbb{S}^1$, ${\beta_W} : \U(W) \rightarrow \mathbb{S}^1$ such that \cite[Corollary 3.2, case 3]{KudlaSplitting}
\begin{equation*}
	\widetilde{\iota_V} \otimes \widetilde{\iota_W} = \big((\nu_J \circ \iota_V), \beta_V \big) \otimes \big( (\nu_J \circ \iota_W), \beta_W \big) :  \U(V) \times \U(W) \longrightarrow \widetilde{\Sp} (V \otimes_{\mathbb{R}} W)
\end{equation*}
lift $(\nu_J \circ \iota_V) \otimes (\nu_J \circ \iota_W)$.
\subsubsection{Tempered spherical transfer}
Let $n > m \geq 1$ be integers such that $n + m$ is even and let $\psi : \mathbb{R} \rightarrow \mathbb{C}^{\times}$ be the non-trivial additive character $\psi(x) = \exp(2\pi i x)$. The tempered spherical transfer has already been studied in \cite[§8.4]{BM2} for the reductive dual pair
\begin{equation*}
(G,G') = (\Ortho(n,m), \Sp_{2m}(\mathbb{R}))
\end{equation*}
in $\Sp_{2(n+m)m}(\mathbb{R})$. We summarize their result in Proposition \ref{prop3.6.1}. Recall $\Irr(G)$ stands for the set of (infinitesimal equivalence classes of) continuous irreducible admissible representations of $G$. For any positive integer $m'$, we let $W_{m'}$ be a symplectic space of dimension $2m'$ and we choose the standard coordinates on $W_{m'}$, so that ${\Sp(W_{m'}) = \Sp_{2m'}(\mathbb{R})}$. For any $\pi \in \Irr(G)$ we let $\theta(\pi ; W_{m'}) \in \Irr(\U(m',m'))$ be the theta lift of $\pi$ to $\Sp(W'_m)$. When $m' = m$, $\Ortho(n,m)$ and $\Sp_{2m}(\mathbb{R})$ both have rank $m$ and we can identify $\mathfrak{a}^*_{\mathbb{C}}$ with ${\mathfrak{a}'}^*_{\mathbb{C}}$.
\begin{proposition} \label{prop3.6.1}
Let $\pi \in \Irr(\Ortho(n,m))$ be tempered, spherical with spectral parameter ${\lambda \in i \mathfrak{a}^*}$. Then $\theta(\pi ; W_{m'}) = 0$ for all $1 \leq m' < m$ and if
\begin{equation*}
\pi' = \theta(\pi ; W_{m}) \neq 0
\end{equation*}
then $\pi' \in \Irr(\Sp_{2m}(\mathbb{R}))$ is $\det^{(n-m)/2}$-spherical with spectral parameter $\lambda$.

If ${\mathbf{1} \in \Irr(\Ortho(n+m))}$ is the trivial representation and if
\begin{equation*}
\pi' = \theta(\mathbf{1} ; W_{m}) \neq 0
\end{equation*}
then $\pi' \in \Irr(\Sp_{2m}(\mathbb{R}))$ is $\det^{(n + m)/2}$-spherical with spectral parameter
\begin{equation*}
\Big( \frac{n + m}{2} - 1, \ldots, \frac{n + m}{2} - m \Big).
\end{equation*}
\end{proposition}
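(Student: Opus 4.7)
The plan is to closely follow the arguments of \cite[§8.4]{BM2}, which establish the first assertion for this orthogonal--symplectic dual pair, and to extend those techniques to the lift of the trivial representation of $\Ortho(n+m)$.

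First I would address the vanishing $\theta(\pi;W_{m'})=0$ for $m' < m$. Since $\pi$ is tempered spherical on $\Ortho(n,m)$, it is fully induced from a unitary character of the minimal parabolic $P = MAN$, and one may show by an archimedean first-occurrence argument (via the conservation relation, or Moeglin's description of the archimedean correspondence for tempered representations) that the first occurrence of $\pi$ in the symplectic Witt tower is at $W_m$, matching the common real rank $m$ of $\Ortho(n,m)$ and $\Sp_{2m}(\mathbb{R})$. Once this is established, for $\pi' = \theta(\pi; W_m) \neq 0$ the spectral parameter is obtained by matching infinitesimal characters on both sides through the action of the Casimirs on the Weil representation (equivalently, via Przebinda's theta-correspondence formula for infinitesimal characters); since $\Sp_{2m}(\mathbb{R})$ is split, Corollary \ref{coro3.3.2} identifies the Harish--Chandra parameter with the spectral parameter, yielding $\lambda$. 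The $K'$-type is then read off from the restriction of $\omega$ to the joint action of $\Ortho(n) \times \Ortho(m)$ and $\U(m)$: the asymmetry $n \neq m$ between the positive and negative parts of $V$, combined with Kudla's splitting formula \cite{KudlaSplitting}, produces the $\U(m)$-character $\det^{(n-m)/2}$ on the trivial $\Ortho(n) \times \Ortho(m)$-isotypic component.

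Second, I would treat the lift of the trivial representation. Since $\Ortho(n+m)$ is compact, the Weil representation $\omega$ is completely reducible as an $\Ortho(n+m)$-module, and $\omega^{\Ortho(n+m)}$ is precisely the $\mathbf{1}$-isotypic component, on which $\widetilde{\Sp}_{2m}(\mathbb{R})$ acts irreducibly. I would realize $\omega$ in the Fock model, so that $\omega^{\Ortho(n+m)}$ identifies with the $\Ortho(n+m)$-invariants in a polynomial Fock space; this invariant subspace is generated by the vacuum vector together with its $\Sp_{2m}(\mathbb{R})$-translates, and the vacuum transforms under the $\U(m)$-action by the character of weight $-(n+m)/2$ in each coordinate. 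After applying Kudla's splitting $\beta_W$ this becomes a genuine $\Sp_{2m}(\mathbb{R})$-representation with lowest $K'$-type $\det^{(n+m)/2}$. The resulting representation is a holomorphic lowest-weight module, whose Harish--Chandra parameter (the $\rho$-shift of the lowest $K'$-type) is the standard expression $(k-1, k-2, \ldots, k-m)$ with $k = (n+m)/2$; Corollary \ref{coro3.3.2} then identifies this with the spectral parameter.

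The main obstacle I anticipate is the bookkeeping of the splittings $\beta_V, \beta_W$ from Section \ref{sec2.4} and the parity of the various twists: a miscount by an integer in the $\U(m)$-character would shift the exponent of $\det$ in either assertion, and a careful comparison with \cite{KudlaSplitting} for both the indefinite dual pair $(V,W)$ and the definite dual pair of rank $n+m$ is required. Once these normalizations are pinned down, the spectral parameter identifications in both parts follow formally from Corollary \ref{coro3.3.2} together with the explicit Fock-model computations, and the first-occurrence step reduces to checking that a tempered spherical $\pi$ does not occur before $m' = m$.
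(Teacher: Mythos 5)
The paper does not supply an independent proof of Proposition~\ref{prop3.6.1}; it is quoted from \cite[\S 8.4]{BM2}. The paper does, however, prove the analogous unitary statement, Proposition~\ref{prop3.6.2}, and that proof is the natural comparison point for your argument.

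Two of your three ingredients coincide with the paper's treatment of Proposition~\ref{prop3.6.2}: the spectral parameter of $\pi'$ is obtained by matching Harish--Chandra parameters through Przebinda's relation $\omega_{\pi} = \iota(\omega_{\pi'}) + \mathfrak{z}_{m'}$ and then translating back via Corollary~\ref{coro3.3.2}, and the identification of the $K'$-type is a computation (your Fock-model route versus the paper's citation to Paul and Kashiwara--Vergne in the unitary case). The genuine divergence is the vanishing step. The paper deduces $\theta(\pi;W_{m'})=0$ for $m'<m$ as a byproduct of the same Harish--Chandra parameter relation: if $\theta(\pi;W_{m'})\neq 0$ then $\lambda = \omega_{\pi} - \rho_{\mathfrak{m}} = \iota(\omega_{\pi'}) + \mathfrak{z}_{m'} - \rho_{\mathfrak{m}}$ is forced to carry non-zero real entries coming from $\mathfrak{z}_{m'}-\rho_{\mathfrak{m}}$, contradicting $\lambda\in i\mathfrak{a}^*$. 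No independent first-occurrence theorem is needed, and the vanishing and spectral-parameter assertions fall out of a single computation. You instead appeal to a first-occurrence argument via the conservation relation or Mœglin's tempered classification, which splits those two steps and invokes heavier machinery. Moreover, the conservation relation for $(\Ortho,\Sp)$ dual pairs, as usually stated, constrains the two orthogonal Witt towers attached to a fixed symplectic representation (there being only one symplectic tower), so it does not by itself bound the first occurrence of a given orthogonal representation in the symplectic tower; you would have to fall back on Mœglin's description, which the Przebinda-parameter argument renders unnecessary.

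Your Fock-model treatment of $\theta(\mathbf{1};W_m)$ is a valid alternative to the paper's (which simply re-applies $\rho = \iota(\omega_{\pi'}) + \mathfrak{z}_m$) and does produce the correct Harish--Chandra parameter. But the concern you flag about pinning down the splittings $\beta_V,\beta_W$ is precisely where the Fock-model route costs more than the citation-based one: that bookkeeping is exactly what \cite{KudlaSplitting} and \cite{Przebinda} (respectively \cite{PaulUnitary} in the unitary case) have already done, and redoing it from the vacuum-vector normalization is where a parity slip in the exponent of $\det$ would most plausibly enter.
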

In the sequel we prove the analog of Proposition \ref{prop3.6.1} for the reductive dual pair
\begin{equation*}
	(G,G') = (\U(n,m), \U(m,m))
\end{equation*}
in $\Sp_{4(n+m)m}(\mathbb{R})$. For any positive integer $m'$, we let $W_{m'}$ be a split skew-Hermitian $E$-space of dimension $2m'$ and we choose the standard coordinates on $W_{m'}$, so that ${\U(W_{m'}) = \U(m',m')}$. For any $\pi \in \Irr(\U(n,m))$, we let $\theta(\pi ; W_{m'}) \in \Irr(\U(m',m'))$ be the theta lift of $\pi$. When $m' = m$, $\U(W_m) = \U(m,m)$ so that $\U(n,m)$ and $\U(m,m)$ have same rank $m$; therefore, we can identify $\mathfrak{a}^*_{\mathbb{C}}$ with ${\mathfrak{a}'}^*_{\mathbb{C}}$.
\begin{proposition} \label{prop3.6.2}
Let $\pi \in \Irr(\U(n,m))$ be tempered, spherical with spectral parameter $\lambda \in i \mathfrak{a}^*$. Then $\theta(\pi ; W_{m'}) = 0$ for all $1 \leq m' < m$ and, if
\begin{equation*}
\pi' = \theta(\pi ; W_{m}) \neq 0
\end{equation*}
then $\pi' \in \Irr(\U(m,m))$ is $\det^{(n-m)/2} \otimes \det^{(m-n) / 2}$-spherical with spectral parameter $\lambda$. If ${\mathbf{1} \in \Irr(\U(n+m))}$ is the trivial representation and if
\begin{equation*}
\pi' = \theta(\mathbf{1} ; W_{m}) \neq 0
\end{equation*}
then $\pi' \in \Irr(\U(m,m))$ is $\det^{(n+m)/2} \otimes \det^{-(n+m)/2}$-spherical with spectral parameter
\begin{equation*}
	\Big( \dfrac{3m + n - 1}{2}, \ldots, \frac{n - m + 1}{2}   \Big).
\end{equation*}
\end{proposition}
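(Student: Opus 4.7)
The plan is to parallel the proof of Proposition \ref{prop3.6.1} given in \cite[§8.4]{BM2}, transposing the argument from the orthogonal-symplectic setting to the unitary dual pair $(\U(n,m), \U(m,m))$ realized through the embedding and splitting described earlier in Section \ref{sec3.7}. The proof divides into three tasks: (i) vanishing of $\theta(\pi; W_{m'})$ for $1 \leq m' < m$; (ii) preservation of the spectral parameter when $m' = m$; and (iii) identification of the $K'$-type of the lift.

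For task (i), I invoke the conservation (or tower) relation for the archimedean theta correspondence in the unitary setting: a tempered spherical $\pi$ with spectral parameter in $i\mathfrak{a}^*$ has Harish-Chandra parameter generic enough to force the first non-vanishing lift in the Witt tower of split skew-Hermitian $E$-spaces to occur at $\dim_E W = 2m$. This mirrors the argument used in \cite[§8.4]{BM2} and reflects the fact that $\pi$ is not associated with a smaller Witt index. For task (ii), since $\U(n,m)$ and $\U(m,m)$ share the same real rank $m$, one identifies $\mathfrak{a}^*_{\mathbb{C}} \simeq {\mathfrak{a}'}^*_{\mathbb{C}}$; by Lemma \ref{lem3.3.1}, $\pi$ has Harish-Chandra parameter $\rho_{\mathfrak{m}} + \lambda$ while $\pi'$ has parameter $\rho_{\mathfrak{m}'} + \lambda'$. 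The transfer of infinitesimal characters across the archimedean theta correspondence, normalized via the splittings $\beta_V, \beta_W$ from Section \ref{sec2.4} (see \cite{KudlaSplitting}), then forces $\lambda' = \lambda$.

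The main obstacle is task (iii), namely pinning down the character $\det^{(n-m)/2} \otimes \det^{(m-n)/2}$ of $\U(m) \times \U(m)$. The strategy is to pass to the Fock model of the Weil representation, where the joint action of the maximal compacts $\U(n) \times \U(m) \subset \U(n,m)$ and $\U(m) \times \U(m) \subset \U(m,m)$ is simultaneously diagonalizable. The spherical vector of $\pi$ has trivial $\U(n) \times \U(m)$-type, so Howe duality applied to the compact dual pair $(\U(n+m), \U(m,m))$ sitting inside $\Sp(V \otimes_{\mathbb{R}} W)$ guarantees that its image transforms by a single character $\det^a \otimes \det^b$ of $\U(m) \times \U(m)$. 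A direct computation in the Fock model, combined with careful bookkeeping of the splitting twists $\beta_V$ and $\beta_W$ (whose parity constraints follow from the hypothesis $n+m$ even), fixes $(a,b) = ((n-m)/2, (m-n)/2)$.

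For the final claim concerning the lift of $\mathbf{1} \in \Irr(\U(n+m))$, I use the seesaw pair $(\U(n+m), \U(m,m))$ versus $(\U(n,m), \U(m) \times \U(m))$: the theta lift of the trivial representation of the compact group $\U(n+m)$ is a degenerate unitary highest weight module of $\U(m,m)$ realized explicitly as a submodule of the Weil representation. Reading off its minimal $K'$-type and Harish-Chandra parameter from this explicit realization yields the character $\det^{(n+m)/2} \otimes \det^{-(n+m)/2}$ and the spectral parameter $\big(\frac{3m+n-1}{2}, \ldots, \frac{n-m+1}{2}\big)$.
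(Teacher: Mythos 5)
Your three-part division is sensible, and tasks (iii) and the trivial-representation case do align with the paper's references (Paul's Lemma~1.4.5 quoting Kashiwara--Vergne for the $K'$-type, and an explicit computation for $\theta(\mathbf{1})$). But tasks (i) and (ii) paper over the actual mechanism, and the proof would not go through as you have sketched it.

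For task (i), appealing to a ``conservation/tower relation'' does not establish the vanishing. Conservation relations (Kudla--Rallis, Sun--Zhu) constrain the sum of first-occurrence indices across a pair of Witt towers; they do not, on their own, tell you that a tempered spherical $\pi$ first occurs precisely at $W_m$. What the paper actually does is purely computational: it applies Przebinda's Harish-Chandra parameter transfer $\omega_{\pi} = \iota(\omega_{\pi'}) + \mathfrak{z}_{m'}$ together with Lemma~\ref{lem3.3.1} (so $\omega_\pi = \rho_\mathfrak{m} + \lambda$), solves for $\lambda$, and observes that when $m' < m$ the resulting $\lambda$ necessarily has nonzero real entries $\frac{n+3m-4m'-1}{2}, \ldots, \frac{n-m+1}{2}$ and hence cannot lie in $i\mathfrak{a}^*$. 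The vanishing is a contrapositive of an explicit formula, not a consequence of a tower principle. Without this computation your argument has a genuine hole.

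For task (ii), ``transfer of infinitesimal characters forces $\lambda' = \lambda$'' hides the step that makes everything work: the identity $\rho_{\mathfrak{m}} = \mathfrak{z}_m$ for $\U(n,m)$. Przebinda's formula gives $\omega_\pi = \iota(\omega_{\pi'}) + \mathfrak{z}_m$, Lemma~\ref{lem3.3.1} gives $\omega_\pi = \rho_{\mathfrak{m}} + \lambda$, and since $\U(m,m)$ is split Corollary~\ref{coro3.3.2} gives that the Harish-Chandra parameter of $\pi'$ equals its spectral parameter. It is only because $\rho_\mathfrak{m} = \mathfrak{z}_m$ that these cancel and leave $\omega_{\pi'} = \lambda$. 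You must verify this equality explicitly; ``normalization via the splittings $\beta_V, \beta_W$'' does not substitute for it, as the splitting characters affect the $K'$-type but not the (untwisted) infinitesimal character bookkeeping on the compact Cartan. One also needs \cite[Theorem 1.1]{ZhuRep} to ensure that the Harish-Chandra parameter together with the $K'$-type pins down $\pi'$ uniquely, which your sketch omits.

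In short: the $K'$-type identification via the Fock model is defensible (it is essentially what the cited references do), but the vanishing and the preservation of the spectral parameter must be done by the explicit Harish-Chandra parameter computation with Przebinda's $\mathfrak{z}_{m'}$ shift and the identity $\rho_{\mathfrak{m}} = \mathfrak{z}_m$, not by the qualitative principles you invoke.
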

\begin{proof}
The statement about $K$-types may be directly extracted from \cite[Lemma 1.4.5]{PaulUnitary} which, in turn, quotes the work of Kashiwara and Vergne \cite{KashiwaraVergne}. We refer to Lemma \ref{lem3.3.1} for the construction of such representations and note that $\mathfrak{g}_{\mathbb{C}}$ (resp. $\mathfrak{g}'_{\mathbb{C}}$) is the Lie algebra of $\GL_{n+m}(\mathbb{C})$ (resp. $\GL_{2m'}(\mathbb{C})$). Then, we choose Cartan subalgebras $\mathfrak{h}_{\mathbb{C}}$ and $\mathfrak{h}'_{\mathbb{C}}$ of $\mathfrak{g}_{\mathbb{C}}$ and $\mathfrak{g}'_{\mathbb{C}}$ respectively. We let $e_1, \ldots , e_{n + m}$ be the standard basis of $\mathfrak{h}^*_{\mathbb{C}}$, $e'_1, \ldots , e'_{2m'}$ be the standard basis of ${\mathfrak{h}'}^*_{\mathbb{C}}$ and ${\iota : {\mathfrak{h}'}^*_{\mathbb{C}} \rightarrow  {\mathfrak{h}}^*_{\mathbb{C}}}$ be the natural embedding; namely $\iota(e_i') = e_i$. Following \cite[(1.17)]{Przebinda} we set
\begin{align*}
	\mathfrak{z}_{m'} &= \sum_{i = 2m' + 1}^{n + m} \Big( \frac{2m' + 1 + n + m}{2} - i \Big) e_i \\
			    &= \Big( \underbrace{0 , \ldots, 0}_{2m'} , \frac{n + m - 2m' - 1}{2} , \ldots,  \frac{2m' + 1 - n - m}{2} \Big).
\end{align*}
Recall the Harish-Chandra parameter of an irreducible representation $\pi$ of a compact Lie group $G$ is $\lambda_{\pi} + \rho$, with $\lambda_{\pi}$ the highest weight of $\pi$ and $\rho$ the half-sum of positive roots (see \cite[Chapter VIII, §6]{Knapp}). If $\mathbf{1}$ is the trivial representation of $\U(n+m)$ and if ${\pi' = \theta(\mathbf{1} ; W_m) \neq 0}$, then the Harish-Chandra parameter $\omega_{\pi'}$ of $\pi'$ satisfies the relation \cite[Theorem 1.19]{Przebinda}
\begin{equation*}
	\rho = \iota(\omega_{\pi'}) + \mathfrak{z}_{m}.
\end{equation*}
Hence, the Harish-Chandra parameter of $\pi'$ satisfies
\begin{equation*}
\iota(\omega_{\pi'}) = \sum_{i = 1}^{n + m} \Big( \frac{3m + n + 1}{2} - i \Big) e_i - \sum_{i = 2m + 1}^{n + m} \Big( \frac{3m + n + 1}{2} - i \Big) e_i
\end{equation*}
that is $\omega_{\pi'} =  \sum_{i = 1}^{2m} \big( \frac{3m + n + 1}{2} - i \big) e_i$. We now quote \cite[Theorem 1.1]{ZhuRep} to ensure the infinitesimal class of $\pi$' is uniquely determined. In particular, since $\U(m,m)$ is split, the spectral parameter of $\pi'$ is (see Corollary \ref{coro3.3.2})
\begin{equation*}
	\Big( \dfrac{3m + n - 1}{2}, \ldots, \frac{n - m + 1}{2}   \Big).
\end{equation*}
Let $\pi \in \Irr(\U(n,m))$ with spectral parameter $\lambda \in \mathfrak{a}^*_{\mathbb{C}}$, let $1 \leq m' \leq m'$ and assume $\pi' \overset{\defin}{=} \theta(\pi ; W_{m'}) \neq 0$. Let $\omega_{\pi}$ and $\omega_{\pi'}$ denote the Harish-Chandra parameters of $\pi$ and $\pi'$ respectively, then (see \cite[Theorem 1.19]{Przebinda})
\begin{equation*}
	\omega_{\pi} = \iota (\omega_{\pi'}) + \mathfrak{z}_{m'}.
\end{equation*}
By Lemma \ref{lem3.3.1}, we have $\omega_{\pi} = \rho_{\mathfrak{m}} + \lambda$. In particular
\begin{align*}
	\lambda    &= \iota (\omega_{\pi'}) + \mathfrak{z}_{m'} - \rho_{\mathfrak{m}} \\
			&= \Big( \iota (\omega_{\pi'})_1, \ldots, \iota (\omega_{\pi'})_{2m'} , \frac{n + 3m - 4m' - 1}{2}, \ldots, \frac{n - m + 1}{2}, 0, \ldots, 0 \Big).
\end{align*}
Thus, if $1 \leq m' < m$ and $\theta(\pi ; W_{m'}) \neq 0$, then $\lambda \notin i \mathfrak{a}^*$. By contrapositive this proves that, if $\pi$ is tempered, then its theta lift to $W_{m'}$ is zero for all $m' < m$. If $\pi \in \Irr(\U(n,m))$ is tempered \textit{i.e.} has spectral parameter $\lambda \in i \mathfrak{a}^*$ and if $\pi' = \theta(\pi ; W_m) \neq 0$, then the Harish-Chandra parameter $\omega_{\pi'}$ of $\pi'$ satisfies (see \cite[Theorem 1.19]{Przebinda})
\begin{equation*}
	\omega_{\pi} = \iota (\omega_{\pi'}) + \mathfrak{z}_{m}
\end{equation*}
with $\omega_{\pi} = \lambda + \rho_{\mathfrak{m}}$ the Harish-Chandra parameter of $\pi$. Since $\rho_{\mathfrak{m}} =  \mathfrak{z}_{m}$, we deduce $\pi'$ has Harish-Chandra parameter $\lambda$ and, using \cite[Theorem 1.1]{ZhuRep} and the fact $\U(m,m)$ is split, this proves $\pi'$ has spectral parameter $\lambda$.
\end{proof}
\subsection{Global theta correspondence} \label{sec4.1}
We now turn to the global theta correspondence.

Let $F$ be a number field and, for any place $v$ of $F$, let $F_v$ be its completion. Let $\mathbb{A}$ be the ring of adèles of $F$ and let $E$ be $F$ itself or a totally imaginary quadratic extension of $F$. Let $V$ be a non-degenerate Hermitian $E$-space and $W$ be a non-degenerate skew-Hermitian $E$-space. Set $G = \U(V)$ and $G' = \U(W)$ for the associated $F$-groups.

Let $\psi = \otimes_v \psi_v$ be a non-trivial additive character of $F \backslash \mathbb{A}$ and consider ${\mathcal{W} = V \otimes W}$ as a symplectic $F$-space. For each place $v$, we have a metaplectic cover
\begin{equation*}
	1 \longrightarrow \mathbb{S}^{1} \longrightarrow \widetilde{\Sp}(\mathcal{W}_v)_{\psi_v} \longrightarrow \Sp (\mathcal{W}_v) \longrightarrow 1.
\end{equation*}
For any finite place $v$, let $\mathcal{K}_v \subset \Sp (\mathcal{W}_v)$ be the stabilizer of a self-dual lattice. Then the covering splits over $\mathcal{K}_v$ and we obtain an open compact subgroup $\mathcal{K}_v$ of $\widetilde{\Sp}(\mathcal{W}_v)$. We may then form the (restricted with respect to the $\mathcal{K}_v$'s) direct product
\begin{equation*}
	\widetilde{\Sp}(\mathcal{W})(\mathbb{A}) = \prod_v \widetilde{\Sp}(\mathcal{W}_v).
\end{equation*}
This define a cover
\begin{equation*}
	1 \longrightarrow \mathbb{S}^{1} \longrightarrow \widetilde{\Sp}(\mathcal{W}) (\mathbb{A}) \longrightarrow \Sp (\mathcal{W})(\mathbb{A}) \longrightarrow 1
\end{equation*}
called the \textit{adelic metaplectic group}. Note that we are not considering the adelic points of an algebraic group. For each place $v$, we have a Weil representation $\omega_{\psi_v, v}$ acting on a space $S_{\psi_v, v}$ and we define the global Weil representation
of $\widetilde{\Sp}(\mathcal{W}) (\mathbb{A})$ as the (restricted) tensor product
\begin{equation*}
	\omega_{\psi} = \bigotimes_v \omega_{\psi_v,v}
\end{equation*}
which acts on the (restricted) tensor product
\begin{equation*}
	S_{\psi}(\mathbb{A}) = \bigotimes_v S_{\psi_v,v}.
\end{equation*}
We fix a complete polarization $\mathcal{W} = \mathcal{X} \oplus \mathcal{Y}$ of $\mathcal{W}$ and we consider the Schrödinger model of the Weil representation on $\mathcal{Y}(F_v) = \mathcal{Y} \otimes F_v$ at every places.

The natural inclusion $\iota :  \Sp (\mathcal{W})(F) \rightarrow \Sp (\mathcal{W})(\mathbb{A})$ lifts to a diagonal embedding \cite[§40]{Weil}
\begin{equation*}
\widetilde{\iota} : \Sp (\mathcal{W})(F) \longrightarrow \widetilde{\Sp} (\mathcal{W})(\mathbb{A})
\end{equation*}
and we identify $\Sp (\mathcal{W})(F)$ with its image in $\widetilde{\Sp}(\mathcal{W})(\mathbb{A})$. Moreover Weil showed that the distribution
\begin{equation*}
	s \mapsto \sum_{y \in \mathcal{Y}(F)} s(y) \quad (s \in S_{\psi}(\mathbb{A}))
\end{equation*}
obtained by averaging $s$ over the $F$-rational points of $\mathcal{Y}$ is $\Sp (\mathcal{W})(F)$-invariant and that, for any fixed $s \in S_{\psi}(\mathbb{A})$, the function
\begin{equation*}
	\Theta (g; s) = \sum_{y \in \mathcal{Y}(F)} \omega_{\psi}(g) s(y)
\end{equation*}
has moderate growth on $\widetilde{\Sp}(\mathcal{W}) (\mathbb{A})$.
\subsubsection{Global theta lift} \label{subsec4.2}
Recall from Section \ref{sec2.4} the local splittings
\begin{equation*}
	{G}(F_v) \longrightarrow \widetilde{\Sp}(\mathcal{W}) (F_v) \quad , \quad {G}'(F_v) \longrightarrow \widetilde{\Sp}(\mathcal{W}) (F_v).
\end{equation*}
Considering the restricted tensor product of these local splittings, we deduce global splittings
\begin{equation*}
	{G}(\mathbb{A}) \longrightarrow \widetilde{\Sp}(\mathcal{W}) (\mathbb{A}) \quad , \quad {G}'(\mathbb{A}) \longrightarrow \widetilde{\Sp}(\mathcal{W}) (\mathbb{A})
\end{equation*}
and, given $s \in S_{\psi}(\mathbb{A})$, we set
\begin{equation*}
\Theta(g,g' ; s) =  \sum_{y \in \mathcal{Y}(F)} \omega_{\psi}(g, g') s(y) \quad (g \in G(\mathbb{A}), g' \in G'(\mathbb{A})).
\end{equation*}
This way, we can define the global Weil representation of ${G}(\mathbb{A}) \times {G}'(\mathbb{A})$. Let $f$ be a cuspidal automorphic form on $G$, $s \in S(\mathbb{A})$ and define the automorphic form $\theta(f ; s)$ on $G'$ by
\begin{equation*}
	\theta(f ; s) (g') = \int_{[G]} f(g) \Theta(g,g' ; s) dg \quad (g' \in G'(\mathbb{A})).
\end{equation*} 
Now, let $\pi$ be a cuspidal automorphic representation of $G$ and let $\Theta (\pi ; W)$ be the space generated by the $\theta(f ; \alpha)$'s for $f \in \pi$ and ${s \in S(\mathbb{A})}$. We call $\Theta (\pi ; W)$ the global theta lift of $\pi$.

Conversely, if $f'$ is a cuspidal automorphic form on $G$ and if $s \in S(\mathbb{A})$, we can define the automorphic form
\begin{equation*}
\theta(f' ; \overline{s})(g) = \int_{[G']} f(g) \overline{\Theta(g,g'; \alpha)} dg' \quad (g \in G(\mathbb{A}))
\end{equation*}
on $G$. Assuming both $\theta(f; s)$ and $\theta(f ; \overline{s})$ are cuspidal (to ensure every integral is absolutely convergent) we have the adjoint property
\begin{equation} \label{eqadj}
\langle f, \theta(f' ; \overline{s}) \rangle_G = \langle \theta( f; s) , f' \rangle_{G'}.
\end{equation}
\subsubsection{Representation theoretic properties} \label{sec4.3}
We now take note of two important results in the global theta correspondence and state a straightforward Corollary. The first one, due to Kudla and Rallis \cite{KudlaRallis}, provides conditions for the compatibility between local and global lifts and the second one, due to Rallis \cite{RallisOn}, gives the cuspidality of the first occurence in the global theta correspondence.
\begin{theorem} \label{theo6.4.1}
If $\pi$ is a cuspidal irreducible automorphic representation of $G$ and if $\Theta(\pi ; W)$ is square-integrable, then $\Theta (\pi ; W)$ is irreducible and
\begin{equation*}
	\Theta (\pi ; W) \simeq \bigotimes_v \theta (\pi_v ; W).
\end{equation*}
\end{theorem}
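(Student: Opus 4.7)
The plan is to combine the local Howe duality recalled in Sections \ref{sec2.2} and \ref{sec3.7} with a global multiplicity-one argument deduced from the doubling/Rallis inner product formula, using square-integrability as the convergence hypothesis that makes all the relevant integrals absolutely convergent.

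First I would establish that any irreducible subquotient $\sigma$ of $\Theta(\pi;W)$ is forced to be isomorphic to $\bigotimes_v \theta(\pi_v;W)$ place by place. Writing $\pi = \bigotimes_v \pi_v$ and $\omega_\psi = \bigotimes_v \omega_{\psi_v}$, the theta kernel defines a $(G\times G')(\mathbb{A})$-intertwining map
\begin{equation*}
\pi \otimes \omega_{\psi} \longrightarrow \Theta(\pi;W), \qquad f \otimes s \longmapsto \theta(f;s),
\end{equation*}
so as a $G'(\mathbb{A})$-module, $\Theta(\pi;W)$ is a quotient of the co-$G(\mathbb{A})$-invariants of $\pi \otimes \omega_\psi$. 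Factoring place by place, the $G'(F_v)$-module $(\pi_v \otimes \omega_{\psi_v})_{G(F_v)}$ is precisely the local big theta $\Theta(\pi_v;W)$, and its unique irreducible quotient is $\theta(\pi_v;W)$ by Howe duality (Waldspurger/Gan–Takeda at finite places, Howe at archimedean places). Therefore every irreducible constituent $\sigma \subset \Theta(\pi;W)$ satisfies $\sigma_v \simeq \theta(\pi_v;W)$ at all $v$, hence $\sigma \simeq \bigotimes_v \theta(\pi_v;W)$.

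Next I would use the square-integrability hypothesis to promote this to an actual isomorphism, i.e.\ to rule out higher multiplicities and to show that $\Theta(\pi;W)$ has no proper invariant subspaces other than the one already identified. Since $\Theta(\pi;W) \subset L^2([G'])$, it decomposes as a unitary sum of irreducibles, each of which is isomorphic to $\Pi := \bigotimes_v \theta(\pi_v;W)$ by the previous step; the question is whether its $\Pi$-isotypic multiplicity can exceed $1$. This is controlled by the adjoint relation \eqref{eqadj}, which rewrites $\langle \theta(f_1;s_1), \theta(f_2;s_2) \rangle_{G'}$ as a pairing on $\pi$ against the kernel produced by $s_1 \otimes \overline{s_2}$. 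Unfolding this pairing via the doubling method (Piatetski-Shapiro–Rallis) turns it into an Eulerian integral whose local factors depend only on $\pi_v$ and on the chosen Schwartz data, and in particular depend only on $\pi$ as an abstract representation. Consequently the map $f \otimes s \mapsto \theta(f;s)$ factors through a \emph{single} isotypic copy of $\Pi$, which gives irreducibility together with the isomorphism $\Theta(\pi;W) \simeq \Pi$.

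The main obstacle will be the second step: the Howe-duality input is black-boxed, but turning "the inner product depends only on $\pi$" into "multiplicity one" requires the doubling integral to converge and to be non-zero, which is exactly what the square-integrability assumption (together with cuspidality of $\pi$) is designed to secure. Without square-integrability one enters the regularized Siegel–Weil regime of Kudla–Rallis, where $\Theta(\pi;W)$ may be reducible or fail to be tensor-factorizable, so the hypothesis is genuinely used rather than cosmetic. In the setting of Theorems \ref{theo1.2.1} and \ref{theo1.2.2}—where $W$ is split, $V$ is anisotropic and the archimedean signatures are constrained—one can appeal directly to the square-integrable range of the doubling method, so the argument reduces to quoting the Kudla–Rallis computation rather than reproving it.
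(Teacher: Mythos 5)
The paper does not prove Theorem~\ref{theo6.4.1}: it is quoted from Kudla--Rallis, with the preceding remark deferring the unitary case to the literature, so there is no in-house argument to compare against. Assessing your sketch on its own terms:

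Your first step is correct and is the heart of the matter. Factoring the theta map through $(\pi\otimes\omega_{\psi})_{G(\mathbb{A})}\simeq\bigotimes_v\Theta(\pi_v;W)$ and invoking local Howe duality (Sections~\ref{sec2.2} and~\ref{sec3.7}) does pin down every irreducible constituent of $\Theta(\pi;W)$ as $\Pi := \bigotimes_v \theta(\pi_v;W)$, and square-integrability is used exactly where you put it: to ensure that $\Theta(\pi;W)$ lies in the discrete spectrum and therefore decomposes as a direct sum of irreducibles, so that there is something to which the local-global comparison can be applied.

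The gap is in your second step. The inference ``the inner product is Eulerian, hence the theta map factors through a single copy of $\Pi$'' is not a logical consequence of Eulerianness alone. The Rallis inner product formula gives you a sesquilinear form on $(\pi\otimes\omega)_{G(\mathbb{A})}$ that factors over places, but to conclude that its radical is precisely the kernel of the projection onto $\Pi$ you must show that each local factor descends from $\Theta(\pi_v;W)$ to $\theta(\pi_v;W)$, i.e.\ that $\dim\Hom_{G'(F_v)}\big(\Theta(\pi_v;W),\theta(\pi_v;W)\big)=1$ --- which is local Howe duality again. Once you grant that, the doubling method is superfluous: $\dim\Hom_{G'(\mathbb{A})}\big((\pi\otimes\omega)_{G(\mathbb{A})},\Pi\big)\leq 1$, so if $\Theta(\pi;W)\cong\Pi^{\oplus m}$ then composing the surjective theta map with the $m$ coordinate projections produces $m$ linearly independent elements of that at-most-one-dimensional space, forcing $m\leq 1$. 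The convergence and non-vanishing of the doubling integral, which you invoke as the role of the square-integrability hypothesis, are in fact the ingredients for the \emph{non-vanishing criterion} of the global theta lift, not for multiplicity one, which is a pure consequence of local Howe duality.
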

For $m > 0$ an integer, we denote by $W_m$ the split skew-Hermitian space of dimension $2m$.
\begin{theorem} \label{theo6.4.2}
Let $m \geq 2$. If $\pi$ is a cuspidal irreducible automorphic representation of $G$ and if $\Theta(\pi ; W_{m - 1}) = 0$, then $\Theta(\pi ; W_{m})$ (if non-zero) is cuspidal.
\end{theorem}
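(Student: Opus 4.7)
The plan is to prove cuspidality of $\Theta(\pi; W_m)$ by showing that all constant terms along unipotent radicals of proper parabolic subgroups of $G' = \U(W_m)$ vanish, and to express each such constant term as a theta lift into a smaller member of the Witt tower that is forced to vanish by hypothesis combined with the tower property of the theta correspondence.

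First I would recall the structure of parabolic subgroups of $\U(W_m)$: up to conjugacy they are the stabilizers $P'_k$ of isotropic subspaces of dimension $k$ for $1 \le k \le m$, with Levi decompositions $P'_k = M'_k N'_k$ where $M'_k \simeq \GL_k(E) \times \U(W_{m-k})$. To prove $\Theta(\pi;W_m)$ is cuspidal it suffices, by the standard reduction, to check that for every $k \geq 1$ and every $f \in \pi$, $s \in S(\mathbb{A})$, one has
\begin{equation*}
	\theta(f;s)_{N'_k}(g') \overset{\defin}{=} \int_{[N'_k]} \theta(f ; s)(n' g') dn' = 0 \quad (g' \in G'(\mathbb{A})).
\end{equation*}

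Second, I would carry out the "tower" unfolding: choosing a complete polarization $W_m = X_m \oplus X_m^*$ compatible with the isotropic flag defining $P'_k$, one realizes $\omega_\psi$ on the Schrödinger model and decomposes $\mathcal{Y} = V \otimes X_m^*$ so that the action of $N'_k$ on the Schwartz function $s$ becomes explicit (essentially by translations and character twists). Interchanging the $N'_k$-integral with the theta sum and applying Poisson summation (or a direct partial Fourier transform in the Fourier coefficient along the abelian quotient of $N'_k$) collapses the sum over $\mathcal{Y}(F)$ to a sum supported on a smaller Lagrangian. The outcome is a factorization of the form
\begin{equation*}
	\theta(f;s)_{N'_k}(g') = \int_{[U]} \Big( \int_{[G]} f(g) \, \Theta_{V,W_{m-k}}(g, g'_{m-k} ; s_k) dg \Big) \phi_k(u, g') du
\end{equation*}
where $\Theta_{V,W_{m-k}}$ is the theta kernel for the dual pair $(\U(V),\U(W_{m-k}))$, $s_k$ is an auxiliary Schwartz function on $V \otimes X_{m-k}^*(\mathbb{A})$ constructed from $s$, and $\phi_k$ collects the remaining abelian/unipotent integrations. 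The inner integral is exactly an element of $\Theta(\pi ; W_{m-k})$ evaluated at $g'_{m-k}$.

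Third, I would invoke the tower (persistence) property of the theta correspondence: since $\Theta(\pi ; W_{m-1}) = 0$, the first occurrence index of $\pi$ is at least $m$, so $\Theta(\pi ; W_{m'}) = 0$ for every $m' \leq m-1$, and in particular $\Theta(\pi ; W_{m-k}) = 0$ for all $1 \le k \le m$. The inner integral above therefore vanishes identically, hence $\theta(f;s)_{N'_k}(g') = 0$ for every maximal parabolic, and consequently for every proper parabolic. This yields cuspidality of every $\theta(f;s)$ and hence of $\Theta(\pi ; W_m)$.

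The delicate step is the second one, the explicit computation of the constant term and its identification with a theta integral on the pair $(\U(V), \U(W_{m-k}))$. The bookkeeping of the Weil representation under $N'_k$ (especially the quadratic character twist coming from the Heisenberg commutation relation) and the justification of interchanging summation and integration (guaranteed by the moderate growth of $\Theta$ and the rapid decay of $f$) are where the effort lies. Once this structural identity is in hand, the conclusion follows formally from the tower property.
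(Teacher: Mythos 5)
The paper does not prove this theorem: it is quoted as a known result of Rallis \cite{RallisOn}, with a remark that the unitary case is covered by \cite{GanAutoTheta}. So there is no in-text argument to compare against, and your proposal must stand on its own merits.

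Your outline reproduces the standard Rallis tower argument: reduce cuspidality of $\theta(f;s)$ to the vanishing of the constant terms along unipotent radicals $N'_k$ of the maximal parabolics of $G'$, compute each constant term in a Schr\"odinger model adapted to the isotropic flag, and identify the surviving pieces with theta lifts to smaller members of the Witt tower, which die by first occurrence. That is the correct skeleton, and the argument does go through. Two places where the plan as written is imprecise and would need to be tightened. First, the constant term along $N'_k$ does not collapse to a single theta integral against the kernel for $(\U(V),\U(W_{m-k}))$; rather, after the $N'_k$-integration kills all but the lattice points $y \in \mathcal{Y}(F)$ satisfying an isotropy/degeneracy condition, the surviving set stratifies by the rank of $y$ restricted to the relevant sub-polarization, and each stratum of rank $r$ contributes a theta integral attached to the pair $(\U(V),\U(W_r))$ with $r < m$. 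The rank-$0$ stratum is handled directly by cuspidality of $f$ (it pairs $f$ against a constant), while the higher strata die because $\Theta(\pi;W_{m-1})=0$ together with persistence of vanishing in the tower gives $\Theta(\pi;W_r)=0$ for all $r \leq m-1$. Your single formula has to be replaced by this finite sum. Second, for $k<m$ the radical $N'_k$ is two-step nilpotent (a Heisenberg-type group), not abelian, so the ``Poisson summation / partial Fourier transform along the abelian quotient'' step must be run in two stages — first along the center of $N'_k$, then along the abelian quotient — and this is exactly where the quadratic character twist from the Heisenberg commutator you flag makes its appearance. Modulo filling in these bookkeeping details, the approach is sound and is indeed the one in the cited literature.
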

\begin{rmrk}
The articles \cite{KudlaRallis} and \cite{RallisOn} deal with orthogonal-symplectic pairs. However, both Theorems \ref{theo6.4.1} and \ref{theo6.4.2} hold for unitary pairs (see \textit{e.g.} \cite[§3]{GanAutoTheta}).\\
Note that, in these statements, one can reverse the roles of $G$ and $G'$ (upon replacing $m$ by the Witt index of $V$).
\end{rmrk}
\begin{corollary} \label{coro6.4.3}
Let $n \geq m \geq 1$ be integers such that $n+m$ is even. Assume the dimension of $V$ is $n+m$ and $W$ is split of dimension $2m$. Let $v_0$ be a fixed archimedean place of $F$ and assume $V$ has signature $(n,m)$ at $v_0$. Let $\pi$ be a cuspidal irreducible automorphic representation of $G$. If the local component of $\pi$ at $v_0$ is spherical and tempered, then $\Theta(\pi ; W)$ (if non-zero) is cuspidal and $\Theta(\pi ; W) \simeq \otimes_v \theta(\pi_v ; W_{v})$.
\end{corollary}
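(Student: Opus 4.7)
The plan is to combine Rallis's tower property with the archimedean vanishing results. The key local input is that by Proposition~\ref{prop3.6.1} (in the case $E = F$) or Proposition~\ref{prop3.6.2} (in the case $E \neq F$), the hypothesis on $\pi_{v_0}$ yields
\begin{equation*}
	\theta(\pi_{v_0}; W_{m', v_0}) = 0 \quad (1 \leq m' < m).
\end{equation*}
This is the only ingredient specific to the present setting; everything else is formal.

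Next I would climb the Witt tower. Assuming $\Theta(\pi; W_m) \neq 0$ (otherwise there is nothing to prove), let $m_0 \in \{1, \ldots, m\}$ be the smallest integer for which $\Theta(\pi; W_{m_0}) \neq 0$. For $m_0 \geq 2$, the minimality of $m_0$ forces $\Theta(\pi; W_{m_0 - 1}) = 0$, so Theorem~\ref{theo6.4.2} ensures that $\Theta(\pi; W_{m_0})$ is cuspidal, and in particular square-integrable. Theorem~\ref{theo6.4.1} then produces the factorization
\begin{equation*}
	\Theta(\pi; W_{m_0}) \simeq \bigotimes_v \theta(\pi_v; W_{m_0, v});
\end{equation*}
since the restricted tensor product is non-zero, each local factor must be non-zero, including the factor at $v_0$. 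Comparing this with the archimedean vanishing above forces $m_0 = m$, which simultaneously yields cuspidality of $\Theta(\pi; W_m)$ and its decomposition as $\bigotimes_v \theta(\pi_v; W_v)$.

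The residual case $m_0 = 1$ lies outside the literal hypothesis of Theorem~\ref{theo6.4.2}, but it is easier: at the bottom of the tower, $\Theta(\pi; W_1)$ is automatically cuspidal whenever non-zero by Rallis's direct analysis of the constant term along the Siegel parabolic \cite{RallisOn}. Theorem~\ref{theo6.4.1} applies as before, and the same local--global comparison either yields the conclusion (when $m = 1$) or a contradiction (when $m \geq 2$, forcing $\Theta(\pi; W_1) = 0$ so that the argument resumes one level higher in the tower). I do not foresee any genuine obstacle: the archimedean local vanishing supplied by Propositions~\ref{prop3.6.1}--\ref{prop3.6.2} is the load-bearing input, and the two global theorems package the remaining machinery mechanically.
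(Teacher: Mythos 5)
Your argument is correct and follows essentially the same route as the paper: archimedean local vanishing from Propositions~\ref{prop3.6.1} and~\ref{prop3.6.2}, Rallis's tower property (Theorem~\ref{theo6.4.2}) for cuspidality at the first occurrence, and Kudla--Rallis (Theorem~\ref{theo6.4.1}) for the factorization, with the local-versus-global comparison at $v_0$ closing the loop. One small point in your favor: the paper's proof invokes Theorem~\ref{theo6.4.2} directly for $\Theta(\pi; W_m)$, which as stated requires $m \geq 2$, whereas your version makes explicit that the base case $m_0 = 1$ (and thus the case $m = 1$ allowed by the corollary's hypotheses) needs Rallis's analysis of the constant term along the Siegel parabolic rather than a literal appeal to the stated theorem; this is a real gap-filling, not mere verbosity. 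The only inefficiency is that you re-derive the implication ``local vanishing at $v_0$ forces global vanishing for $m' < m$'' via the climb to $m_0$, whereas the paper treats it as a direct consequence (citing \cite{BM2}); both routes land in the same place.
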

\begin{proof}
We extract this from \cite[Lemma 8.10]{BM2}. Let $\pi_{v_0}$ be the local component of $\pi$ at $v_0$. Since $\pi_{v_0}$ is spherical and tempered, the local theta lift of $\pi_{v_0}$ to $\U(W_{m'})_{v_0}$ vanishes for $m' < m$. This follows from Proposition \ref{prop3.6.1} if $E = F$ or Proposition \ref{prop3.6.2} if $E \neq F$. Therefore, the global theta lift $\Theta(\pi ; W_{m'})$ vanishes for $m' < m$. Since $W = W_m$, Theorem \ref{theo6.4.2} ensures $\Theta(\pi ; W)$ is cuspidal (hence square-integrable). The factorization statement now follows from Theorem \ref{theo6.4.1}.
\end{proof}

\section{Global Period relation} \label{chap5}
In this Section, we state a relation between periods and explain how the choice of a test function yield a distinction argument. This Section is analogous to \cite[§9]{BM2} which generalizes the period relation in Rudnick and Sarnak \cite[(3.24)]{RudnickSarnak}. However, we shall use a different test function to that of \cite{BM2}. We begin with an overview of these relations.

Let $F$ be a number field and $E$ be $F$ itself or a totally imaginary quadratic extension of $F$. Let $V$ be a non-degenerate anisotropic Hermitian $E$-space, $W$ be a non-degenerate split skew-Hermitian $E$-space and $G = \U(V)$, $G' = \U(W)$ be the associated $F$-groups. We assume both $\dim_E V$ and $\dim_E W$ are even. As $W$ is split, there is a maximal isotropic subspace $X \subset W$ of dimension $\frac{1}{2} \dim_E W$ such that
\begin{equation*}
	W = X \oplus X^*
\end{equation*}	
for any isotropic space $X^*$ which is in perfect duality with $X$. We let $P_X$ denote the stabilizer in $\U(W)$ of the flag $\lbrace 0 \rbrace \subset X \subset W$. Then $P_X$ is a maximal (Siegel) parabolic subgroup and we fix a Levi decomposition
\begin{equation*}
	P_X = \GL(X) \cdot N_X
\end{equation*}
with $N_X \subset \Hom(X^*,X)$ the unipotent radical of $P_X$. Note that $N_X$ can be identified with the set of Hermitian forms on $X^*$ and its Pontrjagin dual can be identified with $N_{X^*}$, the unipotent radical of the stabilizer $P_{X^*}$ of the flag $\lbrace 0 \rbrace \subset X^* \subset W$. We then identify ${N_{X^*} \subset \Hom(X,X^*)}$ with Hermitian forms on $X$. Under these identifications, the Hermitian form $q$ on $X$ corresponds to an element $n_q^* \in N_{X^*}$ which, in turn, corresponds to the character
\begin{equation*}
	\psi_q (n) = \psi \big( \tr_{X} (n \circ n_q^*) \big)
\end{equation*}
of $N_X(F) \backslash N_X(\mathbb{A})$.

Let $q$ be a non-degenerate Hermitian form on $X$, let $\U(X) \subset \GL(X)$ be the associated unitary group, assume there is a non-zero embedding $j : X \hookrightarrow V$ of Hermitian spaces and define the subgroup
\begin{equation*}
	H_j^{\perp} =  \U(j(X)^{\perp})
\end{equation*}
of $G$. The adelic quotient $[N_X]$ is compact and, recalling $V$ is anisotropic, the adelic quotients $[G]$ and $[H_j]$ also compacts. Let $dh$ and $dn$ denote the $H_j^{\perp}(\mathbb{A})$-probability measure on $[H_j^{\perp}]$ and the $N_X(\mathbb{A})$-probability measure on $[N_X]$ respectively. We define the $H_j^{\perp}$-period of $f \in C^{\infty}([G])$ by
\begin{equation*}
	\mathcal{P}_{H_j^{\perp}}(f) = \int_{[H_j^{\perp}]} f(h) dh
\end{equation*}
and the $(N_X, \psi_q)$-period (also called the $\psi_q$-Fourier-Whittaker coefficient) of $f' \in C^{\infty}([G'])$ by
\begin{equation*}
	\mathcal{P}^{\psi_q}_{N_X}(f') = \int_{[N_X]} f'(n) \psi_q^{-1}(n) dn.
\end{equation*}
Note that, if $\pi$ and $\pi$' are irreducible automorphic representation of $G$ and $G$' respectively, then
\begin{itemize}
	\item $f \mapsto \mathcal{P}_{H_j^{\perp}}(f)$ defines an element in $\Hom_{H_j^{\perp}(\mathbb{A})} (\pi, 1)$;
	\item $f' \mapsto \mathcal{P}_{N_X}^{\psi_q}(f')$ defines an element in $\Hom_{N_X(\mathbb{A})} (\pi', \psi_q)$.
\end{itemize}
The global Schrödinger model of the global Weil representation $\omega$ with respect to the complete polarization $\mathcal{W} = \mathcal{X} \oplus \mathcal{Y}$ of $\mathcal{W} = V \otimes W$ is the space of Bruhat-Schwartz functions on $\mathcal{Y}(\mathbb{A})$. We may identity $\mathcal{Y} = X^* \otimes V$ with $\Hom(X,V)$ so that, for any $s \in S(\mathbb{A})$ and any $y \in \Hom(X,V)(\mathbb{A})$, we have
\begin{itemize}
	\item $\omega(g, 1) s(y) = s(g^{-1} \circ y) \quad (g \in G(\mathbb{A}))$;
	\item $\omega(1, n) s(y) = \psi_{y}(n) s(y) \quad (n \in N(\mathbb{A}))$;
\end{itemize}
where $\psi_y$ is the character of $N_X(F) \backslash N_X(\mathbb{A})$ corresponding to the Hermitian form on $X$ obtained by pulling back the Hermitian form on $V$ using $y$.

One can prove (see \textit{e.g.} \cite{WallsArticle})
\begin{equation} \label{eq5.1}
\mathcal{P}^{\psi_q}_{N_X}(\theta (f ; s)) =  \int_{H_j^{\perp}(\mathbb{A}) \backslash G(\mathbb{A})} \omega(x) s(y) \mathcal{P}_{H_j^{\perp}}(R(x){f}) dx
\end{equation}
for any $s \in \omega$. This mainly follows from the computation
\begin{equation} \label{eq5.2}
	\int_{[N_X]} \psi_{q}^{-1}(n) \sum_{y \in \mathcal{Y}(F)} \omega(n) s(y) dn
	= \sum_{y \in \mathcal{Y}_q(F)} s(y) dn = \sum_{x \in H_j^{\perp}(F) \backslash G(F)} s(x \circ j) dn
\end{equation}
of the integral of the theta kernel which, in turn, follows from the identification
\begin{equation*}
H_j^{\perp}(F) \backslash G(F) \simeq \mathcal{Y}_q(F)
\end{equation*}
given by Witt's Theorem.

In the local setting, \eqref{eq5.1} may be seen as a global analog of the isomorphism \cite{GanPeriods}\footnote{We implicitly choosed $\chi_V = \chi_W = \mathbf{1}$ thanks to our assumption on the dimensions of $V$ and $W$.}
\begin{equation*}
	\Hom_{N_X} (\Theta(\pi), \psi_q) \simeq \Hom_{H_j^{\perp}} (\pi^{\vee}, 1)
\end{equation*}
where $\Theta(\pi) = (\omega \otimes \pi)_G$ is the co-$G$-invariants module of $\omega \otimes \pi$ for a given $\pi \in \Irr(G)$ (with $\omega$ the local Weil representation) and $\pi^{\vee}$ is the contragredient of $\pi$. The proof essentially relies on the calculation of the $\psi_q$-twisted Jacquet module of the local Weil representation: one can show that it is isomorphic to the induced (to $G$) representation of the trivial representation of $H_j^{\perp}$. The global analog of this local calculation is \eqref{eq5.2}. Moreover, by keeping track of the action of the diagonally embedded subgroup
\begin{equation*}
	\U(X,q) \longrightarrow \U(X,q) \times \U(j(X)) \longrightarrow \U(X,q) \times \U(V)
\end{equation*}
one can prove \cite{GanPeriods}
\begin{equation} \label{eq5.3}
	\Hom_{R_X} (\Theta(\pi), 1 \times \psi_q) \simeq \Hom_{H_j} (\pi^{\vee}, 1)
\end{equation}
where $R_X = \U(X) \rtimes N_X$, $H_j = \U(j(X)^{\perp}) \times \U(j(X))$ and $1 \times \psi_q$ is the character of $R_X$ such that $(1 \times \psi_q)(r) = \psi_q(n)$ for all $r = (t,n) \in R_X$.

The period relation we prove in the next Section is the global analog of \eqref{eq5.3}.\\
In \cite{Jun} the author generalizes these global period relations by considering various partitions of nilpotent orbits in the orthogonoal-symplectic case and this can be seen as a global analog to the local results of Gomez and Zhu \cite{GomezZhu}. In a wider context, analogous period relations have been used to prove Gan-Gross-Prasad conjecture \cite{BeuzartChaudouard}.
\subsection{Global period relation} \label{sec5.1}
We use the notations and hypothesis previously adopted for $V$, $W$, $X$, $G$ and $G'$. For $q$ a non-degenerate Hermitian form on $X$, we denote by $\U(X) \subset \GL(X)$ the associated unitary group and set
\begin{equation*}
R_q = \U(X) \ltimes N_X.
\end{equation*}
It may be seen that $\psi_q$, being invariant under conjugation by $\U(X)$, extends to a character $\mathbf{1} \times \psi_q$ of $R_q(F) \backslash R_q(\mathbb{A})$ such that
\begin{equation*}
	\mathbf{1} \times \psi_q (r) = \psi_q (n) \quad (r = tn \in R(\mathbb{A})).
\end{equation*}
Assuming moreover that there is a non-zero embedding $j : X \hookrightarrow V$ of Hermitian spaces, we define the subgroup
\begin{equation*}
H_j = \U(j(X)^{\perp}) \times \U(j(X))
\end{equation*}
of $G$. The adelic quotient $[N_X]$ is compact and, recalling $V$ is anisotropic, the adelic quotients $[G]$, $[H_j]$ and $[R_q]$ are also compacts. Let $dh$, $dt$ and $dn$ denote the $H_j(\mathbb{A})$-probability measure on $[H_j]$, the $\U(X)(\mathbb{A})$-probability measure on $[\U(X)]$ and the $N_X(\mathbb{A})$-probability measure on $[N_X]$ respectively. We define the $H_j$-period of $f \in C^{\infty}([G])$ by
\begin{equation*}
	\mathcal{P}_{H_j}(f) = \int_{[H_j]} f(h) dh
\end{equation*}
and the $(R_q, \mathbf{1} \times \psi_q)$-period (also called the Shalika or Bessel period) of $f' \in C^{\infty}([G'])$ by
\begin{equation*}
	\mathcal{P}^{\mathbf{1} \times \psi_q}_{R_q}(f') = \int_{[\U(X)]} \int_{[N_X]} f'(tn) \psi_q^{-1}(n) dn dt.
\end{equation*}
Note that, if $\pi$ and $\pi$' are irreducible automorphic representation of $G$ and $G$' respectively, then
\begin{itemize}
	\item $f \mapsto \mathcal{P}_{H_j}(f)$ defines an element in $\Hom_{H_j(\mathbb{A})} (\pi, 1)$;
	\item $f' \mapsto \mathcal{P}_{R_q}^{\mathbf{1} \times \psi_q}(f')$ defines an element in $\Hom_{R_q(\mathbb{A})} (\pi', \psi_q)$.
\end{itemize}
The following Proposition is well known (see \textit{e.g.} \cite[§9]{BM2} or \cite[§8]{BeuzartChaudouard}) but we prove it for the convenience of the reader. Recall $\omega$ denotes the global Weil representation.
\begin{proposition} \label{prop5.1.1}
Let $f$ be an automorphic form of $G$ and $s \in \omega$, then
\begin{equation*}
\mathcal{P}^{\mathbf{1} \times \psi_q}_{R_q}(\theta (f ; s)) =  \int_{H_j (\mathbb{A}) \backslash G(\mathbb{A})}\mathcal{I}_{j,s}(x) \text{ } \mathcal{P}_{H_j}(R(x){f}) dx
\end{equation*}
where $R(x)f : g \mapsto f(gx)$ and
\begin{equation*}
	\mathcal{I}_{j,s}(x) = \int_{\U(X)(\mathbb{A})} \omega(x, t) s (j) dt.
\end{equation*}
\end{proposition}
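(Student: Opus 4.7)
My plan is to prove the identity by unfolding the Bessel period of $\theta(f; s)$ in the spirit of \eqref{eq5.2}. First, by the absolute convergence coming from the compactness of $[G]$ and the Schwartz property of $s$, Fubini's theorem allows me to swap the $[G]$-integration defining $\theta(f; s)$ with the $[R_q]$-integration of the Bessel period, reducing the problem to evaluating the inner integral of the theta kernel twisted by $\overline{(\mathbf{1} \times \psi_q)}$ for fixed $g$.

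Second, writing $r = tn \in R_q$ with $t \in \U(X)$, $n \in N_X$, and using $\omega(g, tn) = \omega(g, t) \omega(1, n)$ together with the Schrödinger-model formula $[\omega(1, n) \phi](y) = \psi_y(n) \phi(y)$, the orthogonality of characters of $[N_X]$ picks out exactly those $y \in \mathcal{Y}(F)$ whose pullback Hermitian form equals $q$, i.e. $y \in \mathcal{Y}_q(F)$. This global analog of \eqref{eq5.2} gives
\begin{equation*}
\int_{[N_X]} \Theta(g, tn; s) \psi_q^{-1}(n) \, dn = \sum_{y \in \mathcal{Y}_q(F)} \omega(g, t) s(y).
\end{equation*}
Witt's theorem identifies $\mathcal{Y}_q(F)$ with $G(F)/H_j^{\perp}(F)$ via $y \leftrightarrow \gamma \cdot j$ where $H_j^{\perp} = \U(j(X)^{\perp})$. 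Unfolding the resulting sum against the $[G]$-integration, then decomposing $H_j^{\perp}(F) \backslash G(\mathbb{A})$ as $[H_j^{\perp}] \cdot (H_j^{\perp}(\mathbb{A}) \backslash G(\mathbb{A}))$ and using that $\omega(hx, t) s(j) = \omega(x, t) s(j)$ for $h \in H_j^{\perp}(\mathbb{A})$ (since $H_j^{\perp}$ fixes $j$), extracts $\mathcal{P}_{H_j^{\perp}}$ and yields
\begin{equation*}
\mathcal{P}^{\mathbf{1} \times \psi_q}_{R_q}(\theta(f; s)) = \int_{[\U(X)]} \int_{H_j^{\perp}(\mathbb{A}) \backslash G(\mathbb{A})} \omega(x, t) s(j) \; \mathcal{P}_{H_j^{\perp}}(R(x) f) \, dx \, dt.
\end{equation*}

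Third, I convert this to the stated form using the decomposition $H_j = j_*(\U(X)) \cdot H_j^{\perp}$ as a direct product, valid since $j_*(\U(X)) = \U(j(X))$ and $H_j^{\perp} = \U(j(X)^{\perp})$ commute inside $\U(V)$. This commutation implies that left multiplication by $j_*(t)$ is a measure-preserving automorphism of $H_j^{\perp}(\mathbb{A}) \backslash G(\mathbb{A})$. Performing the change of variables $x \mapsto j_*(t) y$ and using the key identity $j_*(t)^{-1} \cdot j = j \circ t^{-1}$, the integrand dramatically simplifies: $\omega(j_*(t) y, t) s(j) = s(y^{-1} j)$, independent of $t$. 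The $[\U(X)]$-integration then folds directly into $\mathcal{P}_{H_j^{\perp}}(R(j_*(t) y) f)$ to produce $\mathcal{P}_{H_j}(R(y) f)$. Splitting $H_j^{\perp}(\mathbb{A}) \backslash G(\mathbb{A}) = \U(X)(\mathbb{A}) \cdot (H_j(\mathbb{A}) \backslash G(\mathbb{A}))$ and assembling the residual adelic $\U(X)(\mathbb{A})$-integration into $\mathcal{I}_{j, s}(y) = \int_{\U(X)(\mathbb{A})} \omega(y, \alpha) s(j) \, d\alpha$ then yields the target identity.

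The main obstacle is this final step, which requires carefully tracking the two roles played by $\U(X)$—once as a factor of $R_q \subset G'$ (integrated against the Bessel character) and once via $j_*$ as a factor of $H_j \subset G$ (integrated against the automorphic form $f$)—and legitimately applying Fubini on the non-compact quotients involved. The crucial algebraic input is the commutation $[j_*(\U(X)), H_j^{\perp}] = 1$ inside $G$, which is what makes the substitution $x \mapsto j_*(t) y$ both well-defined on the quotient and effective in killing the $t$-dependence of the Schwartz factor.
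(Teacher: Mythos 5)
Your strategy is sound and genuinely different from the paper's, but there is a sign error you should fix before the argument closes.

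Both you and the paper begin identically: swap the $[G]$ and $[R_q]$ integrations, unfold the $[N_X]$ integral to project onto $\sum_{y \in \mathcal{Y}_q(F)} \omega(g,t)s(y)$. The routes then diverge in the order of unfolding. The paper first decomposes $\mathcal{Y}_q(F)$ into free $\U(X)(F)$-orbits, which turns the compact $[\U(X)]$-integral into an $\U(X)(\mathbb{A})$-integral over a set of orbit representatives; it then identifies the orbit set $Y_q(F)$ with $H_j(F) \backslash G(F)$, unfolds the $[G]$-integral, and lands directly on $H_j(F) \backslash G(\mathbb{A})$. The last step is to verify left-$H_j(\mathbb{A})$-invariance of $\mathcal{I}_{j,s}$, the $\U(j(X))(\mathbb{A})$-invariance coming from the relation $j \circ t^{-1} \circ j^{-1} \in \U(j(X))$. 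You instead identify $\mathcal{Y}_q(F)$ with the larger quotient $H_j^{\perp}(F) \backslash G(F)$, unfold $[G]$ first, extract the $H_j^{\perp}$-period, and only then confront the remaining $[\U(X)]$-integral via a left translation on the non-compact quotient $H_j^{\perp}(\mathbb{A}) \backslash G(\mathbb{A})$ (valid since $j_*(\U(X))$ centralizes $H_j^{\perp}$ and everything is unimodular). Both routes reduce to the same algebraic identity between $j_*$, $j$ and $t$; the paper's order avoids the change-of-variables on a non-compact quotient, which makes the measure-theoretic bookkeeping lighter, while yours has the pedagogical virtue of isolating the $H_j^{\perp}$-period as an intermediate object.

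The error: with the Schr\"odinger model conventions $\omega(g,t)s(y) = s(g^{-1} \circ y \circ t^{-1})$ used in the paper, your substitution $x \mapsto j_*(t) y$ gives
\begin{equation*}
\omega\big(j_*(t) y, t\big) s(j) = s\big(y^{-1} j_*(t)^{-1}\, j\, t^{-1}\big) = s\big(y^{-1} (j t^{-1})\, t^{-1}\big) = s\big(y^{-1} j\, t^{-2}\big),
\end{equation*}
which is not independent of $t$. The correct substitution is $x \mapsto j_*(t)^{-1} y$, for which $\omega(j_*(t)^{-1} y, t)s(j) = s(y^{-1} j_*(t)\, j\, t^{-1}) = s(y^{-1} j\, t\, t^{-1}) = s(y^{-1} j)$ as desired. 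With this one-letter fix, the rest of your plan -- folding $[\U(X)]$ into the $H_j^{\perp}$-period to get the $H_j$-period, then splitting off the $\U(j(X))(\mathbb{A})$ fiber of $H_j^{\perp}(\mathbb{A})\backslash G(\mathbb{A}) \to H_j(\mathbb{A})\backslash G(\mathbb{A})$ to assemble $\mathcal{I}_{j,s}$ -- goes through and recovers the stated identity.
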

\begin{proof}
We consider the complete polarisation
\begin{equation*}
	\mathcal{W} = (X \otimes V) \oplus (X^* \otimes V)
\end{equation*}
of the $F$-symplectic space $\mathcal{W} = W \otimes V$ and we consider the Schrödinger model of the Weil representation. In particular $\omega$ acts on $S(\mathbb{A})$, the space of Bruhat-Schwartz functions on $\mathcal{Y}(\mathbb{A})$. We may identify $\mathcal{Y} = X^* \otimes V$ with $\Hom(X, V)$ so that, for any $s \in S(\mathbb{A})$ and $y \in \Hom (X,V)(\mathbb{A})$ we have
\begin{itemize}
	\item $\omega(g, 1) s(y) = s(g^{-1} \circ y)$, for all $g \in G(\mathbb{A})$
	\item $\omega(1,t) s(y) = s(y \circ t^{-1})$, for all $t \in \U(X,q)(\mathbb{A})$
	\item $\omega(1, n) s(y) = \psi_{y}(n) s(y)$, for all $n \in N(\mathbb{A})$
\end{itemize}
where $\psi_y$ is the character of $N_X(F) \backslash N_X(\mathbb{A})$ corresponding to the Hermitian form on $X$ obtained by pulling back the Hermitian form on $V$ using $y$. By definition, we have
\begin{equation*}
	\mathcal{P}^{\mathbf{1} \times \psi_q}_{R_q}(\theta (f ; s)) = \int_{[\U(X)]} \int_{[N_X]} \psi_q^{-1} (n) \int_{[G]} {f(g)} \sum_{y \in \mathcal{Y}(F)} \omega(g, tn) s(y) dg dn dt
\end{equation*}
with
\begin{equation*}
	\int_{[N_X]} \sum_{y \in \mathcal{Y}(F)} \omega(g, tn) s(y)  \psi_q (n)  dn = \sum_{y \in \mathcal{Y}(F)} \omega(g,t) s(y) \int_{[N_X]} (\psi_y - \psi_q)(n) dn.
\end{equation*}
The integral
\begin{equation*}
	\int_{[N_X]} (\psi_y - \psi_q)(n) dn
\end{equation*}
is zero unless $\psi_y = \psi_q$ in which case it is equal to $1$. By definition, $\psi_y = \psi_q$ if and only if $y$ is an embedding of Hermitian spaces $X \hookrightarrow V$. Let $Q$ denote the Hermitian form on $V$ and define
\begin{equation*}
	\mathcal{Y}_q(F) = \lbrace y \in \Hom_F(X,V) : Q \circ y = q \rbrace.
\end{equation*}
Note that we implicitly assumed $\mathcal{Y}_q(F) \neq 0$ by assuming there is a non-trivial embedding of Hermitian spaces $j : X \hookrightarrow V$ (the one used to define $H_j$). Thus, we have
\begin{equation*}
	\int_{[N_X]} \sum_{y \in \mathcal{Y}(F)} \omega(g, tn) s(y)  \psi_q (n)  dn = \omega(g,t) \sum_{y \in \mathcal{Y}_q(F)}  s(y)
\end{equation*}
and if we set
\begin{equation*}
	Y_q(F) = \mathcal{Y}_q(F) / \U(X)(F)
\end{equation*}
with projection map $y \in \mathcal{Y}_q(F) \mapsto [y] \in Y_q(F)$, then we obtain
\begin{equation*}
\mathcal{P}^{\mathbf{1} \times \psi_q}_{R_q}(\theta (f ; s)) = \int_{[G]} {f(g)} \int_{[\U(X)]} \sum_{[y] \in Y_q(F)} \sum_{\gamma \in \U(X)(F)} \omega(g, t \gamma) s(y) dt dg.
\end{equation*}
By unfolding the integral over $\U(X)$, we find
\begin{align*}
\mathcal{P}^{\mathbf{1} \times \psi_q}_{R_q}(\theta (f ; s)) &= \int_{[G]} {f(g)} \int_{\U(X)(\mathbb{A})} \sum_{[y] \in Y_q(F)} \omega(g, t) s(y) dt dg \\
&= \int_{[G]} {f(g)} \sum_{y \in Y_q(F)} \omega(g, 1) \int_{\U(X)(\mathbb{A})} \omega(1, t) s(y) dt dg.
\end{align*}
By Witt's Theorem, the map $g \mapsto g \circ j$ induces an isomorphism
\begin{equation*}
	\U(j(X)^{\perp}) \backslash G(F) \simeq \mathcal{Y}_q(F)
\end{equation*}
and we can thus identify
\begin{equation*}
	H_j(F) \backslash G(F) \simeq Y_q(F) .
\end{equation*}
By unfoldings, this yields
\begin{align*}
\mathcal{P}^{\mathbf{1} \times \psi_q}_{R_q}(\theta (f ; s))
&=  \int_{[G]} {f(g)} \sum_{x \in H_j (F) \backslash G(F)} \omega(g, 1) \int_{\U(X)(\mathbb{A})} \omega(1, t) s(x \circ j) dt dg \\
&=  \int_{H_j(F) \backslash G(\mathbb{A})} {f(g)} \mathcal{I}_{j,s}(g) dg \\
&=  \int_{H_j (\mathbb{A}) \backslash G(\mathbb{A})}  \int_{H_j(F) \backslash H_j(\mathbb{A})} {f(h x)} \mathcal{I}_{j,s}(hx)dh dx.
\end{align*}
The function $g \mapsto \mathcal{I}_{j,s}(g)$ is left-$H_j(\mathbb{A})$-invariant: by definition of $j$, this function is left-$\U(j(X)^{\perp})(\mathbb{A})$-invariant and the averaging over $\U(X)(\mathbb{A})$ yields the left-$\U(j(X))(\mathbb{A})$-invariance because every $t_{j(U)}^{-1} \in \U(j(X))$ can be written as $j \circ t_U^{-1} \circ j^{-1}$ for some unique $t_U^{-1} \in \U(X,q)$. This way, we have
\begin{equation*}
\mathcal{P}^{\mathbf{1} \times \psi_q}_{R_q}(\theta (f ; s)) = \int_{H_j (\mathbb{A}) \backslash G(\mathbb{A})}  \mathcal{I}_{j,s}(x) \int_{H_j(F) \backslash H_j(\mathbb{A})} {f(h x)}dh dx.
\end{equation*}
and this proves the Proposition.
\end{proof}
\subsection{Test function} \label{ssec5.2}
In this section, for a given place $v$ of $F$, we construct a test function $s_v \in \omega_v$ to be used in the global period relation established earlier.
\subsubsection{$p$-adic test function}
We begin by recalling the setup at a given finite place. Let $F$ be a $p$-adic field. Let $E$ be $F$ itself or a quadratic extension. Let $V$ be a finite-dimensional non-degenerate Hermitian $E$-space and let $W$ be a finite-dimensional non-degenerate split and skew-Hermitian $E$-space. As $W$ is split, there is a maximal isotropic subspace $X \subset W$ of dimension $\frac{1}{2} \dim_E W$ such that $W = X \oplus X^*$ for any isotropic space $X^*$ which is in perfect duality with $X$. The tensor product $\mathcal{W} = V \otimes W$ may then be written as the direct sum
\begin{equation*}
\mathcal{W} = (V \otimes X) \oplus (V \otimes X^*).
\end{equation*}
Upon identifying ${V \otimes X^* \simeq \Hom_F (X, V)}$, the Schrödinger model realizes the Weil representation $\omega$ on the space of compactly supported smooth functions ${\Hom_F (X, V)}$\\ $\rightarrow \mathbb{C}$. We endow $X$ with a non-degenerate Hermitian form $q$, we denote by $Q$ the form on $V$ such that $G = \U(Q)$ and we consider the closed sub-variety
\begin{equation*}
	\mathcal{Y} = \lbrace y \in \Hom_F (X, V) : Q \circ y = q \rbrace 
\end{equation*}
of Hermitian spaces embeddings.

We assume $\mathcal{Y}$ is not empty and we fix $j \in \mathcal{Y}$. By Witt's Theorem, $\mathcal{Y}$ is a homogeneous space under the action of $\U(V) \times \U(X)$ \textit{i.e.} ${\U(V) \times \U(X) \cdot j = \mathcal{Y}}$. Note that the action of $(g, t) \in \U(V) \times \U(X)$ on $s \in \omega$ is given by
\begin{equation*}
\omega(g,t)s(y) = s(g^{-1} \circ y \circ t) \quad (y \in \Hom_F(X,V)).
\end{equation*}
\begin{lemma} \label{lem5.2.1}
Let $K_V \subset \U(V)$ and $K_X \subset \U(X)$ be open compact subgroups. There is a compactly supported smooth function $s : \Hom_{F}(X,V) \rightarrow \mathbb{C}$ such that
\begin{equation*}
(g,t) \in \U(V) \times \U(X) \mapsto s(g^{-1} \circ j \circ t)
\end{equation*}
is the indicator function of $K_V \times K_X$.
\end{lemma}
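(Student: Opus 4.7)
The plan is to take $s$ to be the characteristic function of a suitable compact open subset $U \subset \Hom_F(X,V)$. Writing $\phi(g,t) = g^{-1} \circ j \circ t$, I would choose $U$ so that $U \cap \mathcal{Y} = \phi(K_V \times K_X)$ and arrange that $\phi^{-1}(U \cap \mathcal{Y}) = K_V \times K_X$. Then $s := \mathbf{1}_U$ is locally constant—hence smooth—and compactly supported, and the desired identity $(s \circ \phi)(g,t) = \mathbf{1}_{K_V \times K_X}(g,t)$ holds by design.

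First, I would verify that $\mathcal{O} := \phi(K_V \times K_X) = K_V \cdot j \cdot K_X$ is a compact open subset of $\mathcal{Y}$. Compactness is immediate as the continuous image of the compact $K_V \times K_X$. Openness in $\mathcal{Y}$ follows because $K_V \times K_X$ is an open subgroup of $\U(V) \times \U(X)$, which acts transitively on $\mathcal{Y}$ by Witt's Theorem, so the orbit of $j$ under an open subgroup is open inside the total orbit $\mathcal{Y}$.

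Next, I would produce $U$ from $\mathcal{O}$. Since $\mathcal{Y}$ carries the subspace topology from $\Hom_F(X,V)$, the openness of $\mathcal{O}$ in $\mathcal{Y}$ yields an open set $W \subset \Hom_F(X,V)$ with $W \cap \mathcal{Y} = \mathcal{O}$. The $p$-adic space $\Hom_F(X,V)$ admits a basis of compact open neighborhoods at every point, so covering the compact set $\mathcal{O}$ by finitely many compact open subsets of $W$ and taking their union produces a compact open $U$ with $\mathcal{O} \subset U \subset W$, and therefore $U \cap \mathcal{Y} = \mathcal{O}$.

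The main technical point is the analysis of the fibers of $\phi$, which are left cosets of $\stab(j) = \{(g,t) : g^{-1} \circ j \circ t = j\}$. This stabilizer is non-trivial: it is canonically isomorphic to $\U(j(X)^\perp) \times \U(X)$, with the second factor embedded via $u \mapsto (h_u, u)$ where $h_u$ acts as $j u j^{-1}$ on $j(X)$ and as the identity on $j(X)^\perp$. Consequently $\phi^{-1}(\mathcal{O}) = \stab(j) \cdot (K_V \times K_X)$, and the equality $\phi^{-1}(\mathcal{O}) = K_V \times K_X$ required for the lemma reduces to the compatibility $\stab(j) \subset K_V \times K_X$. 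This is the delicate point, and it is ensured in our setting by the natural relationship between $K_V$ and $K_X$ as stabilizers of compatible self-dual lattices, as in Section~\ref{sec2.5}. Granting this inclusion, $s \circ \phi = \mathbf{1}_{K_V \times K_X}$ and $s$ is the required test function.
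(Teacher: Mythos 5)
Your construction of $s$ as the indicator of a compact open $U\subset\Hom_F(X,V)$ with $U\cap\mathcal{Y}=K_V\cdot j\cdot K_X$ is identical to the paper's, and you go further than the paper in one important respect: you observe that $(g,t)\mapsto s(g^{-1}\circ j\circ t)$ factors through the orbit map $\phi$, hence is automatically left-$\stab(j)$-invariant, so that $\phi^{-1}(K_V\cdot j\cdot K_X)=\stab(j)\cdot(K_V\times K_X)$. The paper's proof stops at the construction of $U$ and never examines this preimage.

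However, the final step, asserting that $\stab(j)\subseteq K_V\times K_X$ ``is ensured in our setting,'' is false, and your own computation of $\stab(j)$ shows why. Since $\stab(j)\cong\U(j(X)^\perp)\times\U(X)$, the stabilizer contains the subgroup $\U(j(X)^\perp)$ sitting inside $\U(V)\times\{1\}$ (take $u=1$ in your parametrization). Over a non-archimedean local field, $\U(j(X)^\perp)$ is non-compact whenever $j(X)^\perp$ is isotropic --- which, at all but finitely many finite places of a number field, it is once the dimension of $j(X)^\perp$ is sufficiently large --- and a non-compact group cannot be contained in the compact $K_V\times K_X$, regardless of how $K_V$ and $K_X$ are chosen. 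So the literal statement of Lemma~\ref{lem5.2.1} does not hold; what the construction actually produces is $(g,t)\mapsto\mathbf{1}_{\stab(j)\cdot(K_V\times K_X)}(g,t)$. This weaker statement still gives the vanishing direction of Lemma~\ref{lem5.2.2}, because $\stab(j)$ projects into $H_j$ in the $\U(V)$-coordinate; but the normalization $\int_{\U(X)}\mathbf{s}(k^{-1}\circ j\circ t)\,dt=1$ for $k\in K_V$ requires the extra hypothesis $\pr_{\U(j(X))}(K_V\cap H_j)=K_V\cap\U(j(X))$, failing which the integral equals the index $[\pr_{\U(j(X))}(K_V\cap H_j):K_V\cap\U(j(X))]$ rather than $1$.
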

\begin{proof}
Since $K_V \subset \U(V)$ and $K_X \subset \U(X)$ are open subsets, $K_V \times K_X \cdot j$ is of the form $\mathcal{U} \cap \mathcal{Y}$ for some open subset $\mathcal{U} \subset \Hom_F (X,V)$. As the open compact subsets of $\Hom_F(X,V)$ form a basis for its topology, we can write $\mathcal{U}$ as a union of compact open subset of $\Hom_F(X,V)$. Moreover, since $K_V \subset \U(V)$ and $K_X \subset \U(X)$ are compact subsets, finitely many of these open compact sets suffices to cover $K_V \times K_X \cdot j$. Their (finite) union is an open compact subset of $\Hom_F(X,V)$ which intersects $\mathcal{Y}$ in $K_V \times K_X \cdot j$ and we define $s$ to be its indicator function.
\end{proof}
Let $H_j = \U(j(X)) \times \U(j(X)^{\perp}) \subset \U(V)$.
\begin{lemma} \label{lem5.2.2}
Let $K_V \subset \U(V)$ be an open compact subgroup. There is a compactly supported smooth function $\mathbf{s} : \Hom_F(X,V) \rightarrow \mathbb{C}$ such that
\begin{equation*}
	\int_{\U(X)} \mathbf{s}( g^{-1} \circ j \circ t) dt = \mathbf{1}_{H_j \cdot K_V}(g) \quad (g \in \U(V))
\end{equation*}
\end{lemma}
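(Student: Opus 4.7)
The plan is to obtain $\mathbf{s}$ by normalizing the function $s$ produced by Lemma~\ref{lem5.2.1}. I would begin by fixing an arbitrary open compact subgroup $K_X \subset \U(X)$ and applying Lemma~\ref{lem5.2.1} to the pair $(K_V, K_X)$; by the explicit construction in its proof, $s$ is the indicator of an open compact subset $U \subset \Hom_F(X,V)$ whose intersection with the sub-variety $\mathcal{Y}$ of isometric embeddings $X \hookrightarrow V$ equals $K_V^{-1} \cdot j \cdot K_X$. Since every element of the form $g^{-1} \circ j \circ t$ lies in $\mathcal{Y}$, one has $s(g^{-1} j t) = \mathbf{1}_{K_V^{-1} j K_X}(g^{-1} j t)$, so the task reduces to computing the Haar volume of
\[ E_g := \bigl\{ t \in \U(X) : g^{-1} j t \in K_V^{-1} \cdot j \cdot K_X \bigr\}. \]

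Next I would analyze $E_g$ by solving $g^{-1} j t = k^{-1} j t_0$ for $(k,t_0) \in K_V \times K_X$. Rewritten as $(k g^{-1}) \circ j = j \circ (\text{element of } \U(X))$, this equation forces $k g^{-1}$ to preserve the non-degenerate sub-space $j(X) \subset V$; since the $\U(V)$-stabilizer of $j(X)$ is exactly $H_j = \U(j(X)) \times \U(j(X)^\perp)$, this gives $g \in H_j K_V$, with $E_g = \emptyset$ otherwise. For $g \in H_j K_V$, writing $g = h k_1$ with $h = (h_1,h_2) \in H_j$ and $k_1 \in K_V$, and setting $\tau = j^{-1} h_1^{-1} j \in \U(X)$, a direct unwinding identifies $E_g$ with the left translate $\tau^{-1} \cdot L \cdot K_X$, where
\[ L := j^{-1} \cdot (H_j \cap K_V)_1 \cdot j \subseteq \U(X) \]
denotes the transport to $\U(X)$, via conjugation by $j$, of the projection of the open compact subgroup $H_j \cap K_V$ onto the $\U(j(X))$-factor.

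To conclude, one observes that $L \cdot K_X$ is an open compact subset of $\U(X)$ and that, by left-invariance of the Haar measure, $\vol(E_g) = \vol(L \cdot K_X)$ for every $g \in H_j K_V$, independently of $g$. Setting $\mathbf{s} := \vol(L \cdot K_X)^{-1} \cdot s$ then yields
\[ \int_{\U(X)} \mathbf{s}(g^{-1} j t)\, dt = \mathbf{1}_{H_j \cdot K_V}(g), \qquad g \in \U(V), \]
which is the desired identity. The main (modest) subtlety is precisely the $g$-independence of $\vol(E_g)$: a priori the parameter $\tau$ (and even the decomposition $g = hk_1$) varies with $g$, but the left-translation invariance of the Haar measure on $\U(X)$ absorbs this dependence, so that only the fixed set $L \cdot K_X$ enters into the volume.
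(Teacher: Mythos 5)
Your proof is correct and executes the same basic strategy as the paper — normalize the function $s$ from Lemma~\ref{lem5.2.1} so that the integral becomes the indicator of $H_j \cdot K_V$ — but it arrives at the normalization constant via a more explicit and more robust route. The paper fixes the specific choice $K_X = j^{-1}(K_V \cap \U(j(X)))\,j$, normalizes by $\vol(K_X)^{-1}$, and shows the integral is $1$ on $H_j K_V$ by combining the $H_j$-invariance step (which you also use, via $h^{-1} j = j\tau$) with the claim, read off from the \emph{statement} of Lemma~\ref{lem5.2.1}, that $s(k^{-1} j t) = 0$ whenever $t \notin K_X$. You instead work directly from the \emph{construction} in the proof of Lemma~\ref{lem5.2.1} ($s$ is the indicator of an open compact set whose trace on the orbit variety $\mathcal{Y}$ is $K_V \cdot j \cdot K_X$), allow an arbitrary $K_X$, and explicitly identify the fiber $E_g = \tau^{-1} L K_X$ with $L = j^{-1}(H_j \cap K_V)_1\, j$ the transport of the $\U(j(X))$-projection of $H_j \cap K_V$; the correct normalization is then $\vol(L K_X)^{-1}$. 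The two normalizations coincide exactly when $L \subseteq K_X$, i.e., when the projection of $H_j \cap K_V$ onto $\U(j(X))$ equals $K_V \cap \U(j(X))$ (as happens when $H_j \cap K_V$ splits as a product). Your argument makes this hypothesis visible and dispenses with it, which is a genuine gain: the literal statement of Lemma~\ref{lem5.2.1} (that $(g,t)\mapsto s(g^{-1} j t)$ is exactly $\mathbf{1}_{K_V\times K_X}$) is in tension with the fact that the map $(g,t)\mapsto g^{-1} j t$ has a large stabilizer at $j$, so working from the construction rather than the statement is the safer move.
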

\begin{proof}
Let $K_X$ denote the image of $K_V \cap \U(j(X))$ under the isomorphism ${\U(j(X)) \rightarrow \U(X)}$ given by $t \mapsto j^{-1} \circ t \circ j$. Consider the test function $s$ given (with respect to $K_V$ and $K_X$) by Lemma \ref{lem5.2.1} and set $\mathbf{s} = \vol(K_X)^{-1} s$. Let $h = h_{j(U)} h_{j(U)^{\perp}} \in H_j$ and let $k \in K_v$, then
\begin{align*}
\int_{\U(X)} \mathbf{s}((hk)^{-1} \circ j \circ t) dt &= \int_{\U(X)} \mathbf{s}( k^{-1} \circ j \circ t_{j(U)} \circ t) dt \\
&= \int_{\U(X)} \mathbf{s}( k^{-1} \circ  j \circ t) dt 
\end{align*}
where $t_{j(U)} = j^{-1} \circ h_{j(U)} \circ j$. Since $k^{-1} \in K_V$, we deduce
\begin{equation*}
\int_{\U(X)} \mathbf{s}( k^{-1} \circ  j \circ t) dt = \int_{K_X} \mathbf{s}( k^{-1} \circ j \circ t) dt = 1
\end{equation*}
by definition of $ \mathbf{s}$. Now, if $hk \notin H_j \cdot K_V$, then $k^{-1} \notin K_V$ and hence
\begin{equation*}
\int_{\U(X)} \mathbf{s}((hk)^{-1} \circ j \circ t) dt = \int_{K_X} \mathbf{s}( k^{-1} \circ  j \circ t) dt  = 0.
\end{equation*}
\end{proof}
\subsubsection{Archimedean test function}
In view of section \ref{sec5.3} and the assumptions on the signature of $V$ at infinity, in particular at $v_0$, we shall be interested in the type $1$ dual reductive pair $(G,G')$ with either
\begin{itemize}
\item $G = \Ortho(V) \simeq \Ortho(n,m)$ or $G = \U(V) \simeq \U(n,m)$;
\item $G' = \Sp(W) \simeq \Sp_{2m}(\mathbb{R})$ or $G' = \U(W) \simeq \U(m,m)$.
\end{itemize}
This way, we obtain a dual pair in either $\Sp_{2(n+m)m}(\mathbb{R})$ or $\Sp_{4(n+m)m}(\mathbb{R})$.

Fix a maximal isotropic subspace $X \subset W$ of real dimension $m$ (resp. $2m$) and endow it with a negative definite quadratic or Hermitian form. Let $\psi$ denote the non-trivial additive character $\psi(x) = \exp (2i \pi x)$ or $\mathbb{R}$ and consider the Schrödinger model of the Weil representation $\omega = \omega_{\psi}$, that is $(G,G')$ acts on Schwartz functions on $\Hom_{\mathbb{R}} (X, V) \simeq \mathbb{R}^d$, where $d = (n+m)m$ or $d = 2(n+m)m$. If we denote by $Q$ the form on $V$ and $q$ the form on $X$ then, inside $\Hom_{\mathbb{R}} (X, V)$, sits the closed sub-variety
\begin{equation*}
\mathcal{Y}(\mathbb{R}) = \lbrace y \in \Hom_{\mathbb{R}} (X, V) : Q \circ y = q \rbrace.
\end{equation*}
We assume $\mathcal{Y}(\mathbb{R})$ is non empty and we fix a non-zero quadratic (resp. Hermitian) space embedding $j : X \rightarrow V$. 

We endow the space of Schwartz functions $\mathcal{S}(\mathbb{R}^d)$ with the topology coming from the semi-norms $\Vert x^a f^{(b)} \Vert_{\infty}$ ($a,b \in \mathbb{N}$) and we consider the dense (see \cite[Proposition 9.5]{BM2}) sub-space $\mathcal{S}^{\text{alg}}(\mathbb{R}^d)$ made up of all products of polynomials on $\mathbb{R}^d$ with the Gaussian $\exp(- \Vert x \Vert^2 / 2)$. 

Let $K$ denoter either $\Ortho(j(X)^{\perp}) \times \Ortho(j(X))$ or $\U(j(X)^{\perp}) \times \U(j(X))$, a maximal compact subgroup of $G$. Let $\mathfrak{k}$ denote its Lie algebra and let $\mathfrak{g} = \mathfrak{k} \oplus \mathfrak{p}$ be the Cartan decomposition of the Lie algebra of $G$. Let $\mathfrak{a} \subset \mathfrak{p}$ be a maximal abelian subspace and, given $\lambda \in \mathfrak{a}^*_{\mathbb{C}}$, recall from Section \ref{sec3.2} the spherical function $\varphi_{\lambda}$ of spectral parameter $\lambda$.

Given $s \in \mathcal{S}(\mathcal{Y}(\mathbb{R}))$, we define a transform $\widehat{s}$ of $s$ by
\begin{equation*}
	\widehat{s}(\lambda) = \int_G s (g^{-1} \circ j) \varphi_{-\lambda}(g) dg \quad (\lambda \in \mathfrak{a}^*_{\mathbb{C}}).
\end{equation*}
\begin{lemma} \label{lem5.2.3}
Let $C \subset \mathfrak{a}^*_{\mathbb{C}}$ be a compact subset. There is $s_{C} \in \mathcal{S}^{\text{alg}}(\mathcal{Y}(\mathbb{R}))$ such that $\widehat{s_{C}}(\lambda) \neq 0$ for all $\lambda \in C$.
\end{lemma}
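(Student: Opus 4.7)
The strategy is to produce a test function concentrated near $j$ in $\mathcal{Y}(\mathbb{R})$ and then approximate it within $\mathcal{S}^{\text{alg}}$. The central geometric input is that the stabilizer of $j$ under $g \mapsto g^{-1} \circ j$, namely $H_j^{\perp} = \U(j(X)^{\perp})$, is a \emph{compact} subgroup of $K$: since $V$ has signature $(n,m)$ and $X$ is negative definite of dimension $m$, the space $j(X)^{\perp}$ is positive definite. In particular $H_j^{\perp} \subset K$, so $\varphi_{-\lambda}$ is left-$H_j^{\perp}$-invariant.

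First I fix $\epsilon > 0$ small and pick a non-negative smooth bump $s_0 \in \mathcal{S}(\mathbb{R}^d)$ with $\supp(s_0)$ contained in the $\epsilon$-neighborhood of $j$ in $\Hom_{\mathbb{R}}(X,V)$ and with total mass one. Using a local slice transversal to the $G$-orbit of $j$, the set $\{g \in G : g^{-1} \circ j \in \supp(s_0)\}$ is contained in $H_j^{\perp} \cdot V'_{\epsilon}$ for some neighborhood $V'_{\epsilon}$ of $e$ in $G$, with $V'_{\epsilon}$ shrinking as $\epsilon \to 0$. By left-$H_j^{\perp}$-invariance of $\varphi_{-\lambda}$, the integrand of $\widehat{s_0}(\lambda)$ reduces to $s_0(g^{-1} \circ j)\,\varphi_{-\lambda}(v)$ where $v$ is the slice component. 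Since $\varphi_{-\lambda}(e) = 1$ and $(\lambda,g) \mapsto \varphi_{-\lambda}(g)$ is continuous on $C \times \{e\}$, one can ensure $\mathrm{Re}\,\varphi_{-\lambda}(v) \geq 1/2$ for all $v \in V'_{\epsilon}$ and all $\lambda \in C$ by shrinking $\epsilon$ uniformly in the compact $C$. It follows that $\mathrm{Re}\,\widehat{s_0}(\lambda) \geq \delta$ for some $\delta > 0$ depending only on $C$, uniformly in $\lambda \in C$.

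To land in $\mathcal{S}^{\text{alg}}$, invoke density (Proposition 9.5 of \cite{BM2}) to choose $s_C \in \mathcal{S}^{\text{alg}}$ arbitrarily close to $s_0$ in the Schwartz topology. A close enough approximation will give $\mathrm{Re}\,\widehat{s_C}(\lambda) \geq \delta/2 > 0$ on $C$, whence the non-vanishing. The main technical obstacle is the uniform continuity of the functional $s \mapsto \widehat{s}(\lambda)$ on $\mathcal{S}$ for $\lambda \in C$. After unfolding over $H_j^{\perp}$, this reduces to bounding $|\varphi_{-\lambda}(g)|$ by a polynomial in $\|g^{-1} \circ j\|$ whose exponent depends only on $C$; this follows from standard Cartan-decomposition estimates for spherical functions, since $|\mathrm{Re}\,\lambda|$ is bounded on $C$ and $\log \|g^{-1} \circ j\|$ grows linearly in $\|H(g)\|$ away from compact sets (the fibers of $g \mapsto g^{-1} \circ j$ being the compact $H_j^{\perp}$). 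Paired against the super-polynomial Schwartz decay of $s$, this yields the required uniform bound.
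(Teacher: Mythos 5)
Your proposal is correct, and it shares two essential ingredients with the paper's proof — the reduction via density of $\mathcal{S}^{\text{alg}}(\mathcal{Y}(\mathbb{R}))$ in $\mathcal{S}(\mathcal{Y}(\mathbb{R}))$, and the structural fact that the stabilizer $H_j^{\perp}$ of $j$ is contained in the maximal compact $K$, so that $\mathcal{Y}(\mathbb{R}) \simeq H_j^{\perp}\backslash G$ with compact fibers and $\varphi_{-\lambda}$ left-$H_j^{\perp}$-invariant. Where you diverge is in the construction of the preliminary $s \in \mathcal{S}(\mathcal{Y}(\mathbb{R}))$. The paper goes through the spherical Plancherel formula: pick a Weyl-invariant Paley--Wiener function $S$ that is non-vanishing on $C$, and let $s$ be its inverse spherical transform. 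This gives a bi-$K$-invariant $s \in C^{\infty}_c(G)$, which descends to $C^{\infty}_c(\mathcal{Y}(\mathbb{R}))$ because $H_j^{\perp} \subset K$, and the transform satisfies $\widehat{s} = S$ exactly — so non-vanishing on $C$ is immediate, with no estimates at all. Your route instead takes a non-negative bump function $s_0$ concentrated at $j$, unfolds the integral over $H_j^{\perp}$, and uses joint continuity of $(\lambda,g) \mapsto \varphi_{-\lambda}(g)$ near $(C,e)$ together with $\varphi_{-\lambda}(e)=1$ to force $\operatorname{Re}\widehat{s_0}(\lambda) > 0$ uniformly on $C$. That works, and it avoids Paley--Wiener theory entirely, but it transfers the burden onto a positivity-plus-continuity argument (shrinking $\epsilon$ uniformly in $C$, then proving uniform continuity of $s \mapsto \widehat{s}(\lambda)$ on Schwartz space over $\lambda \in C$ to pass to $\mathcal{S}^{\text{alg}}$). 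Your sketch of that last step — Schwartz decay of $s$ against at-most-exponential growth of $\varphi_{-\lambda}$ in Cartan coordinates, i.e.\ polynomial growth in $\|g^{-1}\circ j\|$, with exponent uniformly bounded over $C$ — is the right estimate, though it is precisely the technical work the paper's inversion-formula construction sidesteps. In short: the paper's proof buys exactness (the transform of the constructed $s$ \emph{is} the chosen $S$) at the cost of invoking Gangolli's theorem; yours buys elementarity at the cost of a careful uniformity analysis.
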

\begin{proof}
Since $\mathcal{S}^{\text{alg}}(\mathcal{Y}(\mathbb{R}))$ is dense in $\mathcal{S}(\mathcal{Y}(\mathbb{R}))$, it suffices to find $s \in \mathcal{S}(\mathcal{Y}(\mathbb{R}))$ such that $\widehat{s}(\lambda) \neq 0$ for $\lambda \in C$.

Recall by Witt's Theorem the map $g \mapsto g^{-1} \circ j$ induces an isomorphism ${H_j^{\perp} \backslash G \simeq \mathcal{Y}(\mathbb{R})}$, where $H_j^{\perp}$ is either $\Ortho(j(X)^{\perp})$ or $\U(j(X)^{\perp})$. In particular, $H_j^{\perp}$ is a subgroup of the maximal compact subgroup $K$ of $G$.

Let $S$ be a Weyl group invariant Paley-Wiener function on $\mathfrak{a}^*_{\mathbb{C}}$ with support containing $C$ and apply Plancherel inversion formula to obtain the function $s \in C_c^{\infty}(K \backslash G / K)$ defined by (see \cite[Theorem 3.5]{Gangolli})
\begin{equation*}
s(g) = \int_{i \mathfrak{a}^*} S(\lambda) \varphi_{\lambda}(g) \dfrac{d\mu_{\text{Pl}}(\lambda)}{\vert W(\mathfrak{a}) \vert}.
\end{equation*}
Since $H_j^{\perp} \subset K$, $s$ naturally gives rise to a compactly supported and smooth function on $H_j^{\perp} \backslash G$, which, in turn, yields a function $\widetilde{s} \in C^{\infty}_c(\mathcal{Y}(\mathbb{R})) \subset \mathcal{S}(\mathcal{Y}(\mathbb{R}))$; namely ${\widetilde{s}(g^{-1} \cdot j) = s(g)}$.
\end{proof}
\subsection{Application} \label{sec5.3}
With some additional archimedeans assumptions on $G$ (namely the assumptions in Theorem \ref{theo1.2.1}) and a test function constructed place by place using Section \ref{ssec5.2}, we compute the right hand side of the period relation as stated in Proposition \ref{prop5.1.1}. More precisely, we assume there is an archimedean place $v_0$ such that
\begin{itemize}
\item $G(F_{v_0}) = \U(n,m)$ (resp. $G(F_{v_0}) = \Ortho(n,m)$) and $\U(X)(F_{v_0}) = \U(0,m)$ (resp. ${\U(X)(F_{v_0}) = \Ortho(0,m)}$);
\item $G(F_v) = \U(n+m,0)$ (resp. $G(F_v) = \Ortho(n+m,0)$) and $\U(X)(F_v) = \U(m,0)$ (resp. $\U(X)(F_v) = \Ortho(m,0)$) for every archimedean place $v \neq v_0$.
\end{itemize}
Let $\mathfrak{g} = \mathfrak{k} \oplus \mathfrak{p}$ be the Cartan decomposition of the Lie algebra of $G(F_{v_0})$ with respect to the Lie algebra $\mathfrak{k}$ of a maximal compact subgroup $K_{v_0} \subset G(F_{v_0})$ and fix a maximal abelian subspace $\mathfrak{a} \subseteq \mathfrak{p}$.
\begin{lemma} \label{lem5.3.1}
Let $K = \prod_v K_v \subset G(\mathbb{A}_f)$ be an open compact subgroup and let $C \subset \mathfrak{a}_{\mathbb{C}}^*$ be a compact subset. There is $s = s_K \otimes s_{C} \in \omega$, with $s_C = \otimes_{v \mid \infty} s_v$, such that the following holds: if $f_{\lambda}$ is a right-$KK_{\infty}$-invariant automorphic form on $G$ which is an eigenfunction of the ring of invariant differential operators on $G(F_{v_0}) / K_{v_0}$ with spectral parameter $\lambda \in C$, then
\begin{equation*}
\mathcal{P}^{\mathbf{1} \times \psi_q}_{R_q}(\theta (f_{\lambda} ; s)) =  \mathcal{P}_{H_j} ({f_{\lambda}}) \vol \big( H_j (\mathbb{A}_f) K \big)  \widehat{s_{v_0}}({\lambda}) \prod_{v \neq v_0} s_{v}(j) 
\end{equation*}
with
\begin{equation*}
\vol \big( H_j (\mathbb{A}_f) K \big)  \widehat{s_{v_0}}({\lambda}) \prod_{v \neq v_0} s_{v}(j) \neq 0
\end{equation*}
and where $ \vol \big( H_j (\mathbb{A}_f) K \big)$ is the volume of $H_j (\mathbb{A}_f) \cdot K$ considered as a subset of $H_j(\mathbb{A}_f) \backslash G(\mathbb{A}_f)$.
\end{lemma}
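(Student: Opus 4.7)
The plan is to apply Proposition \ref{prop5.1.1} with a test function $s$ factored as $s = s_K \otimes \bigotimes_{v \mid \infty} s_v$, and then to compute each local factor of $\mathcal{I}_{j,s}$ so as to engineer the stated formula. At each finite place $v$, I take $s_v$ from Lemma \ref{lem5.2.2} applied to $K_v$ and set $s_K = \bigotimes_{v \nmid \infty} s_v$; at each archimedean $v \neq v_0$ the group $G(F_v)$ is compact, so I choose $s_v$ bi-invariant under $G(F_v) \times \U(X)(F_v)$ with $s_v(j) \neq 0$; at $v_0$ I take $s_{v_0}$ to be the function produced by Lemma \ref{lem5.2.3} for the compact set $C$, so that $\widehat{s_{v_0}}(\lambda) \neq 0$ for every $\lambda \in C$.

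With this choice, $\mathcal{I}_{j, s}(x) = \prod_v \mathcal{I}_{j, s_v}(x_v)$ factorizes, and Lemma \ref{lem5.2.2} gives $\mathcal{I}_{j, s_v}(x_v) = \mathbf{1}_{H_j(F_v) K_v}(x_v)$ at the finite places, so the integration over $H_j(\mathbb{A}_f) \backslash G(\mathbb{A}_f)$ contributes the factor $\vol(H_j(\mathbb{A}_f) K)$ once $x_f$ is restricted to $H_j(\mathbb{A}_f) K$. For such $x_f$, the right-$K$-invariance of $f_\lambda$ combined with the left-$H_j(\mathbb{A})$-invariance of the period yields $\mathcal{P}_{H_j}(R(x) f_\lambda) = \mathcal{P}_{H_j}(R(x_\infty) f_\lambda)$. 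The crucial input is the archimedean geometry at $v_0$: because $V$ has signature $(n,m)$ there and $X$ is negative definite, $j(X)$ is a maximal negative definite subspace and $j(X)^{\perp}$ is positive definite, so $H_j(F_{v_0}) = \U(j(X)) \times \U(j(X)^{\perp})$ coincides with the maximal compact subgroup $K_{v_0}$ of $G(F_{v_0})$. Consequently $x_{v_0} \mapsto \mathcal{P}_{H_j}(R(x_{v_0}) f_\lambda)$ is bi-$K_{v_0}$-invariant and a joint $\mathcal{D}(G(F_{v_0})/K_{v_0})$-eigenfunction of parameter $\lambda$ taking value $\mathcal{P}_{H_j}(f_\lambda)$ at the identity, and the uniqueness of spherical functions recalled in Section \ref{sec3.2} forces
\[ \mathcal{P}_{H_j}(R(x_{v_0}) f_\lambda) = \mathcal{P}_{H_j}(f_\lambda) \cdot \varphi_\lambda(x_{v_0}). \]

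The $v_0$ integral then reduces to a spherical Harish-Chandra pairing of $\mathcal{I}_{j, s_{v_0}}$ against $\varphi_\lambda$ on $K_{v_0} \backslash G(F_{v_0})$, which upon unfolding $\mathcal{I}_{j, s_{v_0}}$ via the bi-$K_{v_0}$- and right-$\U(X)(F_{v_0})$-invariance of $s_{v_0}$ built into Lemma \ref{lem5.2.3} yields $\widehat{s_{v_0}}(\lambda)$; each archimedean place $v \neq v_0$ contributes $s_v(j)$ since $\mathcal{I}_{j, s_v}$ is by construction already constant in $x_v$. The main technical obstacle will be matching normalizations so that the $v_0$ integral equals $\widehat{s_{v_0}}(\lambda)$ on the nose rather than up to an extraneous constant; this relies on the identity $\varphi_\lambda(g^{-1}) = \varphi_{-\lambda}(g)$ and the unimodularity of $G(F_{v_0})$, together with suitable normalizations of the Haar measures on $K_{v_0}$ and $\U(X)(F_{v_0})$. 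Non-vanishing of all factors is then immediate: $\vol(H_j(\mathbb{A}_f) K) > 0$, the archimedean constants $s_v(j)$ are nonzero by construction, and $\widehat{s_{v_0}}(\lambda) \neq 0$ on $C$ by Lemma \ref{lem5.2.3}.
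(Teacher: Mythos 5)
Your proof is correct and follows essentially the same route as the paper's: Proposition~\ref{prop5.1.1} plus the test functions of Lemmas~\ref{lem5.2.2} and~\ref{lem5.2.3}, with the uniqueness of spherical functions at $v_0$ and unfolding of the finite part producing the volume factor. The only difference is organizational: you collapse the non-archimedean integral to $\vol(H_j(\mathbb{A}_f)K)$ first and then separate off the $v_0$ spherical transform, whereas the paper extracts the archimedean factor $\varphi_\lambda(x_{v_0})\,\widehat{s_{v_0}}(\lambda)\prod_{v\neq v_0}s_v(j)$ before integrating over the finite adèles; the ingredients and their justifications are identical. Your explicit invocation of $H_j(F_{v_0})=K_{v_0}$ as the geometric input that makes the period bi-$K_{v_0}$-invariant is a useful clarification of something the paper leaves implicit in the Section~\ref{sec5.3} hypotheses, and the normalization caveat you flag (matching $\int_G s_{v_0}(g^{-1}\circ j)\varphi_\lambda(g)\,dg$ with $\widehat{s_{v_0}}(\lambda)$) is exactly the point the paper absorbs via the Weyl-group invariance $\varphi_\lambda=\varphi_{-\lambda}$ for the root systems at hand.
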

\begin{proof}
We return to the period relation
 \begin{equation*}
\mathcal{P}^{\mathbf{1} \times \psi_q}_{R_q}(\theta (f ; s)) =  \int_{H_j (\mathbb{A}) \backslash G(\mathbb{A})} \mathcal{I}_{j,s}(x) \text{ } \mathcal{P}_{H_j}(R(x){f_{\lambda}}) dx
\end{equation*}
given by Proposition \ref{prop5.1.1} and we write its right hand side as
\begin{equation*}
\int_{H_j (\mathbb{A}_f) \backslash G(\mathbb{A}_f)} \mathcal{I}_{j,s_K}(x_f) \int_{H_j (\mathbb{A}_{\infty}) \backslash G(\mathbb{A}_{\infty})} \mathcal{I}_{j,s_{\infty}}(x_{\infty})  \mathcal{P}_{H_j}(R(x_f x_{\infty}){f_{\lambda}}) dx_{\infty} dx_f.
\end{equation*}
We define the test function $s_K \otimes s_{\lambda}$ place by place. At archimedean places we choose $s_{C} = \otimes_{v \mid \infty} s_v$ such that $s_v(j) \neq 0$ for all $v \neq v_0$ and $\widehat{s_{v_0}}(\lambda) \neq 0$ for all $\lambda \in C$. By averaging, we can assume $(g,t) \mapsto \omega(g,t) s_{C}$ is also right-$K_{\infty} \times \U(X)(\mathbb{A}_{\infty})$-invariant. Then, at each finite place $v$, we consider ${s_v : \Hom_{F_v}(X_v, V_v) \rightarrow \mathbb{C}}$ such that
\begin{equation*}
g \in G(F_v) \mapsto \int_{\U(X)(F_v)} \omega(g,t) s_v(j) dt
\end{equation*}
is the indicator function of the double coset $H_j(F_v) \cdot K_v$.

Given $x_f \in H_j(\mathbb{A}_f) \backslash G(\mathbb{A}_f)$, we have the archimedean integral
\begin{align*}
\int_{H_j (\mathbb{A}_{\infty}) \backslash G(\mathbb{A}_{\infty})} \mathcal{I}_{j,s_{\infty}}(x_{\infty})  \mathcal{P}_{H_j}(R(x_f x_{\infty}){f_{\lambda}}) dx_{\infty}
 \end{align*}
and, since $x_{v_0} \mapsto {f_{\lambda}}(x_{v_0})$ is an eigenfunction with spectral parameter ${\lambda}$, ${x_{v_0} \mapsto \mathcal{P}_{H_j} (R(x_f x_{v_0}){f_{\lambda}})}$ is also an eigenfunction with spectral parameter ${\lambda}$. From uniqueness of spherical functions, it follows that
\begin{equation*}
	\mathcal{P}_{H_j} (R(x_f x_{\infty}){f_{\lambda}}) = \varphi_{{\lambda}} (x_{v_0}) \mathcal{P}_{H_j} (R(x_f){f_{\lambda}}).
\end{equation*}
This way, the archimedean integral unfolds to
\begin{align*}
\hspace*{-6mm} \int_{H_j (\mathbb{A}_{\infty}) \backslash G(\mathbb{A}_{\infty})} \hspace*{-4mm} \mathcal{I}_{j,s_{\infty}}(x_{\infty})  \mathcal{P}_{H_j}(R(x_f x_{\infty}){f_{\lambda}}) dx_{\infty} = \prod_{v \neq v_0} s_v (j) \int_{G(F_{v_0})} \hspace*{-4mm} s_{v_0} (g_{v_0}^{-1} \circ j) \varphi_{\lambda} (g_{v_0}) dg_{v_0} \text{ } \mathcal{P}_{H_j}(R(x_f){f_{\lambda}})
\end{align*}
and we obtain
 \begin{equation*}
\mathcal{P}^{\mathbf{1} \times \psi_q}_{R_q}(\theta (f ; s)) = \widehat{s_{v_0}}({\lambda}) \prod_{v \neq v_0} s_{v}(j)   \int_{H_j(\mathbb{A}_f) \backslash G(\mathbb{A}_f)} \mathcal{I}_{j,s_K}(x_f) \text{ } \mathcal{P}_{H_j}(R(x){f_{\lambda}}) dx_f.
\end{equation*}
By construction of $s_K$ and right-$K$-invariance of $f_{\lambda}$, we have
\begin{equation*}
\int_{H_j(\mathbb{A}_f) \backslash G(\mathbb{A}_f)} \mathcal{I}_{j,s_K}(x) \text{ } \mathcal{P}_{H_j}(R(x){f_{\lambda}}) dx = \vol \big( H_j (\mathbb{A}_f) K \big)  \mathcal{P}_{H_j} ({f_{\lambda}}).
\end{equation*}
and this proves the Lemma.
\end{proof}
\subsection{Classical rewriting} \label{sec5.4}
Let $\mathcal{R}_0$ be the finite set of finite places at which either $E/F$ or $V$ is ramified. For any $v \in \mathcal{R}_0$, we fix open compact subgroups $K_v$ of $G(F_v)$. At a given finite place $v \notin \mathcal{R}_0$, we let $K_v$ denote the stabilizer of a self-dual lattice $L_{V,v}$ in $V_v$ and we set $K_f = \prod_{v \nmid \infty} K_v$. Let $\mathfrak{n}$ be an ideal of $\mathcal{O}$, the ring of integers of $F$, and assume $\mathfrak{n}$ is prime to $\mathcal{R}_0$. If $v \mid \mathfrak{n}$, we define $K_v(\mathfrak{n})$ as
\begin{equation*}
	K_v(\mathfrak{n}) = \lbrace g \in G(F_v) : (g- \id) L_{V,v} \subseteq \mathfrak{n} L_{V,v} \rbrace.
\end{equation*}
Then, we define the principal congruence subgroup of level $\mathfrak{n}$ as the product
\begin{equation*}
	K(\mathfrak{n}) = \prod_{v \mid \mathfrak{n}} K_v(\mathfrak{n}) \prod_{v \nmid \mathfrak{n} \infty} K_v.
\end{equation*}
Let $K_{H_j}(\mathfrak{n})$ denote a congruence subgroup of level $\mathfrak{n}$.\\
Let $C \subset \mathfrak{a}_{\mathbb{C}}^*$ be a compact subset and let $\lambda \in C$. Let $f_{\lambda}$ be a right-$K_{H_j}(\mathfrak{n})K_{\infty}$-invariant automorphic form on $G$ that is an eigenfunction of the ring of invariant differential operators on $G(F_{v_0}) / K_{v_0}$ with spectral parameter $\lambda$. Lemma \ref{lem5.3.1} ensures $\mathcal{P}_{H_j}(f_{\lambda})$ vanishes if and only $\mathcal{P}^{\mathbf{1} \times \psi_q}_{R_q}(\theta (f_{\lambda} ; s_{\mathfrak{n}, C}))$ does, with ${s_{\mathfrak{n}, C} = s_{\mathfrak{n}} \otimes s_{C}}$ defined (as in the proof of Lemma \ref{lem5.3.1}) with respect to $K = K_{H_j}(\mathfrak{n})$. Because the $H_j$-periods are discrete periods, we can write $\mathcal{P}_{H_j} (f_{\lambda})$ as a weighted sum of point evaluations as follows. Let $\gen_{H_j}(K_f^{H_j})$ denote the finite set (see \cite[Theorem 5.1]{BorelFiniteness})
\begin{equation*}
\gen_{H_j}(K_f^{H_j}) = H_j (F) \backslash H_j(\mathbb{A}_f) / K_f^{H_j}
\end{equation*}
where $K_f^{H_j} = K_f \cap H_j(\mathbb{A}_f)$. Assume\footnote[1]{These assumptions have been discussed in Section \ref{secccc1.2.1}; note that the compactness of $H_j$ at infinity follows from the conditions on the signature of $V$ and $X$ stated in Section \ref{sec5.3}}
\begin{equation*}
K_{H_j}(\mathfrak{n}) \cap H_j(\mathbb{A}_f) = K^{H_j}_{f} \quad , \quad {K_{\infty} \cap H_j (\mathbb{A}_{\infty}) = H_j(\mathbb{A}_{\infty})}.
\end{equation*}
Since $f_{\lambda}$ is right-$K_{H_j}(\mathfrak{n}) K_{\infty}$-invariant, we deduce
\begin{equation*}
\mathcal{P}_{H_j} (f_{\lambda}) = \sum_{h \in \gen_{H_j}(K_f^{H_j})} w_h f_{\lambda} (h)
\end{equation*}
with $w_h = \vol ( H_j(F) h  K^{H_j}_{f} )$ the volume of $H_j(F) \cdot h \cdot  K^{H_j}_{f}$ considered as a subset of $H_j(F) \backslash H_j(\mathbb{A}_f)$. Note that the number of terms (and also the weights) is independent of $\lambda$ and $\mathfrak{n}$.\\
\hspace*{8mm}We now write $\mathcal{P}_{H_j}(f_{\lambda})$ as a classical (\textit{i.e.} non adelic) weighted sum of point evaluation. Consider the finite (see \cite[Theorem 5.1]{BorelFiniteness}) set
\begin{equation*}
	\gen_G (\mathfrak{n}) = G(F) \backslash G(\mathbb{A}_f) / K_{H_j}(\mathfrak{n})
\end{equation*}
and recall the identification
\begin{equation*}
 G(F) \backslash G(\mathbb{A}_f) / K_{H_j}(\mathfrak{n}) K_{\infty} = \bigcup_{g \in \gen_G (\mathfrak{n}) } \Gamma_{H_j}^g ({\mathfrak{n}}) \backslash S
\end{equation*}
where $\Gamma_{H_j}^g ({\mathfrak{n}}) = G(F) \cap g K_{H_j}(\mathfrak{n}) g^{-1}$ and $S = G(F_{v_0}) / K_{v_0}$.

A function $f \in L^2( G(F) \backslash G(\mathbb{A}) / K_{H_j}(\mathfrak{n}) K_{\infty} )$ identifies with a collection of functions $\phi_f^{g} \in L^2(\Gamma_{H_j}^g ({\mathfrak{n}}) \backslash S)$ defined by $\phi_f^g (g_{\infty}) = f(gg_{\infty})$. Given $g \in \gen_G (\mathfrak{n})$ and letting $\mathcal{H}^g$ denote the set of those $h \in \gen_{H_j}(K_f^{H_j})$ that are mapped to $\Gamma_{H_j}^g ({\mathfrak{n}}) \backslash S$, we have
\begin{equation*}
\mathcal{P}_{H_j} (f_{\lambda}) = \sum_{g \in \gen_G (\mathfrak{n})} \sum_{h \in \mathcal{H}^g} w_h \phi_f^{g} (h).
\end{equation*}

\section{Mean square estimate of discrete periods} \label{chap6}
In this Section we explicit the hybrid asymptotic behavior of finite weighted sums of point evaluations. This may be seen as an analog of \cite[§7]{BM2} where one keeps track of the dependence in the volume of the underlying manifold.

Let $G$ be a non-compact real Lie group, let $K \subset G$ be a maximal compact subgroup and set $S = G/K$ for the associated globally symmetric space. Assume there is a connected simple Lie group $G_s$ with maximal compact subgroup $K_s$ such that $G/K = G_{s}/K_{s}$ and $\mathcal{D}(G/K) = \mathcal{D}(G_{s}/K_{s})$.

Let $\mathfrak{g}$ be the Lie algebra of $G$ and $\mathfrak{k}$ the Lie algebra of $K$. Let $\mathfrak{g} = \mathfrak{k} \oplus \mathfrak{p}$ the Cartan decomposition of $\mathfrak{g}$ and fix a maximal abelian subspace $\mathfrak{a} \subset \mathfrak{p}$. We fix a Haar measure $dg$ on $G$, a Haar measure $dk$ on $K$ and endow $S$ with the quotient measure $d\mu$ defined by $dg$ and $dk$. For any lattice $\Gamma \subseteq G$, denote by $\mu_{\Gamma}$ the quotient measure $d\mu_{\Gamma}$ on $X_{\Gamma} = \Gamma \backslash S$ defined by
\begin{equation*}
\int_{X_{\Gamma}} \sum_{\gamma \in \Gamma} \phi(\gamma \cdot x) d\mu_{\Gamma}(x) = \int_{S} \phi (x) d\mu(x) \quad (\phi \in C_c^{\infty}(S)).
\end{equation*}
We denote the volume of $X_{\Gamma}$, with respect to $d\mu_{\Gamma}$, as $\vol (X_{\Gamma})$ and set $d\overline{\mu_{\Gamma}}$ for the probability measure on $X_{\Gamma}$, that is $d\overline{\mu_{\Gamma}} = \frac{d\mu_{\Gamma}}{\vol(X_{\Gamma})}$. Finally, for $\Gamma \subseteq G$, we fix an orthonormal basis $\lbrace f^{\Gamma}_{\lambda} \rbrace_{\lambda}$ of $L^2(X_{\Gamma}, d \overline{\mu_{\Gamma}})$ consisting Maass forms $f^{\Gamma}_{\lambda}$ of spectral parameter $\lambda$.\\
Fix a uniform lattice $\Gamma_0 \subset G$.
\begin{proposition} \label{prop6.0.1}
There is $Q \geq 1$ such that the following holds: for any uniform lattice $\Gamma \subset G$ contained in $\Gamma_0$,  for any ${x_1, \ldots x_h \in X_{\Gamma}}$, any ${w_1, \ldots w_h \in \mathbb{C}^{\times}}$ and any $\nu \in i \mathfrak{a}^*$ of large enough norm (depending only on the minimal distance between the $x_i$'s, \textit{c.f.} \eqref{eqd}), we have
\begin{equation*}
\sum_{\Vert \im(\lambda) - \nu \Vert \leq Q} \left\lvert \sum_{i = 1}^{h} w_i f^{\Gamma}_{\lambda}(x_i) \right\rvert^2 \asymp \vol (X_{\Gamma}) \widetilde{\beta_S}(\nu).
\end{equation*}
The implied constant in the lower depends only on the weights and $h$ while, in the upper bound, it additionally depends on $\Gamma_0$ and $Q$.
\end{proposition}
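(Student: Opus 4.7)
The plan is to apply the pre-trace formula to the test function $k_\nu$ from Proposition \ref{prop3.7.1}. Choose lifts $\widetilde{x}_i \in S$ of the $x_i$ and set $P(\lambda) = \sum_i w_i f^\Gamma_\lambda(x_i)$. Since $\{f^\Gamma_\lambda\}$ is orthonormal with respect to the probability measure $d\overline{\mu_\Gamma}$, rescaling to $d\mu_\Gamma$ introduces a factor $\vol(X_\Gamma)$ and the pre-trace formula reads
\begin{equation*}
\sum_\lambda \widehat{k_\nu}(\lambda)\, |P(\lambda)|^2 \;=\; \vol(X_\Gamma) \sum_{i,j} w_i \overline{w_j} \sum_{\gamma \in \Gamma} k_\nu(\widetilde{x}_i^{-1} \gamma \widetilde{x}_j).
\end{equation*}
Since $k_\nu$ is supported in $G_R$ for fixed $R$ and $\Gamma \subseteq \Gamma_0$, only finitely many $\gamma$ (bounded in terms of $\Gamma_0$ and $R$) contribute to each pair $(i,j)$. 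The identity triple $(i, i, 1)$ contributes $\vol(X_\Gamma)\, k_\nu(e) \sum_i |w_i|^2 \asymp \vol(X_\Gamma)\, \widetilde{\beta_S}(\nu)$ by Lemma \ref{lem3.7.3}, while every other $\widetilde{x}_i^{-1} \gamma \widetilde{x}_j$ lies at distance at least some $d > 0$ from the identity (the minimum over distinct $x_i$'s and non-trivial $\Gamma_0$-displacements of the lifts); by \eqref{eq3.6} each such term is $O\bigl((1 + \Vert \nu\Vert\, d)^{-1/2}\, \widetilde{\beta_S}(\nu)\bigr)$. For $\Vert \nu\Vert$ large enough in terms of $d$ these non-diagonal terms are absorbed, so the geometric side is $\asymp \vol(X_\Gamma)\, \widetilde{\beta_S}(\nu)$; a naive triangle inequality with $|k_\nu| \ll \widetilde{\beta_S}(\nu)$ yields the one-sided bound $\ll \vol(X_\Gamma)\, \widetilde{\beta_S}(\nu)$ with no hypothesis on $\Vert \nu\Vert$.

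The upper bound in the proposition follows by combining non-negativity on the unitary spectrum (Property \ref{propppp2}) with $\widehat{k_\nu} \geq c$ on $\{\Vert \im \lambda - \nu\Vert \leq 1\}$ (Property \ref{propppp3}):
\begin{equation*}
\sum_{\Vert \im \lambda - \nu\Vert \leq 1} |P(\lambda)|^2 \leq c^{-1} \sum_\lambda \widehat{k_\nu}(\lambda)\, |P(\lambda)|^2 \ll \vol(X_\Gamma)\, \widetilde{\beta_S}(\nu),
\end{equation*}
uniformly in $\nu$. Covering $\{\Vert \im \lambda - \nu\Vert \leq Q\}$ by $O(Q^r)$ unit balls centered at points $\nu_k$ and using the polynomial control $\widetilde{\beta_S}(\nu_k) \ll_Q \widetilde{\beta_S}(\nu)$ from \eqref{eq3.4} extends the upper bound to the full window of radius $Q$.

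For the lower bound, split the spectral side as
\begin{equation*}
\sum_\lambda \widehat{k_\nu}(\lambda)\, |P(\lambda)|^2 = \Sigma_{\mathrm{near}} + \Sigma_{\mathrm{far}}, \qquad \Sigma_{\mathrm{near}} = \sum_{\Vert \im \lambda - \nu\Vert \leq Q} \widehat{k_\nu}(\lambda)\, |P(\lambda)|^2.
\end{equation*}
The first item of Proposition \ref{prop3.7.1} gives $\widehat{k_\nu}(\lambda) \ll 1$ on the unitary spectrum, so $\Sigma_{\mathrm{near}} \ll \sum_{\Vert \im \lambda - \nu\Vert \leq Q} |P(\lambda)|^2$. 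To bound $\Sigma_{\mathrm{far}}$, decompose it into dyadic shells $\{2^j Q < \Vert \im \lambda - \nu\Vert \leq 2^{j+1} Q\}$: the first item applied with arbitrarily large exponent $N$ yields $\widehat{k_\nu}(\lambda) \ll (2^j Q)^{-N}$ on the $j$-th shell (reducing if necessary modulo the finite Weyl group action), while the upper bound just established, applied at sub-centers covering each shell and combined with \eqref{eq3.4}, gives $\sum_{\mathrm{shell}} |P(\lambda)|^2 \ll (2^j Q)^{\delta + r}\, \vol(X_\Gamma)\, \widetilde{\beta_S}(\nu)$ with $\delta$ the exponent appearing in \eqref{eq3.4}. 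Summing over $j$ produces $\Sigma_{\mathrm{far}} \ll Q^{-N + \delta + r}\, \vol(X_\Gamma)\, \widetilde{\beta_S}(\nu)$, which for $Q$ fixed large enough (depending only on $R$, $r$, $\delta$, and independent of $\nu$ and $\Gamma$) is at most half the geometric side. Combining with $\Sigma_{\mathrm{near}} + \Sigma_{\mathrm{far}} \asymp \vol(X_\Gamma)\, \widetilde{\beta_S}(\nu)$ yields $\Sigma_{\mathrm{near}} \asymp \vol(X_\Gamma)\, \widetilde{\beta_S}(\nu)$, whence the desired lower bound. The main obstacle is this dyadic tail estimate: it relies on the upper bound being uniform in the spectral center (which is guaranteed by Proposition \ref{prop3.7.1}) and on $N$ being chosen large relative to $\delta + r$, so that the polynomial growth of the local Weyl law at shifted centers is defeated by the rapid spectral concentration of $\widehat{k_\nu}$.
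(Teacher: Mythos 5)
Your proposal is correct and follows essentially the same strategy as the paper: pre-trace formula, diagonal/identity term dominating via Lemma \ref{lem3.7.3}, off-diagonal geometric terms controlled by \eqref{eq3.6} for $\Vert\nu\Vert$ large, upper bound from Properties \ref{propppp2}--\ref{propppp3} plus \eqref{eq3.4}, and a spectral tail estimate (your dyadic shells are the paper's unit-ball covering in Lemma \ref{lem6.3.1}). The only cosmetic deviation is that the paper shrinks the support of $k_\nu$ so that the self-displacements $x_i^{-1}\gamma x_i$ ($\gamma \neq e$) contribute nothing, keeping the threshold on $\Vert\nu\Vert$ dependent on $\mathbf{d}$ alone, whereas you absorb those terms into the error via \eqref{eq3.6}, which works but folds the systole of $X_{\Gamma_0}$ into the threshold.
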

The orthogonal and unitary groups $\Ortho(n,m)$ and $\U(n,m)$, for ${n > m \geq 1}$\footnote[1]{The proof given in \cite[§7]{BM2} uses the condition $n > m$ and applies to orthogonal groups. However, for unitary groups, this holds for $n \geq m$.}, satisfy the required conditions, that is, there is a connected simple Lie group $G_s$ with maximal compact subgroup $K_s$ such that
\begin{equation*}
G/K = G_s / K_s \quad \text{and} \quad \mathcal{D}(G/K) = \mathcal{D}(G_s/K_s).
\end{equation*}
For orthogonal groups, this follows from \cite[§7]{BM2} with
\begin{equation*}
G_s = \SO^0(n,m) \quad \text{and} \quad K_s = \SO^0(n) \times \SO^0(m).
\end{equation*}
For unitary groups, we can choose
\begin{equation*}
G_s = \SU(n,m) \quad \text{and} \quad K_s = \text{S}(\U(n) \times \U(m)).
\end{equation*}
To see this, let $\mathfrak{g} = \mathfrak{k} \oplus \mathfrak{p}$ be the Cartan decomposition of $\mathfrak{g} = \mathfrak{u}(n,m)$ induced by the Cartan involution $\text{ad}(i \cdot \Id_{n,m})$ on $\mathfrak{g}$, where $i = \sqrt{-1}$ and
\begin{equation*}
I_{,nm} = \begin{pmatrix} \Id_{n,n} & 0_{n,m} \\ 0_{m,n} & - \Id_{m,m} \end{pmatrix}
\end{equation*}
so that
\begin{equation*}
\mathfrak{k} = \begin{pmatrix} A & 0_{n,m} \\ 0_{m,n} & B \end{pmatrix} \text{ , } \mathfrak{p} = \begin{pmatrix} 0_{n,n} & C \\ {^{T} \overline{C}} & 0_{m,m} \end{pmatrix}.
\end{equation*}
Note that the center $\mathfrak{z} = \lbrace z \cdot \Id_{n+m, n+m} : z \in i\mathbb{R} \rbrace$ of $\mathfrak{g}$ as trivial intersection with $\mathfrak{p}$, proving in particular $G/K = G_s / K_s$ \cite[§4.6]{GangolliVara}. Let $\mathfrak{g}_s = \mathfrak{k}_s \oplus \mathfrak{p}_s$ be the Cartan decomposition of the Lie algebra $\mathfrak{g}_s$ of $G_s$ associated to $\mathfrak{k}_s$, the Lie algebra of $K_s$. Let $\mathfrak{a}_s \subset \mathfrak{p}_s$ be a maximal abelian subspace. Since $\mathfrak{z} \cap \mathfrak{p} = \lbrace 0 \rbrace$, $\mathfrak{a}_s$ is a maximal abelian subspace of $\mathfrak{p}$. Let $W(\mathfrak{a}_s, \mathfrak{g})$ and $W(\mathfrak{a}_s, \mathfrak{g}_s)$ be the associated Weyl groups. Let ${\mathfrak{a}_{s}}_{\mathbb{C}} = \mathfrak{a}_s \otimes \mathbb{C}$ and recall the Harish-Chandra isomorphisms \cite[Theorem 2.6.7]{GangolliVara}
\begin{equation*}
	\Symm ({\mathfrak{a}_{s}}_{\mathbb{C}})^{W(\mathfrak{a}_s, \mathfrak{g})}  \simeq \mathcal{D}(G/K) \quad , \quad \Symm ({\mathfrak{a}_{s}}_{\mathbb{C}})^{W(\mathfrak{a}_s, \mathfrak{g}_s)} \simeq \mathcal{D}(G_s/K_s).
\end{equation*}
By Chevalley's Theorem on invariants of finite reflexion groups \cite{Chevalley}, we have
\begin{equation*}
	\Symm ({\mathfrak{a}_{s}}_{\mathbb{C}})^{W(\mathfrak{a}_s, \mathfrak{g})} \simeq \mathbb{C}[t_1, \ldots , t_{\dim {\mathfrak{a}_{s}}_{\mathbb{C}}}] \simeq \Symm ({\mathfrak{a}_{s}}_{\mathbb{C}})^{W(\mathfrak{a}_s, \mathfrak{g}_s)}
\end{equation*}
and this proves $\mathcal{D}(G/K) = \mathcal{D}(G_{s}/K_{s})$.
\subsection{Test function} \label{sec6.1}
Let $\mathfrak{g}_{s}$ and $\mathfrak{k}_{s}$ be the Lie algebras of $G_{s}$ and $K_{s}$ respectively and let $\mathfrak{g}_{s} = \mathfrak{k}_{s} \oplus \mathfrak{p}$ be the Cartan decomposition of $\mathfrak{g}_{s}$. We fix a maximal abelian subspace $\mathfrak{a} \subset \mathfrak{p}$ and denote by $\langle \cdot , \cdot \rangle$ the restriction of the Killing form on $\mathfrak{g}_{s}$ to $\mathfrak{a}$. Let $A$ denote the analytic subgroup of $G_{s}$ corresponding to $\mathfrak{a}$ and $G_{s} = K_{s}AK_{s}$ the Cartan decomposition. For $R > 0$, we let $B_{\mathfrak{a}}(0,R) \subset \mathfrak{a}$ be the ball centered at the origin with radius $R$ (with respect to the norm induced by $\langle \cdot, \cdot \rangle$). Let $A_R = \exp \big( B_{\mathfrak{a}}(0,R) \big)$ and set
\begin{equation*}
	G_{s, R} = K_{s}A_RK_{s}.
\end{equation*}
Recall the unitary spectrum $\mathfrak{a}^*_{\text{unit}} \subset \mathfrak{a}^*_{\mathbb{C}}$ defined in Section \ref{sec3.2} as the set of those $\lambda$ in $\mathfrak{a}_{\mathbb{C}}^*$ for which the associated spherical function $\varphi_{\lambda}$ is positive definite. Using Proposition \ref{prop3.7.1}, we can construct a smooth and compactly supported bi-$K$-invariant test function $k_{\nu}$ on $G_{s}$ with support contained in $G_{s, R}$ and whose Harish-Chandra transform $\widehat{k_{\nu}}$ satisfies
\begin{enumerate}
\item $\widehat{k_{\nu}} (\lambda) \ll_{A,R} \exp (R \Vert \re (\lambda) \Vert) \sum_{w \in W} (1 + \Vert w \cdot \lambda - \nu \Vert)^{-A}$ for all $\lambda \in \mathfrak{a}^*_{\mathbb{C}}$; \label{property1}
\item $\widehat{k_{\nu}}(\lambda) \geq 0$ for all $\lambda \in \mathfrak{a}^*_{\text{unit}}$;  \label{property2}
\item $\widehat{k_{\nu}}(\lambda) \geq c$ for all $\lambda \in \mathfrak{a}^*_{\text{unit}}$ satisfying $\Vert \im(\lambda) - \nu \Vert \leq 1$.  \label{property3}
\end{enumerate}
The hypothesis $\mathcal{D}(G/K) = \mathcal{D}(G_s/K_s)$ ensures that spectral notions such that joint eigenfunctions, their spectral parameter, spherical functions and Harish-Chandra transform are insensitive to the choice of the presentation of $S = G/K = G_s / K_s$ as a symmetric space for $G$ or $G_{s}$.\\
Moreover, we may view $k_{\nu}$ as a bi-$K$-invariant smooth and compactly supported function on $G$. We may (and do) assume its support is small enough so that
\begin{equation*}
	\supp (k_{\nu}) \cap x_i {\Gamma_0} x_i^{-1} = \lbrace e \rbrace \quad (i = 1, \ldots, h).
\end{equation*}
In particular, for $\Gamma \subset \Gamma_0$, we have
\begin{equation}\label{eq6.1}
	\supp (k_{\nu}) \cap x_i {\Gamma} x_i^{-1} = \lbrace e \rbrace \quad (i = 1, \ldots, h).
\end{equation}
Recall the ${f^{\Gamma}_{\lambda}}$'s are $L^2$-normalized with respect to the probability measure on $X_{\Gamma}$. In other words, $\lbrace \vol (X_{\Gamma})^{-1/2}f^{\Gamma}_{\lambda}  \rbrace_{\lambda}$ is an orthonormal basis of $L^2(X_{\Gamma},d\mu_{\Gamma})$. Then, we spectrally expand the automorphic kernel associated to $k_{\nu}$: since $\Gamma$ is a co-compact lattice, there is no continuous spectrum and the pre-trace formula associated to $k_{\nu}$ is
\begin{equation} \label{eq6.2}
\sum_{\lambda} \widehat{k_{\nu}}(\lambda) f^{\Gamma}_{\lambda}(x_i) \overline{f^{\Gamma}_{\lambda}(x_j)}  = \vol (X_{\Gamma}) \sum_{\gamma \in \Gamma} k_{\nu}(x_i^{-1} \gamma x_j)
\end{equation}
for any $1 \leq i,j \leq h$.
\subsection{Local and upper bound}
Let $\nu \in i \mathfrak{a}^*$, construct $k_{\nu}$ as in Section \ref{sec6.1} and set $i = j$ in \eqref{eq6.2} so that
\begin{equation*}
\sum_{\lambda} \widehat{k}_{\nu}(\lambda) \vert f^{\Gamma}_{\lambda}(x_i) \vert^2 = \vol (X_{\Gamma}) \sum_{\gamma \in \Gamma} k_{\nu}(x_i^{-1} \gamma x_i) \quad (1 \leq i \leq h).
\end{equation*}
By construction of $k_{\nu}$ (\textit{cf.} \eqref{eq6.1}) we have
\begin{equation*}
\sum_{\lambda} \widehat{k}_{\nu}(\lambda) \vert f^{\Gamma}_{\lambda}(x_i) \vert^2 = \vol (X_{\Gamma}) \vert \stab_{\Gamma}(x_i) \vert k_{\nu} (e) \quad (1 \leq i \leq h)
\end{equation*}
with $1 \leq  \vert \stab_{\Gamma}(x_i) \vert \leq  \vert \stab_{\Gamma_{0}}(x_i) \vert$. Inserting Lemma \ref{lem3.7.3}, we may draw the important conclusion that
\begin{equation}\label{eq6.3}
\sum_{\lambda} \widehat{k}_{\nu}(\lambda) \vert f^{\Gamma}_{\lambda}(x_i) \vert^2 \asymp \vol (X_{\Gamma}) \widetilde{\beta_S}(\nu)
\end{equation}
for all $1 \leq i \leq h$. Using Property \ref{property2} of $\widehat{k}_{\nu}$ to drop all terms but those for which ${\Vert \im(\lambda) - \nu \Vert \leq 1}$ and using Property \ref{property3} then yields
\begin{equation} \label{eq6.4}
\sum_{\Vert \im(\lambda) - \nu \Vert \leq 1}  \vert f^{\Gamma}_{\lambda}(x_i) \vert^2 \ll \vol (X_{\Gamma}) \widetilde{\beta_{S}}(\nu) \quad (1 \leq i \leq h).
\end{equation}
On one hand, by dropping all but one term, we get the local bound
\begin{equation*}
\Vert f^{\Gamma}_{\lambda} \Vert_{\infty} \ll \vol (X_{\Gamma})^{\frac{1}{2}} \widetilde{\beta_{S}}(\nu)^{\frac{1}{2}}.
\end{equation*}
On the other hand, given $Q \geq 1$, by Cauchy-Schwartz inequality, we have
\begin{equation*}
\sum_{\Vert \im(\lambda) - \nu \Vert \leq Q} \left\lvert \sum_{i = 1}^{h} w_i f^{\Gamma}_{\lambda}(x_i) \right\rvert^2 \leq \sum_{\Vert \im(\lambda) - \nu \Vert \leq Q} h  \sum_{i = 1}^{h} \vert w_i f^{\Gamma}_{\lambda}(x_i) \vert^2
\end{equation*}
and, upon covering the region $\lbrace \mu \in i \mathfrak{a}^* : \Vert \mu - \nu \Vert \leq Q \rbrace$ by the (finite) union of unit balls centered at points $\mu_n$, we get
\begin{align*}
\sum_{\Vert \im(\lambda) - \nu \Vert \leq Q} \left\lvert \sum_{i = 1}^{h} w_i f^{\Gamma}_{\lambda}(x_i) \right\rvert^2
&\leq h \sum_{i = 1}^{h} \vert w_i \vert^2  \sum_n \sum_{\Vert \im(\lambda) - \mu_n \Vert \leq 1}  \vert f^{\Gamma}_{\lambda}(x_i) \vert^2 \\
&\ll \sum_n \vol(X_{\Gamma}) \widetilde{\beta_S}(\mu_n) \quad (\text{by \eqref{eq6.4}})\\
&\ll \vol(X_{\Gamma}) \widetilde{\beta_S}(\nu) \quad (\text{by \eqref{eq3.4} with $\sigma = \emptyset$}).
\end{align*}
This proves the upper bound in Proposition \ref{prop6.0.1}.
\subsection{Lower bound}
Given $\nu \in i \mathfrak{a}^*$ and $Q \geq 1$, we may write
\begin{align*}
\sum_{\Vert \im(\lambda) - \nu \Vert \leq Q} \widehat{k_{\nu}}(\lambda) \left\lvert \sum_{i = 1}^{h} w_i f^{\Gamma}_{\lambda}(x_i) \right\rvert^2
\end{align*}
as the sum
\begin{align*}
\sum_{\lambda} \widehat{k_{\nu}}(\lambda) \left\lvert \sum_{i = 1}^{h} w_i f^{\Gamma}_{\lambda}(x_i) \right\rvert^2 - \sum_{\Vert \im(\lambda) - \nu \Vert > Q} \widehat{k_{\nu}}(\lambda) \left\lvert \sum_{i = 1}^{h} w_i f^{\Gamma}_{\lambda}(x_i) \right\rvert^2
\end{align*}
of a main term and an error term. In the sequel, we estimate the error term, then the main term and finally deduce the lower bound in Proposition \ref{prop6.0.1}.
\subsubsection{Error term}
\begin{lemma} \label{lem6.3.1}
Let $\nu \in i \mathfrak{a}^*$ and let $Q > 1$, then
\begin{equation*}
\sum_{\Vert \im (\lambda) - \nu \Vert > Q} \widehat{k_{\nu}}(\lambda) \left\lvert \sum_{i = 1}^h w_i f^{\Gamma}_{\lambda}(x_i) \right\rvert^2 \ll_A Q^{-A} \vol (X_{\Gamma}) \widetilde{\beta_S}(\nu).
\end{equation*}
\end{lemma}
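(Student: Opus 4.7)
The plan is to exploit the rapid polynomial decay of $\widehat{k_\nu}$ away from $\nu$ given by Property \ref{property1} of Section \ref{sec6.1}, combined with the localized upper bound \eqref{eq6.4} already established. First, because $\Gamma\subseteq\Gamma_0$ is uniform, the non-tempered part of the spectrum on $X_\Gamma$ lies in a fixed bounded portion of $\mathfrak{a}^*_{\text{unit}}$, so the factor $\exp(R\|\re(\lambda)\|)$ appearing in Property \ref{property1} is $O_R(1)$ for every spectral parameter that appears. Splitting the decay exponent as $A+B$ with $A,B\geq 0$ auxiliary constants, Property \ref{property1} yields, for every $\lambda$ whose Weyl orbit satisfies $\|\im(\lambda)-\nu\|>Q$,
\begin{equation*}
\widehat{k_\nu}(\lambda)\ll_{A,B,R}Q^{-A}\sum_{w\in W(\mathfrak{a})}(1+\|w\lambda-\nu\|)^{-B}.
\end{equation*}

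Next, I would cover $i\mathfrak{a}^*$ by a lattice of unit balls centered at points $\mu_k \in i\mathfrak{a}^*$ and bucket spectral parameters according to which ball their imaginary part sits in. Inside each ball the weights $(1+\|w\lambda-\nu\|)^{-B}$ are, up to constants, replaced by $(1+\|w\mu_k-\nu\|)^{-B}$. By the Cauchy--Schwarz inequality and the already-established bound \eqref{eq6.4} applied at each point $x_i$,
\begin{equation*}
\sum_{\|\im(\lambda)-\mu_k\|\leq 1}\left|\sum_{i=1}^h w_i f^{\Gamma}_\lambda(x_i)\right|^2\ll h\sum_{i=1}^h|w_i|^2 \vol(X_\Gamma)\widetilde{\beta_S}(\mu_k).
\end{equation*}
Combining these two steps, the tail is majorized by
\begin{equation*}
Q^{-A}\vol(X_\Gamma)\sum_{k}\sum_{w\in W(\mathfrak{a})}(1+\|w\mu_k-\nu\|)^{-B}\widetilde{\beta_S}(\mu_k).
\end{equation*}

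To finish, I would invoke \eqref{eq3.4} (with $\sigma=\emptyset$), which gives $\widetilde{\beta_S}(\mu_k)\ll(1+\|\mu_k-\nu\|)^{\delta}\widetilde{\beta_S}(\nu)$ for $\delta=\sum_{\alpha\in\Sigma^+(\mathfrak{a})}d_\alpha$. The inner double sum then collapses to
\begin{equation*}
\sum_k\sum_{w\in W(\mathfrak{a})}(1+\|w\mu_k-\nu\|)^{-B+\delta},
\end{equation*}
which converges as soon as $B>\delta+\dim_{\mathbb{R}}\mathfrak{a}$. Choosing $B$ large enough and renaming the outer exponent $A$ back to match the statement concludes the proof.

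The main subtlety is a Weyl-group bookkeeping point: a spectral parameter with $\|\im(\lambda)-\nu\|>Q$ can nevertheless have a $W$-translate close to $\nu$, so the raw pointwise bound $\widehat{k_\nu}(\lambda)\ll Q^{-A}$ fails. This is harmlessly absorbed by the Weyl sum in the majorant used above, at the cost of a bounded combinatorial factor $|W(\mathfrak{a})|$; the proof never requires pointwise smallness of $\widehat{k_\nu}$ on the tail, only integrability after averaging. One should also record that the same uniformity in $\Gamma\subseteq\Gamma_0$ appearing in \eqref{eq6.4} is inherited here, since every constant that enters the argument depends only on $\Gamma_0$, on the weights $w_i$ and $h$, and on $R$ (fixed once and for all in Section \ref{sec6.1}).
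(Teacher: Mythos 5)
Your argument is correct and follows the paper's proof essentially step for step: cover the tail region by unit balls centered at points $\mu_n$, use Property~\ref{property1} of $\widehat{k_\nu}$ to extract a factor $Q^{-A}$ (keeping enough decay in reserve to sum over $n$), apply Cauchy--Schwarz and the local bound \eqref{eq6.4} on each ball, and convert $\widetilde{\beta_S}(\mu_n)$ to $\widetilde{\beta_S}(\nu)$ via \eqref{eq3.4}. Your remarks on the Weyl-group bookkeeping and the boundedness of $\exp(R\|\re\lambda\|)$ on the unitary spectrum only make explicit what the paper leaves implicit; they do not change the method.
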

\begin{proof}
We cover the region $\lbrace \mu \in i \mathfrak{a}^* : \Vert \mu - \nu \Vert > Q \rbrace$ by the union of unit balls centered at points $\mu_n$ and, applying Property \ref{property1} of $\widehat{k_{\nu}}$, we have
\begin{equation*}
\sum_{\Vert \im (\lambda) - \nu \Vert > Q} \widehat{k_{\nu}}(\lambda) \left\lvert \sum_{i = 1}^h w_i f^{\Gamma}_{\lambda}(x_i) \right\rvert^2 \ll_A \sum_n \Vert \mu_n - \nu \Vert^{-A} \sum_{\Vert \im(\lambda) - \mu_n \Vert \leq 1} \left\lvert \sum_{i = 1}^h w_i f^{\Gamma}_{\lambda}(x_i) \right\rvert^2.
\end{equation*}
By Cauchy-Schwartz inequality, we deduce
\begin{equation*}
\sum_n \Vert \mu_n - \nu \Vert^{-A} \sum_{\Vert \im(\lambda) - \mu_n \Vert \leq 1} \left\lvert \sum_{i = 1}^h w_i f^{\Gamma}_{\lambda}(x_i) \right\rvert^2 \ll \sum_n \Vert \mu_n - \nu \Vert^{-A} \sum_{\Vert \im(\lambda) - \mu_n \Vert \leq 1} \sum_{i = 1}^h \vert f^{\Gamma}_{\lambda}(x_i) \vert^2.
\end{equation*}
Inserting \eqref{eq6.4}, \eqref{eq3.4} (with $\sigma$ the emptyset) and bounding ${\sum_n \Vert \mu_n - \nu \Vert^{-A} \ll Q^{-A}}$, we get the Lemma.
\end{proof}
\subsubsection{Main term}
Let $\nu \in i \mathfrak{a}^*$. We expand the square to write the main term
\begin{equation*}
\sum_{\lambda} \widehat{k_{\nu}}(\lambda) \left\lvert \sum_{i = 1}^{h} w_i f^{\Gamma}_{\lambda}(x_i) \right\rvert^2
\end{equation*}
as the sum
\begin{equation*}
\sum_{\lambda} \widehat{k_{\nu}}(\lambda) \sum_{i = 1}^{h} \vert w_i f^{\Gamma}_{\lambda}(x_i) \vert^2 + \sum_{\lambda} \widehat{k_{\nu}}(\lambda) \sum_{i \neq j} w_i \overline{w_j} f^{\Gamma}_{\lambda}(x_i) \overline{f^{\Gamma}_{\lambda}(x_j)}
\end{equation*}
of a diagonal and an anti-diagonal term. Using \eqref{eq6.3}, we obtain
\begin{equation} \label{eq6.5}
\sum_{\lambda} \widehat{k_{\nu}}(\lambda) \sum_{i = 1}^{h} \vert w_i f^{\Gamma}_{\lambda}(x_i) \vert^2 \asymp \vol (X_{\Gamma}) \widetilde{\beta_S}(\nu).
\end{equation}
Let $B$ be the Killing form on $\mathfrak{g} = \mathfrak{k} \oplus \mathfrak{p}$ and identify $\mathfrak{p}$ with the tangent space at the identity $K$ of $S = G/K$. Then, endow $S$ with the Riemannian metric induced by the restriction of $B$ to $\mathfrak{p}$ and let $d$ be the associated bi-$K$-invariant distance function on $G$. For $1 \leq i \leq h$, we write $x_i = \Gamma g_i K$ ($g_i \in G$). For $1 \leq i, j \leq h$, we set $d_{i,j} = \min_{\gamma \in \hspace*{-1mm} \Gamma} d(g_i, \gamma g_j)$ and define
\begin{equation} \label{eqd}
\mathbf{d} = \min_{i \neq j} d_{i,j}.
\end{equation}
Given $i \neq j$ and $\gamma \in \Gamma$, we may write $x_i^{-1} \gamma x_j = k \exp (a) k'$ for some $k,k' \in K$ and some $a \in \mathfrak{a}$ satisfying $\Vert a \Vert = d(g_i, \gamma g_j)$. From \eqref{eq3.6}, we deduce
\begin{equation*}
	k_{\nu}(x_i^{-1} \gamma x_j) \ll (1 + \Vert \nu \Vert \mathbf{d} )^{-1/2} \widetilde{\beta_S}(\nu) \quad (1 \leq i \neq j \leq h, \gamma \in \Gamma).
\end{equation*}
We can now estimate the anti-diagonal term. By \eqref{eq6.2}, we have
\begin{align*}
\sum_{\lambda} \widehat{k_{\nu}}(\lambda) \sum_{i \neq j} w_i \overline{w_j} f^{\Gamma}_{\lambda}(x_i) \overline{f^{\Gamma}_{\lambda}(x_j)}
&= \sum_{i \neq j} w_i \overline{w_j} \sum_{\lambda} \widehat{k_{\nu}}(\lambda)  f^{\Gamma}_{\lambda}(x_i) \overline{f^{\Gamma}_{\lambda}(x_j)} \\
&=  \sum_{i \neq j} w_i \overline{w_j} \vol (X_{\Gamma}) \sum_{\gamma \in \Gamma} k_{\nu}(x_i^{-1} \gamma x_j).
\end{align*}
Given $i \neq j$, we bound the number of terms appearing in the sum over $\Gamma$ by
\begin{equation*}
\max_{i \neq j} \text{card} \big( \lbrace \gamma \in \Gamma_0 : k_{\nu}(x_i^{-1} \gamma x_j) \neq 0 \rbrace \big).
\end{equation*}
This way, we obtain
\begin{equation} \label{eq6.6}
\sum_{\lambda} \widehat{k_{\nu}}(\lambda) \sum_{i \neq j} w_i \overline{w_j} f^{\Gamma}_{\lambda}(x_i) \overline{f^{\Gamma}_{\lambda}(x_j)} \ll (\Vert \nu \Vert \mathbf{d} )^{-1/2} \vol (X_{\Gamma}) \widetilde{\beta_S}(\nu).
\end{equation}
\subsubsection{Proof of the lower bound}
We are now ready to prove the lower bound in Proposition \ref{prop6.0.1}. Let $Q > 1$, we have
\begin{align*}
\sum_{\Vert \im(\lambda) - \nu \Vert \leq Q} \widehat{k_{\nu}}(\lambda) \left\lvert \sum_{i = 1}^{h} w_i f^{\Gamma}_{\lambda}(x_i) \right\rvert^2
&= \sum_{\lambda} \widehat{k_{\nu}}(\lambda) \sum_{i = 1}^{h} \vert w_i f^{\Gamma}_{\lambda}(x_i) \vert^2
+ \sum_{\lambda} \widehat{k_{\nu}}(\lambda) \sum_{i \neq j} w_i f^{\Gamma}_{\lambda}(x_i) \overline{w_j f^{\Gamma}_{\lambda}(x_j)} \\
&- \sum_{\Vert \im(\lambda) - \nu \Vert > Q} \widehat{k_{\nu}}(\lambda) \left\lvert \sum_{i = 1}^{h} w_i f^{\Gamma}_{\lambda}(x_i) \right\rvert^2.
\end{align*}
Applying \eqref{eq6.5}, \eqref{eq6.6} and Lemma \ref{lem6.3.1}, we deduce
\begin{align*}
\sum_{\Vert \im(\lambda) - \nu \Vert \leq Q} \widehat{k_{\nu}}(\lambda) \left\lvert \sum_{i = 1}^{h} w_i f^{\Gamma}_{\lambda}(x_i) \right\rvert^2
&\asymp \vol(X_{\Gamma}) \widetilde{\beta_S}(\nu) + O \big( (\Vert \nu \Vert \mathbf{d} )^{-1/2}  \vol(X_{\Gamma}) \widetilde{\beta_S}(\nu) \big)  \\
&+ O \big( Q^{-A}  \vol(X_{\Gamma}) \widetilde{\beta_S}(\nu) \big).
\end{align*}
Fixing $A > 1$ and taking $Q > 1$ large enough, and also taking $\nu$ with large enough norm (depending on $\mathbf{d}$), we can make both error terms smaller by a constant factor than the main term. This proves the lower bound.

\section{Weyl type law} \label{chap7}
Let $E$ be $\mathbb{Q}$ or a totally imaginary quadratic extension of $\mathbb{Q}$. Let $v_0$ be an archimedean place of $F$. We fix a split skew-Hermitian $E$-space of dimension $2m$, for some integer $m \geq 1$ and we set $G = \U(W)$ for the associated algebraic $F$-group defined. Note that the adelic quotient $[G] = G(\mathbb{Q}) \backslash G(\mathbb{A})$ is non-compact and has finite volume since the center of $G$ is anisotropic. Let $p$ be a prime number. If $E = \mathbb{Q}$, we let $K_p$ be the stabilizer of a self-dual lattice in $W_p = W \otimes \mathbb{Q}_p$. If $E \neq \mathbb{Q}$ and
\begin{itemize}
\item if $p$ is inert or ramified in $E$, we let $K_p$ be the stabilizer of a self-dual lattice in $W_p$ (see \cite[Chapitre 5, I. Remarque (2)]{MVW});
\item if $p$ splits in $E$, we let $K_p \simeq \GL_{2m}(\mathbb{Z}_p)$.
\end{itemize}
Fix a maximal compact subgroups $K_{\infty}$ of $G(\mathbb{R})$ and define $K_f = \prod_{v \nmid \infty} K_v.$ Let $\mathfrak{g} = \mathfrak{k} \oplus \mathfrak{p}$ be the Cartan decomposition of the Lie algebra $\mathfrak{g}$ of $G(\mathbb{R})$, where $\mathfrak{k}$ is the Lie algebra of $K_{\infty}$, and let $\mathfrak{a} \subset \mathfrak{p}$ be a maximal abelian subspace.

Let $\tau : K_{\infty} \rightarrow \mathbb{C}^{\times}$ be a continuous character, $Q \geq 1$ be a real number, let $\nu \in i \mathfrak{a}^*$ and let $K \subseteq K_f$ be an open compact subgroup. We denote by $\mathcal{A}_{\cusp}(K, \nu, \tau, Q)$ the set of (isomorphism classes of) irreducible cuspidal automorphic unitary representations $\pi = \pi_f \otimes \pi_{\infty}$ of $G$ such that
\begin{itemize}
\item $\pi_f^{K} \neq 0$;
\item $\pi_{\infty}$ is the unique $\tau$-spherical representation of $G(\mathbb{R})$ of spectral parameter $\lambda \in \mathfrak{a}^*_{\mathbb{C}}$ satisfying $\Vert \im(\lambda) - \nu \Vert \leq Q$ (see Corollary \ref{coro3.3.2}).
\end{itemize}
Let $\widetilde{\mathcal{A}}_{\cusp}(K,\nu, \tau, Q)$ denote the set of pairs $(\pi, E)$ where $\pi \in \mathcal{A}_{\cusp}(K, \nu, \tau, Q)$ and $E$ is a morphism from $\pi$ to the space of automorphic forms on $G$. We denote by $m(\pi)$ the multiplicity space of $\pi$, that is, the fiber of the map $\widetilde{\mathcal{A}}_{\cusp} (K, \nu, \tau, Q) \rightarrow \mathcal{A}_{\cusp}(K, \nu, \tau, Q)$ above $\pi$. We denote by $\mathcal{E}_{\cusp}(K,\nu,\tau,Q)$ the associated space of automorphic forms on $G$, namely
\begin{equation*}
	\mathcal{E}_{\cusp}(K,\nu,\tau,Q) = \bigoplus_{\pi \in \mathcal{A}_{\cusp}(K,\nu, \tau,Q)} m(\pi) \otimes \pi_f^{K}.
\end{equation*}
Finally, set
\begin{equation*}
X(K) = G(\mathbb{Q}) \backslash G(\mathbb{A}) / K K_{\infty}
\end{equation*}
and let $\widetilde{\beta_S}$ be the (majorant of the) spectral density function of $S = G(\mathbb{R}) / K_{\infty}$ (see Section \ref{sec3.5}). The main result of this Section is the following Proposition.
\begin{proposition} \label{prop7.0.1}
Let $Q \geq 1$, let $\nu \in i \mathfrak{a}^*$ and let $K \subseteq K_f$ be an open compact subgroup. We have
\begin{equation*}
	\dim \mathcal{E}_{\cusp}(K,\nu,\tau,Q) \ll_{\tau, Q} \vol (X(K)) \widetilde{\beta_S}(\nu) \log \big( (1 + \Vert \nu \Vert) \vol (X(K))^{\frac{1}{2}} \big)^{m}.
\end{equation*}
\end{proposition}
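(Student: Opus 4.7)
The plan is to apply a partial pre-trace formula with the $\tau$-spherical test function of Proposition \ref{prop3.7.1}, combined with a truncation argument to handle the non-compactness of $[G]$.

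By covering $\{\mu \in i\mathfrak{a}^* : \|\mu - \nu\| \leq Q\}$ with $O_Q(1)$ unit balls I would reduce to $Q = 1$. Let $k_\infty = k_{\nu,\tau}$ be supplied by Proposition \ref{prop3.7.1}, with support in $G(\mathbb{R})_R$ for a fixed small $R > 0$, and set $k = k_\infty \otimes k_f$ with $k_f = \vol(K)^{-1}\mathbf{1}_K$. The spectral expansion of the cuspidal kernel $K^{\cusp}_k(x,y)$ together with properties \ref{propppp2} and \ref{propppp3} would yield
\begin{equation*}
\dim \mathcal{E}_{\cusp}(K,\nu,\tau,1) \ll \int_{X(K)} K^{\cusp}_k(x,x)\, d\mu_{X(K)}(x),
\end{equation*}
and since the residual and continuous parts of $L^2([G])$ are also unitary, their spectral parameters lie in $\mathfrak{a}^*_{\tau,\text{unit}}$, so property \ref{propppp2} forces every term of the full spectral expansion of $K_k(x,x)$ to be non-negative; in particular $K^{\cusp}_k(x,x) \leq K_k(x,x)$.

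I would then truncate using a standard Siegel set: denote by $X(K)^{\leq Y}$ its image in $X(K)$ with height at most $Y$ in the cuspidal directions. Standard rapid-decay bounds for cusp forms (via Fourier–Whittaker expansion, with polynomial control in $\|\nu\|$ and in the conductor of the Whittaker character, which itself is polynomial in the level) show that for $Y := C(1 + \|\nu\|)^A \vol(X(K))^A$ with $A$ large enough, $\int_{X(K)^{> Y}} K^{\cusp}_k(x,x)\, dx = O(1)$. On the truncated region I would bound $K_k$ geometrically: by Lemma \ref{lem3.7.3}, $\|k_\infty\|_\infty \ll \widetilde{\beta_S}(\nu)$, so the identity contribution to $\int_{X(K)^{\leq Y}} K_k(x,x)\,dx$ is $\asymp \widetilde{\beta_S}(\nu)\vol(X(K))$. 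The non-trivial $\gamma \in G(\mathbb{Q})$ contribute $O(1)$ in the bulk (by injectivity of the exponential), while in the cuspidal direction parametrized by coordinates $a = (a_1, \ldots, a_m)$ on the split torus of rank $m$ the number of unipotent $\gamma$ with $x^{-1}\gamma x \in \supp k_\infty$ is $\asymp \prod_i a_i^{2\rho_i}$; this cancels exactly the Jacobian $\prod_i a_i^{-2\rho_i}$ of the $G$-invariant measure, so integrating $\prod_i da_i/a_i$ over $1 \leq a_i \leq Y$ produces $(\log Y)^m \asymp \log\big((1 + \|\nu\|) \vol(X(K))^{1/2}\big)^m$ after absorbing $A$ into the implicit constant.

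The hard part will be the rapid-decay estimate with explicit and uniform dependence on both $\|\nu\|$ and the level $K$: one must verify that the conductor of the additive Whittaker character grows polynomially in $K$, so that the truncation parameter $Y$ can indeed be chosen polynomial in $\|\nu\|\vol(X(K))$. This parallels the symplectic treatment in \cite[§10]{BM2} and should adapt without essential change to the unitary case. As noted after Theorem \ref{theo1.2.1}, the use of this coarse partial trace formula, rather than a sharper analysis of the geometric side, is precisely the source of the logarithmic loss in the final bound.
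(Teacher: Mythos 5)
Your overall plan tracks the paper's proof closely: reduce to $Q=1$ by covering, apply a $\tau$-spherical test function from Proposition~\ref{prop3.7.1}, truncate the Siegel domain at a height polynomial in $(1+\Vert\nu\Vert)\vol(K)^{-1/2}$ using uniform rapid decay of cusp forms, and extract the $(\log)^m$ factor from the integral of $a^{2\rho}$ against the Iwasawa measure $a^{-2\rho}\,da$ over the truncated Siegel set. The one genuine stylistic difference is that you work directly with the cuspidal kernel and dominate it by the full kernel via positivity of $\widehat{k}$ on $\mathfrak{a}^*_{\tau,\text{unit}}$, whereas the paper phrases the same step as a Bessel-type pre-trace inequality (Lemma~\ref{lem7.2.1}) for the orthonormal set of eigenforms; both give the same inequality. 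The paper also uses the ``square root'' $k^0_{\nu,\tau^{-1}}$ as $k_\infty$ so that $k\ast k^\vee$ recovers the full transform, rather than $k_{\nu,\tau}$ itself, but this is a cosmetic choice.

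Two points need correcting. First, your accounting of the geometric side is off: the cuspidal contribution is not $(\log Y)^m$ but $\vol(K)^{-1}\widetilde{\beta_S}(\nu)(\log Y)^m$. The count $\asymp \prod_i a_i^{2\rho_i}$ of unipotent $\gamma$ with $x^{-1}\gamma x\in\supp k_\infty$ must be multiplied by $\Vert k\Vert_\infty \asymp \vol(K)^{-1}\widetilde{\beta_S}(\nu)$ before integrating; dropping this factor turns the correct multiplicative bound $\vol(X(K))\widetilde{\beta_S}(\nu)\log(\cdot)^m$ into an additive one, which is not what the geometric side actually gives and is inconsistent with the statement being proved. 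Second, the dependence of the truncation height on the level does not enter through ``the conductor of the Whittaker character'': in the paper's argument (Lemma~\ref{lem7.3.1} through Lemma~\ref{lem7.5.1}) it comes from the factor $\vol(K)^{-1/2}$ in the uniform moderate growth bound, which in turn originates from the finite-place component $\vol(K)^{-1}\mathbf{1}_K$ of the test function inserted into the pre-trace inequality; the Fourier expansion used for rapid decay is along a filtration of the unipotent radical (\`a la Moeglin--Waldspurger, Lemma~\ref{lem7.4.1}), not a Whittaker model, and the lattice of Fourier frequencies is renormalized so that the level dependence stays confined to the $\vol(K)^{-1/2}$ factor.
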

\begin{rmrk}
This logarithmic factor is responsible for the loss of the same quality in the main result.
\end{rmrk}
\begin{rmrk}
We count spectral parameters in spectral windows and not in growing balls. In this regard, the result of \cite{Maiti} is not fine enough for us.
\end{rmrk}
\begin{lemma} \label{lem7.0.4}
	Proposition \ref{prop7.0.1} follows from the case $Q = 1$.
\end{lemma}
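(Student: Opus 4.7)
The plan is a standard covering argument that upgrades a unit-window bound to a bound on any window of radius $Q \geq 1$, paying only a multiplicative constant depending on $Q$ (which is allowed since the implied constant in Proposition \ref{prop7.0.1} is permitted to depend on $Q$).

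First I would cover the ball $\{\mu \in i\mathfrak{a}^* : \Vert \mu - \nu \Vert \leq Q\}$ by a finite collection of unit balls with centers $\nu_1, \ldots, \nu_N \in i\mathfrak{a}^*$, where $N \ll_Q 1$ (the implied constant depending also on $\dim \mathfrak{a}$). By Corollary \ref{coro3.3.2} (applied to $G'$, which is split since $W$ is split), an irreducible $\tau$-spherical representation of $G(\mathbb{R})$ is determined up to isomorphism by its spectral parameter modulo $W(\mathfrak{a})$. In particular, every $\pi \in \mathcal{A}_{\cusp}(K, \nu, \tau, Q)$ has $\pi_\infty$ equal to the unique $\tau$-spherical representation with spectral parameter $\lambda$ satisfying $\Vert \im(\lambda) - \nu \Vert \leq Q$, so $\lambda$ lies in one of the unit balls around some $\nu_n$, giving $\pi \in \mathcal{A}_{\cusp}(K, \nu_n, \tau, 1)$. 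Correspondingly, we have the containment
\begin{equation*}
\mathcal{E}_{\cusp}(K, \nu, \tau, Q) \subseteq \sum_{n = 1}^N \mathcal{E}_{\cusp}(K, \nu_n, \tau, 1),
\end{equation*}
so that
\begin{equation*}
\dim \mathcal{E}_{\cusp}(K, \nu, \tau, Q) \leq \sum_{n = 1}^N \dim \mathcal{E}_{\cusp}(K, \nu_n, \tau, 1).
\end{equation*}

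Next I would apply the assumed case $Q = 1$ of Proposition \ref{prop7.0.1} to each summand, obtaining
\begin{equation*}
\dim \mathcal{E}_{\cusp}(K, \nu_n, \tau, 1) \ll_\tau \vol(X(K)) \, \widetilde{\beta_S}(\nu_n) \log\bigl((1 + \Vert \nu_n \Vert) \vol(X(K))^{1/2}\bigr)^m.
\end{equation*}
Since $\Vert \nu_n - \nu \Vert \leq Q$, the inequality \eqref{eq3.4} (with $\sigma = \emptyset$) yields $\widetilde{\beta_S}(\nu_n) \ll_Q \widetilde{\beta_S}(\nu)$, and $1 + \Vert \nu_n \Vert \leq 1 + \Vert \nu \Vert + Q \ll_Q 1 + \Vert \nu \Vert$, so the logarithmic factor satisfies $\log((1 + \Vert \nu_n \Vert) \vol(X(K))^{1/2})^m \ll_Q \log((1 + \Vert \nu \Vert) \vol(X(K))^{1/2})^m$ (using that the quantity inside the logarithm is $\geq 1$, which is harmless, or by simply enlarging the constant).

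Summing over the $N \ll_Q 1$ centers $\nu_n$ gives
\begin{equation*}
\dim \mathcal{E}_{\cusp}(K, \nu, \tau, Q) \ll_{\tau, Q} \vol(X(K)) \, \widetilde{\beta_S}(\nu) \log\bigl((1 + \Vert \nu \Vert) \vol(X(K))^{1/2}\bigr)^m,
\end{equation*}
which is precisely the bound claimed in Proposition \ref{prop7.0.1}. There is no real obstacle here; the only point to verify is the insensitivity of $\widetilde{\beta_S}$ and of the logarithmic factor to a bounded translation of the spectral parameter, which is exactly what \eqref{eq3.4} provides.
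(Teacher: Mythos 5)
Your proof is correct and follows essentially the same route as the paper: cover the $Q$-ball by unit balls, apply the $Q=1$ case to each, and use \eqref{eq3.4} together with $1 + \Vert \mu_n \Vert \ll_Q 1 + \Vert \nu \Vert$ to reassemble the bound. The only difference is cosmetic—you spell out the containment $\mathcal{E}_{\cusp}(K,\nu,\tau,Q) \subseteq \sum_n \mathcal{E}_{\cusp}(K,\nu_n,\tau,1)$ via Corollary \ref{coro3.3.2}, which the paper leaves implicit.
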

\begin{proof}
	We cover the region the region $\lbrace \mu \in i \mathfrak{a}^* : \Vert \mu - \nu \Vert \leq Q \rbrace$ by the (finite) union of unit balls centered at points $\mu_n$ and we have
\begin{equation*}
	\dim \mathcal{E}_{\cusp}(K,Q,\nu,\tau) \leq \sum_n \dim \mathcal{E}_{\cusp}(K,1,\mu_n,\tau).
\end{equation*}
Assuming Proposition \ref{prop7.0.1} holds for $Q = 1$, we deduce
\begin{equation*}
	\dim \mathcal{E}_{\cusp}(K,Q,\nu,\tau) \ll \sum_n \vol (X(K)) \widetilde{\beta_S}(\mu_n) \log \big( (1 + \Vert \mu_n \Vert) \vol (X(K))^{\frac{1}{2}} \big)^{m}
\end{equation*}
and the Lemma now follows from \eqref{eq3.4} (with $\sigma$ the empty set) and ${1 + \Vert \mu_n \Vert \ll_Q 1 + \Vert \nu \Vert}$.
\end{proof}
Thus, we can assume $Q = 1$ and we set $\mathcal{E}_{\cusp}(K, \nu,\tau) =  \mathcal{E}_{\cusp}(K,\nu,\tau, 1)$. The proof follows closely \cite[§6]{BM2} which, in turn, is shaped around \cite[Lemma I.2.10]{MoeglinWald}. The idea is to take advantage of the rapid decay of cusp forms to obtain a sufficiently large truncated Siegel domain $[G]^{\leq T}$, for some sufficiently large parameter $T$ (depending on $K$ and $\nu$), on which cusp forms of spectral parameter about $\nu$ have $L^2$-mass bounded away from zero. We deduce Proposition \ref{prop7.0.1} by integrating the automorphic kernel, associated with (an approximation of) a spectral projector about $\nu$, over $[G]^{\leq T}$.\\
\hspace*{8mm} For reduction theory, we refer to \cite[Chapter 5]{PlatonovRapinchuk} or the article of Springer \cite{Springer}. In the latter, note that the group $G(\mathbb{Q})$ acts on $G(\mathbb{A})$ by right multiplication, changing the inequality defining $A_0(t)$.
\subsection{Reduction theory} \label{sec7.1}
Let $\mathbf{K} = K_f K_{\infty}$. Let $P$ be a minimal parabolic $\mathbb{Q}$-subgroup of $G$, let $N$ be its unipotent radical and let $L$ be a Levi subgroup so that $P = UL$ and $G(\mathbb{A}) = N(\mathbb{A})L(\mathbb{A})\mathbf{K}$. Set
\begin{equation*}
	M = \Big( \bigcup_{\chi \in X^*(L)} \ker \vert \chi \vert \Big)^0
\end{equation*}
where $X^*(L)$ denote the group of $\mathbb{Q}$-characters of $L$. Let $A$ be the maximal $\mathbb{Q}$-split torus of the center of $L$ so that $M(\mathbb{A})$ is the direct product of $M(\mathbb{A}) A(\mathbb{R})^0$. Letting $\mathfrak{a}$ denote the Lie algebra of $A(\mathbb{R})^0$, we can (and do) choose $K_{\infty}$ such that the Lie algebra of $K_{\infty}$ and $\mathfrak{a}$ are orthogonal with respect to the Killing form. We have
\begin{equation*}
	G(\mathbb{A}) = N(\mathbb{A}) M(\mathbb{A}) {A(\mathbb{R})^0} \mathbf{K}.
\end{equation*}
Let $R$ be the root system of $A$ in $G$, $R_+ \subset R$ be the positive roots determined by $N$ and let $\Phi \subset R_+$ denote a subset of simple roots. For any $t > 0$, define
\begin{equation*}
	A_0 (t) = \lbrace a \in A(\mathbb{R})^0 : a^{\alpha}\geq t \text{, } \forall \alpha \in \Phi \rbrace.
\end{equation*}
For any compact set $\omega \subset N(\mathbb{A}) M(\mathbb{A})$ and any $t > 0$, we define the ($\omega, t$)-Siegel subset of $G(\mathbb{A})$ by
\begin{equation*}
	\mathfrak{S}_{\omega, t} = \omega A_0(t) \mathbf{K}.
\end{equation*}
The main result of reduction theory is that, for sufficiently large $\omega$ and sufficiently small $t > 0$, $\mathfrak{S}_{\omega, t}$ is a fundamental set for the action of $G(\mathbb{Q})$ on $G(\mathbb{A})$ in the sense that
\begin{equation*}
	G(\mathbb{Q}) \mathfrak{S}_{\omega, t} = G(\mathbb{A})
\end{equation*}
with $\mathfrak{S}_{\omega, t}^{-1} \mathfrak{S}_{\omega, t} \cap G(\mathbb{Q})$ finite. For $T > t$, we define
\begin{equation*}
	A_0 (t, T) = \lbrace a \in A(\mathbb{R})^0 : T \geq  a^{\alpha} \geq t \text{, } \forall \alpha \in \Phi \rbrace
\end{equation*}
and set $\mathfrak{S}^{\leq T}_{\omega, t} = \omega A_0(t, T) \mathbf{K}$. Finally, we define truncations of the adelic quotient as follows
\begin{itemize}
\item $[G]^{\leq T}$ is the image of $\mathfrak{S}^{\leq T}_{\omega, t} = \omega A_0(t, T) \mathbf{K}$ in $[G]$;
\item $[G]^{> T}$ is the image of $\mathfrak{S}^{> T}_{\omega, t} = \omega A_0^{ > T} \mathbf{K}$ in $[G]$;
\end{itemize}
where
\begin{equation*}
	 A_0^{ > T} =  \lbrace a \in A(\mathbb{R})^0 : a^{\alpha} >  T \text{, } \forall \alpha \in \Phi \rbrace.
\end{equation*}
\subsection{Preliminary Lemmas} \label{sec7.2}
Let $dn$ be a Haar measure on $N(\mathbb{A})$, endow $N(\mathbb{Q})$ with the counting measure and normalize $dn$ so that $\vol (N(\mathbb{Q}) \backslash N(\mathbb{A}) ; dn) = 1$. Let $dk$ be the Haar measure on $\mathbf{K}$ such that $\vol (\mathbf{K} ; dk) = 1$, use the isomorphism $\exp : \mathfrak{a} \rightarrow A(\mathbb{R})^0$ to define a Haar measure on $A(\mathbb{R})^0$ and let $dm$ be a Haar measure on $M(\mathbb{A})$. Letting $\rho = \rho_{R_+}$ denote the half-sum of positive roots, there is Haar measure $dg$ on $G(\mathbb{A})$ such that
\begin{equation} \label{eq7.1}
	\int_{G(\mathbb{A})} \phi(g) dg = \int_{N(\mathbb{A})} \int_{M(\mathbb{A})} \int_{A(\mathbb{R})^0} \int_{\mathbf{K}} \phi (u m a k) a^{-2 \rho} dk da dm dn \quad \big( \phi \in C_c^{\infty} (G(\mathbb{A})) \big).
\end{equation}
We use this measure to define convolution of complex valued functions on $G(\mathbb{A})$, which we denote by $\ast$. If $k \in C^{\infty}_c (G(\mathbb{A}))$ we define the operator $R(k)$ by which $k$ acts on $L^2([G])$. We also define $k^{\vee} \in C^{\infty}_c(G(\mathbb{A}))$ by ${k^{\vee} (g) = \overline{k(g^{-1})}}$. 

The following Lemma gives a pre-trace inequality that will be used in both the proof of the uniform moderate growth estimate (Lemma \ref{lem7.3.1}) and the proof of Proposition \ref{prop7.0.1} in Section \ref{secc7.6}.
\begin{lemma} \label{lem7.2.1}
	Let $k \in C^{\infty}_c (G(\mathbb{A}))$ and let $\lbrace f_i \rbrace_i$ be an orthonormal set in $L^2([G])$, then
\begin{equation*}
	\sum_i \vert R(k) f_i (g) \vert^2 \leq \sum_{\gamma \in G(\mathbb{Q})} k \ast k^{\vee} (g^{-1} \gamma g) \quad (g \in G(\mathbb{A})).
\end{equation*}
\end{lemma}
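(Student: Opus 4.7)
The plan is to recognise $R(k) f_i(g)$ as a single inner product of $f_i$ against a compactly supported section of the automorphic kernel, apply Bessel's inequality for the orthonormal family $\{f_i\}$, and identify the resulting $L^2$-norm with the $(g,g)$-diagonal of the kernel of $R(k * k^\vee) = R(k) R(k)^*$.

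First I would unfold and refold the defining integral for the right regular action:
\begin{equation*}
R(k) f_i(g) = \int_{G(\mathbb{A})} k(h) f_i(gh) dh = \int_{G(\mathbb{A})} k(g^{-1} h) f_i(h) dh = \int_{[G]} K(g,y) f_i(y) dy,
\end{equation*}
where $K(g,y) = \sum_{\gamma \in G(\mathbb{Q})} k(g^{-1} \gamma y)$ and the last equality uses left-$G(\mathbb{Q})$-invariance of $f_i$ together with the compact support of $k$ (which legitimises the interchange of sum and integral and makes $y \mapsto K(g,y)$ smooth and compactly supported on $[G]$, hence in $L^2([G])$). Setting $\phi_g(y) = \overline{K(g,y)}$ we obtain $R(k) f_i(g) = \langle f_i , \phi_g \rangle_{L^2([G])}$.

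Bessel's inequality then gives
\begin{equation*}
\sum_i \vert R(k) f_i (g) \vert^2 = \sum_i \vert \langle f_i, \phi_g \rangle \vert^2 \leq \Vert \phi_g \Vert_{L^2([G])}^2 = \int_{[G]} \vert K(g,y) \vert^2 dy.
\end{equation*}
The final step is to recognise the right-hand side as the diagonal $(g,g)$-value of the automorphic kernel associated to $k * k^\vee$. Since the operator kernel of $R(k)^* = R(k^\vee)$ at $(y,g)$ is $\overline{K(g,y)}$, composing integral operators yields
\begin{equation*}
\int_{[G]} K(g,y) \overline{K(g,y)} dy = K_{k * k^\vee}(g,g) = \sum_{\gamma \in G(\mathbb{Q})} (k * k^\vee)(g^{-1} \gamma g),
\end{equation*}
which is exactly the claimed bound.

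The only real obstacle is bookkeeping: one must match conventions so that $R(k_1) R(k_2) = R(k_1 * k_2)$ and $R(k)^* = R(k^\vee)$ with $k^\vee(h) = \overline{k(h^{-1})}$, which is a direct verification from \eqref{eq7.1}. All convergence and Fubini issues are trivial thanks to $k \in C_c^\infty(G(\mathbb{A}))$, so no analytic difficulty arises.
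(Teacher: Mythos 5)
Your proof is correct and follows essentially the same route as the paper: unfold the action of $k$ against the automorphic kernel $K(g,\cdot)$ so that $R(k)f_i(g)$ becomes an inner product in $L^2([G])$, apply Bessel's inequality, and identify $\int_{[G]}\vert K(g,y)\vert^2 dy$ with the diagonal value $\sum_{\gamma} (k*k^\vee)(g^{-1}\gamma g)$. The only difference is cosmetic: where the paper performs this last identification by expanding the square and unfolding the $\gamma_2$-sum by hand, you invoke the operator-kernel identities $R(k)^* = R(k^\vee)$ and $R(k_1)R(k_2)=R(k_1*k_2)$, which is the same computation packaged more abstractly.
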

\begin{proof}
By definition we have
\begin{equation*}
	R(k) f_i (g) = \int_{G(\mathbb{A})} k(x) f_i (gx) dx \quad (g \in G(\mathbb{A}))
\end{equation*}
and thus, by unfolding, we get
\begin{equation*}
	R(k) f_i (g) = \int_{[G]} \sum_{\gamma \in G(\mathbb{Q})} k(\gamma x) f_i (g \gamma x)  dx = \int_{[G]} f_i(x) \sum_{\gamma \in G(\mathbb{Q})} k(g^{-1} \gamma x) dx = \left\langle f_i , \overline{\sum_{\gamma \in G(\mathbb{Q})} k (g^{-1} \gamma \cdot )} \right\rangle
\end{equation*}
where $\langle \cdot , \cdot \rangle$ denotes the inner product in $L^2([G])$. Because the $f_i$'s are orthonormal, we can apply Parseval's Inequality to obtain
\begin{equation*}
\sum_i \vert R(k) f_i (g) \vert^2 \leq \left\lVert \sum_{\gamma \in {G}(\mathbb{Q})} k (g^{-1} \gamma \cdot ) \right\rVert_2^2 = \int_{[G]} \left\lvert \sum_{\gamma \in {G}(\mathbb{Q})} k (g^{-1} \gamma x) \right\rvert^2 dx.
\end{equation*}
Expanding the square gives
\begin{equation*}
\sum_i \vert R(k) f_i (g) \vert^2 \leq \int_{[G]} \sum_{\gamma_1, \gamma_2 \in {G}(\mathbb{Q})} k(g^{-1} \gamma_1 x) \overline{k(x^{-1} \gamma_2 x)} dx \quad (g \in G(\mathbb{A}))
\end{equation*}
and, by unfolding (with respect to $\gamma_2$), we deduce
\begin{align*}
\sum_i \vert R(k) f_i (g) \vert^2 &\leq \sum_{\gamma \in {G}(\mathbb{Q})} \int_{{G}(\mathbb{A})} k(g^{-1} \gamma x) \overline{k(g^{-1} x)} dx \\
&= \sum_{\gamma \in G(\mathbb{Q})} \int_{G(\mathbb{A})} k(g^{-1} \gamma x) k^{\vee} (x^{-1} g) dx \\
&= \sum_{\gamma \in G(\mathbb{Q})} \int_{G(\mathbb{A})} k(x) k^{\vee} (x^{-1} g^{-1} \gamma g) dx \\
&= \sum_{\gamma \in G(\mathbb{Q})} k \ast k^{\vee} (g^{-1} \gamma g).
\end{align*}
\end{proof}
We now establish a bound on the growth of automorphic kernel functions, which will be applied in estimating the uniform moderate growth of cusp forms in Lemma \ref{lem7.3.1}.
\begin{lemma} \label{lem7.2.2}
	Let $\mathcal{K}_f \subset G(\mathbb{A}_f)$ be an open compact subgroup and let $\mathcal{K}_{\infty} \subset G(\mathbb{R})$ be a compact set. For $g = n_g m_g a_g k_g \in \mathfrak{S}_{\omega, t}$, we have
\begin{equation*}
	\sum_{\gamma \in G(\mathbb{Q})} \mathbf{1}_{\mathcal{K}_f \times \mathcal{K}_{\infty}}(g^{-1} \gamma g) \ll a_g^{2 \rho}.
\end{equation*}
\end{lemma}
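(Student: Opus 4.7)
The plan is a standard volume-packing estimate: convert the sum to a lattice-point count, bound it by the volume of a thickening, and extract the Jacobian $a_g^{2\rho}$ that already appears in the Iwasawa measure decomposition \eqref{eq7.1}.

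By discreteness of $G(\mathbb{Q})$ in $G(\mathbb{A})$, fix a small open neighborhood $U = U_f \times U_\infty$ of the identity, with $U_f \subset G(\mathbb{A}_f)$ a compact open subgroup and $U_\infty \subset G(\mathbb{R})$ relatively compact, such that $UU^{-1} \cap G(\mathbb{Q}) = \{1\}$. The right-translates $\gamma U$, for distinct $\gamma \in G(\mathbb{Q}) \cap g(\mathcal{K}_f \times \mathcal{K}_\infty)g^{-1}$, are pairwise disjoint and all contained in $g(\mathcal{K}_f \times \mathcal{K}_\infty)g^{-1} U$, so by left-invariance of Haar measure
\[
\sum_\gamma \mathbf{1}_{\mathcal{K}_f \times \mathcal{K}_\infty}(g^{-1}\gamma g) \;\leq\; \frac{\mathrm{vol}\bigl(g(\mathcal{K}_f \times \mathcal{K}_\infty)g^{-1} U\bigr)}{\mathrm{vol}(U)} \;=\; \frac{\mathrm{vol}\bigl((\mathcal{K}_f \times \mathcal{K}_\infty) \cdot g^{-1}Ug\bigr)}{\mathrm{vol}(U)}.
\]

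Now write $g = n_g m_g a_g k_g$. Since $n_g, m_g \in \omega$ and $k_g \in \mathbf{K}$ lie in fixed compact sets, right-invariance of Haar absorbs the outer $k_g$, while conjugation by $n_g m_g$ only deforms $U$ within a bounded family. Enlarging $\mathcal{K}_f \times \mathcal{K}_\infty$ to a fixed compact set $\mathcal{K}'$ (absorbing the $k_g^{-1}$ factor) and replacing $U$ by a fixed small neighborhood $U^*$ of the identity, everything depending only on $\mathcal{K}_f\times\mathcal{K}_\infty$, $\omega$, $\mathbf{K}$ and $U$, it suffices to establish $\mathrm{vol}(\mathcal{K}' \cdot a_g^{-1} U^* a_g) \ll a_g^{2\rho}$.

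The geometric input is the action of $\mathrm{Ad}(a_g^{-1})$: at the archimedean place it scales any $X \in \mathfrak{g}_\alpha$ by $a_g^{-\alpha}$, and since $a_g \in A_0(t)$ has $a_g^\alpha \geq t$ for every positive root $\alpha$, the set $a_g^{-1} U^* a_g$ is \emph{expanded} in the $\overline{\mathfrak{n}}$-direction and \emph{contracted} in the $\mathfrak{n}$-direction, with total volume preserved by unimodularity. Passing to exponential coordinates at infinity (using Baker--Campbell--Hausdorff on the small neighborhood $U^*$), and using the filtration of $G(F_v)$ by compact open subgroups at each finite place, the product $\mathcal{K}' \cdot a_g^{-1} U^* a_g$ is contained in a set whose extent is bounded in the $\mathfrak{n}$- and $\mathfrak{l}$-directions (coming from $\mathcal{K}'$) and grows like $a_g^{|\alpha|}$ in each $\mathfrak{g}_{-\alpha}$-direction for $\alpha \in R_+$. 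Multiplying these extents yields $\mathrm{vol}(\mathcal{K}' \cdot a_g^{-1} U^* a_g) \ll a_g^{2\rho}$, precisely the Jacobian factor appearing in the measure decomposition \eqref{eq7.1}.

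The main obstacle is this final step: because the volume of a non-commutative product is not controlled by the individual volumes alone (a Brunn--Minkowski style inequality is in the wrong direction), one must carry out an honest coordinate computation. The cleanest route is to pull back to the Lie algebra via $\exp$ at infinity and apply BCH, together with a direct index computation for conjugates of $U_f$ at each finite place. The cumulative directional distortion matches exactly the $a^{2\rho}$ Jacobian in \eqref{eq7.1}.
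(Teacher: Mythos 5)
Your route is a self-contained adelic packing argument, whereas the paper reduces to finitely many classical lattices via the finiteness of $G(\mathbb{Q}) \backslash G(\mathbb{A}_f)/\mathcal{K}_f$ and then invokes \cite[Lemma 6.2]{BM2}; those are genuinely different strategies, and yours would be welcome if it closed. The packing setup, the identity $\mathrm{vol}\bigl(g(\mathcal{K}_f\times\mathcal{K}_\infty)g^{-1}U\bigr)=\mathrm{vol}\bigl((\mathcal{K}_f\times\mathcal{K}_\infty)\cdot g^{-1}Ug\bigr)$, and the reduction (via compactness of $\omega$ and bi-invariance) to bounding $\mathrm{vol}(\mathcal{K}'\cdot a_g^{-1}U^*a_g)$ are all fine.

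The gap is in the last step. Your claim that $\mathcal{K}'\cdot a_g^{-1}U^*a_g$ is ``contained in a set whose extent is bounded in the $\mathfrak{n}$- and $\mathfrak{l}$-directions and grows like $a_g^{\alpha}$ in each $\mathfrak{g}_{-\alpha}$-direction'' is false, and the failure is not a matter of unwritten detail. Your $\mathcal{K}'$ necessarily contains $\mathbf{K}^{-1}$, hence Weyl group representatives $w$. Take $w\in\mathcal{K}'$ and $\bar n\in a_g^{-1}U^*a_g$ with $\bar{\mathfrak{n}}$-coordinate of size $\asymp a_g^{\alpha}$: already in $\SL_2$, with $w=\left(\begin{smallmatrix}0&-1\\1&0\end{smallmatrix}\right)$ and $\bar n = \left(\begin{smallmatrix}1&0\\x&1\end{smallmatrix}\right)$, the Iwasawa $A$-coordinate of $w\bar n$ is $\asymp |x|\asymp a_g^{\alpha}$, so the product escapes every fixed compact in the $A$-direction. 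The product set is therefore not a box, and multiplying directional extents is unjustified; the volume bound $\ll a_g^{2\rho}$ is true, but not for the reason you give. The correct mechanism is that the $A$-spread is killed by the $a^{2\rho}$ Jacobian in the $\bar N M A K$ Iwasawa measure, not by a containment in a bounded $L$-slab. One workable repair along your lines: take inverses so that the expanded $\bar N$-factor sits on the left, write $a_g^{-1}(U^*)^{-1}a_g \subset \bar N_{\mathrm{big}}(a_g)\cdot L_0\cdot N_0$ by applying the Bruhat decomposition of the \emph{small} set $U^*$ and conjugating, observe that $L_0 N_0 (\mathcal{K}')^{-1}$ is a fixed compact set $C$ whose $\bar N M A K$-Iwasawa projections are compact, so $\bar N_{\mathrm{big}}(a_g)\,C \subset \bar N_{\mathrm{big}}(a_g)\bar N_C\, M_C A_C \mathbf{K}$, and then integrate with $dg = a^{2\rho}\,d\bar n\,dm\,da\,dk$ to get $\mathrm{vol}\ll\mathrm{vol}_{\bar N}\bigl(\bar N_{\mathrm{big}}(a_g)\bigr)\asymp a_g^{2\rho}$. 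As written, though, the proposal's key volume estimate is asserted via a false geometric claim, so the proof does not go through.
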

\begin{proof}
Notice that the function
\begin{equation*}
g_f \in G(\mathbb{A}_f) \mapsto \sum_{\gamma \in G(\mathbb{Q})} \mathbf{1}_{\mathcal{K}_f}(g_f^{-1} \gamma g_f)
\end{equation*}
is left-$G(\mathbb{Q})$-invariant and right-$\mathcal{K}_f$-invariant. In other words, the above sum depends on $g_f$ only through its conjugacy class in $G(\mathbb{Q}) \backslash G(\mathbb{A}_f) / \mathcal{K}_f$. Since this double quotient is finite (see \cite[Theorem 5.1]{BorelFiniteness}), upon fixing representatives $x_1, \ldots, x_h$ of $G(\mathbb{Q}) \backslash G(\mathbb{A}_f) / \mathcal{K}_f$, we are left with estimating the various sums
\begin{equation*}
	\sum_{\gamma \in \Gamma_i} \mathbf{1}_{\mathcal{K}_{\infty}}(g_{\infty}^{-1} \gamma g_{\infty}) \quad (1 \leq i \leq h)
\end{equation*}
where $\Gamma_i = G(\mathbb{Q}) \cap x_i \mathcal{K}_f x_i^{-1}$. As it is now independent of $g_f$, we can quote \cite[Lemma 6.2]{BM2} to deduce the Lemma.
\end{proof}
\subsection{Uniform moderate growth estimate} \label{ss7.33}
The main objective of Sections \ref{ss7.33}, \ref{ssseecc} and \ref{secaeaa} is to obtain control on the $L^2$-mass of cusp forms. More precisely, we show that cusp forms with spectral parameter $\lambda$, satisfying $\Vert \im(\lambda) - \nu \Vert \leq 1$, have a positive proportion of their $L^2$-mass in the truncated Siegel domain $[G]^{\leq T}$, provided $T \gg_{\epsilon} (1 + \Vert \nu \Vert)^{1 + \epsilon} \vol (K)^{- \frac{1 + \epsilon}{2}}$.\\
The corresponding statement is Lemma \ref{lem7.5.2} and we deduce this from a uniform (in both the spectral parameter and the volume) rapid decay estimate for cusp forms, namely Lemma \ref{lem7.4.1}. In turn, the latter estimate follows from a uniform (in both the spectral parameter and the volume) moderate growth estimate for cusp forms, that is, Lemma \ref{lem7.3.1}.

Let $\nu \in i \mathfrak{a}^*$ and $\tau : K_{\infty} \rightarrow \mathbb{C}^{\times}$ be a continuous character. Let $k_{\nu, \tau^{-1}}^0$ be the test function on either $\SU(m,m)$ or $\Sp_{2m}(\mathbb{R})$ appearing in the proof of Proposition \ref{prop3.7.1}; not the $\tau^{-1}$-spherical test function $k_{\nu, \tau^{-1}}$ itself, which is defined by ${k_{\nu, \tau^{-1}} = k_{\nu, \tau^{-1}}^0 \ast k_{\nu, \tau^{-1}}^0}$.\\
As explained in Section \ref{sec6.1} (see also the paragraph following Proposition \ref{prop6.0.1}), in the case $G(\mathbb{R}) = \U(m,m)$, we can extend $k_{\nu, \tau^{-1}}^0$ to a smooth and compactly supported function $k_{\infty}$ on $G(\mathbb{R})$. Let $f \in \mathcal{E}_{\cusp}(K,\nu,\tau)$. Scaling $k_{\infty}$ by a bounded constant, we may assume that $R(k_{\infty}) f = f$ and thus $R(k)f = f$, where ${k = \vol(K)^{-1} \mathbf{1}_K \otimes k_{\infty}}$.

In the following Lemma, we bound $D \cdot f$, for $f \in \mathcal{E}_{\cusp}(K,\nu,\tau)$ and $D$ a left-invariant and right-invariant diﬀerential operator on $G(\mathbb{R})$, by writing it as $R(\phi)f$ for some well-chosen $\phi \in C^{\infty}_c (G(\mathbb{A}))$ depending on $D$, $\tau$ and $\nu$. Then, we apply the pre-trace inequality, that is, Lemma \ref{lem7.2.1}, with $k = \phi$ and bound the right hand side using Lemma \ref{lem7.2.2}.
\begin{lemma} \label{lem7.3.1}
	Let $f \in \mathcal{E}_{\cusp}(K,\nu,\tau)$ be $L^2$-normalized, let $D$ be a left-invariant and right-invariant differential operator of degree $d$ and let ${g \in \mathfrak{S}_{\omega, t}}$, then
	\begin{equation*}
		D \cdot f (g) \ll (1 + \Vert \nu \Vert)^{d} \widetilde{\beta_S}(\nu)^{1/2} \vol(K)^{-1/2} a_g^{\rho} 
	\end{equation*}
where $g = n_g m_g a_g k_g$.
\end{lemma}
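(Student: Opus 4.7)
The plan is to realize $Df$ as $R(\phi)f$ for a well-chosen $\phi \in C_c^{\infty}(G(\mathbb{A}))$, apply the pre-trace inequality (Lemma \ref{lem7.2.1}) pointwise, and then separate the finite-adelic contribution (which produces the $\vol(K)^{-1/2}$) from the archimedean contribution (which produces the spectral factor $(1+\|\nu\|)^d \widetilde{\beta_S}(\nu)^{1/2}$) and the geometric count (which produces $a_g^{\rho}$ via Lemma \ref{lem7.2.2}). Concretely, since $f$ is $L^2$-normalised and (as noted before the statement) we may rescale so that $R(k)f=f$ for $k = \vol(K)^{-1} \mathbf{1}_K \otimes k_\infty$, bi-invariance of $D$ on $G(\mathbb{R})$ allows me to write $Df = D R(k)f = R(\phi)f$, where $\phi = \vol(K)^{-1}\mathbf{1}_K \otimes \phi_\infty$ with $\phi_\infty = D k_\infty$ (up to signs from the right/left convention, which are irrelevant for the modulus). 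In particular $\phi_\infty$ is supported in the same bounded set $G_R$ as $k_\infty$.

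Next I would apply Lemma \ref{lem7.2.1} to the single normalised function $f$, viewed inside an orthonormal set, to get
\begin{equation*}
|Df(g)|^2 = |R(\phi)f(g)|^2 \leq \sum_{\gamma \in G(\mathbb{Q})} (\phi \ast \phi^\vee)(g^{-1}\gamma g).
\end{equation*}
A direct computation using $\mathbf{1}_K \ast \mathbf{1}_K = \vol(K)\mathbf{1}_K$ factorises
\begin{equation*}
\phi \ast \phi^\vee = \vol(K)^{-1}\, \mathbf{1}_K \otimes (\phi_\infty \ast \phi_\infty^\vee),
\end{equation*}
so the summand vanishes unless $g^{-1}\gamma g$ lies in $K \times \mathcal{K}_\infty$, where $\mathcal{K}_\infty = \supp(\phi_\infty \ast \phi_\infty^\vee) \subset G_{2R}$ is a fixed compact set (independent of $\nu$ and $K$). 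Lemma \ref{lem7.2.2} then bounds the number of contributing $\gamma$ by $\ll a_g^{2\rho}$, and bounding $\|\phi_\infty \ast \phi_\infty^\vee\|_\infty \leq \|\phi_\infty\|_2^2$ by Cauchy--Schwarz yields
\begin{equation*}
|Df(g)|^2 \ll \vol(K)^{-1}\, \|\phi_\infty\|_2^2\, a_g^{2\rho}.
\end{equation*}

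The remaining point, and the only genuinely technical one, is to show $\|\phi_\infty\|_2^2 \ll (1+\|\nu\|)^{2d}\, \widetilde{\beta_S}(\nu)$. Here I would apply the $\tau^{-1}$-spherical Plancherel inversion of Section \ref{sec3.5} to $k_{\nu,\tau^{-1}}^0$: since $D$ is bi-invariant, the $\tau^{-1}$-Harish-Chandra isomorphism of Section \ref{sec3.2} sends $D$ to a Weyl-invariant polynomial $P_D$ on $\mathfrak{a}^*_{\mathbb{C}}$ of degree $\leq d$, so $\widehat{\phi_\infty}(\lambda) = P_D(\lambda)\,\widehat{k_{\nu,\tau^{-1}}^0}(\lambda)$, and Plancherel gives
\begin{equation*}
\|\phi_\infty\|_2^2 = \int_{i\mathfrak{a}^*} |P_D(\lambda)|^2\, |\widehat{k_{\nu,\tau^{-1}}^0}(\lambda)|^2\, \frac{d\mu_{\mathrm{Pl},\tau^{-1}}(\lambda)}{|W(\mathfrak{a})|}.
\end{equation*}
By the construction recalled in the proof of Proposition \ref{prop3.7.1}, $\widehat{k_{\nu,\tau^{-1}}^0}$ concentrates in a bounded neighbourhood of the Weyl orbit of $\nu$, on which $|P_D(\lambda)| \ll (1+\|\nu\|)^d$; together with Lemma \ref{lem3.7.3} (or equivalently \eqref{eq3.5}), this controls the integral by $(1+\|\nu\|)^{2d}\,\widetilde{\beta_S}(\nu)$. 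The main obstacle is precisely this last step, which requires a careful handling of the non-trivial $\tau$ (since $k_\infty$ is not bi-$K_\infty$-invariant) and of the discrete components of the $\tau$-spherical Plancherel measure $\mu_{\mathrm{Pl},\tau^{-1}}$; however, the rapid decay of $\widehat{k_{\nu,\tau^{-1}}^0}$ away from $\nu$, which is property \ref{property1} of Section \ref{sec6.1}, confines the contribution of those extra pieces and makes them harmless. Taking square roots then yields the desired bound.
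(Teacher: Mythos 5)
Your skeleton matches the paper's: realise $D\cdot f = R(\phi)f$ with $\phi = \vol(K)^{-1}\mathbf{1}_K\otimes D\cdot k_\infty$, apply the pre-trace inequality of Lemma~\ref{lem7.2.1} to the singleton $\{f\}$, factorise $\phi\ast\phi^\vee$, and invoke the geometric count of Lemma~\ref{lem7.2.2}. Two points where you diverge from the paper's argument deserve comment.

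First, and this is the one genuine gap: when you count the contributing $\gamma\in G(\mathbb{Q})$, you apply Lemma~\ref{lem7.2.2} with $\mathcal{K}_f = K$, the \emph{variable} level subgroup. But the implied constant in Lemma~\ref{lem7.2.2} depends on $\mathcal{K}_f$ — its proof fixes representatives $x_1,\ldots,x_h$ of the finite set $G(\mathbb{Q})\backslash G(\mathbb{A}_f)/\mathcal{K}_f$ and estimates lattice point counts for the $\Gamma_i = G(\mathbb{Q})\cap x_i\mathcal{K}_f x_i^{-1}$ — so applying it to the shrinking $K$ leaves you with a constant that is not \emph{a priori} uniform in the level. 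The paper gets around this precisely here by first bounding $\mathbf{1}_K\le\mathbf{1}_{K_f}$ and then applying Lemma~\ref{lem7.2.2} to the \emph{fixed} maximal compact $K_f$, which makes the constant independent of $\mathfrak{n}$. You should insert this step; it is moreover the cause of the suboptimality mentioned in the remark after the lemma.

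Second, for the archimedean factor you use Cauchy–Schwarz, $\|\phi_\infty\ast\phi_\infty^\vee\|_\infty\le\|\phi_\infty\|_2^2$, followed by Plancherel, whereas the paper recognises $(D\cdot k_\infty)\ast(D\cdot k_\infty)^\vee = DD^\vee\cdot k_{\nu,\tau^{-1}}$ and cites \cite[Lemma~5.11]{BM2} for its sup-norm. These are not really different quantities: $\phi_\infty\ast\phi_\infty^\vee$ is positive-definite, so its sup is attained at $e$, where it equals $\|\phi_\infty\|_2^2$ — your Cauchy–Schwarz step is thus an equality, and you are effectively re-deriving the cited lemma in the bi-invariant case. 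Your Plancherel route is sound in outline, but the step you flag at the end — bounding the integral of $|P_D|^2|\widehat{k^0_{\nu,\tau^{-1}}}|^2$ against the full $\tau^{-1}$-spherical Plancherel measure $\mu_{\mathrm{Pl},\tau^{-1}}$, including its lower-dimensional pieces, by $(1+\|\nu\|)^{2d}\widetilde{\beta_S}(\nu)$ — is precisely what \cite[Lemma~5.11]{BM2} establishes. (Note also that Lemma~\ref{lem3.7.3} treats trivial $\tau$ only, so it does not directly give what you need.) Citing it is the economical route; if you wish to carry out the Plancherel argument you would need to verify Shimeno's description of the discrete components of $\mu_{\mathrm{Pl},\tau^{-1}}$ is compatible with the majorant $\widetilde{\beta_S}$.
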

\begin{rmrk}
	Here $D \cdot f$ is defined as follows: any smooth function on $G(\mathbb{A})$ can be written as a finite sum of functions $\varphi_f \otimes \varphi_{\infty}$ with $(\varphi_f , \varphi_{\infty}) \in \big(C^{\infty}(G(\mathbb{A}_f)) \times C^{\infty}(G(\mathbb{R})) \big)$. We define $D$ on such $\varphi_f \otimes \varphi_{\infty}$ by
\begin{equation*}
	D \cdot (\varphi_f \otimes \varphi_{\infty}) = \varphi_f \otimes D \cdot \varphi_{\infty}
\end{equation*}
and we extend this action linearly.
\end{rmrk}
\begin{proof}
We have $D \cdot f = D \cdot R(k)f = R(\phi) f$, where ${\phi = D \cdot k = \vol(K)^{-1} \mathbf{1}_K \otimes D \cdot k_{\infty}}$. Then we apply Lemma \ref{lem7.2.1}, to the orthonormal set containing the single element $f$ with test function $\phi$, to obtain
\begin{equation*}
	\vert R(\phi) f(g) \vert^2 \leq \sum_{\gamma \in G(\mathbb{Q})} (\phi \ast \phi^{\vee}) (g^{-1} \gamma g).
\end{equation*}
Given $\gamma \in G(\mathbb{Q})$, we bound
\begin{equation*}
(\vol(K)^{-1} \mathbf{1}_K) \ast (\vol(K)^{-1} \mathbf{1}_K)^{\vee} (g_f^{-1} \gamma g_f) = \vol(K)^{-1} \mathbf{1}_K (g_f^{-1} \gamma g_f) \leq \vol (K)^{-1} \mathbf{1}_{K_f} (g_f^{-1}\gamma g_f)
\end{equation*}
and
\begin{equation*}
(D \cdot k_{\infty}) \ast (D \cdot k_{\infty})^{\vee} (g_{\infty}^{-1} \gamma g_{\infty}) \leq \Vert (D \cdot k_{\infty}) \ast (D \cdot k_{\infty})^{\vee} \Vert_{\infty}.
\end{equation*}
Noticing that the support of the archimedean test function is uniformly bounded, we can apply Lemma \ref{lem7.2.2}, with $\mathcal{K}_f = K_f$ and $\mathcal{K}_{\infty} = \supp \big( (D \cdot k_{\infty}) \ast (D \cdot k_{\infty})^{\vee} \big)$, to deduce
\begin{equation*}
\vert R(\phi) f(g) \vert^2 \ll \vol(K)^{-1} \Vert (D \cdot k_{\infty}) \ast (D \cdot k_{\infty})^{\vee} \Vert_{\infty} \text{ } a_g^{2 \rho}.
\end{equation*}
We now bound $\Vert (D \cdot k_{\infty}) \ast (D \cdot k_{\infty})^{\vee} \Vert_{\infty}$ as follows. We have
\begin{align*}
(D \cdot k_{\infty}) \ast (D \cdot k_{\infty})^{\vee} (g) &= \int_{G(\mathbb{R})} (D \cdot k_{\infty}) (x) \overline{(D \cdot k_{\infty}) (g^{-1} x)} dx \\
&= \int_{G(\mathbb{R})} (D \cdot k_{\infty}) (x) {(D^{\vee} \cdot k_{\infty}) (x^{-1} g)} dx \\
&= D D^{\vee} \cdot (k_{\infty} \ast k_{\infty})(g) = D D^{\vee} \cdot k_{\nu, \tau^{-1}}(g)
\end{align*}
where $D^{\vee}$ is defined on $C^{\infty}(G(\mathbb{R}))$ by $D^{\vee} \cdot \varphi (g) = \overline{D \cdot \varphi (g^{-1})}$, and we quote \cite[Lemma 5.11]{BM2} to bound
\begin{equation*}
	\Vert (D \cdot k_{\infty}) \ast (D \cdot k_{\infty})^{\vee} \Vert_{\infty} = \Vert D D^{\vee} \cdot k_{\nu, \tau^{-1}} \Vert_{\infty}   \ll_{D, \tau} (1 + \Vert \nu \Vert)^{2 d} \widetilde{\beta_S}(\nu).
\end{equation*}
\end{proof}
\begin{rmrk}
In this proof, we bounded $\mathbf{1}_K$ by $\mathbf{1}_{K_f}$. Hence, our estimate is not sharp in $K$. In particular, the height $T$ in Lemma \ref{lem7.5.1} is overestimated and the logarithmic loss in Proposition \ref{prop7.0.1} is not optimal.
\end{rmrk}
\subsection{Uniform Rapid decay estimate} \label{ssseecc}
As explained in Section \ref{ss7.33} we can deduce, from a uniform moderate growth estimate, a uniform rapid decay estimate. The proof is adapted from \cite[Lemma I.2.10]{MoeglinWald}.
\begin{lemma} \label{lem7.4.1}
Let $f \in \mathcal{E}_{\cusp}(K,\nu,\tau)$ be $L^2$-normalized and $\alpha \in \Phi$ (the simple roots). For any $r > 0$ and any $g \in \mathfrak{S}_{\omega, t}$, we have
\begin{equation*}
	f (g) \ll (1 + \Vert \nu \Vert)^{r} \widetilde{\beta_S}(\nu)^{1/2} \vol(K)^{-1/2} a_g^{\rho - r \alpha}
\end{equation*}
where $g = n_g m_g a_g k_g$.
\end{lemma}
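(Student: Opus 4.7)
This is the classical derivation of uniform rapid decay from uniform moderate growth, in the style of \cite[Lemma I.2.10]{MoeglinWald} and \cite[§6]{BM2}. The strategy is to exploit cuspidality of $f$ to cancel the constant Fourier coefficient along a unipotent subgroup, then integrate by parts repeatedly: each iteration produces a factor $a_g^{-\alpha}$ while the resulting derivatives of $f$ remain controlled by Lemma \ref{lem7.3.1}.

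Fix a non-zero root vector $X \in \mathfrak{g}_\alpha \subset \mathrm{Lie}(N)$. By cuspidality along $N$, the constant term $\int_{[N]} f(ng)\,dn$ vanishes, and Fourier-expanding $n \mapsto f(ng)$ along $[N]/[[N,N]]$ (using cuspidality along the derived series of $N$, as in \cite[§6]{BM2}, to reduce the non-abelian case to the abelian one) yields
\[
f(g) = \sum_{\psi \neq \mathbf{1}} f_\psi(g), \qquad f_\psi(g) = \int_{[N]} f(ng)\overline{\psi(n)}\,dn.
\]
For each $\psi$ with $d\psi(X) \neq 0$, integration by parts along $X$ on the compact quotient $[N]$, combined with the $N(\mathbb{Q})$-invariance of $f$, gives for every integer $r \geq 0$
\[
f_\psi(g) = \frac{1}{(2\pi i\, d\psi(X))^r}\int_{[N]} (L_X^r f)(ng)\overline{\psi(n)}\,dn.
\]

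The key geometric input is the identity $(L_X \phi)(g) = -(R_{\mathrm{Ad}(g^{-1})X} \phi)(g)$. For $g = n_g m_g a_g k_g \in \mathfrak{S}_{\omega, t}$ and $X \in \mathfrak{g}_\alpha$, the vector $\mathrm{Ad}(g^{-1}) X$ decomposes along root spaces $\mathfrak{g}_\beta$ with $\beta \in \alpha + \mathbb{Z}_{\geq 0}\Phi$, each of size $\ll a_g^{-\beta} \leq a_g^{-\alpha}$ on the Siegel set, while $\mathrm{Ad}(m_g^{-1} n_g^{-1})$ and $\mathrm{Ad}(k_g^{-1})$ are bounded on $\omega$ and $\mathbf{K}$. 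Iterating, $L_X^r f(g)$ is a bounded linear combination of $a_g^{-r\alpha} (W \cdot f)(g)$ for right-invariant differential operators $W$ of degree $r$. Applying Lemma \ref{lem7.3.1} to each $W$, and noting that $a_{ng} = a_g$ for $n \in N$, we obtain
\[
|f_\psi(g)| \ll_r |d\psi(X)|^{-r}\, (1 + \Vert \nu \Vert)^r\, \widetilde{\beta_S}(\nu)^{1/2}\, \vol(K)^{-1/2}\, a_g^{\rho - r\alpha}.
\]

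Summing over non-trivial $\psi$, which are indexed by a rational lattice in $\mathrm{Lie}(N/[N,N])^*$, the series $\sum_\psi |d\psi(X)|^{-r}$ converges once $r$ exceeds $\dim(N/[N,N])$. For smaller values of $r$ we apply the argument with $r' = \max(r, \dim N + 1)$: on the Siegel set $a_g^\alpha \geq t$, so $a_g^{\rho - r'\alpha} \leq t^{-(r' - r)} a_g^{\rho - r\alpha}$, and the claim follows with an enlarged implied constant. The main technical obstacle is the treatment of those characters $\psi$ with $d\psi(X) = 0$, which form a lower-dimensional sub-lattice; these are handled either by varying $X$ within $\mathfrak{g}_\alpha$ and partitioning the sum accordingly, or through the layered Fourier expansion along the derived series of $N$ as in \cite[§6]{BM2}, which transfers directly to our setting.
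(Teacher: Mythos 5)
Your core ideas (cancel the constant term by cuspidality, integrate by parts, convert left-invariant to right-invariant derivatives via $\mathrm{Ad}(g^{-1})$, invoke Lemma \ref{lem7.3.1}) are the right ones, but the execution has two genuine gaps that undermine the argument.

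First, the convergence claim is false. You write that $\sum_{\psi\neq\mathbf{1}} |d\psi(X)|^{-r}$ converges once $r > \dim(N/[N,N])$. It does not converge for any $r$: the characters with $d\psi(X) = 0$ form a sub-lattice of positive codimension, and near that sub-lattice there are infinitely many $\psi$ with $|d\psi(X)|$ bounded (if $X$ is rational, an entire coset of the sub-lattice has the same non-zero value of $d\psi(X)$), so the sum has infinitely many terms of comparable size. Integration by parts along a single $X \in \mathfrak{g}_\alpha$ yields no decay at all in the directions of $d\psi$ transverse to $X$, so the Fourier series cannot be resummed this way.

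Second, the characters with $d\psi|_{\mathfrak{g}_\alpha} = 0$ are a genuine obstruction that neither of your proposed fixes resolves. Varying $X$ within $\mathfrak{g}_\alpha$ does not reach a character whose differential vanishes identically on $\mathfrak{g}_\alpha$ but is non-trivial on another simple root space $\mathfrak{g}_\gamma$; for such $\psi$ you can integrate by parts along $\mathfrak{g}_\gamma$ and gain a factor $a_g^{-\gamma}$, but on a Siegel set $a_g^\gamma$ and $a_g^\alpha$ are independent, so this gives no control in $a_g^\alpha$. Likewise, a layered expansion along the derived series of the full minimal $N$ has rungs whose root vectors sit in $\mathfrak{g}_\gamma$ with $\gamma \neq \alpha$, and those rungs again produce $a_g^{-\gamma}$ rather than $a_g^{-\alpha}$.

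What the paper (following M{\oe}glin--Waldspurger, Lemma I.2.10) actually does is filter not the minimal $N$ but the unipotent radical $U$ of the standard parabolic singled out by $\alpha$, whose Lie algebra is the sum of the $\mathfrak{g}_\beta$ with $\alpha$ appearing with strictly positive coefficient in $\beta$. Cuspidality still gives $\int_{[U]} f(ug)\,du = 0$. Choosing a central series $\{0\} = V_0 \subset \cdots \subset V_N = U$ with $V_{i-1}\backslash V_i \simeq \mathbb{G}_a$ and each generator $X_i$ lying in a root space $\mathfrak{g}_{\beta_i}$ with $\beta_i \supseteq \alpha$, one telescopes $f = \sum_i (\mathcal{F}_{i-1} - \mathcal{F}_i)$ with $\mathcal{F}_i$ the partial constant terms. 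Each difference is a Fourier series over $\mathbb{Z}$ (not over a $k$-dimensional lattice), so the integration-by-parts gain $|\xi|^{-h}$ is summable once $h \geq 2$; and since every $\beta_i$ contains $\alpha$ with positive coefficient, the conjugation $\mathrm{Ad}(g^{-1})X_i$ produces the single controlling factor $a_g^{-\alpha}$ at every step on the Siegel set. Both summability and the correct exponent are built into this filtration, which is exactly what a direct abelianization of $N$ fails to supply.
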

\begin{proof}
	We fix a sequence
\begin{equation*}
	\lbrace 0 \rbrace = V_0 \subset V_1 \subset \ldots \subset V_{N-1} \subset V_N = U
\end{equation*}
of $\mathbb{Q}$-subgroups such that, for all $1 \leq i \leq N$, $V_i$ is normal in $U$ and $V_{i - 1} \backslash V_i \simeq \mathbb{G}_a$. Let $j_i : V_i \longrightarrow V_{i - 1} \backslash V_i$ denote the corresponding projection and set
\begin{equation*}
	\Gamma_i = V_i(\mathbb{A}_f) \cap gKg^{-1}.
\end{equation*}
In particular, there is a lattice $L_i \subset \mathbb{R}$ such that
\begin{equation*}
	L_i \backslash \mathbb{R} \simeq \mathbb{Q} \backslash \mathbb{A} / j_i(\Gamma_i)
\end{equation*}
with $\mathbb{Z} \backslash \mathbb{R} \simeq L_i \backslash \mathbb{R} \simeq V_{i-1}(\mathbb{A}) V_i(\mathbb{Q}) \backslash V_i (\mathbb{A}) / \Gamma_i$. More precisely, letting $\mathfrak{v}_i$ denote the Lie algebra of $V_i(\mathbb{R})$ and writting $\mathfrak{v}_i = \mathfrak{v}_{i-1} \oplus X_i \cdot \mathbb{R}$, then $x \mapsto \exp (x X_i)$ induces an isomorphism
\begin{equation*}
\mathbb{Z} \backslash \mathbb{R} \longrightarrow V_{i-1}(\mathbb{A}) V_i(\mathbb{Q}) \backslash V_i (\mathbb{A}) / \Gamma_i.
\end{equation*}
For $i = 0, \ldots, N$, set
\begin{equation*}
	\mathcal{F}_i (g) = \int_{[V_i]} f(ug) du
\end{equation*}	
so that $\mathcal{F}_0 = f$ and $\mathcal{F}_N = 0$ (recall $f$ is a cusp form). Moreover, we have
\begin{equation*}
	f(g) = \mathcal{F}_0(g) - \mathcal{F}_N(g) = \sum_{i = 1}^{N} \mathcal{F}_{i - 1}(g) - \mathcal{F}_{i}(g)
\end{equation*}
and the Lemma follows by proving the corresponding upper bound on each ${\mathcal{F}_{i - 1}(g) - \mathcal{F}_{i }(g)}$. Fix $1 \leq i \leq N$ and note that $u \mapsto \mathcal{F}_{i - 1} (ug)$ is right-$\Gamma_i$-invariant. Identifying
\begin{equation*}
\mathbb{Z} \backslash \mathbb{R} \simeq V_{i-1}(\mathbb{A}) V_i(\mathbb{Q}) \backslash V_i (\mathbb{A}) / \Gamma_i
\end{equation*}
we may consider its Fourier expansion
\begin{equation*}
 \mathcal{F}_{i - 1} (g) = \sum_{\xi \in \mathbb{Z}} \mathcal{F}_{i - 1}^{\xi}(g)
\end{equation*}
with
\begin{equation*}
\mathcal{F}_{i - 1}^{\xi}(g) = \int_{\mathbb{Z} \backslash \mathbb{R}} \mathcal{F}_{i - 1} \big( \exp(x X_i)g \big) \exp(-2i \pi x \xi) dx.
\end{equation*}
This way, using $\mathcal{F}_{i}(g) = \mathcal{F}_{i - 1}^{0}(g)$, we have
\begin{equation*}
\mathcal{F}_{i - 1}(g) - \mathcal{F}_{i }(g) = \sum_{\xi \neq 0} \mathcal{F}_{i - 1}^{\xi}(g)
\end{equation*}
and the Lemma follows by proving the corresponding upper bound on $\mathcal{F}_{i - 1}^{\xi}(g)$, $\xi \neq 0$. We have
\begin{align*}
\mathcal{F}_{i - 1}^{\xi}(g)
&= \int_{\mathbb{Z} \backslash \mathbb{R}} \mathcal{F}_{i - 1} \big( \exp(x X_i)g \big) \exp(-2i \pi x \xi) dx \\
&= \int_{\mathbb{Z} \backslash \mathbb{R}} \mathcal{F}_{i - 1} \big( g \exp(x X_i(g)) \big) \exp(-2i \pi x \xi) dx \\
&= (-2 i \pi \xi)^{-N} \int_{\mathbb{Z} \backslash \mathbb{R}}\mathcal{F}_{i - 1} \big( g \exp(x X_i(g)) \big)  \frac{\partial^N}{\partial x^N}  \exp(-2i \pi x \xi) dx
\end{align*}
where $X_i(g) = \text{Ad}(g^{-1}) X_i$. Then, fixing a non-zero $h \in \mathbb{N}$ and integrating by parts $h$ times yields
\begin{align*}
\mathcal{F}_{i - 1}^{\xi}(g) &= (2 i \pi \xi)^{-h} \int_{\mathbb{Z} \backslash \mathbb{R}} {X_i(g)}^h \cdot \mathcal{F}_{i - 1} \big( g \exp(x X_i(g)) \big) \exp(-2i \pi x \xi) dx \\
&= (2 i \pi \xi)^{-h} \int_{\mathbb{Z} \backslash \mathbb{R}} {X_i(g)}^h \cdot \mathcal{F}_{i - 1} \big( \exp(x X_i) g \big) \exp(-2i \pi x \xi) dx.
\end{align*}
As in the proof of \cite[Lemma I.2.10]{MoeglinWald}, we can show that there is $c_h > 0$ and $D_h$ of degree $h$ such that
\begin{equation*}
	{X_i(g)}^h \cdot \mathcal{F}_{i - 1} \big( \exp(x X_i) g \big) \leq c_h a_g^{- \alpha h} D_h \cdot \mathcal{F}_{i - 1}\big( \exp(x X_i) g \big).
\end{equation*}
Enlarging $\omega$ to ensure $\exp(x X_i) g \in \mathfrak{S}_{\omega, t}$ if necessary, we may apply Lemma \ref{lem7.3.1} to find
\begin{equation*}
	{X_i(g)}^h \cdot \mathcal{F}_{i - 1} \big( \exp(x X_i) g \big) \ll a_g^{- \alpha h} (1 + \Vert \nu \Vert)^{h} \widetilde{\beta_S}(\nu)^{1/2} \vol(K)^{-1/2} a_g^{\rho}
\end{equation*}
and this proves
\begin{equation*}
	f (g) \ll_r (1 + \Vert \nu \Vert)^{r} \widetilde{\beta_S}(\nu)^{1/2} \vol(K)^{-1/2} a_g^{\rho - r \alpha}
\end{equation*}
upon choosing $h \geq r$.
\end{proof}
\subsection{Control of the $L^2$-mass} \label{secaeaa}
We can now prove the statement on the control of the $L^2$-mass mentioned in Section \ref{ss7.33}. We split the proof in two Lemmas. 
\begin{lemma} \label{lem7.5.1}
	Let $f \in \mathcal{E}_{\cusp}(K,\nu,\tau)$ be $L^2$-normalized, let $c >0$ and $\epsilon > 0$. There is $C_{\epsilon} > 0$ such that, if $T > C_{\epsilon} (1 + \Vert \nu \Vert)^{1 + \epsilon} \vol (K)^{-( 1 + \epsilon) /2 }$ and if $g \in [G]^{> T}$, then $\vert f(g) \vert \leq c$.
\end{lemma}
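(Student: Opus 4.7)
The plan is to feed the uniform rapid decay estimate of Lemma~\ref{lem7.4.1} into a single bound valid over the deep part of the Siegel set by combining the individual bounds across all simple roots, and then to solve for $T$. Fix $r > 0$, to be chosen below. For each simple root $\alpha \in \Phi$, Lemma~\ref{lem7.4.1} gives
\[
|f(g)| \ll_r (1+\Vert\nu\Vert)^r \widetilde{\beta_S}(\nu)^{1/2} \vol(K)^{-1/2} a_g^{\rho - r\alpha}
\]
for $g = n_g m_g a_g k_g \in \mathfrak{S}_{\omega,t}$. Since the left-hand side is independent of $\alpha$, I take the geometric mean of these $|\Phi|$ inequalities to obtain the single bound
\[
|f(g)| \ll_r (1+\Vert\nu\Vert)^r \widetilde{\beta_S}(\nu)^{1/2} \vol(K)^{-1/2} a_g^{\rho - (r/|\Phi|)\sum_{\alpha \in \Phi}\alpha}.
\]

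Next I exploit the condition $g \in [G]^{>T}$. Expanding $\rho = \sum_{\beta \in \Phi} c_\beta \beta$ in the simple root basis with $c_\beta \geq 0$ (valid for the irreducible root systems attached to $\U(m,m)$ and $\Sp_{2m}$ since the inverse Cartan matrix has non-negative entries), set $|\rho|_\Phi = \sum_\beta c_\beta > 0$. On $[G]^{>T}$ we have $a_g^\beta > T$ for every simple $\beta$, so if $r > |\Phi|\cdot\max_\beta c_\beta$ every exponent $c_\beta - r/|\Phi|$ is negative and
\[
a_g^{\rho - (r/|\Phi|)\sum_\alpha \alpha} = \prod_{\beta \in \Phi}(a_g^\beta)^{c_\beta - r/|\Phi|} \leq T^{|\rho|_\Phi - r}.
\]
Combining with the polynomial majorant $\widetilde{\beta_S}(\nu) \ll (1+\Vert\nu\Vert)^{2D}$ extracted from \eqref{eq3.2}, this gives
\[
|f(g)| \ll_r (1+\Vert\nu\Vert)^{r+D} \vol(K)^{-1/2} T^{|\rho|_\Phi - r}.
\]

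To conclude, given $\epsilon > 0$, I fix $r = r(\epsilon)$ large enough that both $(r+D)/(r - |\rho|_\Phi) \leq 1 + \epsilon$ and $1/(r - |\rho|_\Phi) \leq 1 + \epsilon$. Then the hypothesis $T > C_\epsilon (1+\Vert\nu\Vert)^{1+\epsilon}\vol(K)^{-(1+\epsilon)/2}$, with $C_\epsilon$ absorbing $c$, the $r$-dependent implicit constant, and the bounded factor $\vol(K_f)$ arising when $\vol(K) > 1$, forces
\[
T^{r - |\rho|_\Phi} \geq C_r c^{-1}(1+\Vert\nu\Vert)^{r+D}\vol(K)^{-1/2},
\]
which is exactly what is needed to conclude $|f(g)| \leq c$. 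The whole argument is elementary bookkeeping around Lemma~\ref{lem7.4.1}; no real obstacle arises here, as the genuine analytic work has already been done in the uniform moderate-growth estimate (Lemma~\ref{lem7.3.1}) and its integration-by-parts upgrade to rapid decay. The only point demanding care is that the $\vol(K)$-dependence propagates uniformly through the combination step, which it does because $\vol(K)$ is bounded above by the fixed $\vol(K_f)$.
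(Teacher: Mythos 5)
Your proof is correct, and at heart it runs the same engine as the paper's: feed the uniform rapid-decay estimate of Lemma~\ref{lem7.4.1} into the height constraint and solve for $T$. The only genuine difference is the intermediate bookkeeping. The paper fixes the single simple root $\alpha$ that maximises $a_g^\alpha$, splits $r = R_1 + R_2$ with $R_1$ chosen to absorb $a_g^\rho$ via $a_g^{\rho - R_1\alpha}\le 1$, and then isolates a pure power $a_g^{-R_2\alpha}<T^{-R_2}$; unwinding, $R_1 = |\rho|_\Phi$ and the decisive exponent is $-R_2 = -(r - |\rho|_\Phi)$, which is precisely the $T^{|\rho|_\Phi - r}$ you obtain. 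You instead take the geometric mean of the bounds over all $\alpha\in\Phi$ and use $a_g^\beta > T$ for every simple root, which is a cleaner way to the same estimate and avoids the paper's $R_1,R_2$ split — in particular your choice of $r$ is manifestly uniform in $\nu$, whereas the paper's phrasing ``let $R_2>1$ be such that $(1+\Vert\nu\Vert)^{R_2\epsilon}\ge\widetilde{\beta_S}(\nu)^{1/2}(1+\Vert\nu\Vert)^{R_1}$'' superficially reads as $\nu$-dependent and is better handled, as you do, by invoking the polynomial majorant $\widetilde{\beta_S}(\nu)\ll(1+\Vert\nu\Vert)^{2D}$ once and for all. You also correctly note that the positivity of the coefficients $c_\beta$ in $\rho=\sum_\beta c_\beta\beta$ is what the argument needs and that $\vol(K)\le\vol(K_f)$ handles the residual $\vol(K)$ factor; both points are implicitly used but not spelled out in the paper. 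In short: same approach, a mild variant of the combinatorics over simple roots, slightly tighter presentation of the uniformity in $\nu$.
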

\begin{proof}
	Let $g = n_g m_g a_g k_g \in [G]^{> T}$ and let $\alpha \in \Phi$ such that $a_g^{\alpha} > T$ and
\begin{equation*}	
a_g^{\alpha} \geq a_g^{\beta}
\end{equation*}
for any $\beta \in \Phi$. Let $R_1 > 0$ be such that
\begin{equation*}
a_g^{\rho - R_1 \alpha} \leq 1.
\end{equation*}
Let $R_2 > 1$ be such that
\begin{equation*}
	 (1 + \Vert \nu \Vert)^{R_2 \epsilon} \geq \widetilde{\beta_S}(\nu)^{1/2} (1 + \Vert \nu \Vert)^{R_1}.
\end{equation*}
Let $r = R_1 + R_2$, applying Lemma \ref{lem7.4.1} yields $C_r > 0$ such that
\begin{align*}
	\vert f (g) \vert &\leq C_r (1 + \Vert \nu \Vert)^{r} \widetilde{\beta_S}(\nu)^{1/2} \vol(K)^{-1/2} a_g^{\rho - r \alpha} \\
		&\leq C_r (1 + \Vert \nu \Vert)^{r} \widetilde{\beta_S}(\nu)^{1/2} \vol(K)^{-1/2} a_g^{-(R_2 + R_3) \alpha}.
\end{align*}
Since $a_g^{\alpha} > T$, we have
\begin{equation*}
a_g^{-R_2 \alpha} < T^{-R_2} < C_{\epsilon}^{-R_2} (1 + \Vert \nu \Vert)^{-R_2(1 + \epsilon)} \vol(K)^{R_2(1 + \epsilon)  / 2}.
\end{equation*}
and thus
\begin{align*}
	\vert f (g) \vert &\leq C_r (1 + \Vert \nu \Vert)^{r} \widetilde{\beta_S}(\nu)^{1/2} \vol(K)^{-1/2} a_g^{- R_2 \alpha} \\
	&\leq C_r C_{\epsilon}^{-R_2} \dfrac{(1 + \Vert \nu \Vert)^{R_1 + R_2}}{(1 + \Vert \nu \Vert)^{R_2 \epsilon + R_2}}\widetilde{\beta_S}(\nu)^{1/2} \dfrac{ \vol(K)^{R_2 (1 + \epsilon)/2}}{ \vol(K)^{1/2}}.
\end{align*}
Because $R_2 > 1$, we have $R_2(1+ \epsilon) - 1 > 0$ , hence
\begin{equation*}
\dfrac{ \vol(K)^{R_2 (1 + \epsilon)/2}}{ \vol(K)^{1/2}} = \vol (K)^{\frac{R_2 ( 1 + \epsilon) - 1}{2}} \leq \vol (K_f)^{\frac{R_2 ( 1 + \epsilon) - 1}{2}}
\end{equation*}
and therefore, by assumption on $R_2$, this yields
\begin{equation*}
	\vert f (g) \vert \leq  C_r C^{-R_2}_{\epsilon} \vol (K_f)^{\frac{R_2 ( 1 + \epsilon) - 1}{2}}.
\end{equation*}
Choosing $C_{\epsilon} > 0$ sufficiently large completes the proof.
\end{proof}
\begin{lemma} \label{lem7.5.2}
	Let $f \in \mathcal{E}_{\cusp}(K,\nu,\tau)$ be $L^2$-normalized, let $0 < c < 1$ and let $\epsilon > 0$. There is $C_{\epsilon} > 0$ such that, if $T > C_{\epsilon} (1 + \Vert \nu \Vert)^{1 + \epsilon} \vol (K)^{-( 1 + \epsilon) /2 }$, then
\begin{equation*}
	\int_{[G]^{\leq T}} \vert f(g) \vert^2 dg \geq c.
\end{equation*}
\end{lemma}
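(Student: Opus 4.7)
The plan is to deduce Lemma \ref{lem7.5.2} from the uniform pointwise rapid decay proved in Lemma \ref{lem7.5.1}, combined with the finiteness of $\vol([G])$. Since the center of $G$ is anisotropic, $[G]$ has finite total volume with respect to the Haar measure fixed in \eqref{eq7.1}, and this total volume is a constant independent of both $K$ and $\nu$. Recalling that $f$ is $L^2$-normalized on $[G]$, I will split
\begin{equation*}
1 = \int_{[G]} \vert f(g) \vert^2 dg = \int_{[G]^{\leq T}} \vert f(g) \vert^2 dg + \int_{[G]^{> T}} \vert f(g) \vert^2 dg
\end{equation*}
and then show that the tail integral can be forced below $1 - c$ by a suitable choice of $T$.

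To carry this out, I will first fix an auxiliary constant $c_0 \in (0,1)$ small enough that $c_0^2 \, \vol([G]) \leq 1 - c$ (for instance any $c_0 \leq \sqrt{(1 - c)/\vol([G])}$). I will then apply Lemma \ref{lem7.5.1} with this value $c_0$ in place of $c$: the lemma produces a constant $C_{c_0, \epsilon} > 0$ such that, whenever $T > C_{c_0, \epsilon} (1 + \Vert \nu \Vert)^{1 + \epsilon} \vol(K)^{-(1+\epsilon)/2}$, the pointwise estimate $\vert f(g) \vert \leq c_0$ holds for every $g \in [G]^{> T}$. Integrating this bound and estimating the measure of the tail by the total volume of $[G]$ yields
\begin{equation*}
\int_{[G]^{> T}} \vert f(g) \vert^2 dg \leq c_0^2 \, \vol([G]^{> T}) \leq c_0^2 \, \vol([G]) \leq 1 - c,
\end{equation*}
which, combined with the identity above, gives the required lower bound $\int_{[G]^{\leq T}} \vert f \vert^2 dg \geq c$. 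The constant $C_\epsilon$ announced in the statement is then simply $C_{c_0, \epsilon}$, with its dependence on $c$ entering through $c_0$.

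There is no genuine obstacle in this deduction: all of the analytic work is already absorbed into Lemma \ref{lem7.5.1}, which itself rests on the uniform moderate growth estimate of Lemma \ref{lem7.3.1} and on the Fourier-expansion with repeated integration-by-parts argument of Lemma \ref{lem7.4.1}. The only point worth underlining is that $\vol([G])$ be a fixed finite quantity independent of $K$ and $\nu$, so that the implicit dependence of $c_0$ on $c$ is harmless and does not pollute the uniform dependence on the level and on the spectral parameter.
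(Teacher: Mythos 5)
Your argument is correct and rests on the same two ingredients as the paper's proof — the pointwise tail bound of Lemma \ref{lem7.5.1} and the finiteness of the volume of $[G]$ — but the bookkeeping of constants is routed differently. You invoke Lemma \ref{lem7.5.1} with a small target value $c_0$ chosen so that $c_0^2 \vol([G]) \leq 1 - c$, and then simply bound $\vol([G]^{>T})$ by the fixed total volume $\vol([G])$. The paper instead applies Lemma \ref{lem7.5.1} with the crude bound $c = 1$, getting $\vert f \vert < 1$ on $[G]^{>T}$, and then exploits the fact that $\vol([G]^{>T}) \to 0$ as $T \to \infty$, enlarging $C_\epsilon$ so that the threshold is high enough to make $\vol([G]^{>T}) \leq 1 - c$. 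The two moves are interchangeable: your version pushes the dependence on $c$ into the pointwise constant, while the paper's pushes it into the volume of the tail region. Your variant is arguably a touch more transparent in that it only needs $\vol([G]) < \infty$ rather than the monotone decay of $\vol([G]^{>T})$, and it makes manifest why $C_\epsilon$ ends up depending on $c$ (through $c_0$) but not on $K$ or $\nu$. Both are equally valid.
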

\begin{proof}
	We apply Lemma \ref{lem7.5.1} with the given $\epsilon > 0$ and $c = 1$. We obtain $C_{\epsilon} > 0$ such that, for $T > C_{\epsilon} (1 + \Vert \nu \Vert)^{1 + \epsilon} \vol (K)^{-( 1 + \epsilon) /2 }$, then
\begin{equation*}
	\vert f(g) \vert < 1 \quad (g \in [G]^{> T}).
\end{equation*}	
In particular
\begin{equation*}
	\int_{[G]^{> T}} \vert f(g) \vert^2 dg \leq \vol ([G]^{> T})
\end{equation*}
and, because $\vol ([G]^{> T}) \rightarrow 0$ as $T \rightarrow \infty$, making $C_{\epsilon}$ sufficiently large gives the result.
\end{proof}
\subsection{Upper bound} \label{secc7.6}
We are now ready to prove Proposition \ref{prop7.0.1}.

Let $\tau : K_{\infty} \rightarrow \mathbb{C}^{\times}$ be a continuous character. Let $k_{\nu, \tau^{-1}}^0$ be the test function on either $\SU(m,m)$ or $\Sp_{2m}(\mathbb{R})$ appearing in the proof of Proposition \ref{prop3.7.1}; not the $\tau^{-1}$-spherical test function $k_{\nu, \tau^{-1}}$ itself, which is the convolution $k_{\nu, \tau^{-1}}= k_{\nu, \tau^{-1}}^0 \ast k_{\nu, \tau^{-1}}^0$.\\
As explained in Section \ref{sec6.1} (see also the paragraph following Proposition \ref{prop6.0.1}), in the case ${G(\mathbb{R}) = \U(m,m)}$, we can extend $k_{\nu, \tau^{-1}}^0$ to a smooth and compactly supported function $k_{\infty}$ on $G(\mathbb{R})$.

Let $\mathcal{B}_{\cusp}(K,\nu, \tau)$ be an orthonormal basis of $\mathcal{E}_{\cusp}(K,\nu,\tau)$. Lemma \ref{lem7.2.1} applied to the orthonormal set $\lbrace f_{\lambda} \rbrace_{\lambda} = \mathcal{B}_{\cusp}(K,\nu, \tau)$ with test function $k = \vol(K)^{-1} \mathbf{1}_{K} \otimes k_{\infty}$ yields
\begin{equation} \label{eq7.2}
	\sum_{\Vert \im(\lambda) - \nu \Vert \leq 1} \vert R(k) f_{\lambda}(g) \vert^2 \leq \sum_{\gamma \in G(\mathbb{Q})} k \ast k^{\vee}(g^{-1} \gamma g).
\end{equation} 
Scaling $k_{\infty}$ by a bounded constant, we have $\vert R(k) f_{\lambda}(g) \vert^2 = \vert c_{\lambda} f_{\lambda} \vert^2$ with $\vert c_{\lambda} \vert^2 \geq 1$ since $\Vert \im (\lambda) - \nu \Vert \leq 1$ (see Proposition \ref{prop3.7.1}). Arguing as in the proof of Lemma \ref{lem7.3.1} (with $X = 1$ of degree $0$), we can bound
\begin{equation*}
	 \sum_{\gamma \in G(\mathbb{Q})} k \ast k^{\vee}(g^{-1} \gamma g) \ll \vol(K)^{-1} \widetilde{\beta_S}(\nu) a_g^{2 \rho}.
\end{equation*}
Let $\epsilon > 0$, let $c = 1/2$ and apply Lemma \ref{lem7.5.2} to obtain $C_{\epsilon} > 0$ such that
\begin{equation*}
	\int_{[G]^{\leq T}} \vert f_{\lambda} (g) \vert^2 dg \geq 1/2
\end{equation*}
for $T > C_{\epsilon} (1 + \Vert \nu \Vert)^{1 + \epsilon} \vol (K)^{-( 1 + \epsilon) /2 }$. This way, integrating \eqref{eq7.2} over $[G]^{\leq T}$ yields
\begin{equation*}
	\dim \mathcal{E}_{\cusp}(K, \nu, \tau) \ll \vol(K)^{-1} \widetilde{\beta_S}(\nu) \int_{[G]^{\leq T}} a_g^{a \rho} dg.
\end{equation*}
Using Iwasawa coordinates \eqref{eq7.1}, we obtain
\begin{equation*}
\int_{[G]^{\leq T}} a_g^{2 \rho} dg \ll \int_{A_0(t, T)} da \ll_{\epsilon} \log \big( (1 + \Vert \nu \Vert) \vol (K)^{-1/2} \big).
\end{equation*}
and this proves, upon inserting $\vol(K)^{-1} \asymp \vol(X(K))$, Proposition \ref{prop7.0.1} .

\section{Proof of the main result} \label{chap8}
In this Section, we prove the main result of this article, that is, Theorem \ref{theo1.2.2} (and \ref{theo1.2.1}). Let $F$ be a totally real number field and let $E$ be $F$ itself or a totally imaginary quadratic extension of $F$. Let $n > m \geq 1$ be integers, let $V$ be a non-degenerate Hermitian $E$-space of dimension $n + m$ and $W$ be a non-degenerate split skew-Hermitian $E$-space of dimension $2m$. We assume there is an archimedean place $v_0$ of $F$ such that $V$ has signature $(n,m)$ at $v_0$ and is positive definite at every other real places $v \neq v_0$. We set $G = \U(V)$ and $G' = \U(W)$ for the associated unitary groups.

Let $\mathcal{R}_0$ be the finite set of finite places at which either $E/F$ or $V$ is ramified and let $\mathcal{R}$ denote the union of $\mathcal{R}_0$ with the places above $2$. For any $v \in \mathcal{R}$, we choose open compact subgroups $K_v$ of $G(F_v)$. For any archimedean place $v$, we fix maximal open compact subgroups $K_v$ and $K_v'$ of $G(F_v)$ and $G'(F_v)$ respectively. For a given finite place $v \notin \mathcal{R}$, we let $K_v$ (resp. $K_v'$) denote the stabilizer of a self-dual lattice $L_{V,v}$ in $V_v$ (resp. $L_{W,v}$ in $W_v$). We then define
\begin{equation*}
	K_f = \prod_{v \nmid \infty} K_v \quad \text{,} \quad K'_f = \prod_{v \nmid \infty} K'_v
\end{equation*}
and
\begin{equation*}
	K_{\infty} = \prod_{v \mid \infty} K_v \quad \text{,} \quad K'_{\infty} = \prod_{v \mid \infty} K'_v.
\end{equation*}
Let $\mathfrak{n}$ be an ideal of $\mathcal{O}$, the ring of integers of $F$, and assume $\mathfrak{n}$ is prime to $\mathcal{R}$. If $v \mid \mathfrak{n}$, we set
\begin{equation*}
	K_v(\mathfrak{n}) = \lbrace g \in G(F_v) : (g- \id) L_{V,v} \subseteq \mathfrak{n} L_{V,v} \rbrace.
\end{equation*}
We define the principal congruence subgroup of level $\mathfrak{n}$ as the product
\begin{equation*}
	K(\mathfrak{n}) = \prod_{v \mid \mathfrak{n}} K_v(\mathfrak{n}) \prod_{v \nmid \mathfrak{n} \infty} K_v.
\end{equation*}
Similarly, we define
\begin{equation*}
	K'(\mathfrak{n}) = \prod_{v \mid \mathfrak{n}} K'_v(\mathfrak{n}) \prod_{v \nmid \mathfrak{n} \infty} K'_v
\end{equation*}
where $K'_v$, for $v \in \mathcal{R}$, are chosen before Lemma \ref{lemmm}.

As $W$ is split, we can fix a maximal isotropic subspace $X \subset W$ and write $W = X \oplus X^*$ for some $X^* \subset W$ in perfect duality with $X$. Let $q$ be a non-degenerate Hermitian form on $X$ such that $\U(X)(F_{v_0}) = \U(0,m)$ (or $\U(X)(F_{v_0}) = \Ortho(0,m)$) and $\U(X)(F_{v}) = \U(m,0)$ (or $\U(X)(F_{v}) = \Ortho(m,0)$) at every other archimedean places $v \neq v_0$. Let $j : X \hookrightarrow V$ be an embedding of Hermitian (or quadratic) spaces and set
\begin{equation*}
H_j = \U(j(X)) \times \U(j(X)^{\perp}).
\end{equation*}
We now choose and fix a congruence subgroup $K_{H_j}(\mathfrak{n})$ of level $\mathfrak{n}$, that is containing $K(\mathfrak{n})$ and not containing any $K(\mathfrak{d})$ ($\mathfrak{d} \vert \mathfrak{n}$), satisfying
\begin{equation*}
	K_{H_j}(\mathfrak{n}) \cap H_j(\mathbb{A}_f) = K^{H_j}_{f}.
\end{equation*}
Finally, we set
\begin{equation*}
X_{H_j}(\mathfrak{n}) = G(F) \backslash G(\mathbb{A}) / K_{H_j}(\mathfrak{n}) K_{\infty}
\end{equation*}
and
\begin{equation*}
X'(\mathfrak{n}) = G'(F) \backslash G'(\mathbb{A}) / K'(\mathfrak{n}) K'_{\infty}.
\end{equation*}
\subsection{Automorphic forms}
We let $\mathfrak{g} = \mathfrak{k} \oplus \mathfrak{p}$ be the Cartan decomposition of the Lie algebra of $G(F_{v_0})$ with respect to the Lie algebra $\mathfrak{k}$ of $K_{v_0}$ and we fix a maximal abelian subspace $\mathfrak{a} \subseteq \mathfrak{p}$. Let $Q \geq 1$, $\nu \in i \mathfrak{a}^*$ and let $\mathfrak{n} \subset \mathcal{O}$ be an ideal prime to $\mathcal{R}$. We define $\mathcal{A}^G(K_{H_j}(\mathfrak{n}),\nu, Q)$ as the set of (isomorphism classes of) irreducible automorphic unitary representations $\pi = \pi_f \otimes \pi_{\infty}$ of $G$ such that
\begin{itemize}
\item $\pi_f^{K_{H_j}(\mathfrak{n})} \neq 0$;
\item $\pi_{\infty} = \otimes_v \pi_v$ with $\pi_v$ the trivial representation of $G(F_v)$ for all $v \neq v_0$ and $\pi_{v_0}$ the unique spherical representation of $G(F_{v_0})$ of spectral parameter $\lambda \in \mathfrak{a}^*_{\mathbb{C}}$ satisfying $\Vert \im(\lambda) - \nu \Vert \leq Q$ (see Lemma \ref{lem3.3.1} if $E \neq F$ and remark \ref{rmrkOrtho} if $E = F$).
\end{itemize}
Then, we let $\widetilde{\mathcal{A}}^G(K_{H_j}(\mathfrak{n}),\nu, Q)$ denote the set of pairs $(\pi, E)$ where $\pi \in \mathcal{A}^G(K_{H_j}(\mathfrak{n}),\nu, Q)$ and $E$ is a morphism from $\pi$ to the space of automorphic forms on $G$. We denote by $m(\pi,G)$ the multiplicity space of $\pi$ \textit{i.e.} the fiber of the map $\widetilde{\mathcal{A}}^G(K_{H_j}(\mathfrak{n}),\nu, Q) \rightarrow \mathcal{A}^G(K_{H_j}(\mathfrak{n}),\nu, Q)$ above $\pi$. We let $\mathcal{E}^G(K_{H_j}(\mathfrak{n}),\nu,Q)$ denote the associated space of automorphic forms on $G$, namely
\begin{equation*}
	\mathcal{E}^G(K_{H_j}(\mathfrak{n}),\nu,Q) = \bigoplus_{\pi \in \mathcal{A}^G(K_{H_j}(\mathfrak{n}),\nu, Q)} m(\pi, G) \otimes \pi_f^{K_{H_j}(\mathfrak{n})}.
\end{equation*}
Then, set
\begin{equation*}
	\tau = \prod_{v \mid \infty} \tau_v : \prod_{v \mid \infty} K'_v \longrightarrow \mathbb{C}^{\times}
\end{equation*}
for the continuous character of $K'_{\infty}$ given by
\begin{itemize}
\item $\tau_v = \det^{(n+m)/2}$ if $v \neq v_0$ and $E = F$;
\item $\tau_{v_0} = \det^{(n-m)/2}$ if $E = F$;
\item $\tau_v = \det^{(n + m)/2} \otimes \det^{-(n+m)/2}$ if $v \neq v_0$ and $E \neq F$;
\item $\tau_{v_0} = \det^{(n-m)/2} \otimes \det^{(m-n)/2}$ if $E \neq F$.
\end{itemize}
We let $\mathfrak{g}' = \mathfrak{k}' \oplus \mathfrak{p}'$ be the Cartan decomposition of the Lie algebra of $G'(F_{v_0})$ with respect to the Lie algebra $\mathfrak{k}'$ of $K'_{v_0}$ and we fix a maximal abelian subspace $\mathfrak{a}' \subseteq \mathfrak{p}'$. Note that, as $G$ and $G'$ have same rank, we may identify $\mathfrak{a}^*_{\mathbb{C}}$ with ${{\mathfrak{a}'}^*_{\mathbb{C}}}$ and this allows us to view a spectral parameter simultaneously for $G$ and $G'$. 

Let $\mathcal{A}_{\cusp}^{G'}(K'(\mathfrak{n}), \nu, \tau, Q)$ be the set of (isomorphism classes of) irreducible cuspidal automorphic unitary representations $\pi' = \pi'_f \otimes \pi'_{\infty}$ of $G'$ such that
\begin{itemize}
\item ${\pi'_f}^{K'(\mathfrak{n})} \neq 0$;
\item $\pi'_{\infty} = \otimes_v \pi'_v$ with $\pi'_v$ the unique irreducible $\tau_v$-spherical representation of $G'(F_v)$ of spectral parameter $(\frac{n}{2} - 1, \ldots, \frac{n}{2} - m)$ if $E = F$ or $(\frac{3m + n - 1}{2}, \ldots, \frac{n - m + 1}{2})$ if $E \neq F$ for all ${v \neq v_0}$ and $\pi_{v_0}$ the unique $\tau_{v_0}$-spherical representation of $G'(F_{v_0})$ of spectral parameter $\lambda \in {\mathfrak{a}'}^*_{\mathbb{C}}$ satisfying $\Vert \im(\lambda) - \nu \Vert \leq Q$ (see Corollary \ref{coro3.3.2}).
\end{itemize}
We define $m(\pi', G')$ similarly as $m(\pi,G)$ and let $\mathcal{E}_{\cusp}^{G'}(K'(\mathfrak{n}),\nu, \tau,Q)$ denote the associated space of cuspidal automorphic forms on $G'$, that is
\begin{equation*}
	\mathcal{E}_{\cusp}^{G'}(K'(\mathfrak{n}),\nu, \tau,Q) = \bigoplus_{\pi' \in \mathcal{A}_{\cusp}^{G'}(K'(\mathfrak{n}),\nu, \tau, Q)} m(\pi', G') \otimes {\pi_f'}^{K'(\mathfrak{n})}.
\end{equation*}
\subsection{Mean square estimate}
We apply the result of Section \ref{chap6} to show that the sum of the squared $H_j$-periods, over a $L^2$-normalized with respect to the probability measure on $X_{H_j}(\mathfrak{n})$ basis of $\mathcal{E}^G(K_{H_j}(\mathfrak{n}),\nu,Q)$, is $\asymp \vol (X_{H_j}(\mathfrak{n})) \beta_S(\nu)$.

Let $Q > 1$, $\nu \in i \mathfrak{a}^*$ and let $\mathfrak{n} \subset \mathcal{O}$ be an ideal prime to $\mathcal{R}$. We denote by $\mathcal{B}(\mathfrak{n},\nu,Q)$ a basis of $\mathcal{E}^G(K_{H_j}(\mathfrak{n}),\nu,Q)$ of $L^2$-normalized (with respect to the probability measure on $X_{H_j}(\mathfrak{n})$) Maass forms and we define
\begin{equation*}
	\mathcal{M}_{H_j}({\mathfrak{n}},\nu,Q) = \sum_{f_{\lambda} \in \mathcal{B}(\mathfrak{n},\nu,Q)} \left\lvert \mathcal{P}_{H_j} (f_{\lambda}) \right\rvert^2.
\end{equation*}
\begin{lemma} \label{lem8.2.1}
There is $Q > 1$ (independent of $\mathfrak{n}$ and $\nu$) such that, for any sufficiently regular $\nu \in i \mathfrak{a}^*$ (with sufficiently large norm independently of $\mathfrak{n}$) we have
\begin{equation*}
	\mathcal{M}_{H_j}({\mathfrak{n}},\nu,Q) \asymp \vol (X_{H_j}(\mathfrak{n})) \beta_S(\nu).
\end{equation*}
\end{lemma}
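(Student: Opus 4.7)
The plan is to reduce Lemma \ref{lem8.2.1} to Proposition \ref{prop6.0.1} applied component-by-component on the disconnected manifold $X_{H_j}(\mathfrak{n})$, using the classical rewriting of Section \ref{sec5.4}. First, I will invoke the decomposition $X_{H_j}(\mathfrak{n}) = \bigsqcup_{g \in \gen_G(\mathfrak{n})} \Gamma^g \backslash S$ with $\Gamma^g = \Gamma^g_{H_j}(\mathfrak{n})$, and choose the orthonormal basis $\mathcal{B}(\mathfrak{n},\nu,Q)$ so that each $f_\lambda$ is supported on a single component, which I denote $g(\lambda)$. The classical rewriting of Section \ref{sec5.4} then gives
\begin{equation*}
\mathcal{P}_{H_j}(f_\lambda) = \sum_{h \in \mathcal{H}^{g(\lambda)}} w_h \, \phi^{g(\lambda)}_{f_\lambda}(h),
\end{equation*}
where the weights $w_h$ and the set $\mathcal{H}^g \subseteq \gen_{H_j}(K_f^{H_j})$ are independent of $\mathfrak{n}$ and $\lambda$, thanks to the condition $K_{H_j}(\mathfrak{n}) \cap H_j(\mathbb{A}_f) = K_f^{H_j}$.

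Second, I reconcile normalizations. Setting $V = \vol(X_{H_j}(\mathfrak{n}))$ and $V^g = \vol(\Gamma^g \backslash S)$, an $f_\lambda$ of unit mass in the probability measure on $X_{H_j}(\mathfrak{n})$ whose support lies in $\Gamma^{g(\lambda)} \backslash S$ satisfies $\|\phi^{g(\lambda)}_{f_\lambda}\|^2 = V/V^{g(\lambda)}$ in the probability measure of that component. Rescaling $\psi^g_\lambda = \sqrt{V^g/V}\, \phi^g_{f_\lambda}$ produces, for each $g$, an orthonormal family of Maass forms in $L^2(\Gamma^g \backslash S, d\overline{\mu}_{\Gamma^g})$, so that
\begin{equation*}
\mathcal{M}_{H_j}(\mathfrak{n},\nu,Q) \;=\; \sum_{g:\, \mathcal{H}^g \neq \emptyset} \frac{V}{V^g} \sum_{\substack{g(\lambda) = g \\ \Vert \im(\lambda) - \nu\Vert \leq Q}} \left| \sum_{h \in \mathcal{H}^g} w_h \psi^g_\lambda(h) \right|^2.
\end{equation*}

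Third, I apply Proposition \ref{prop6.0.1} to each $\Gamma^g$. The lattices $\Gamma^g_{H_j}(\mathfrak{n})$ are decreasing in $\mathfrak{n}$, so each sits inside the fixed lattice $\Gamma^g_{H_j}((1))$, and only finitely many $g$ (at most $|\gen_{H_j}(K_f^{H_j})|$) carry non-empty $\mathcal{H}^g$; the number of points and the weights are thus uniformly bounded in $\mathfrak{n}$. Crucially, the minimum distance between the points of $\mathcal{H}^g$ is non-decreasing as $\mathfrak{n}$ refines (fewer $\gamma \in \Gamma^g$ to minimize over), so it is bounded below uniformly, yielding a uniform threshold on $\|\nu\|$. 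Proposition \ref{prop6.0.1} then gives, for sufficiently regular $\nu$,
\begin{equation*}
\sum_{\substack{g(\lambda) = g \\ \Vert \im(\lambda) - \nu\Vert \leq Q}} \left| \sum_{h \in \mathcal{H}^g} w_h \psi^g_\lambda(h) \right|^2 \;\asymp\; V^g \, \widetilde{\beta_S}(\nu).
\end{equation*}
Summing over the finitely many contributing $g$ produces $\mathcal{M}_{H_j}(\mathfrak{n},\nu,Q) \asymp V\, \widetilde{\beta_S}(\nu)$, and the statement follows by converting $\widetilde{\beta_S}(\nu)$ to $\beta_S(\nu)$ via \eqref{eqreg} for sufficiently regular $\nu$.

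The argument is not hard, since all heavy lifting has been done by Proposition \ref{prop6.0.1}; the only real technical point is the book-keeping of uniformity in $\mathfrak{n}$, which is ensured at each step by the condition $K_{H_j}(\mathfrak{n}) \cap H_j(\mathbb{A}_f) = K_f^{H_j}$ and by the monotonicity of the principal congruence tower. In particular, one does not need any strong approximation-style argument to control $\gen_G(\mathfrak{n})$ itself, since only those $g$ with $\mathcal{H}^g \neq \emptyset$ contribute and these are parametrized by the fixed set $\gen_{H_j}(K_f^{H_j})$.
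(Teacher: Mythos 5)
Your proof is correct and takes the same route as the paper: decompose $X_{H_j}(\mathfrak{n})$ into its components $\Gamma^g_{H_j}(\mathfrak{n}) \backslash S$, use the classical rewriting of Section \ref{sec5.4} to express $\mathcal{P}_{H_j}(f_\lambda)$ as a weighted sum of point-evaluations whose data are independent of $\mathfrak{n}$ and $\lambda$ (thanks to $K_{H_j}(\mathfrak{n}) \cap H_j(\mathbb{A}_f) = K_f^{H_j}$), and apply Proposition \ref{prop6.0.1} component by component. Your explicit renormalisation $\psi^g_\lambda = \sqrt{V^g/V}\,\phi^g_{f_\lambda}$, together with the monotonicity of $\mathbf{d}$ and the bound on the number of contributing $g$ by $\vert\gen_{H_j}(K_f^{H_j})\vert$, is more careful than the paper's phrasing ``each $g$ contributes $\asymp \vol(\Gamma^g_{H_j}(\mathfrak{n})\backslash S)\,\beta_S(\nu)$'' — the precise per-component contribution is $\asymp V\,\widetilde{\beta_S}(\nu)$ for contributing $g$ and $0$ otherwise, and the two accountings coincide only in aggregate, so your version makes the argument watertight.
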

\begin{proof}
Consider the finite (see \cite[Theorem 5.1]{BorelFiniteness}) set $\gen_G(\mathfrak{n}) = G(F) \backslash G(\mathbb{A}_f) / K_{H_j}(\mathfrak{n})$ and the identification
\begin{equation*}
	X_{H_j}(\mathfrak{n}) = \bigcup_{g \in \gen_G(\mathfrak{n})} \Gamma^g_{H_j}(\mathfrak{n})  \backslash S
\end{equation*}
where $\Gamma^g_{H_j}(\mathfrak{n}) = G(F) \cap g K_{H_j}(\mathfrak{n}) g^{-1}$. In particular, if
\begin{equation*}
	\mathcal{E} (\Gamma^g_{H_j}(\mathfrak{n}) \backslash S ; \lambda) = \lbrace f \in L^2(\Gamma^g_{H_j}(\mathfrak{n})  \backslash S) : D \cdot f = \chi_{\lambda} (D) f \text{, } \forall D \in \mathcal{D}(S) \rbrace
\end{equation*}
then
\begin{equation*}
	\mathcal{E}^{G}(K_{H_j}(\mathfrak{n}),\nu,Q) = \sum_{g \in \gen_G(\mathfrak{n})} \sum_{\lambda \in \Lambda(\Gamma^g_{H_j}(\mathfrak{n}) , \nu, Q)} \mathcal{E}(\Gamma^g_{H_j}(\mathfrak{n})  \backslash S ; \lambda)
\end{equation*}
where
\begin{equation*}
\Lambda(\Gamma^g_{H_j}(\mathfrak{n}), \nu, Q) = \lbrace \lambda \in \mathfrak{a}^*_{\mathbb{C}} : \dim \mathcal{E}_{\lambda} (\Gamma^g_{H_j}(\mathfrak{n}) \backslash S)  > 0 \text{ and } \Vert \im(\lambda) - \nu \Vert \leq Q \rbrace.
\end{equation*}
This way, we can write the basis $\mathcal{B}(\mathfrak{n},\nu,Q)$ as a union (over $g \in \gen_G(\mathfrak{n})$) of bases $\mathcal{B}_g (\mathfrak{n},\nu,Q) $ of
\begin{equation*}
\sum_{\lambda \in \Lambda(\Gamma^g_{H_j}(\mathfrak{n}), \nu, Q)} \mathcal{E}(\Gamma^g_{H_j}(\mathfrak{n}) \backslash S, \lambda).
\end{equation*}
Hence, $\mathcal{M}_{H_j}({\mathfrak{n}},\nu,Q)$ may be written as a classical (\textit{i.e.} non adelic) sum (see Section \ref{sec5.4})
\begin{equation*}
\sum_{g \in \gen_G(\mathfrak{n})} \sum_{f_{\lambda} \in \mathcal{B}_g(\mathfrak{n},\nu,Q)} \left\lvert \sum_{p \in \mathcal{H}^g} w_p f_{\lambda}(p) \right\rvert^2
\end{equation*}
Note that the number of points in $\mathcal{H}^g$ is independent of both $\mathfrak{n}$ and $\lambda$. Similarly, the weights $w_p$ are independent of $\mathfrak{n}$ and $\lambda$. We can now quote Proposition \ref{prop6.0.1} to obtain that the contribution of each $g \in \gen_G(\mathfrak{n})$ to $\mathcal{M}({\mathfrak{n}},\nu,Q)$ is $\asymp \vol(\Gamma^g_{H_j}(\mathfrak{n}) \backslash S) \beta_S(\nu)$ and hence
\begin{equation*}
\mathcal{M}_{H_j}({\mathfrak{n}},\nu,Q) \asymp \sum_{g \in \gen_G(\mathfrak{n})} \vol(\Gamma^g_{H_j}(\mathfrak{n}) \backslash S) \beta_S(\nu)  = \vol (X_{H_j}(\mathfrak{n})) \beta_S(\nu).
\end{equation*}
\end{proof}
\subsection{Distinction} \label{secc8.2}
We now use the period relation developed in Section \ref{chap5} to determine which forms contribute to $\mathcal{M}_{H_j}({\mathfrak{n}},\nu,Q)$.

Let $Q > 1$ and assume $\nu \in i \mathfrak{a}^*$ is $Q$-regular. For $\pi \in \mathcal{A}^G(K_{H_j}(\mathfrak{n}), \nu, Q)$, we denote by $\Theta(\pi) = \Theta(\pi; W)$ the span of the
\begin{equation*}
	\theta(f ; s) : g' \mapsto \int_{[G]} f(g) \Theta(g,g';s) dg
\end{equation*}
for $f \in \pi$ and $s \in \omega$ (see Section \ref{subsec4.2}). We now choose $K'_v$, for $v \in \mathcal{R}$, small enough as in \cite[Proposition 2.11]{Cossutta}.
\begin{lemma} \label{lemmm}
If $\pi \in \mathcal{A}^G(K_{H_j}(\mathfrak{n}), \nu, Q)$ and if $\Theta(\pi) \neq 0$, then $\Theta(\pi) = \otimes_v \theta(\pi_v)$ and $\Theta(\pi) \in \mathcal{A}_{\cusp}^{G'}(K'(\mathfrak{n}), \nu, \tau, Q)$.
\end{lemma}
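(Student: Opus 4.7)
The plan is to verify the three assertions packaged in the lemma --- that $\Theta(\pi)$ is cuspidal, decomposes as $\bigotimes_v \theta(\pi_v ; W_v)$, and belongs to $\mathcal{A}_{\cusp}^{G'}(K'(\mathfrak{n}), \nu, \tau, Q)$ --- by assembling the place-by-place results of Sections \ref{sec2.5} and \ref{sec3.7} with the global Rallis and Kudla--Rallis theorems packaged in Corollary \ref{coro6.4.3}.

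First I would handle cuspidality and factorization. Since $\pi_{v_0}$ is spherical with spectral parameter $\lambda$ satisfying $\Vert \im(\lambda) - \nu \Vert \leq Q$ for $\nu \in i\mathfrak{a}^*$ sufficiently regular of large norm, $\pi_{v_0}$ is tempered. By Propositions \ref{prop3.6.1} and \ref{prop3.6.2}, the local theta lift of $\pi_{v_0}$ to $W_{m'}$ vanishes for every $1 \leq m' < m$; consequently the global lift $\Theta(\pi ; W_{m'})$ vanishes as well for every such $m'$. Invoking Corollary \ref{coro6.4.3} (which repackages Theorems \ref{theo6.4.1} and \ref{theo6.4.2}), and given that $\Theta(\pi) = \Theta(\pi ; W_m) \neq 0$ by hypothesis, it follows that $\Theta(\pi)$ is cuspidal and factorizes as $\Theta(\pi) \simeq \bigotimes_v \theta(\pi_v ; W_v)$.

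Next I would identify the archimedean components. At $v_0$, Proposition \ref{prop3.6.1} (if $E = F$) or Proposition \ref{prop3.6.2} (if $E \neq F$) gives that $\theta(\pi_{v_0} ; W_{v_0})$ is $\tau_{v_0}$-spherical with spectral parameter equal to $\lambda$, matching the conditions defining $\mathcal{A}_{\cusp}^{G'}(K'(\mathfrak{n}),\nu,\tau,Q)$. At an archimedean place $v \neq v_0$, the hypothesis on the signature of $V$ forces $G(F_v)$ to be compact, so $\pi_v$ is the trivial representation; the second halves of Propositions \ref{prop3.6.1} and \ref{prop3.6.2} then identify $\theta(\pi_v ; W_v)$ with the unique $\tau_v$-spherical representation of $G'(F_v)$ of spectral parameter $(n/2 - 1, \ldots, n/2 - m)$ or $\big( (3m+n-1)/2, \ldots, (n-m+1)/2 \big)$, again matching the definition of $\mathcal{A}_{\cusp}^{G'}$.

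Finally I would verify nonvanishing of $K'(\mathfrak{n})$-invariants at every finite place. For $v \notin \mathcal{R}$, the inclusion $K_{H_j}(\mathfrak{n}) \supset K(\mathfrak{n})$ combined with $\pi_f^{K_{H_j}(\mathfrak{n})} \neq 0$ gives $\pi_v^{K_v(\mathfrak{n})} \neq 0$, and Waldspurger's level-preservation (Theorem \ref{theo2.5.1}), applied as at the end of Section \ref{sec2.5}, yields $\theta(\pi_v ; W_v)^{K'_v(\mathfrak{n})} \neq 0$. At places $v \in \mathcal{R}$ the subgroup $K'_v$ was chosen following \cite[Proposition 2.11]{Cossutta} precisely so that $\theta(\pi_v; W_v)^{K'_v} \neq 0$. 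Taking the tensor product over all finite places gives $\Theta(\pi)^{K'(\mathfrak{n})} \neq 0$, and combined with the archimedean identifications we obtain $\Theta(\pi) \in \mathcal{A}_{\cusp}^{G'}(K'(\mathfrak{n}),\nu,\tau,Q)$.

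The principal obstacle is the first step: one must confirm that the spectral regularity hypothesis on $\nu$ is strong enough to apply the vanishing halves of Propositions \ref{prop3.6.1} and \ref{prop3.6.2} uniformly to every $\lambda$ in the window $\Vert \im(\lambda) - \nu \Vert \leq Q$, and thereby legitimately invoke Rallis's tower theorem. The other pieces --- matching the explicit $\tau_v$ and spectral parameters at each archimedean place, and threading the local level preservation through the possibly ramified places --- are routine bookkeeping once the vanishing of earlier occurrences is established.
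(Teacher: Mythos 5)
Your proof is correct and follows essentially the same route as the paper: quote Corollary~\ref{coro6.4.3} for cuspidality and factorization (this already subsumes the vanishing-of-lower-lifts step via Propositions~\ref{prop3.6.1}/\ref{prop3.6.2}), then match archimedean $K$-types and spectral parameters using those same propositions, and check $K'(\mathfrak{n})$-invariants place by place via Theorem~\ref{theo2.5.1} and Cossutta at ramified places. The ``principal obstacle'' you flag is precisely what the paper resolves with the $Q$-regularity hypothesis on $\nu$, which is imposed in the paragraph immediately preceding the lemma and guarantees that every $\lambda$ in the window $\Vert \im(\lambda) - \nu \Vert \leq Q$ with $\pi_\lambda$ unitary must lie in $i\mathfrak{a}^*$, hence $\pi_{v_0}$ is tempered.
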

\begin{proof}
We quote Corollary \ref{coro6.4.3} to obtain the factorization property (hence the irreducibility of $\Theta(\pi)$) and the fact $\Theta(\pi)$ is cuspidal. Here, we used the assumption $\nu$ is $Q$-regular as follows: if $\Vert \im(\lambda) - \nu \Vert \leq Q$, then $\lambda \in i \mathfrak{a}^*$ and hence $\pi_{v_0}$ is tempered.\\
We then argue place by place to prove $\Theta(\pi) \in \mathcal{A}_{\cusp}^{G'}(K'(\mathfrak{n}), \nu, \tau, Q)$: see Section \ref{sec3.7} for the archimedean places and Section \ref{sec2.5} for the finite places (and also \cite[Proposition 2.11]{Cossutta} for places $v \in \mathcal{R}$).
\end{proof}
Let $C$ denote the compact subset $\Vert \im(\lambda) - \nu \Vert \leq Q$ of $\mathfrak{a}^*_{\mathbb{C}}$ and consider the test function $s_{K_{H_j}(\mathfrak{n})} \otimes s_{\nu, C}$ constructed in the proof of Lemma \ref{lem5.3.1}. Setting
\begin{equation*}
s_{\mathfrak{n}, \nu, Q} =  s_{K_{H_j}(\mathfrak{n})} \otimes s_{\nu, C}
\end{equation*}
and applying Lemma \ref{lem5.3.1}, we obtain
\begin{equation} \label{eqqqqat}
\mathcal{P}_{H_j} ({f_{\lambda}}) \neq 0 \Rightarrow \theta (f_{\lambda} ; s_{\mathfrak{n}, \nu, Q}) \neq 0
\end{equation}
for any $f_{\lambda} \in \mathcal{E}^G(K_{H_j}(\mathfrak{n}), \nu, Q)$. We then define a linear map $\Theta_{\mathfrak{n}, \nu, Q}$ on $\mathcal{E}_{\cusp}^{G'}(K'(\mathfrak{n}), \nu, \tau, Q)$ by
\begin{equation*}
\Theta_{\mathfrak{n}, \nu, Q} (f') : g \mapsto \int_{[G']} f'(g') \overline{\Theta(g,g';s_{\mathfrak{n}, \nu, Q})} dg'.
\end{equation*}
The adjoint property \eqref{eqadj} thus reads
\begin{equation*}
\langle f, \Theta_{\mathfrak{n}, \nu, Q}(f') \rangle_G = \langle \theta(f; s_{\mathfrak{n}, \nu, Q}) , f' \rangle_{G'}.
\end{equation*}
\begin{lemma}
The image of $\Theta_{\mathfrak{n}, \nu, Q}$ lies in $\mathcal{E}^{G}(K(\mathfrak{n}), \nu, Q)$.
\end{lemma}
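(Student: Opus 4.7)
The plan is to verify, one by one, the three conditions defining $\mathcal{E}^{G}(K(\mathfrak{n}),\nu,Q)$: automorphy of $\Theta_{\mathfrak{n},\nu,Q}(f')$ on $G$, right $K(\mathfrak{n})K_{\infty}$-invariance, and that each irreducible cuspidal component $\pi$ appearing has archimedean components trivial at $v\neq v_{0}$ and spherical with spectral parameter in $\nu+O(Q)$ at $v_{0}$.

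First, the cuspidality of $f'$ gives rapid decay on $[G']$ and the theta kernel $\Theta(g,\cdot\,;s_{\mathfrak{n},\nu,Q})$ is of moderate growth there, so the defining integral converges absolutely for every $g\in G(\mathbb{A})$. The left $G(F)$-invariance of the theta kernel in its first argument then produces an automorphic function on $G$. Because $V$ is anisotropic, $[G]$ is compact and $L^{2}([G])$ decomposes discretely, so $\Theta_{\mathfrak{n},\nu,Q}(f')$ is a finite sum of components $f_{\pi}$ lying in irreducible cuspidal automorphic representations $\pi$ of $G$.

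Next, right $K(\mathfrak{n})K_{\infty}$-invariance reduces, via the cocycle identity $\omega(gk,g')=\omega(g,g')\omega(k,1)$, to the Weil-representation invariance $\omega(k,1)s_{\mathfrak{n},\nu,Q}=s_{\mathfrak{n},\nu,Q}$ for $k\in K(\mathfrak{n})K_{\infty}$. At archimedean places this is exactly the averaging step built into $s_{C}$ in the proof of Lemma \ref{lem5.3.1}. At finite places, replacing $s_{K_{H_{j}}(\mathfrak{n})}$ by its $K(\mathfrak{n})$-average under $\omega(\cdot,1)$ preserves the integral identity of Lemma \ref{lem5.2.2}, whose right-hand side is already invariant under right-translation by the local factor of $K(\mathfrak{n})$; thus one may assume this invariance at the outset without disturbing the previous results.

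The subtlest point is identifying the archimedean spectral type of each $\pi$. At $v\neq v_{0}$, $G(F_{v})$ is compact (as $V$ is positive definite there), so right $K_{v}=G(F_{v})$-invariance forces $\pi_{v}$ to be trivial; at $v_{0}$ it forces sphericity. For the spectral parameter at $v_{0}$, the plan is to use the adjoint relation
\begin{equation*}
\langle f_{\pi},\Theta_{\mathfrak{n},\nu,Q}(f')\rangle_{G} \;=\; \langle \theta(f_{\pi};s_{\mathfrak{n},\nu,Q}),f'\rangle_{G'}.
\end{equation*}
Non-vanishing of the $\pi$-component requires $\theta(f_{\pi};s_{\mathfrak{n},\nu,Q})\in\Theta(\pi;W)$ to pair non-trivially with the irreducible $\pi'$ containing $f'$. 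By Corollary \ref{coro6.4.3}, $\Theta(\pi;W)$ (when non-zero) is irreducible cuspidal and factorizes as $\otimes_{v}\theta(\pi_{v};W_{v})$, so this pairing forces $\pi'\simeq\Theta(\pi;W)$ globally; Proposition \ref{prop3.6.2} (or \ref{prop3.6.1} when $E=F$) then transfers the tempered spherical spectral parameter of $\pi'_{v_{0}}$ to $\pi_{v_{0}}$, yielding $\|\im(\lambda)-\nu\|\leq Q$ as required. The main obstacle is the bookkeeping at ramified finite places $v\in\mathcal{R}$, where one must ensure that the local Howe correspondence genuinely forces $\pi_{v}\simeq \theta(\pi'_{v};V_{v})$; this is precisely what the small choice of $K'_{v}$ via \cite[Proposition 2.11]{Cossutta} (used in Lemma \ref{lemmm}) is designed to guarantee.
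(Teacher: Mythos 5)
Your argument has a genuine gap at its crucial step. To establish the archimedean spectral type of each cuspidal component $\pi$ of $\Theta_{\mathfrak{n},\nu,Q}(f')$, you invoke Corollary~\ref{coro6.4.3} for $\Theta(\pi;W)$. But that corollary requires $\pi_{v_0}$ to be \emph{spherical and tempered}; you have established sphericity (from right $K_{v_0}$-invariance), but temperedness of $\pi_{v_0}$ is precisely the archimedean conclusion you are trying to reach via the transfer at the end of the paragraph. Without temperedness you cannot invoke the local vanishing $\theta(\pi_{v_0};W_{m'})=0$ for $m'<m$, hence cannot invoke Rallis's cuspidality theorem or the factorization $\Theta(\pi;W)\simeq\otimes_v\theta(\pi_v)$, and so the identification $\pi'\simeq\Theta(\pi;W)$ (and the subsequent inversion giving the spectral parameter of $\pi_{v_0}$) is not available. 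The argument is, in effect, circular.

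The paper sidesteps this entirely by running the theta lift in the other direction: start from $\pi'$ on $G'$ and consider $\Theta(\pi';V)$ on $G$. Because $V$ is anisotropic, $[G]$ is compact, so $\Theta(\pi';V)$ is automatically square-integrable; Theorem~\ref{theo6.4.1} (Kudla--Rallis) then gives irreducibility and the factorization $\Theta(\pi')\simeq\otimes_v\theta(\pi'_v)$ with \emph{no} hypothesis on any archimedean component. The spectral data at $v_0$ is then read off from $\pi'_{v_0}$, whose spectral parameter is known and tempered by the standing assumption that $\pi'\in\mathcal{A}_{\cusp}^{G'}(K'(\mathfrak{n}),\nu,\tau,Q)$ with $\nu$ a $Q$-regular element of $i\mathfrak{a}^*$. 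Your opening two paragraphs (convergence, automorphy, $K(\mathfrak{n})K_\infty$-invariance via $K(\mathfrak{n})$-averaging of the finite-place test function) are fine, if a little informal, and your closing remark about the small choice of $K'_v$ at $v\in\mathcal{R}$ is on target; but to repair the proof you should replace the appeal to Corollary~\ref{coro6.4.3} applied to $\pi$ by an appeal to Theorem~\ref{theo6.4.1} applied to $\Theta(\pi';V)$, exploiting the anisotropy of $V$.
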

\begin{proof}
Let $f' \in \pi'$ with $\pi' \in \mathcal{A}_{\cusp}^{G'} (K'(\mathfrak{n}), \nu, \tau, Q)$ and assume $\Theta_{\mathfrak{n}, \nu, Q} (f') \neq 0$; in particular $\Theta(\pi') \neq 0$. Since $G$ is anisotropic, we can quote Theorem \ref{theo6.4.1} to deduce $\Theta(\pi')$ is irreducible and $\Theta(\pi') = \otimes_v \theta(\pi'_v)$. From local considerations (similarly as in Lemma \ref{lemmm}), we deduce $\Theta(\pi') \in \mathcal{A}^{G}(K(\mathfrak{n}), \nu, Q)$ and thus ${\Theta_{\mathfrak{n}, \nu, Q} (f') \in \mathcal{E}^{G}(K(\mathfrak{n}), \nu, Q)}$.
\end{proof}
Since $\mathcal{E}^{G}(K_{H_j}(\mathfrak{n}), \nu, Q) \subset \mathcal{E}^{G}(K(\mathfrak{n}), \nu, Q)$, we can consider the intersection\footnote[1]{This relies on $K(\mathfrak{n}) \subset K_{H_j}(\mathfrak{n})$ but is independent of the assumption $K_{H_j}(\mathfrak{n}) \cap H_j(\mathbb{A}_f) = K^{H_j}_{f}$. We refer to remark  \ref{remarquebrumley} for further comments.} of the image of $\Theta_{\mathfrak{n}, \nu, Q}$ with $\mathcal{E}^{G}(K_{H_j}(\mathfrak{n}), \nu, Q)$. We denote this intersection by $\mathcal{E}^{G, \Theta}(K_{H_j}(\mathfrak{n}), \nu, Q)$.
\begin{lemma} \label{lem8.2.4}
Let $f_{\lambda} \in \mathcal{E}^{G}(K_{H_j}(\mathfrak{n}), \nu, Q)$ and assume $\mathcal{P}_{H_j}(f_{\lambda}) \neq 0$. Then $f_{\lambda}$ lies in $\mathcal{E}^{G, \Theta}(K_{H_j}(\mathfrak{n}), \nu, Q)$.
\end{lemma}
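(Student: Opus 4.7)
The plan is to prove the contrapositive: any $f \in \mathcal{E}^{G}(K_{H_j}(\mathfrak{n}), \nu, Q)$ orthogonal in $L^{2}([G])$ to $\mathcal{E}^{G,\Theta}(K_{H_j}(\mathfrak{n}), \nu, Q)$ satisfies $\mathcal{P}_{H_j}(f) = 0$. Picking the basis $\mathcal{B}(\mathfrak{n},\nu,Q)$ adapted to the orthogonal decomposition $\mathcal{E}^{G}(K_{H_j}(\mathfrak{n}), \nu, Q) = \mathcal{E}^{G,\Theta} \oplus (\mathcal{E}^{G,\Theta})^{\perp}$ then forces every $f_{\lambda}$ with $\mathcal{P}_{H_j}(f_{\lambda}) \neq 0$ to lie in the first summand, which is the lemma.

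The workhorse is the adjoint relation \eqref{eqadj} in the form
\begin{equation*}
\langle \theta(f ; s_{\mathfrak{n},\nu,Q}), f' \rangle_{G'} = \langle f, \Theta_{\mathfrak{n},\nu,Q}(f') \rangle_{G}.
\end{equation*}
First I would make sure the intersection defining $\mathcal{E}^{G,\Theta}$ actually coincides with the image of $\Theta_{\mathfrak{n},\nu,Q}$; this can be arranged by refining Lemma \ref{lem5.2.2} so that each local test function $s_{K_{H_j}(\mathfrak{n}),v}$ is invariant under $\omega(K_{H_j,v}(\mathfrak{n}),1)$, which secures $\Theta_{\mathfrak{n},\nu,Q}(f') \in \mathcal{E}^{G}(K_{H_j}(\mathfrak{n}), \nu, Q)$ for every $f'$. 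With this in hand, $f \perp \mathcal{E}^{G,\Theta}$ gives $\langle f, \Theta_{\mathfrak{n},\nu,Q}(f') \rangle_{G} = 0$ for all $f' \in \mathcal{E}^{G'}_{\cusp}(K'(\mathfrak{n}), \nu, \tau, Q)$, and the adjoint relation then shows that $\theta(f ; s_{\mathfrak{n},\nu,Q})$ is orthogonal to all of $\mathcal{E}^{G'}_{\cusp}(K'(\mathfrak{n}), \nu, \tau, Q)$.

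Next I would verify that $\theta(f ; s_{\mathfrak{n},\nu,Q})$ itself belongs to $\mathcal{E}^{G'}_{\cusp}(K'(\mathfrak{n}), \nu, \tau, Q)$: cuspidality and Euler factorization come from Corollary \ref{coro6.4.3}; the Archimedean $\tau$-spherical type and spectral parameter from Propositions \ref{prop3.6.1} and \ref{prop3.6.2}; and the $K'(\mathfrak{n})$-invariance at finite places $v \nmid \mathcal{R}$ from Section \ref{sec2.5}. The orthogonality then forces $\theta(f ; s_{\mathfrak{n},\nu,Q}) = 0$, and Lemma \ref{lem5.3.1}, whose proportionality constant between the Bessel period of $\theta(f;s_{\mathfrak{n},\nu,Q})$ and $\mathcal{P}_{H_j}(f)$ is non-zero on $C$ (compare \eqref{eqqqqat}), yields $\mathcal{P}_{H_j}(f) = 0$.

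The main obstacle is the place-by-place verification that $\theta(f ; s_{\mathfrak{n},\nu,Q})$ indeed lies in $\mathcal{E}^{G'}_{\cusp}(K'(\mathfrak{n}), \nu, \tau, Q)$, in particular at the ramified finite places (handled by the specific choice of $K'_v$ made before Lemma \ref{lemmm}) and at the Archimedean place (where one must check that $s_{\nu,C}$ produces the correct $\tau$-equivariance under $K'_{\infty}$). The related identification of $\mathcal{E}^{G,\Theta}$ with the full image of $\Theta_{\mathfrak{n},\nu,Q}$ requires the slight sharpening of the test function construction noted above, which amounts to averaging the local test functions of Lemma \ref{lem5.2.2} over $K_{H_j,v}(\mathfrak{n})$.
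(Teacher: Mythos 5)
Your contrapositive route is valid and uses exactly the two ingredients the paper relies on---the adjoint identity \eqref{eqadj} and the non-vanishing constant in Lemma \ref{lem5.3.1}---but runs them in reverse. The paper's own proof is direct: from $\mathcal{P}_{H_j}(f_{\lambda}) \neq 0$ one gets $\theta(f_{\lambda}; s_{\mathfrak{n},\nu,Q}) \neq 0$ via \eqref{eqqqqat}, Lemma \ref{lemmm} places this lift in $\mathcal{E}_{\cusp}^{G'}(K'(\mathfrak{n}),\nu,\tau,Q)$, and specializing \eqref{eqadj} to $f' = \theta(f_{\lambda}; s_{\mathfrak{n},\nu,Q})$ gives $\langle f_{\lambda}, \Theta_{\mathfrak{n},\nu,Q}(\theta(f_{\lambda}; s_{\mathfrak{n},\nu,Q})) \rangle_G = \langle \theta(f_{\lambda};s_{\mathfrak{n},\nu,Q}), \theta(f_{\lambda};s_{\mathfrak{n},\nu,Q}) \rangle_{G'} \neq 0$, from which membership in $\mathcal{E}^{G,\Theta}$ is read off. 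You instead start from $f \perp \mathcal{E}^{G,\Theta}$, use \eqref{eqadj} over all $f'$ to get $\theta(f; s_{\mathfrak{n},\nu,Q}) \perp \mathcal{E}_{\cusp}^{G'}(K'(\mathfrak{n}),\nu,\tau,Q)$, observe that the lift lies in that very space, hence vanishes, and close with Lemma \ref{lem5.3.1}. The paper's direct version buys simplicity: it only pairs $f_{\lambda}$, already right-$K_{H_j}(\mathfrak{n})$-invariant, against one specific vector in the image, so it never has to decide where $\Theta_{\mathfrak{n},\nu,Q}(f')$ lands for arbitrary $f'$. Your contrapositive does need $\Theta_{\mathfrak{n},\nu,Q}(f') \in \mathcal{E}^{G,\Theta}$ for all $f'$, and the averaging of the finite-place test functions of Lemma \ref{lem5.2.2} over $K_{H_j}(\mathfrak{n})$ that you propose is the correct way to secure it; this refinement is not in the paper, and if adopted, $s_{\mathfrak{n},\nu,Q}$, and hence $\Theta_{\mathfrak{n},\nu,Q}$ and $\mathcal{E}^{G,\Theta}$, should be fixed with the averaged test function from the outset so that the space you are assuming $f$ is orthogonal to matches the one appearing in the adjoint identity.
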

\begin{proof}
From \eqref{eqqqqat} we know that $\mathcal{P}_{H_j}(f_{\lambda}) \neq 0$ ensures ${\theta(f_{\lambda} ; s_{\mathfrak{n}, \nu, Q}) \neq 0}$ for any ${f_{\lambda} \in \mathcal{E}^G(K_{H_j}(\mathfrak{n}), \nu, Q)}$. Moreover, by Lemma \ref{lemmm}, we have $\theta(f_{\lambda} ; s_{\mathfrak{n}, \nu, Q}) \in \mathcal{E}_{\cusp}^{G'}(K'(\mathfrak{n}), \nu, \tau, Q)$. From the adjoint property, we deduce
\begin{equation*}
0 \neq \langle \theta(f_{\lambda} ; s_{\mathfrak{n}, \nu, Q}) , \theta(f_{\lambda} ; s_{\mathfrak{n}, \nu, Q}) \rangle_{G'} = \langle f_{\lambda} , \Theta_{\mathfrak{n}, \nu, Q}( \theta(f_{\lambda} ; s_{\mathfrak{n}, \nu, Q})) \rangle_G
\end{equation*}
and this proves $f_{\lambda} \in \mathcal{E}^{G, \Theta}(K_{H_j}(\mathfrak{n}), \nu, Q)$.
\end{proof}
\subsection{Conclusion}
We are now ready to conclude the proof of Theorem \ref{theo1.2.2} (and \ref{theo1.2.1}).

For $\lambda$ such that $\Vert \im(\lambda) - \nu \Vert \leq Q$, we define
\begin{equation*}
V_{H_j}(\mathfrak{n}, \lambda) = m(\pi, G) \otimes \pi_f^{K_{H_j}(\mathfrak{n})}
\end{equation*}
where $\pi = \pi_f \otimes \pi_{\infty} \in \mathcal{A}^G(K_{H_j}(\mathfrak{n}),\nu, Q)$ with $\pi_{\infty} = \pi_{v_0, \lambda} \otimes \bigotimes_{v \neq v_0} \mathbf{1}_v$ and ${\Vert \im(\lambda) - \nu \Vert \leq Q}$. Similarly, let
\begin{equation*}
V'(\mathfrak{n}, \lambda) = m(\pi', G') \otimes {\pi'_f}^{K'(\mathfrak{n})}
\end{equation*}
where $\pi' = \pi'_f \otimes \pi'_{\infty} \in \mathcal{A}_{\cusp}^{G'}(K'(\mathfrak{n}),\nu, \tau, Q)$ with $\pi'_{\infty} = \pi'_{v_0, \lambda, \tau_{v_0}} \otimes \bigotimes_{v \neq v_0} \pi'_{v, \tau_{v}}$ and ${\Vert \im(\lambda) - \nu \Vert \leq Q}$.

Note the automorphic period $\mathcal{P}_{H_j}$ defines a linear functional on each finite dimensional vector space $V_{H_j}(\mathfrak{n}, \lambda)$. In particular, the kernel of $\mathcal{P}_{H_j \vert V_{H_j}(\mathfrak{n}, \lambda)}$ is of codimension at most $1$.\\
Consider the following composition map
\begin{equation*}
\Theta^{H_j}_{\mathfrak{n}, \nu, Q} : \mathcal{E}_{\cusp}^{G'}(K'(\mathfrak{n}), \nu, \tau, Q) \overset{\Theta_{\mathfrak{n}, \nu, Q}}{\longrightarrow} \mathcal{E}^{G}(K(\mathfrak{n}), \nu, Q) \overset{\text{proj}}{\longrightarrow} \mathcal{E}^{G, \Theta}(K_{H_j}(\mathfrak{n}), \nu, Q).
\end{equation*}
For $\lambda$ such that $\Vert \im(\lambda) - \nu \Vert \leq Q$, if $f' \in V'(\mathfrak{n}, \lambda)$, then $f_{\mathfrak{n}, \lambda} = \Theta^{H_j}_{\mathfrak{n}, \nu, Q}(f')$ (if non-zero) lies in $V_{H_j}(\mathfrak{n}, \lambda)$ and satisfies $\mathcal{P}_{H_j}(f_{\mathfrak{n}, \lambda}) \neq 0$. Normalizing $f_{\mathfrak{n}, \lambda}$ if necessary, $f_{\mathfrak{n}, \lambda}$ is a unit normal vector to the kernel of $\mathcal{P}_{H_j \vert V_{H_j}(\mathfrak{n}, \lambda)}$. We can then complete this set of vectors to obtain an orthonormal basis of $\mathcal{E}^{G, \Theta}(K_{H_j}(\mathfrak{n}), \nu, Q)$ denoted $\mathcal{B}_{H_j}^{\Theta}(\mathfrak{n},\nu,Q)$.

By construction of this basis and by Lemma \ref{lem8.2.4}, we have
\begin{equation*}
	\mathcal{M}_{H_j}({\mathfrak{n}},\nu,Q) = \sum_{f_{\lambda} \in \mathcal{B}_{H_j}^{\Theta}(\mathfrak{n},\nu,Q)} \left\lvert \mathcal{P}_{H_j} (f_{\lambda}) \right\rvert^2 = \sum_{\lambda \text{ : } \Vert \im(\lambda) - \nu \Vert \leq Q} \left\lvert \mathcal{P}_{H_j} (f_{\mathfrak{n}, \lambda}) \right\rvert^2.
\end{equation*}
Inserting Lemma \ref{lem8.2.1}, we obtain
\begin{equation} \label{eq8.2bis}
	\sum_{\lambda \text{ : } \Vert \im(\lambda) - \nu \Vert \leq Q} \left\lvert \mathcal{P}_{H_j} (f_{\mathfrak{n}, \lambda}) \right\rvert^2 \asymp \vol (X_{H_j}(\mathfrak{n})) \beta_S(\nu).
\end{equation}
Now, by definition, we have
\begin{equation*}
\dim \mathcal{E}_{\cusp}^{G'}(K'(\mathfrak{n}) , \nu, Q, \tau) = \sum_{\lambda \text{ : } \Vert \im(\lambda) - \nu \Vert \leq Q} \dim V'(\mathfrak{n}, \lambda)
\end{equation*}
and can quote Proposition \ref{prop7.0.1} (with $\mathbb{Q}$-group $\text{Res}_{F/\mathbb{Q}} G'$) to bound
\begin{equation} \label{eq8.2}
\sum_{\lambda \text{ : } \Vert \im(\lambda) - \nu \Vert \leq Q} \dim V'(\mathfrak{n}, \lambda) \ll  \vol(X'(\mathfrak{n})) {\beta_{S'}}(\nu) \mathcal{L} (\mathfrak{n}, \nu)^{m [F : \mathbb{Q}]}.
\end{equation}

We are now ready to prove the main result of this article, namely Theorem \ref{theo1.2.2} (and \ref{theo1.2.1}). Let $Q$ and $\nu$ be as in lemma \ref{lem8.2.1} and assume moreover $\nu$ is $Q$-regular. Comparing \eqref{eq8.2bis} with \eqref{eq8.2}, there is $\lambda$ such that $\Vert \im(\lambda) - \nu \Vert \leq Q$ and
\begin{equation*}
\vert \mathcal{P}_{H_j} (f_{\mathfrak{n}, \lambda}) \vert \gg \dfrac{\vol (X_{H_j}(\mathfrak{n}))^{\frac{1}{2}}}{\vol (X'(\mathfrak{n}))^{\frac{1}{2}}} \dfrac{\beta_S(\nu)^{\frac{1}{2}}}{\beta_{S'}(\nu)^{\frac{1}{2}}} \dim V'(\mathfrak{n}, \lambda)^{\frac{1}{2}} \mathcal{L} (\mathfrak{n}, \nu)^{-\frac{m [F : \mathbb{Q}]}{2}}.
\end{equation*}
In particular, since $\mathcal{P}_{H_j} (f_{\mathfrak{n}, \lambda})$ is a weighted sum of point evaluations with both the weights and the number of terms independent of both $\mathfrak{n}$ and $\nu$, this implies the same lower bound on $\Vert f_{\mathfrak{n}, \lambda} \Vert_{\infty}$.

\newpage
\appendix
\section{Appendix. Order of finite unitary groups} \label{appendixA}
\subsection*{Symplectic group}
Let $p \geq 3$ be an integer. Let $F$ be a $p$-adic field with ring of integers $\mathcal{O}$, maximal ideal $\mathfrak{p}$ and let $q$ denote the cardinal of the residual field $\mathcal{O} / \mathfrak{p}$.
\begin{lemma} \label{lemA01}
	Let $r, \vartheta \geq 1$ be integers, then
\begin{equation*}
	\vert \Sp_{2r} (\mathcal{O} / \mathfrak{p}^{\vartheta}) \vert = q^{\vartheta (2r^2 + r)} \prod_{i = 1}^r (1 - q^{-2i}).
\end{equation*}
\end{lemma}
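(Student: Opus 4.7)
The plan is to first establish the formula in the residue field case $\vartheta = 1$ and then to climb from $\mathcal{O}/\mathfrak{p}^{\vartheta-1}$ to $\mathcal{O}/\mathfrak{p}^{\vartheta}$ by a standard filtration argument, using that $\Sp_{2r}$ is a smooth group scheme over $\mathbb{Z}$ of relative dimension $2r^2+r$.

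For $\vartheta = 1$, the standard way is to count ordered symplectic bases of the standard symplectic space $\mathbb{F}_q^{2r}$. One chooses the first vector $e_1$ freely in $\mathbb{F}_q^{2r}\setminus\{0\}$ (giving $q^{2r}-1$ choices), then $f_1$ must satisfy $\langle e_1, f_1\rangle = 1$, providing $q^{2r-1}$ choices, and one then repeats the argument on the orthogonal complement, which is a symplectic space of dimension $2(r-1)$. Induction yields
\begin{equation*}
	|\Sp_{2r}(\mathbb{F}_q)| = q^{r^2}\prod_{i=1}^{r}(q^{2i}-1) = q^{2r^2+r}\prod_{i=1}^{r}(1-q^{-2i}),
\end{equation*}
which matches the target formula at $\vartheta = 1$.

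For the inductive step, let $\vartheta \geq 2$ and consider the reduction morphism
\begin{equation*}
	\pi_\vartheta : \Sp_{2r}(\mathcal{O}/\mathfrak{p}^{\vartheta}) \longrightarrow \Sp_{2r}(\mathcal{O}/\mathfrak{p}^{\vartheta-1}).
\end{equation*}
I would first show that $\pi_\vartheta$ is surjective. This is a consequence of the smoothness of $\Sp_{2r}$ over $\mathbb{Z}[1/2]$ (and in fact over $\mathbb{Z}$): any $\mathcal{O}/\mathfrak{p}^{\vartheta-1}$-point lifts to an $\mathcal{O}/\mathfrak{p}^{\vartheta}$-point because the obstruction lies in an $H^1$ that vanishes for smooth affine schemes over a local Artinian ring with residue characteristic $\neq 2$. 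Alternatively, one can argue directly: given $g_0 \in \Sp_{2r}(\mathcal{O}/\mathfrak{p}^{\vartheta-1})$, any set-theoretic lift $\widetilde{g}\in \Mat_{2r}(\mathcal{O}/\mathfrak{p}^{\vartheta})$ satisfies $\widetilde{g}^{T} J \widetilde{g} = J + \varpi^{\vartheta-1} S$ with $S$ symmetric, and Hensel lifting along $\mathfrak{sp}_{2r}$ corrects $\widetilde{g}$ into a symplectic matrix.

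Next, I would identify the kernel $\ker \pi_{\vartheta}$ with the abelian group $\mathfrak{sp}_{2r}(\mathbb{F}_q)$ via the map $I + \varpi^{\vartheta-1} X \mapsto \bar{X}$, where $\bar{X}$ is the reduction of $X$ mod $\mathfrak{p}$. The condition for $I + \varpi^{\vartheta-1}X$ to be symplectic, modulo $\mathfrak{p}^{\vartheta}$, reduces (using $\vartheta \geq 2$ and $2(\vartheta-1) \geq \vartheta$) to the linear equation $\bar{X}^T J + J \bar{X} = 0$, i.e. $\bar{X} \in \mathfrak{sp}_{2r}(\mathbb{F}_q)$. Hence $|\ker \pi_\vartheta| = q^{\dim \mathfrak{sp}_{2r}} = q^{2r^2+r}$, and the inductive hypothesis yields
\begin{equation*}
|\Sp_{2r}(\mathcal{O}/\mathfrak{p}^{\vartheta})| = q^{2r^2+r}\cdot q^{(\vartheta-1)(2r^2+r)}\prod_{i=1}^{r}(1-q^{-2i}) = q^{\vartheta(2r^2+r)}\prod_{i=1}^{r}(1-q^{-2i}).
\end{equation*}

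The main technical point is really the surjectivity of $\pi_{\vartheta}$ together with the clean identification of the kernel as $\mathfrak{sp}_{2r}(\mathbb{F}_q)$. The assumption $p\geq 3$ enters only to ensure that the linearized equation governing the kernel (which involves $2X$) stays non-degenerate; this is where characteristic $2$ would force a separate analysis.
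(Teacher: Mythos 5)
Your proof is correct. The paper itself does not prove the lemma; it simply cites Newman, \emph{Integral Matrices}, Theorem~VII.27. You instead supply a self-contained argument in the standard two-stage pattern: count symplectic frames over the residue field, then climb the congruence filtration using surjectivity of the reduction maps and the identification of each successive kernel with $\mathfrak{sp}_{2r}(\mathbb{F}_q)$. Both the base count $\lvert\Sp_{2r}(\mathbb{F}_q)\rvert = q^{r^2}\prod_{i=1}^r(q^{2i}-1)$ and the kernel computation $\lvert\ker\pi_\vartheta\rvert = q^{2r^2+r}$ are right, and the induction closes. This is a fine and arguably more useful alternative to a bare citation, since the filtration argument is exactly the one that also underlies the orthogonal and unitary counts (Lemmas A.2 and A.3 in the appendix), so one proof template covers all three.

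One small inaccuracy in your closing remark: the hypothesis $p\geq 3$ does not actually enter your argument. The linearization of $g^T J g = J$ at $g = I$ is $X^T J + J X = 0$; no factor of $2$ appears, and $\mathfrak{sp}_{2r}$ has dimension $2r^2+r$ over every field, including $\mathbb{F}_2$. Indeed $\Sp_{2r}$ is smooth over all of $\mathbb{Z}$, so both the surjectivity of $\pi_\vartheta$ and the kernel identification go through in residue characteristic $2$ as well, and the counting formula holds there too. The assumption $p \geq 3$ in the lemma statement is inherited from the surrounding lattice-model context of the paper, not from any failure of this formula at $p=2$. (Characteristic $2$ genuinely changes the story for the \emph{orthogonal} group in Lemma~A.2, where the quadratic form and the bilinear form part ways; but not here.)
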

\begin{proof}
See \cite[Theorem VII.27]{Newmann}.
\end{proof}
\subsection*{Orthogonal group}
Let $p \geq 3$ be an integer. Let $F$ be a $p$-adic field with ring of integers $\mathcal{O}$, maximal ideal $\mathfrak{p}$ and let $q$ denote the cardinal of the residual field $\mathcal{O} / \mathfrak{p}$.
\begin{lemma} \label{lemA02}
Let $d, \vartheta \geq 1$ be integers, then
\begin{equation*}
\vert \Ortho_d(\mathcal{O} / \mathfrak{p}^{\vartheta}) \vert = q^{\frac{(\vartheta - 1) d(d-1)}{2}}\vert \Ortho_d(\mathcal{O} / \mathfrak{p}) \vert
\end{equation*}
with
\begin{equation*}
\vert \Ortho_d(\mathcal{O} / \mathfrak{p}) \vert = 2 q^{\lfloor \frac{d}{2} \rfloor(\lfloor \frac{d}{2} \rfloor - 1)} q^{\lfloor \frac{d}{2} \rfloor} q^{\lfloor \frac{d - 1}{2} \rfloor (\lfloor \frac{d - 1}{2} \rfloor + 1)} \prod_{i = 1}^{\lfloor \frac{d - 1}{2} \rfloor} (1 - q^{-2i}) (1 - \epsilon q^{- \lfloor \frac{d}{2} \rfloor})
\end{equation*}
for some $\epsilon \in \lbrace -1, 0 , 1\rbrace$. In particular
\begin{equation*}
\vert \Ortho_d(\mathcal{O} / \mathfrak{p}^{\vartheta}) \vert = q^{\vartheta \frac{d(d-1)}{2} + o(1)}.
\end{equation*}
\end{lemma}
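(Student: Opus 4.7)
The plan is to reduce the statement to the case $\vartheta = 1$ via a standard filtration argument, then invoke the classical order formula for finite orthogonal groups. Since the hypothesis $p \geq 3$ means $2$ is a unit in $\mathcal{O}/\mathfrak{p}^{\vartheta}$, the orthogonal group scheme $\Ortho_d$ attached to a non-degenerate symmetric bilinear form $Q$ is smooth over $\mathcal{O}$, and this smoothness will do most of the work.

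First I would study the reduction map
\begin{equation*}
\pi_{\vartheta} : \Ortho_d(\mathcal{O}/\mathfrak{p}^{\vartheta}) \longrightarrow \Ortho_d(\mathcal{O}/\mathfrak{p}^{\vartheta - 1}) \quad (\vartheta \geq 2).
\end{equation*}
Surjectivity of $\pi_{\vartheta}$ follows from smoothness of $\Ortho_d$ over $\mathcal{O}$ (equivalently, from Hensel's lemma applied coordinate by coordinate to the defining equations $g^T Q g = Q$, which are smooth at every $\mathbb{F}_q$-point since $2$ is invertible). For the kernel, write an element of $\ker \pi_{\vartheta}$ as $g = \Id + \varpi^{\vartheta - 1} X$ with $X \in \Mat_d (\mathcal{O}/\mathfrak{p})$, where $\varpi$ is a uniformizer. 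The orthogonality relation $g^T Q g = Q$ reads, modulo $\mathfrak{p}^{\vartheta}$,
\begin{equation*}
\varpi^{\vartheta - 1} \big( X^T Q + Q X \big) \equiv 0 \pmod{\mathfrak{p}^{\vartheta}},
\end{equation*}
that is, $X^T Q + Q X \equiv 0 \pmod{\mathfrak{p}}$. The space of such $X$ is the Lie algebra $\mathfrak{o}_d(\mathbb{F}_q)$, a $\mathbb{F}_q$-vector space of dimension $d(d-1)/2$. Hence $\vert \ker \pi_{\vartheta} \vert = q^{d(d-1)/2}$ and, by induction on $\vartheta \geq 1$,
\begin{equation*}
\vert \Ortho_d(\mathcal{O}/\mathfrak{p}^{\vartheta}) \vert = q^{(\vartheta - 1) d(d-1)/2} \vert \Ortho_d(\mathcal{O}/\mathfrak{p}) \vert.
\end{equation*}

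It remains to identify $\vert \Ortho_d(\mathbb{F}_q) \vert$ with the stated closed form. This is the classical order formula for finite orthogonal groups (see e.g.\ Artin or Grove), which splits into three cases according to $d \bmod 2$ and, for $d$ even, the isometry type of $Q$ over $\mathbb{F}_q$:
\begin{itemize}
\item $d = 2r+1$: $\vert \Ortho_d(\mathbb{F}_q) \vert = 2 q^{r^2} \prod_{i = 1}^{r} (q^{2i} - 1)$, which corresponds to $\epsilon = 0$;
\item $d = 2r$, split type: $\vert \Ortho_d(\mathbb{F}_q) \vert = 2 q^{r(r-1)} (q^r - 1) \prod_{i = 1}^{r - 1} (q^{2i} - 1)$, which corresponds to $\epsilon = 1$;
\item $d = 2r$, non-split type: $\vert \Ortho_d(\mathbb{F}_q) \vert = 2 q^{r(r-1)} (q^r + 1) \prod_{i = 1}^{r - 1} (q^{2i} - 1)$, which corresponds to $\epsilon = -1$.
\end{itemize}
A direct manipulation, pulling out the appropriate powers of $q$ from the factors $q^{2i} - 1 = q^{2i}(1 - q^{-2i})$ and $q^r \mp 1 = q^r (1 - (\pm 1) q^{-r})$, rewrites each of these expressions in the announced uniform shape
\begin{equation*}
2 q^{\lfloor d/2 \rfloor (\lfloor d/2 \rfloor - 1)} \, q^{\lfloor d/2 \rfloor} \, q^{\lfloor (d - 1)/2 \rfloor (\lfloor (d - 1)/2 \rfloor + 1)} \prod_{i = 1}^{\lfloor (d - 1)/2 \rfloor} (1 - q^{-2i}) (1 - \epsilon q^{- \lfloor d/2 \rfloor}).
\end{equation*}
The asymptotic $\vert \Ortho_d(\mathcal{O}/\mathfrak{p}^{\vartheta}) \vert = q^{\vartheta d(d-1)/2 + o(1)}$ is then immediate, since the base-case factor $\vert \Ortho_d(\mathbb{F}_q) \vert$ and the fixed constant $1$ in the exponent $(\vartheta - 1) d(d-1)/2$ are absorbed in $o(1)$ as $\vartheta \to \infty$.

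There is no real obstacle; the only point requiring care is the smoothness/surjectivity input at $p \geq 3$, which is exactly what fails at $p = 2$ and justifies the hypothesis. The bookkeeping in the last step (matching the three classical formulas to the single expression with $\epsilon \in \{-1,0,1\}$) is routine, so I would present it as a case-by-case verification.
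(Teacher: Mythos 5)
Your proof is correct and follows exactly the approach the paper has in mind: the paper's proof is a bare citation to \cite[Corollary 5.6]{Herman} for the smooth-reduction step $\vert \Ortho_d(\mathcal{O}/\mathfrak{p}^{\vartheta})\vert = q^{(\vartheta-1)\dim \mathfrak{o}_d}\vert\Ortho_d(\mathbb{F}_q)\vert$ and to \cite[Chapter 11]{TaylorGroups} for the closed form of $\vert\Ortho_d(\mathbb{F}_q)\vert$, which are precisely the two ingredients you supply in detail. Your unpacking of the kernel of $\pi_{\vartheta}$ via the Lie algebra and the case-by-case matching of the three classical formulas to the uniform $\epsilon$-expression are both accurate (with the factor $(1-\epsilon q^{-\lfloor d/2\rfloor})$ read as sitting outside the product), so this is a faithful expansion rather than a different route.
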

\begin{proof}
\cite[Corollary 5.6]{Herman} and \cite[Chapter 11]{TaylorGroups}.
\end{proof}
\subsection*{Unitary group}
Let $p \geq 3$ be an integer. Let $F$ be a $p$-adic field with ring of integers $\mathcal{O}$, maximal ideal $\mathfrak{p}$ and let $q$ denote the cardinal of the residual field $\mathcal{O} / \mathfrak{p}$. Let $E/F$ be an unramified quadratic extension with ring of integer $\mathcal{O}_E$ and maximal ideal $\mathfrak{p}_E$.
\begin{lemma} \label{lemA03}
Let $d, \vartheta \geq 1$ be integers, then
\begin{equation*}
\vert \U_d(\mathcal{O}_E / \mathfrak{p}_E^{\vartheta}) \vert = q^{(\vartheta - 1) d^2}\vert \U_d(\mathcal{O}_E / \mathfrak{p}_E) \vert
\end{equation*}
with
\begin{equation*}
\vert \U_d (\mathcal{O}_E / \mathfrak{p}_E) \vert = q^{d^2} \prod_{i = 1}^d (1 - (-q)^{-i}).
\end{equation*}
\end{lemma}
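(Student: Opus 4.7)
The plan is to prove the two displayed formulas separately, corresponding respectively to (i) reduction from level $\vartheta$ down to level $1$, and (ii) the classical order of the finite unitary group at level $1$.

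First, I would realize $\U_d$ as a smooth affine group scheme over $\mathcal{O}_F$, namely the stabilizer of the standard Hermitian form on the self-dual lattice $\mathcal{O}_E^d \subset E^d$. Because $E/F$ is unramified and $p>2$, the Hermitian form reduces to a non-degenerate Hermitian form over the residue field extension $\mathbb{F}_{q^2}/\mathbb{F}_q$, and this guarantees that the group scheme is smooth over $\mathcal{O}_F$ of relative dimension $d^2$. Concretely, its Lie algebra $\mathfrak{u}_d$ is the free $\mathcal{O}_F$-module of $d\times d$ skew-Hermitian matrices (with respect to the nontrivial element of $\Gal(E/F)$), which has rank $d^2$.

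Next, for the factor $q^{(\vartheta-1)d^2}$, I would exploit smoothness together with Hensel's lemma. For each $1\le t < \vartheta$ the reduction map
\begin{equation*}
\U_d(\mathcal{O}_E/\mathfrak{p}_E^{t+1}) \longrightarrow \U_d(\mathcal{O}_E/\mathfrak{p}_E^{t})
\end{equation*}
is surjective, and its kernel consists of matrices of the form $I+\varpi_F^t M$ where the congruence $M + \overline{M}^{T} \equiv 0 \pmod{\mathfrak{p}_E}$ holds (the higher-order term in the defining relation $g\overline{g}^T = I$ vanishes modulo $\mathfrak{p}_E^{t+1}$). Thus the kernel is isomorphic, as an abelian group, to $\mathfrak{u}_d(\mathcal{O}_F/\mathfrak{p}_F)$, which has cardinality $q^{d^2}$. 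Iterating from $t=1$ to $t=\vartheta-1$ yields the claimed multiplicative factor.

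For the residue-field value, $\U_d(\mathcal{O}_E/\mathfrak{p}_E)=\U_d(\mathbb{F}_{q^2}/\mathbb{F}_q)$ is the classical finite unitary group. One standard way to obtain its order is to count ordered unitary bases of $(\mathbb{F}_{q^2}^d,\langle\cdot,\cdot\rangle)$: an inductive argument on the dimension, using that the number of anisotropic vectors in an $i$-dimensional non-degenerate Hermitian space over $\mathbb{F}_{q^2}/\mathbb{F}_q$ equals $q^{2i-1}(q-(-1)^i)/\text{(norm orbit size)}$, yields
\begin{equation*}
\lvert\U_d(\mathbb{F}_{q^2}/\mathbb{F}_q)\rvert = q^{d(d-1)/2}\prod_{i=1}^{d}\bigl(q^{i}-(-1)^{i}\bigr).
\end{equation*}
Rewriting $q^{i}-(-1)^{i} = q^{i}\bigl(1-(-q)^{-i}\bigr)$ and factoring out gives $q^{d(d-1)/2+d(d+1)/2}=q^{d^2}$, whence the stated form $q^{d^2}\prod_{i=1}^{d}(1-(-q)^{-i})$.

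The only step requiring genuine care is the smoothness of the $\mathcal{O}_F$-group scheme, which is exactly where the hypotheses "$E/F$ unramified" and "$p$ odd" are used, and which underpins both the surjectivity of the reduction maps and the identification of their kernels with the additive Lie algebra. Everything else is an explicit count.
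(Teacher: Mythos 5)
Your proposal is correct and essentially fills in, from scratch, what the paper delegates to citations. The paper's proof of Lemma \ref{lemA03} is simply the two references [Corollary 5.6]{Herman} (for the reduction $q^{(\vartheta-1)d^2}$ from level $\vartheta$ to level $1$) and [Chapter 10]{TaylorGroups} (for the order of the finite unitary group over the residue field). You have reconstructed both ingredients: the first via smoothness of the integral model (so the reduction maps $\U_d(\mathcal{O}/\mathfrak{p}^{t+1}) \to \U_d(\mathcal{O}/\mathfrak{p}^{t})$ are surjective with kernel the additive group of skew-Hermitian matrices over $\mathbb{F}_{q^2}$, which has $\mathbb{F}_q$-dimension $d^2$), and the second via counting ordered unitary bases. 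The underlying mathematics is the same as in the cited references; what your approach buys is a self-contained exposition and a transparent explanation of where the hypotheses ``$E/F$ unramified'' and ``$p$ odd'' enter (namely in guaranteeing smoothness of the $\mathcal{O}_F$-group scheme).

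One small blemish: in the residue-field count, the number of vectors $v$ in a non-degenerate $i$-dimensional Hermitian space over $\mathbb{F}_{q^2}$ with $\langle v,v\rangle$ equal to a fixed nonzero scalar is $q^{i-1}\bigl(q^{i}-(-1)^{i}\bigr)$, not the expression you wrote. With this corrected count the induction $\lvert\U_d\rvert = q^{d-1}\bigl(q^{d}-(-1)^{d}\bigr)\,\lvert\U_{d-1}\rvert$ closes up to exactly $q^{d(d-1)/2}\prod_{i=1}^{d}\bigl(q^{i}-(-1)^{i}\bigr)$, and your algebraic rewriting to $q^{d^2}\prod_{i=1}^{d}\bigl(1-(-q)^{-i}\bigr)$ is correct.
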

\begin{proof}
\cite[Corollary 5.6]{Herman} and \cite[Chapter 10]{TaylorGroups}.
\end{proof}
\subsection*{Application}
\begin{lemma}
With notations of Section \ref{sec1.2}, we have
\begin{equation*}
E(\mathfrak{p}^k) = \begin{cases}
      \frac{2m + 1}{2n} & \text{if $E = F$} \\
	\frac{m}{n} & \text{if $E \neq F$} \\
    \end{cases}\,.
\end{equation*}
\end{lemma}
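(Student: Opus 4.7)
The plan is to reduce the computation of the ratio of volumes to a local one at $\mathfrak{p}$, and then apply the order formulas of Lemmas \ref{lemA01}, \ref{lemA02}, \ref{lemA03} of the appendix to read off the leading exponent. Using
\begin{equation*}
\vol (X_H(\mathfrak{p}^k)) = \vol (X_H(1)) [K_f : K_H(\mathfrak{p}^k)] \quad \text{,} \quad \vol (X'(\mathfrak{p}^k)) =  \vol (X'(1)) [K'_f : K'(\mathfrak{p}^k)]
\end{equation*}
and the definition $E(\mathfrak{p}^k) = \frac{\log \vol(X'(\mathfrak{p}^k))}{2 \log \vol(X_H(\mathfrak{p}^k))} + o(1)$, it suffices to determine $[K_f : K_H(\mathfrak{p}^k)]$ and $[K'_f : K'(\mathfrak{p}^k)]$ up to a multiplicative error of $q^{o(k)}$, where $q$ is the cardinality of $\mathcal{O}/\mathfrak{p}$.

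Since $K_H(\mathfrak{p}^k) = K(\mathfrak{p}^k) K_f^H$ and $K_H(\mathfrak{p}^k)_v = K_v$ at every place $v \neq \mathfrak{p}$, both indices factor entirely through the local place $\mathfrak{p}$. Exploiting the condition $K_H(\mathfrak{p}^k) \cap K_f^H = K_f^H$, which yields $K_{\mathfrak{p}}^H \cap K_{\mathfrak{p}}(\mathfrak{p}^k) = K_{\mathfrak{p}}^H(\mathfrak{p}^k)$, a short index manipulation gives
\begin{equation*}
[K_f : K_H(\mathfrak{p}^k)] = \dfrac{\vert G(\mathcal{O}/\mathfrak{p}^k) \vert}{\vert H(\mathcal{O}/\mathfrak{p}^k) \vert} \quad \text{,} \quad [K'_f : K'(\mathfrak{p}^k)] = \vert G'(\mathcal{O}/\mathfrak{p}^k) \vert
\end{equation*}
(with the appropriate base-change to $\mathcal{O}_E$ when $E \neq F$).

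For $E = F$ I would apply Lemma \ref{lemA02} to the orthogonal groups $G = \Ortho(V)$, $\U(X) = \Ortho_m$, $\U(X^\perp) = \Ortho_n$: the leading exponent of $\vert \Ortho_d(\mathcal{O}/\mathfrak{p}^k) \vert$ is $kd(d-1)/2$, so the telescoping
\begin{equation*}
	\tfrac{(n+m)(n+m-1)}{2} - \tfrac{n(n-1)}{2} - \tfrac{m(m-1)}{2} = nm
\end{equation*}
gives $[K_f : K_H(\mathfrak{p}^k)] = q^{knm + o(k)}$. Lemma \ref{lemA01} for $G' = \Sp_{2m}$ yields $[K'_f : K'(\mathfrak{p}^k)] = q^{k(2m^2+m) + o(k)}$. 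The formula $E(\mathfrak{p}^k) = (2m^2+m)/(2nm) = (2m+1)/(2n)$ follows. For $E \neq F$ (and $\mathfrak{p}$ inert, say) I apply Lemma \ref{lemA03}, whose leading exponent for $\U_d$ is $kd^2$; the analogous telescoping $(n+m)^2 - n^2 - m^2 = 2nm$ gives $[K_f : K_H(\mathfrak{p}^k)] = q^{2knm + o(k)}$ and $[K'_f : K'(\mathfrak{p}^k)] = q^{4km^2 + o(k)}$, hence $E(\mathfrak{p}^k) = 4m^2/(2 \cdot 2nm) = m/n$. The split case $\mathfrak{p} = \mathfrak{p}_1 \mathfrak{p}_2$ is treated by the same arithmetic with $\vert \GL_d(\mathcal{O}/\mathfrak{p}^k) \vert = q^{kd^2 + o(k)}$ appearing at each of the two factors, producing the identical leading exponents.

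There is no genuine obstacle here: the only care needed is the bookkeeping in the claimed identity $[K_f : K_H(\mathfrak{p}^k)] = \vert G(\mathcal{O}/\mathfrak{p}^k) \vert / \vert H(\mathcal{O}/\mathfrak{p}^k) \vert$, i.e. checking that $K_{\mathfrak{p}}^H K_{\mathfrak{p}}(\mathfrak{p}^k)/K_{\mathfrak{p}}(\mathfrak{p}^k) \simeq K_{\mathfrak{p}}^H/K_{\mathfrak{p}}^H(\mathfrak{p}^k) \simeq H(\mathcal{O}/\mathfrak{p}^k)$, which uses smoothness of $H$ at $\mathfrak{p}$ (guaranteed by $\mathfrak{p} \nmid \mathcal{R}$). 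Everything else is a direct application of the three order formulas in the appendix.
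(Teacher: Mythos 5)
Your proof is correct and follows essentially the same route as the paper: both reduce the volume ratio to a local index computation at $\mathfrak{p}$, apply the appendix order formulas (Lemmas \ref{lemA01}, \ref{lemA02}, \ref{lemA03}), and telescope to read off the leading exponent (you even spell out the split case of $E \neq F$, which the paper leaves implicit). One small logical misattribution: the local identity $K_{\mathfrak{p}}^H \cap K_{\mathfrak{p}}(\mathfrak{p}^k) = K_{\mathfrak{p}}^H(\mathfrak{p}^k)$ follows from the orthogonal lattice decomposition $L_{V,\mathfrak{p}} = L_{j(X),\mathfrak{p}} \oplus L_{j(X)^{\perp},\mathfrak{p}}$ at the unramified place $\mathfrak{p} \nmid \mathcal{R}$, not from the condition $K_H(\mathfrak{p}^k) \cap K_f^H = K_f^H$, which only asserts the containment $K_f^H \subseteq K_H(\mathfrak{p}^k)$.
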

\begin{proof}
We have
\begin{align*}
[K_f : K_H(\mathfrak{p}^k) ] = \frac{[K_f : K(\mathfrak{p}^k) ]}{[K_H(\mathfrak{p}^k) : K(\mathfrak{p}^k)]}
\end{align*}
with
\begin{align*}
[K_H(\mathfrak{p}^k) : K(\mathfrak{p}^k)] &= [ \big(H(\mathbb{A}_f) \cap K_f \big) \cdot K(\mathfrak{p}^k) : K(\mathfrak{p}^k)] \\
&= [H(\mathbb{A}_f) \cap K_f : \big( H(\mathbb{A}_f) \cap K_f \big) \cap K(\mathfrak{p}^k)] \\
&= [H(\mathbb{A}_f) \cap K_f : H(\mathbb{A}_f) \cap K(\mathfrak{p}^k)].
\end{align*}
This way, from Lemma \ref{lemA01}, \ref{lemA02} and \ref{lemA03}, we have
\begin{equation*}
\vol (X_{H}(\mathfrak{p}^k)) = \begin{cases}
       	q^{\vartheta \big( \frac{(n+m)(n+m-1)}{2} - (\frac{n(n-1)}{2} + \frac{m(m-1)}{2}) \big) + o(1)} = q^{\vartheta nm + o(1)} & \text{if $E = F$} \\
	q^{\vartheta \big( (n + m)^2 - (n^2 + m^2) \big) + o(1)} = q^{\vartheta 2nm + o(1)} & \text{if $E \neq F$} \\
    \end{cases}
\end{equation*}
and
\begin{equation*}
\vol (X'(\mathfrak{p}^k)) = \begin{cases}
       q^{\vartheta (2 m^2 + m) + o(1)} &  \text{if $E = F$} \\
	q^{\vartheta (2m)^2 + o(1)} & \text{if $E \neq F$} \\
    \end{cases}\,.
\end{equation*}
Combining these, we find
\begin{align*}
\dfrac{\vol (X_{H}(\mathfrak{p}^k))}{\vol (X'(\mathfrak{p}^k))} &= \begin{cases}
        q^{\vartheta (nm - 2m^2 - m) + o(1)} &  \text{if $E = F$} \\
	q^{\vartheta (2nm - (2m)^2 ) + o(1)} &  \text{if $E \neq F$} \\
    \end{cases} \\
&= \begin{cases}
       q^{\vartheta nm \big( 1 - \frac{2m^2 + m}{nm} \big) + o(1)} &  \text{if $E = F$} \\
	q^{\vartheta 2nm \big( 1 - \frac{(2m)^2}{2nm} \big) + o(1)} &  \text{if $E \neq F$} \\
    \end{cases}\\
&= \begin{cases}
       \vol (X_{H}(\mathfrak{p}^k))^{1 - \frac{2m + 1}{n} + o(1)} &  \text{if $E = F$} \\
	\vol (X_{H}(\mathfrak{p}^k))^{1 - \frac{2m}{n} + o(1)} &  \text{if $E \neq F$} \\
    \end{cases}\,.
\end{align*}
\end{proof}

\newpage
\bibliographystyle{alpha}
\bibliography{biblio}

\end{document}